\theoremstyle{plain}
\newtheorem{thm}{Theorem}[section]
\newtheorem*{thm*}{Theorem}
\newtheorem{prop}[thm]{Proposition}
\theoremstyle{definition}
\newtheorem{defi}[thm]{Definition}
\newtheorem*{defi*}{Definition}
\theoremstyle{remark}
\newtheorem{rem}[thm]{Remark}
\newtheorem{example}[thm]{Example}
\newtheorem{exple}[thm]{Example}
\newcommand{\QQ}{\mathbb{Q}}
\newcommand{\ZZ}{\mathbb{Z}}
\newcommand{\KK}{\mathbb{K}}
\newcommand{\Lc}{\mathcal{L}}
\newcommand{\To}{\longrightarrow}
\renewcommand{\leq}{\leqslant}
\renewcommand{\geq}{\geqslant}
\renewcommand{\emptyset}{\varnothing}
\newcommand{\defas}{\coloneqq}
\newcommandx{\levtrees}[1][1=n]{\operatorname{MT}^{lev}(#1)}
\newcommandx{\mergetrees}[1][1=n]{\operatorname{MT}(#1)}
\newcommandx{\levtreesT}[1][1=n]{\operatorname{MT}_T^{lev}(#1)}
\newcommandx{\mergetreesT}[1][1=n]{\operatorname{MT}_T(#1)}
\newcommandx{\hyptrees}[1][1=n]{\operatorname{HT}(#1)}
\newcommandx{\Leaves}[1][1=n]{\operatorname{Leaves}(#1)}
\newcommandx{\Gent}[2][1=1, 2=2]{\widetilde{\begin{tikzpicture}[grow=up, frontier/.style={distance from root=10pt}, level distance=0.8cm, inner sep=1pt, scale=0.7 ]
\coordinate (bot)
   child[sibling distance=1.2cm]{node(2){#2}
        }
    child[sibling distance=1.2cm]{node(1){#1}
        }
   ;
   \node[above=1cm of bot] (0) {0};
   \draw (2) edge[bend right, ->] (1) ;
 \draw (1.north) edge[bend left, ->] (0) ;
\end{tikzpicture}}}
\newcommandx{\Gentsst}[2][1=1, 2=2]{{\begin{tikzpicture}[grow=up, frontier/.style={distance from root=10pt}, level distance=0.8cm, inner sep=1pt, scale=0.7 ]
\coordinate (bot)
   child[sibling distance=1.2cm]{node(2){#2}
        }
    child[sibling distance=1.2cm]{node(1){#1}
        }
   ;
   \node[above=1cm of bot] (0) {0};
   \draw (2) edge[bend right, ->] (1) ;
 \draw (1.north) edge[bend left, ->] (0) ;
\end{tikzpicture}}}
\newcommandx{\Genb}[2][1=1, 2=2]{\widetilde{\begin{tikzpicture}[grow=up, frontier/.style={distance from root=10pt}, level distance=0.8cm, inner sep=1pt, scale=0.7 ]
\coordinate (bot)
   child[sibling distance=1.2cm]{node(2){#2}
        }
    child[sibling distance=1.2cm]{node(1){#1}
        }
   ;
   \node[above=0.7cm of bot] (0) {0};
 \draw (1.north) edge[bend left, ->] (0) ;
 \draw (2.north) edge[bend right, ->] (0) ;
\end{tikzpicture}}}
\newcommandx{\Ns}[1][1=\Lc]{\mathcal{N}(#1)}
\newcommand{\diagram}[1]{\SelectTips{cm}{10}\xymatrix{#1}}
\DeclareFontFamily{U}{mathx}{\hyphenchar\font45}
\DeclareFontShape{U}{mathx}{m}{n}{
      <5> <6> <7> <8> <9> <10>
      <10.95> <12> <14.4> <17.28> <20.74> <24.88>
      mathx10
      }{}
\DeclareSymbolFont{mathx}{U}{mathx}{m}{n}
\DeclareMathAccent{\widecheck}{0}{mathx}{"71}
\newcommand{\chains}[1]{c^{#1}}
\newcommand{\chainstop}[1]{\widehat{c}^{\,#1}}
\newcommand{\chainsbottom}[1]{\widecheck{c}^{\,#1}}
\newcommand{\h}[1]{h^{#1}}
\newcommand{\htop}[1]{\widehat{h}^{#1}}
\newcommand{\hbottom}[1]{\widecheck{h}^{#1}}
     \newcommand{\udt}{\begin{tikzpicture}[scale=0.6, anchor=base, baseline]
   \tikzstyle{ver} = [circle, draw, fill, inner sep=0.5mm]
   \tikzstyle{edg} = [line width=0.6mm]
   \node[ver] at (1,0) {};
   \node[ver] at (2,0) {};
   \node[ver] at (3,0) {};
   \node      at (1,-0.6) {1};
   \node      at (2,-0.6) {2};
   \node      at (3,-0.6) {3};
 \end{tikzpicture}}
   \newcommand{\utd}{   
     \begin{tikzpicture}[scale=0.6, anchor=base, baseline]
   \tikzstyle{ver} = [circle, draw, fill, inner sep=0.5mm]
   \tikzstyle{edg} = [line width=0.6mm]
   \node[ver] at (1,0) {};
   \node[ver] at (2,0) {};
   \node[ver] at (3,0) {};
   \node      at (1,-0.6) {1};
   \node      at (2,-0.6) {3};
   \node      at (3,-0.6) {2};
 \end{tikzpicture}}
      \newcommand{\dut}{
     \begin{tikzpicture}[scale=0.6, anchor=base, baseline]
   \tikzstyle{ver} = [circle, draw, fill, inner sep=0.5mm]
   \tikzstyle{edg} = [line width=0.6mm]
   \node[ver] at (1,0) {};
   \node[ver] at (2,0) {};
   \node[ver] at (3,0) {};
   \node      at (1,-0.6) {2};
   \node      at (2,-0.6) {1};
   \node      at (3,-0.6) {3};
 \end{tikzpicture}}
 \newcommand{\utu}{  \begin{tikzpicture}[scale=0.6, anchor=base, baseline]
   \tikzstyle{ver} = [circle, draw, fill, inner sep=0.5mm]
   \tikzstyle{edg} = [line width=0.6mm]
   \node[ver] at (1,0) {};
   \node[ver] at (2,0) {};
   \node[ver] at (3,0) {};
   \node      at (1,-0.6) {1};
   \node      at (2,-0.6) {3};
   \node      at (3,-0.6) {2};
      \draw[edg] (1,0) to[bend left=90, looseness=2] (2,0);
 \end{tikzpicture}}
      \newcommand{\udd}{   \begin{tikzpicture}[scale=0.6, anchor=base, baseline]
   \tikzstyle{ver} = [circle, draw, fill, inner sep=0.5mm]
   \tikzstyle{edg} = [line width=0.6mm]
   \node[ver] at (1,0) {};
   \node[ver] at (2,0) {};
   \node[ver] at (3,0) {};
   \node      at (1,-0.6) {1};
   \node      at (2,-0.6) {2};
   \node      at (3,-0.6) {3};
      \draw[edg] (2,0) to[bend left=90, looseness=2] (3,0);
 \end{tikzpicture}}
 \newcommand{\dud}{\begin{tikzpicture}[scale=0.6, anchor=base, baseline]
   \tikzstyle{ver} = [circle, draw, fill, inner sep=0.5mm]
   \tikzstyle{edg} = [line width=0.6mm]
   \node[ver] at (1,0) {};
   \node[ver] at (2,0) {};
   \node[ver] at (3,0) {};
   \node      at (1,-0.6) {2};
   \node      at (2,-0.6) {1};
   \node      at (3,-0.6) {3};
      \draw[edg] (2,0) to[bend left=90, looseness=2] (3,0);
 \end{tikzpicture}}
 \newcommand{\uud}{  \begin{tikzpicture}[scale=0.6, anchor=base, baseline]
   \tikzstyle{ver} = [circle, draw, fill, inner sep=0.5mm]
   \tikzstyle{edg} = [line width=0.6mm]
   \node[ver] at (1,0) {};
   \node[ver] at (2,0) {};
   \node[ver] at (3,0) {};
   \node      at (1,-0.6) {1};
   \node      at (2,-0.6) {3};
   \node      at (3,-0.6) {2};
      \draw[edg] (1,0) to[bend left=60] (3,0);
 \end{tikzpicture}}
      \newcommand{\udu}{
       \begin{tikzpicture}[scale=0.6, anchor=base, baseline]
   \tikzstyle{ver} = [circle, draw, fill, inner sep=0.5mm]
   \tikzstyle{edg} = [line width=0.6mm]
   \node[ver] at (1,0) {};
   \node[ver] at (2,0) {};
   \node[ver] at (3,0) {};
   \node      at (1,-0.6) {1};
   \node      at (2,-0.6) {2};
   \node      at (3,-0.6) {3};
      \draw[edg] (1,0) to[bend left=60] (3,0);
 \end{tikzpicture}}
      \newcommand{\uuu}{
    \begin{tikzpicture}[scale=0.6, anchor=base, baseline]
   \tikzstyle{ver} = [circle, draw, fill, inner sep=0.5mm]
   \tikzstyle{edg} = [line width=0.6mm]
   \node[ver] at (1,0) {};
   \node[ver] at (2,0) {};
   \node[ver] at (3,0) {};
   \node      at (1,-0.6) {1};
   \node      at (2,-0.6) {2};
   \node      at (3,-0.6) {3};
      \draw[edg] (1,0) to[bend left=90, looseness=2] (2,0);
      \draw[edg] (2,0) to[bend left=90, looseness=2] (3,0);
 \end{tikzpicture}}
\newcolumntype{P}[1]{>{\centering\arraybackslash}p{#1}}
\author{}
 \author{B\'{e}r\'{e}nice Delcroix--Oger}
 \author{Cl\'{e}ment Dupont}
 \address{Institut Montpelli\'erain Alexander Grothendieck, Universit\'{e} de Montpellier, CNRS, Montpellier, France}
 \email{berenice.delcroix-oger@umontpellier.fr}
 \email{clement.dupont@umontpellier.fr}
\subjclass[2020]{06A07, 06A11, 18M70, 18M80, 55U15}
\title{Lie-operads and operadic modules from poset cohomology}
\date{}
\begin{document}

\begin{abstract}
As observed by Joyal, the cohomology groups of the partition posets are naturally identified with the components of the operad encoding Lie algebras. This connection was explained in terms of operadic Koszul duality by Fresse, and later generalized by Vallette to the setting of decorated partitions. In this article, we set up and study a general formalism which produces \emph{a priori} operadic structures (operads and operadic modules) on the cohomology of families of posets equipped with some natural recursive structure, that we call ``operadic poset species''. This framework goes beyond decorated partitions and operadic Koszul duality, and contains the metabelian Lie operad and Kontsevich's operad of trees as two simple instances. In forthcoming work, we will apply our results to the hypertree posets and their connections to post-Lie and pre-Lie algebras.
\end{abstract}

\maketitle

\setcounter{tocdepth}{1}
\tableofcontents

\section*{Introduction}

\subsection*{Poset cohomology and operads}

This article finds its roots in a surprising connection, discovered by Joyal \cite{joyal}, between the notion of a set partition and that of a Lie algebra. Recall that a \emph{partition} of a finite set $S$ is a set of non-empty disjoint subsets of $S$, called \emph{blocks}, whose union equals $S$. They form a poset (partially ordered set) denoted by $\Pi(S)$, where by definition $\pi\leq \pi'$ if a partition $\pi$ can be obtained from another partition $\pi'$ by merging blocks. Joyal's discovery, which builds on earlier work of Hanlon and Stanley in combinatorics \cite{Han81, Stan82} and Brandt in Lie theory \cite{brandt}, is the following isomorphism:
\begin{equation}\label{eq: intro partitions and lie}
h^{|S|-1}(\Pi(S)) \simeq \operatorname{Lie}(S)\otimes\mathrm{sgn}_S.
\end{equation}
The left-hand side is the only non-zero cohomology group of the poset $\Pi(S)$ (see \S\ref{Poset cohomology} for our conventions on poset cohomology). The right-hand side is the subspace of the free Lie algebra with generators indexed by $S$ spanned by iterated Lie brackets involving each generator exactly once---twisted by the sign representation of the symmetric group $\mathfrak{S}_S$ in order to make \eqref{eq: intro partitions and lie} an $\mathfrak{S}_S$-equivariant isomorphism.

In the modern language of algebraic operads (see, e.g., \cite{lodayvallette}), one can recast \eqref{eq: intro partitions and lie} as an isomorphism 
\begin{equation}\label{eq: intro partitions and lie second}
h^\bullet(\Pi) \simeq \Lambda^{-1}\operatorname{Lie},
\end{equation}
between the graded linear species (functor from the category of finite sets and bijections to that of graded vector spaces) $S\mapsto h^\bullet(\Pi(S))$ and the desuspension of the operad encoding Lie algebras. This operadic point of view was studied by Fresse \cite{fressepartitionposets} who explained that \eqref{eq: intro partitions and lie second} is a consequence of Koszul duality between the operad $\mathrm{Lie}$ and the operad $\mathrm{Com}$ encoding commutative algebras. Later, Vallette \cite{vallettepartitionposets} generalized that approach to the poset $\Pi^{\mathcal{Q}}$ of partitions decorated by a set operad $\mathcal{Q}$ satisfying some assumptions, leading to an isomorphism
\begin{equation}\label{eq: intro decorated partitions koszul}
h^\bullet(\Pi^{\mathcal{Q}}) \simeq \Lambda^{-1} \mathcal{Q}^!,
\end{equation}
where $\mathcal{Q}^!$ is the Koszul dual operad. (The case of the operad $\mathcal{Q}=\mathrm{Com}$ recovers \eqref{eq: intro partitions and lie second}.) Finally, Oger \cite{ogerhomologyhypertree} proved an intriguing identification, conjectured by Chapoton \cite{chapotonhyperarbres}, between the operad encoding pre-Lie algebras and the cohomology of the posets of hypertrees $\mathrm{HT}(S)$ \cite{mccammondmeier, jensenMcCammondMeier} (also known as ``Whitehead posets'' \cite{McCulloughMiller, BradyMccammondMeierMiller}). In the notation of the present article, it reads
\begin{equation}\label{eq: intro HT prelie}
\widecheck{h}^\bullet(\mathrm{HT})\simeq \Lambda^{-1}\mathrm{PreLie},
\end{equation}
where $\hbottom{\bullet}$ is a variant of poset cohomology (see \S\ref{Poset cohomology}). Our project started when we tried to understand the structure of the isomorphism \eqref{eq: intro HT prelie}, and led to more surprise: the linear species $\mathrm{PreLie}$ does not naturally appear in \eqref{eq: intro HT prelie} as an \emph{operad}, but as a left operadic module over the operad $\mathrm{PostLie}$ encoding post-Lie algebras \cite{vallettepartitionposets}, which itself appears in an isomorphism
\begin{equation}\label{eq: intro HT postlie}
h^\bullet(\mathrm{HT})\simeq \Lambda^{-1}\mathrm{PostLie}.
\end{equation}
We ended up discovering more instances of this phenomenon, some of which appear in the present article (for \eqref{eq: intro HT prelie} and \eqref{eq: intro HT postlie}, we refer the reader to our forthcoming work \cite{DOD2}). For instance, in \S\ref{sec:metablie} we study the family of ``non-singleton boolean posets'' $\mathrm{NS}(S)$ and exhibit an isomorphism
\begin{equation}\label{eq: intro NS metablie}
h^\bullet(\mathrm{NS}) \simeq \Lambda^{-1}\mathrm{MetabLie}
\end{equation}
involving the operad encoding metabelian Lie algebras. Note that this cannot possibly be a disguised instance of the story of decorated partitions and Koszul duality \eqref{eq: intro decorated partitions koszul} because $\mathrm{MetabLie}$ is not a quadratic operad.

\subsection*{Operadic poset species, their cohomology operads, and operadic modules}

All these examples lead to the question:
\vspace{.1cm}
\begin{center}
Why/How do operadic structures arise in poset cohomology?
\end{center}
\vspace{.1cm}
We answer it by identifying a \emph{structure} on families of posets such as $\Pi$, $\Pi^{\mathcal{Q}}$, $\mathrm{HT}$, and $\mathrm{NS}$ which gives an \emph{a priori} operadic structure on their cohomology, and such that \eqref{eq: intro partitions and lie second}, \eqref{eq: intro decorated partitions koszul}, \eqref{eq: intro HT postlie}, \eqref{eq: intro NS metablie} are isomorphisms of graded operads, and \eqref{eq: intro HT prelie} is an isomorphism of left operadic modules. We give the name \emph{operadic poset species} to this abstract structure. In order to state its definition, let us emphasize the recursive structure of the family of partition posets: lower intervals in $\Pi(S)$ are isomorphic to partition posets, and upper intervals are isomorphic to products of partition posets. Concretely, for a partition $\pi$ (viewed as a set of blocks) of a finite set $S$, we have isomorphisms
\begin{equation}\label{eq: intro recursive structure partition posets}
\Pi_{\leq\pi}(S) \stackrel{\sim}{\To} \Pi(\pi) \quad\quad\quad \mbox{ and } \quad\quad\quad \Pi_{\geq \pi}(S) \stackrel{\sim}{\To} \prod_{T\in \pi}\Pi(T).
\end{equation}
In plain English: the partitions below $\pi$ are obtained by merging the blocks of $\pi$, which amounts to partitioning the set of blocks itself; and the partitions above $\pi$ are obtained by partitioning each block separately. Roughly speaking, an \emph{operadic poset species} is a family of posets $P(S)$, for all finite sets $S$, which lives over the family of partition posets (an object $x\in P(S)$ has an ``underlying partition'' $\pi$), and is equipped with morphisms of posets (not necessarily isomorphisms!) which lift \eqref{eq: intro recursive structure partition posets} and satisfy certain compatibilities. The precise definition roughly goes as follows.

\begin{defi*}[Definition \ref{def: operadic poset species}]
An \emph{operadic poset species} is a poset species $P: S\mapsto P(S)$ equipped with a morphism of poset species $$\diagram{P \ar[d]_-{a} \\ \Pi}$$ and with morphisms of posets
$$\varphi_x : P_{\leq x}(S)\To P(\pi) \quad\quad\quad \mbox{ and } \quad\quad\quad \psi_x:P_{\geq x}(S)\To \prod_{T\in\pi}P(T),$$
for all finite sets $S$ and all $x\in P(S)$, where $\pi$ denotes the underlying partition $a(x)$, which are required to lift \eqref{eq: intro recursive structure partition posets} and satisfy equivariance, unitality, and associativity axioms (see \S\ref{axiom: equivariance}, \ref{axiom: unitality}, \ref{axiom: associativity}).
\end{defi*}

Our main result states that an operadic poset species naturally gives rise to a triple made of a graded operad and graded left and right operadic modules. These three objects correspond to three variants of poset cohomology, denoted by $h^\bullet$ (resp. $\hbottom{\bullet}$, resp. $\htop{\bullet}$), which are computed by the complex of (co)chains $x_0<x_1<\cdots <x_{n-1}<x_n$ in a poset such that $x_0$ is a minimal element and $x_n$ is a maximal element (resp. such that $x_0$ is a minimal element, resp. such that $x_n$ is a maximal element).

\pagebreak

\begin{thm*}[Theorems \ref{thm: main}, \ref{thm: main left}, \ref{thm: main right}]
Let $P$ be an operadic poset species. Then 
\begin{enumerate}[$\triangleright$]
\item the graded linear species $h^\bullet(P) : S\mapsto h^\bullet(P(S))$ is endowed with the structure of a graded operad;
\item the graded linear species $\hbottom{\bullet}(P):S\mapsto \hbottom{\bullet}(P(S))$ is endowed with the structure of a left operadic module over $h^\bullet(P)$;
\item the graded linear species $\htop{\bullet}(P):S\mapsto\htop{\bullet}(P(S))$ is endowed with the structure of a right operadic module over $h^\bullet(P)$.
\end{enumerate}
These structures are functorial in $P$.
\end{thm*}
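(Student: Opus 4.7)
The plan is to build the operadic composition maps at the cochain level, verify they descend to cohomology and satisfy the operad axioms, and then construct the module structures by analogous formulas. The guiding picture, inherited from the partition poset case of Fresse \cite{fressepartitionposets}, is that any chain from a minimum to a maximum of $P(S)$ passing through an element $x$ splits into a chain in $P_{\leq x}(S)$ from the minimum to $x$ and a chain in $P_{\geq x}(S)$ from $x$ to the maximum. Under $\varphi_x$ and $\psi_x$, these two halves project to $P(\pi)$ and $\prod_{T\in \pi} P(T)$ respectively, where $\pi=a(x)$, so cochains on $P(\pi)$ and on the product can be assembled into cochains on $P(S)$ concentrated at $x$.

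Concretely, for each partition $\pi$ of a finite set $S$, I define
$$\gamma_\pi \colon h^\bullet(P(\pi)) \otimes \bigotimes_{T\in \pi} h^\bullet(P(T)) \To h^\bullet(P(S))$$
at the cochain level as follows. After identifying the tensor product on the left with $h^\bullet\bigl(\prod_{T\in\pi} P(T)\bigr)$ via Künneth, set
\[\gamma_\pi(\alpha\otimes \beta) \defas \sum_{x\in a^{-1}(\pi)} \operatorname{conc}_x\bigl(\varphi_x^*\alpha,\, \psi_x^*\beta\bigr),\]
where $\varphi_x^*$ and $\psi_x^*$ denote the cochain pullbacks, and $\operatorname{conc}_x\colon C^\bullet(P_{\leq x}(S))\otimes C^\bullet(P_{\geq x}(S))\To C^\bullet(P(S))$ is the natural concatenation map which evaluates on a chain of $P(S)$ by splitting it at $x$ (and vanishes whenever $x$ does not appear). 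A direct verification, parallel to the partition poset case, shows that $\gamma_\pi$ is a chain map: the differential terms obtained by omitting an internal vertex of the chain above or below $x$ are absorbed by the differentials of $\alpha$ and $\beta$, while the terms obtained by omitting $x$ itself telescope across the sum over $x$ thanks to the lifting property of $\varphi$ and $\psi$ over the morphisms in \eqref{eq: intro recursive structure partition posets}.

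Next, I check the operad axioms on cohomology. Equivariance with respect to $\mathfrak{S}_S$ is inherited from the equivariance axiom of $P$, and the unit, given by the canonical generator of $h^\bullet(P(\{s\}))$ for a singleton, follows from the unitality axiom. The main obstacle, as expected, is \emph{associativity}: given nested partitions $\pi\leq\pi'$ of $S$, the two iterated compositions must coincide. At the chain level both sides produce a double sum over pairs $(x,x')\in P(S)^2$ with $x\leq x'$, $a(x)=\pi$, and $a(x')=\pi'$, and the agreement between them reduces to the compatibility of the structural maps $\varphi_x, \varphi_{x'}, \psi_x, \psi_{x'}$ restricted to the interval $[x,x']$. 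This is precisely the content of the associativity axiom of an operadic poset species, and I expect unpacking it against the Künneth identifications to be the step requiring the most care.

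Finally, the module structures are constructed by the same formula. For the left action $h^\bullet(P(\pi))\otimes \bigotimes_{T}\hbottom{\bullet}(P(T)) \To \hbottom{\bullet}(P(S))$, the inner slot uses cochains on chains of $\prod_{T\in\pi} P(T)$ starting from the minimum but not necessarily reaching the maximum, matching chains of $P(S)$ that start from the minimum and proceed above $x$ arbitrarily. The right action is defined symmetrically by replacing the outer slot with $\htop{\bullet}(P(\pi))$. All verifications are parallel to the operad case, and functoriality is immediate: a morphism $f\colon P\to P'$ of operadic poset species pulls back cochains compatibly with $\varphi_x^*$, $\psi_x^*$, and $a$, so the induced maps on $h^\bullet$, $\hbottom{\bullet}$, and $\htop{\bullet}$ are morphisms of operads and of operadic modules.
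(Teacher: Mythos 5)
Your outline matches the paper's strategy: define $\rho_\pi$ at cochain level as $\sum_{a(x)=\pi}\mu_x\circ(\varphi_x^*\otimes\psi_x^*)$ after a K\"unneth identification, then verify the operad axioms and derive the module structures by the same recipe with $\widecheck{\mu}_x$ and $\widehat{\mu}_x$. However, two of your justifications go wrong in ways that matter.

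First, the ``telescoping'' argument for the chain-map property is misconceived. The paper works not with the full complex $C^\bullet$ but with the subcomplex $c^\bullet$ spanned by chains starting at a minimal element and ending at a maximal element. At that level $\mu_x$ is \emph{individually} a morphism of complexes for each fixed $x$: every chain in $c^\bullet(P_{\leq x})$ already ends at $x$ (the unique maximum of $P_{\leq x}$) and every chain in $c^\bullet(P_{\geq x})$ already starts at $x$, so the differential formula \eqref{eq: formula differential min max} has no ``boundary at $x$'' term to cancel, and no summation over $x$ is required to close the chain-map identity. Conversely, if you really mean to work with the unrestricted complexes $C^\bullet(P_{\leq x})\otimes C^\bullet(P_{\geq x})$ as you wrote, the operation you call $\operatorname{conc}_x$ is \emph{not} a chain map on its own: take $\gamma_1$ in $P_{\leq x}$ ending strictly below $x$; then $\operatorname{conc}_x([\gamma_1]\otimes[\gamma_2])=0$ but $\operatorname{conc}_x(d[\gamma_1]\otimes[\gamma_2])$ picks up the term inserting $x$ at the top of $\gamma_1$ and is nonzero, and this discrepancy does not cancel against anything in the sum over $x$ since it comes from fixed $x$. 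You also silently need the strong min-max compatibility \eqref{eq: axiom a min max compatible strong} (and its consequences, Remark~\ref{rem: strong min max compatibility for all}) to guarantee that $\varphi_x^*$ and $\psi_x^*$ land in $c^\bullet$ at all; this point is not addressed.

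Second, the claim that ``at the chain level both sides produce a double sum over pairs $(x,x')\in P(S)^2$'' is not automatic and is precisely where the real work lies. The composition $\rho_{\pi'}\circ(\rho_\pi\otimes\mathrm{id})$ is a priori a sum over pairs $(y,x')$ with $y\in P(\pi')$, not $y\in P(S)$, while $\rho_\pi\circ(\mathrm{id}\otimes\bigotimes\rho_{\pi'_{|T}})$ is a sum over pairs $(x,y')$ with $y'\in\prod_T P(T)$. Turning each of these into a sum over pairs $(x,x')\in P(S)^2$ requires the compatibility of concatenation with pullback along $\varphi_{x'}$ and $\psi_x$ respectively (Proposition~\ref{prop: concatenation and pullback}), which is what trades a sum over elements of $P(\pi')$ for a sum over their preimages in $P_{\leq x'}(S)$, and similarly on the other side. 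The paper's proof does this carefully, introducing a common intermediary $\rho_{\pi,\pi'}$ and using Propositions~\ref{prop: concatenation and pullback} and~\ref{prop: concatenation and kunneth} together with the associativity axioms \eqref{eq: axiom composition of phis}--\eqref{eq: axiom composition of phis and psis}. Your sketch acknowledges the K\"unneth bookkeeping but misses the re-indexing step entirely, and that step is where the assumption \eqref{eq: axiom a min max compatible strong} (via Remark~\ref{rem: strong min max compatibility for all}) is genuinely used to justify applying the pullback-compatibility proposition to $\varphi_{x'}$ and $\psi_x$.
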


Since the species of partition posets $\Pi$, with its structure \eqref{eq: intro recursive structure partition posets}, is the terminal object of the category of operadic poset species, we see thanks to \eqref{eq: intro partitions and lie second} that every graded operad $h^\bullet(P)$ appearing in the above theorem comes equipped with a morphism of graded operads
$$\Lambda^{-1}\mathrm{Lie}\stackrel{a^*}{\To} h^\bullet(P).$$
In other words, the operadic suspension $\Lambda h^\bullet(P)$ has the structure of a \emph{Lie-operad}. We note that in many examples (including all the examples described so far in this introduction), the only non-zero cohomology group $h^\bullet(P(S))$ is in degree equal to $|S|-1$. (If $P(S)$ is a graded poset of rank $|S|-1$, this is equivalent to it being Cohen--Macaulay). This property translates into the fact that the operadic suspension $\Lambda h^\bullet(P)$ is concentrated in degree zero. 

The appearance of operadic modules in poset cohomology is very natural in our formalism and, to the best of our knowledge, is a new phenomenon. We note that these modules are sometimes trivial: this is the case for $\hbottom{\bullet}(P)$ if every $P(S)$ has a greatest element, and for $\htop{\bullet}(P)$ if every $P(S)$ has a least element. For instance, both operadic modules are trivial in the classical case $P=\Pi$. By contrast, they are rarely trivial in the case of decorated partitions, as we explain below.

\subsection*{Concatenation of (co)chains}

The constructions behind our main theorem use a non-standard piece of structure in poset cohomology: the possibility to concatenate (co)chains. In the simplest instance, they read
\begin{equation}\label{eq: intro concatenation poset cohomology} h^\bullet(P_{\leq x})\otimes h^\bullet(P_{\geq x}) \To h^\bullet(P),
\end{equation}
for any poset $P$ and any element $x\in P$. Using this, we can explain how the operadic structure arises on the cohomology of an operadic poset species $P$: the operadic composition map
\begin{equation}
    h^\bullet(P(\pi)) \otimes \bigotimes_{T\in \pi} h^\bullet(P(T)) \To h^\bullet(P(S))
\end{equation}
indexed by a partition $\pi\in \Pi(S)$ is defined as the sum over all the elements $x\in P(S)$ with underlying partition $a(x)=\pi$ of the composition of the pullback maps
$$h^\bullet(P(\pi)) \stackrel{\varphi_x^*}{\To} h^\bullet(P_{\leq x}(S)) \qquad \mbox{ and } \qquad \bigotimes_{T\in \pi}h^\bullet(P(T))\to  h^\bullet\left(\prod_{T\in \pi}P(T)\right)\stackrel{\psi_x^*}{\To} h^\bullet(P_{\geq x}(S))$$
with the concatenation map \eqref{eq: intro concatenation poset cohomology}. The operadic module structures are obtained through natural variants of this construction. The proof of our main theorem follows from a careful analysis of the compatibilities that the concatenation maps \eqref{eq: intro concatenation poset cohomology} satisfy.

\subsection*{Examples}
We give many examples of our formalism, summed up in Table \ref{tableRes}. There are certainly many more natural examples, and we hope that the reader will enjoy adding their favorite combinatorial objects to our list. We have collected open questions that arose in the study of examples in \S\ref{sec: open questions}.\medskip

Regarding decorated partitions, we recast Vallette's results \cite{vallettepartitionposets} in our formalism in \S\ref{sec: decorated partitions operads}, calling ``right-decorated partitions'' the objects at play in \emph{op.~cit.}, and develop a dual theory of ``left-decorated partitions''. The operadic modules that arise in those contexts seem to be new, and we study them in \S\ref{sec: decorated partitions modules} (see also \S\ref{sec: open questions}).

 \subsubsection*{Left-decorated partitions}

For a set operad $\mathcal{P}$ and a finite set $S$, a \emph{left-$\mathcal{P}$-decorated partition} of $S$ is  a pair $(\pi, \xi)$ where $\pi$ is a partition of $S$ and $\xi$ is an element in $\mathcal{P}(\pi)$. The posets ${}^{\mathcal{P}}\Pi(S)$ of left-$\mathcal{P}$-decorated partitions form an operadic poset species under the assumption that $\mathcal{P}$ is ``left-basic''. If $\mathcal{P}$ is Koszul, then $\Lambda h^\bullet({}^{\mathcal{P}}\Pi)$ is the Koszul dual operad $\mathcal{P}^!$. The left operadic module $\Lambda\hbottom{\bullet}({}^{\mathcal{P}}\Pi)$ is rarely trivial because ${}^{\mathcal{P}}\Pi(S)$ has many maximal elements, indexed by $\mathcal{P}(S)$. As an example, we compute this module in the case of the operad $\mathcal{P}=\mathrm{As}$ encoding associative algebras.

It can be the case that $\Lambda \hbottom{\bullet}({}^{\mathcal{P}}\Pi)$ is not concentrated in degree zero even when $\mathcal{P}$ is Koszul. In this case, the posets ${}^{\mathcal{P}}\Pi(S)$ are Cohen--Macaulay but the augmented posets ${}^{\mathcal{P}}\Pi^+(S)$ obtained by adding a greatest element are not. This phenomenon arises already for the operad $\mathcal{P}=\mathrm{NAP}$ encoding non-associative permutative algebras, as can readily be seen from an Euler characteristic computation (see \S\ref{subsec: Ldp mod}, and \S\ref{sec: open questions} for more detailed examples and open questions).

\subsubsection*{Right-decorated partitions}

For a set operad $\mathcal{Q}$ and a finite set $S$, a \emph{right-$\mathcal{Q}$-decorated partition} of $S$ is a pair $(\pi, (\xi_T)_{T \in \pi})$ where $\pi$ is a partition of $S$ and $\xi_T$ is an element of $\mathcal{P}(T)$ for every $T\in \pi$. (This is the framework studied by Vallette \cite{vallettepartitionposets}, and even earlier by Mendez and Yang \cite{MendezYang}.) The posets $\Pi^{\mathcal{Q}}$ of right-$\mathcal{Q}$-decorated partitions form an operadic poset species under the assumption that $\mathcal{Q}$ is ``right-basic''. If $\mathcal{Q}$ is Koszul, then $\Lambda h^\bullet(\Pi^{\mathcal{Q}})$ is the Koszul dual operad $\mathcal{Q}^!$. The right operadic module $\Lambda\htop{\bullet}(\Pi^{\mathcal{Q}})$ is rarely trivial because $\Pi^{\mathcal{Q}}(S)$ has many minimal elements, indexed by $\mathcal{Q}(S)$. 

It can be the case that $\Lambda \htop{\bullet}(\Pi^{\mathcal{Q}})$ is not concentrated in degree zero even when $\mathcal{Q}$ is Koszul. In this case, the posets $\Pi^{\mathcal{Q}}(S)$ are Cohen--Macaulay but the augmented posets $\Pi^{\mathcal{Q}}_+(S)$ obtained by adding a least element are not. This phenomenon arises already for the operad $\mathcal{Q}=\mathrm{As}$ encoding associative algebras (see \S\ref{sec: As right operadic module}). We also study the case of the operad $\mathcal{Q}=\mathrm{Perm}$ encoding permutative algebras in \S\ref{sec: perm right operadic module}, for which the right $\mathrm{PreLie}$-module $\Lambda \htop{\bullet}(\Pi^{\mathrm{Perm}})$ is concentrated in degree zero (see \S\ref{sec: open questions} for more detailed examples and open questions).\medskip

In \S\ref{sec: more examples} we study other examples of operadic poset species and their cohomological invariants:

\subsubsection*{Non-singleton boolean posets, and metabelian Lie algebras}
In \S\ref{sec:metablie} we study the operadic poset species $\mathrm{NS}$ of non-singleton boolean posets, simply obtained by removing the singletons from the boolean posets. We prove that these posets are Cohen--Macaulay and that the corresponding cohomology operad is isomorphic to the desuspension of the operad encoding metabelian Lie algebras \eqref{eq: intro NS metablie}.

\subsubsection*{Non-crossing $2$-partitions} 
In \S\ref{sec:NC2P} we study the operadic poset species $\Pi_2$ of non-crossing $2$-partitions. We prove that the operad $h^\bullet(\Pi_2)$ is the desuspension of an operad concentrated in degree zero; we determine the underlying $\mathfrak{S}$-module and prove that the natural morphism $\mathrm{Lie}\to \Lambda h^\bullet(\Pi_2)$ factors through $\mathrm{PreLie}$. We have not tried to compute the non-trivial left operadic module $\hbottom{\bullet}(\Pi_2)$.

\subsubsection*{Multilabeled trees, Kontsevich's operad of trees, and pre-Lie algebras}
In \S\ref{sec: MLT} we study the operadic poset species $\mathrm{MLT}$ of multilabeled trees and prove that the corresponding operad is Kontsevich's operad of trees, the suboperad of Kontsevich's operad of graphs spanned by trees. The rooted variant $\mathrm{MLRT}$ produces the $\mathrm{PreLie}$ operad by \cite{ChapotonLivernet}.

\renewcommand{\arraystretch}{1.5}
\begin{table}[h!]
    \centering
    \addtolength{\leftskip} {-.3cm}
    \begin{tabular}{|c|c|c|c|} \hline
     Operadic poset species $P$   & \begin{tabular}{@{}c@{}}Operad \\ $\Lambda h^\bullet(P)$ \end{tabular} & \begin{tabular}{@{}c@{}}Left module \\ $\Lambda \hbottom{\bullet}(P)$ \end{tabular}  & \begin{tabular}{@{}c@{}}Right module \\ $\Lambda \htop{\bullet}(P)$ \end{tabular}  \\
     \hline
      $\Pi$ (partitions, \S\ref{subsec: partitions and lie}) & $\operatorname{Lie} $ & - & - \\ \hline
      $^{\mathcal{P}}\Pi$ (left-decorated partitions, \S\ref{sec: Ldp}) & $\mathcal{P}^!$ if Koszul & \S \ref{subsec: Ldp mod} & - \\
      example: ${}^{\operatorname{As}}\Pi$ (\S\ref{sec: example left operads}) & $\operatorname{As}$ & \S \ref{subsubsec: left As} & - \\ \hline
        $\Pi^{\mathcal{Q}}$ (right-decorated partitions, \S\ref{sec: Rdp}) & $\mathcal{Q}^!$ if Koszul & -& \S \ref{subsec: Rdp mod} \\
        example: $\Pi^{\operatorname{As}}$ (\S\ref{sec: examples right operads}) & $\operatorname{As}$ & -& \S\ref{sec: As right operadic module} \\
        example: $\Pi^{\operatorname{Perm}}$ (\S\ref{sec: examples right operads}) & $\mathrm{PreLie}$ & -& \S\ref{sec: perm right operadic module}
        \\ \hline 
        ${}^{\mathcal{P}}\Pi^{\mathcal{Q}}$ (bi-decorated partitions, \S\ref{sec: Bdp}) & \S\ref{sec: Bdp} & \S\ref{sec: Bdp} & \S\ref{sec: Bdp} \\ \hline
$\mathrm{NS}$ (non-singleton boolean posets, \S\ref{sec:metablie}) & $\mathrm{MetabLie}$ & - & - \\ \hline
$\Pi_2$ (non-crossing $2$-partitions, \S\ref{sec:NC2P}) & \S\ref{sec:NC2P}  
& \S\ref{sec:NC2P} & - \\ \hline
$\mathrm{MLT}$ (multilabeled trees, \S\ref{sec: MLT}) & $\Lambda \mathrm{Tree}$ & \S\ref{sec: MLT} & - \\
$\mathrm{MLRT}$ (multilabeled rooted trees, \S\ref{sec: MLRT})& $\mathrm{PreLie}$ &  \S\ref{sec: MLRT} & - \\ \hline
$\mathrm{HT}$ (hypertrees, \cite{DOD2}) & $\mathrm{PostLie}$ & $\mathrm{PreLie}$ & - \\ \hline
    \end{tabular}
    \vspace{.2cm}
    \caption{Examples of operadic posets species and the corresponding operads and operadic modules (after operadic suspension). A dash indicates a trivial operadic module.}
    \label{tableRes}
\end{table} 

\subsection*{Open questions, variants, and relations to other works}

Many of the examples in Table \ref{tableRes} have not been  described explicitly, even though in some cases we were able to prove some of their properties. These open questions look like fun exercises that we hope some readers will be happy to try and solve. More open questions can be found in \S\ref{sec: open questions}.

A natural variant of our formalism is to work in the setting of non-symmetric operads, replacing partitions of a finite set with partitions of a linearly ordered finite set into intervals -- or similar objects for other types of operads. The cyclic/anticyclic case is also particularly relevant: in \cite{ogerhomologyhypertree} it is proved that the isomorphism \eqref{eq: intro HT prelie} is compatible with the action of the symmetric group $\mathfrak{S}_{n+1}$ in arity $n$.

We expect that our tools could produce ``bigger'' operads on Whitney cohomology groups of posets, defined as the direct sums of the cohomology groups of all intervals, or some variant. For instance, the case of partitions would produce the Gerstenhaber operad, which contains $\Lambda^{-1}\mathrm{Lie}$. These operads would typically have differentials induced by the connecting maps $b$ studied in \cite{dupontjuteau}.

Finally, our constructions should fit naturally into (a variant of) Coron's type of operad encoded by posets, where concatenation of chains, and more general operations, play a central role \cite{coron}.

\subsection*{Contents}

In \S\ref{Poset cohomology} we introduce the variants of poset cohomology and our main tool, the concatenation of (co)chains. The definition of operadic poset species and the construction of the corresponding operads and operadic modules is carried out in \S\ref{sec:main}. In \S\ref{sec: decorated partitions operads} and \S\ref{sec: decorated partitions modules} we study the examples provided by decorated partitions, and \S\ref{sec: more examples} contains more examples of our formalism. In \S\ref{sec: open questions} we collect some open questions that arose from the study of examples.

\subsection*{Convention and notation}

We fix a commutative ring with unit $\KK$.  All posets are assumed to be finite. All operads are implicitly assumed to be reduced, and all finite sets to be non-empty.

\subsection*{Acknowledgments}
We have benefited from helpful discussions with Frédéric Chapoton, Basile Coron, Bishal Deb, Vladimir Dotsenko, Matthieu Josuat-Vergès, Guillaume Laplante-Anfossi, Anibal Medina-Mardones, Bruce E. Sagan, Sheila Sundaram, and Bruno Vallette. We were supported by the ANR project ANR-20-CE40-0016 HighAGT. B. D.-O. was moreover supported by ANR projects ANR-20-CE40-0007 CARPLO and ANR-20-CE48-0010 SSS.

\section{Poset cohomology} \label{Poset cohomology}

    We review four natural constructions of poset cohomology and explain some of their properties. The only non-standard tool is the concatenation operation on (co)chains, which will be crucial in the construction of operads in the rest of the article.

    \subsection{Four variants of poset cohomology}

    \subsubsection{The classical case}
    Let $P$ be a poset and let $\Delta(P)$ denote its simplicial complex, whose vertex set is $P$ and whose $n$-simplices are the strict $n$-chains
    \begin{equation}\label{eq: chain}\gamma:\; x_0<x_1<\cdots <x_{n-1}<x_n
    \end{equation}
    in $P$. We let $C^\bullet(P)$ denote the simplicial cochain complex of $\Delta(P)$ with coefficients in $\KK$. We denote by $[\gamma]\in C^n(P)$ the (dual) basis element corresponding to an $n$-chain $\gamma$. The differential $d:C^n(P)\to C^{n+1}(P)$ is given by the formula
    \begin{equation*}\begin{split}
    d[\gamma] &= \sum_{y<x_0} [y<x_0<x_1<\cdots <x_{n-1}<x_n] \\ &+ \sum_{i=1}^{n} (-1)^{i} \sum_{x_{i-1}<y<x_{i}}[x_0<x_1<\cdots<x_{i-1}<y<x_{i}<\cdots <x_{n-1}<x_n] \\
    &+ (-1)^{n+1} \sum_{x_n<y}^{\phantom{-}}[x_0<x_1<\cdots <x_{n-1}<x_n<y].
    \end{split}\end{equation*}
    We let $H^\bullet(P)$ denote the cohomology of $C^\bullet(P)$, which is simply the cohomology of the simplicial complex $\Delta(P)$ with coefficients in $\KK$. We will in fact be more interested in variants of this construction arising from conditions on the first and/or last element of a chain. 

    \subsubsection{A first variant}

    A \emph{minimal element} (resp. \emph{maximal element}) of a poset $P$ is an element $x\in P$ such that there exists no element $y\in P$ with $y<x$ (resp. $y>x$). We let $\min(P)$ (resp. $\max(P)$) denote the set of minimal (resp. maximal) elements of $P$, and let
    $$\chains{\bullet}(P)\subset C^\bullet(P)$$
    denote the subcomplex spanned by the basis elements $[\gamma]$ for $\gamma$ a chain \eqref{eq: chain} such that $x_0\in\min(P)$ and $x_n\in \max(P)$. The formula for the differential in $\chains{\bullet}(P)$ simplifies as:
    \begin{equation}\label{eq: formula differential min max}
    d[\gamma] = \sum_{i=1}^{n} (-1)^{i} \sum_{x_{i-1}<y<x_{i}}[x_0<x_1<\cdots<x_{i-1}<y<x_{i}<\cdots <x_{n-1}<x_n].
    \end{equation}
    We let $\h{\bullet}(P)$ denote the cohomology of $\chains{\bullet}(P)$. It is easily computed in low degree:
    $$\h{0}(P) = \bigoplus_{x\,\in\, \min(P)\cap \max(P)}\KK[x] \hspace{1cm} \mbox{ and } \hspace{1cm} \h{1}(P) = \bigoplus_{\substack{x\,\in\, \min(P),\, y\,\in\, \max(P) ,\,  x \lessdot y}} \KK[x<y],$$
    where $\lessdot$ denotes the covering relation in $P$.

    These groups are best interpreted in terms of relative cohomology, as follows. Let $\Delta'(P)\subset \Delta(P)$ denote the simplicial subcomplex formed by the $n$-chains \eqref{eq: chain} which are such that $x_0\notin \min(P)$ or $x_n\notin \max(P)$. Then by definition,
    $$\chains{\bullet}(P)= \ker\left( C^\bullet(\Delta(P)) \To C^\bullet(\Delta'(P)) \right)$$
    is the relative cochain complex of the pair $(\Delta(P),\Delta'(P))$ and therefore $\h{\bullet}(P)$ is the relative cohomology of this pair with coefficients in $\KK$. 
    
    The point of view of \emph{relative} cohomology is the most natural one in our opinion. However, in case some readers suffer from an allergy to relative cohomology, we note that, by excision, $\h{\bullet}(P)$ is isomorphic to the reduced cohomology of the topological quotient $|\Delta(P)|/|\Delta'(P)|$. For $n\geq 2$, the cohomology group $\h{n}(P)$ can also be related to the classical (reduced) cohomology groups of the maximal intervals in $P$:
    $$\h{n}(P) \simeq \bigoplus_{\substack{x\,\in\, \min(P) ,\, y\,\in\, \max(P) ,\, x<y}} \widetilde{H}^{n-2}(P_{(x,y)}),$$ 
    where the isomorphism is induced by $[x<x_1<\cdots <x_{n-1}<y]\mapsto (-1)^n[x_1<\cdots <x_{n-1}]$.
    In particular, if $P$ has a least element $\hat{0}$ and a greatest element $\hat{1}$ then 
    $$\h{n}(P)\simeq \widetilde{H}^{n-2}(P\setminus \{\hat{0},\hat{1}\}).$$ 
    This interpretation has the major drawback of introducing an unnatural degree shift (besides being valid only for $n\geq 2$).

    \begin{rem}
    The relationship between the two topological interpretations is that $|\Delta(P)|/|\Delta'(P)|$ is homeomorphic to the double suspension of $|\Delta(P\setminus \{\hat{0},\hat{1}\})|$.
    \end{rem}

    \subsubsection{Two other variants}

    We let 
    $$\chainsbottom{\bullet}(P) \subset C^\bullet(P)$$
    denote the subcomplex spanned by the basis elements $[\gamma]$ for $\gamma$ a chain \eqref{eq: chain} such that $x_0\in \min(P)$. We let $\hbottom{\bullet}(P)$ denote the cohomology of $\chainsbottom{\bullet}(P)$. It is naturally interpreted as the relative cohomology of the pair $(\Delta(P), \widecheck{\Delta}'(P))$ where $\widecheck{\Delta}'(P)\subset \Delta(P)$ denotes the simplicial subcomplex formed by the $n$-chains \eqref{eq: chain} which are such that $x_0\notin \min(P)$. For an interpretation in terms of classical (reduced) cohomology, we have an isomorphism, for $n\geq 1$,
    \begin{equation} \label{Relhtop}
    \widecheck{h}^n(P) \simeq \bigoplus_{x\,\in\, \min(P)}\widetilde{H}^{n-1}(P_{>x}),    
    \end{equation}
    induced by $[x<x_1<\cdots <x_{n-1}<x_n]\mapsto (-1)^n[x_1<\cdots <x_{n-1}<x_n]$. Again, this has the drawback of introducing an unnatural degree shift.
    
    We let
    $$\chainstop{\bullet}(P)\subset C^\bullet(P)$$
    denote the subcomplex spanned by the basis elements $[\gamma]$ for $\gamma$ a chain \eqref{eq: chain} such that $x_n\in\max(P)$. 
    We let $\htop{\bullet}(P)$ denote the cohomology of $\chainstop{\bullet}(P)$. It is naturally interpreted as the relative cohomology of the pair $(\Delta(P), \widehat{\Delta}'(P))$ where $\widehat{\Delta}'(P)\subset \Delta(P)$ denotes the simplicial subcomplex formed by the $n$-chains \eqref{eq: chain} which are such that $x_n\notin \max(P)$. For an interpretation in terms of classical (reduced) cohomology, we have an isomorphism, for $n\geq 1$,
    \begin{equation}\label{Relhbot}
    \widehat{h}^n(P) \simeq \bigoplus_{y\,\in\, \max(P)}\widetilde{H}^{n-1}(P_{<y}),
    \end{equation}
    induced by $[x_0<x_1<\cdots <x_{n-1}<y]\mapsto [x_0<x_1<\cdots <x_{n-1}]$. Again, this has the drawback of introducing an unnatural degree shift.
    
    \begin{rem}
    The obvious inclusions of cochain complexes induce linear maps as in the following diagram.
    $$\xymatrixcolsep{.5cm}\xymatrixrowsep{.5cm}\diagram{
    & \htop{\bullet}(P) \ar[rd] & \\
    \h{\bullet}(P) \ar[ru]\ar[rd]&& H^\bullet(P) \\
    &\hbottom{\bullet}(P) \ar[ru]&
    }$$
    They are isomorphisms in degree zero but generally not in higher degree.
    \end{rem}
    
 \subsection{Functoriality}
    
        Let $f:P\to Q$ be a morphism of posets (i.e. for all $x\in P$, $x\leq y \implies f(x)\leq f(y)$). It induces a morphism of complexes $f^*:C^\bullet(Q)\to C^\bullet(P)$,
        given by the formula
        $$f^*[y_0<y_1<\cdots <y_{n-1}<y_n] = \sum_{f(x_i)=y_i}[x_0<x_1<\cdots <x_{n-1}<x_n],$$
        and therefore a map at the level of cohomology that we also denote by $f^*:H^\bullet(Q)\to H^\bullet(P)$. The latter is simply the pullback in cohomology induced by the simplicial map $\Delta(P)\to \Delta(Q)$. Further assumptions on $f$ are needed in order to have pullback maps for the other variants of poset cohomology.

        \begin{defi}\label{defi: min max compatible}
        A poset morphism $f:P\to Q$ is \emph{min-compatible} (resp. \emph{max-compatible}) if $f^{-1}(\min(Q))\subset \min(P)$ (resp. if $f^{-1}(\max(Q))\subset \max(P)$), and \emph{min-max-compatible} if it is both min-compatible and max-compatible.
        \end{defi}

        \begin{enumerate}[$\triangleright$]
        \item If $f$ is min-max-compatible then we have an induced morphism of complexes $f^*:\chains{\bullet}(Q)\to \chains{\bullet}(P)$ and therefore also in cohomology, $f^*:\h{\bullet}(Q)\to \h{\bullet}(P)$.
        \item If $f$ is min-compatible then we have an induced morphism of complexes $f^*:\chainsbottom{\bullet}(Q)\to \chainsbottom{\bullet}(P)$ and therefore also in cohomology, $f^*:\hbottom{\bullet}(Q)\to \hbottom{\bullet}(P)$.
        \item If $f$ is max-compatible then we have an induced morphism of complexes $f^*:\chainstop{\bullet}(Q)\to \chainstop{\bullet}(P)$ and therefore also in cohomology, $f^*:\htop{\bullet}(Q)\to \htop{\bullet}(P)$.
        \end{enumerate}
        These pullback maps can also be explained by the functoriality of relative cohomology.

        \begin{rem}\label{rem: min-max-compatible}
        Min-max-compatibility is a strong constraint and it might be desirable, in certain situations, to be able to consider functoriality of the cohomology functors $h^\bullet$, $\hbottom{\bullet}$, $\htop{\bullet}$ for more general morphisms of posets, at the cost of losing the functoriality of the cochain level functors $c^\bullet$, $\chainsbottom{\bullet}$, $\chainstop{\bullet}$. We will not need such a generalization in this article.
        \end{rem}

    \subsection{K\"{u}nneth morphisms}
    
        For two posets $P$, $Q$, there is a quasi-isomorphism of complexes
        $$C^\bullet(P)\otimes C^\bullet(Q) \To C^\bullet(P\times Q)$$
        defined by the formula
        $$[x_0<\cdots <x_m]\otimes [y_0<\cdots <y_n] \mapsto [(x_0,y_0)<\cdots < (x_m,y_0)<(x_m,y_1)<\cdots <(x_m,y_n)].$$
        It induces a K\"{u}nneth morphism at the level of cohomology,
        $$H^\bullet(P)\otimes H^\bullet(Q)\To H^\bullet(P\times Q),$$
        which is an isomorphism if $\KK$ is a field, or more generally if all cohomology groups $H^n(P)$ (or $H^n(Q)$) are free $\KK$-modules.
        Similar K\"{u}nneth morphisms exist at the level of the other variants of poset cohomology:
        $$\h{\bullet}(P)\otimes \h{\bullet}(Q)\To\h{\bullet}(P\times Q) \; ,\quad \hbottom{\bullet}(P)\otimes \hbottom{\bullet}(Q)\To \hbottom{\bullet}(P\times Q) \; , \quad \htop{\bullet}(P)\otimes \htop{\bullet}(Q)\To \htop{\bullet}(P\times Q).$$
        They are functorial and satisfy the obvious symmetry and associativity properties.

    \subsection{Concatenation}
    
        The only non-standard piece of structure on poset cohomology that we will use is the possibility to concatenate (co)chains.
        
        \subsubsection{Definition}
        
        For a poset $P$ and an element $x\in P$ we define a morphism of complexes
        $$\mu_x:c^\bullet(P_{\leq x})\otimes c^\bullet(P_{\geq x})\To c^\bullet(P)$$
        by the formula
        \begin{equation*}
        \begin{split}
        \mu_x([x_0<\cdots <x_{m-1}<x_m=x]\otimes[x=x_m & < x_{m+1}<\cdots <x_{m+n}]) \\
        &=  [x_0<\cdots <x_{m-1} <x_m<x_{m+1}<\cdots <x_{m+n}].
        \end{split}
        \end{equation*}
        One easily checks that it is a morphism of complexes by looking at the formula for the differential \eqref{eq: formula differential min max}. Therefore we get an induced morphism on cohomology:
        $$\mu_x:h^\bullet(P_{\leq x})\otimes h^\bullet(P_{\geq x})\To h^\bullet(P),$$
        called a \emph{concatenation morphism}.

        \subsubsection{Compatibilities}

        Concatenation morphisms satisfy the following basic properties.

        \begin{prop}[Associativity]
        Let $P$ be a poset and let $x,x'\in P$ with $x\leq x'$. The following diagram commutes. 
        $$\xymatrixcolsep{2cm}\xymatrixrowsep{2cm}\diagram{
        h^\bullet(P_{\leq x})\otimes h^\bullet(P_{[x,x']})\otimes h^\bullet(P_{\geq x'}) \ar[r]^-{\mu_x\otimes\mathrm{id}}\ar[d]_-{\mathrm{id}\otimes \mu_{x'}} & h^\bullet(P_{\leq x'})\otimes h^\bullet(P_{\geq x'}) \ar[d]^-{\mu_{x'}} \\
        h^\bullet(P_{\leq x})\otimes h^\bullet(P_{\geq x}) \ar[r]_-{\mu_x} & h^\bullet(P)
         }$$
        \end{prop}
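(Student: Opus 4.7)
The plan is to prove that the diagram commutes already at the level of cochain complexes, from which the statement for $h^\bullet$ follows by passing to cohomology. The three maps $\mu_x$ and $\mu_{x'}$ appearing in the diagram are all induced by cochain-level concatenation maps defined by the same combinatorial recipe (gluing chains at a shared endpoint), so it suffices to check that two cochain-level compositions
\[ \chains{\bullet}(P_{\leq x}) \otimes \chains{\bullet}(P_{[x,x']}) \otimes \chains{\bullet}(P_{\geq x'}) \To \chains{\bullet}(P) \]
agree on basis tensors.

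To set this up I first record the tautological identifications
\[ (P_{\leq x'})_{\leq x} = P_{\leq x}, \quad (P_{\leq x'})_{\geq x} = P_{[x,x']}, \quad (P_{\geq x})_{\leq x'} = P_{[x,x']}, \quad (P_{\geq x})_{\geq x'} = P_{\geq x'}. \]
Under these, the top horizontal arrow is the cochain-level concatenation $\mu_x$ computed inside the sub-poset $P_{\leq x'}$ tensored with $\mathrm{id}$, followed by $\mu_{x'}$ in $P$; the left vertical arrow is $\mathrm{id}$ tensored with the cochain-level concatenation $\mu_{x'}$ computed inside $P_{\geq x}$, followed by $\mu_x$ in $P$.

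Next, I evaluate both compositions on a generic basis tensor $[\gamma_1] \otimes [\gamma_2] \otimes [\gamma_3]$ where
\[ \gamma_1 : x_0 < \cdots < x_m = x, \quad \gamma_2 : x_m < \cdots < x_{m+k} = x', \quad \gamma_3 : x_{m+k} < \cdots < x_{m+k+n}. \]
Unwinding the defining formula for $\mu_{\star}$ shows that going right-then-down first produces $[x_0 < \cdots < x_{m+k}] \otimes [\gamma_3]$ inside $\chains{\bullet}(P_{\leq x'}) \otimes \chains{\bullet}(P_{\geq x'})$ and then glues at $x'$, while going down-then-right first produces $[\gamma_1] \otimes [x_m < \cdots < x_{m+k+n}]$ inside $\chains{\bullet}(P_{\leq x}) \otimes \chains{\bullet}(P_{\geq x})$ and then glues at $x$. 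Both routes yield the single chain $[x_0 < x_1 < \cdots < x_{m+k+n}]$ in $\chains{\bullet}(P)$, so the cochain diagram commutes on basis elements.

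No serious obstacle is expected here: the content of the statement is just the formal associativity of concatenation of strictly increasing chains sharing successive endpoints, and the only care needed is to correctly unwind the four restriction identifications above. The same template (lift to the cochain level, check on basis tensors) will adapt verbatim to prove the analogous associativity statements for the variants $\chainsbottom{\bullet}$ and $\chainstop{\bullet}$.
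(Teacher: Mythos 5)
Your proposal is correct and is precisely the direct cochain-level verification that the paper leaves to the reader when it writes ``By direct inspection.'' Both routes yield $[x_0<\cdots<x_{m+k+n}]$, and the identifications $(P_{\leq x'})_{\geq x}=P_{[x,x']}=(P_{\geq x})_{\leq x'}$ are exactly the bookkeeping that makes the diagram typecheck at the cochain level.
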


        \begin{proof}
        By direct inspection.
        \end{proof}

        In the context of the proposition we will denote by 
        \begin{equation}\label{eq: definition mu double}
        \mu_{x,x'}:h^\bullet(P_{\leq x})\otimes h^\bullet(P_{[x,x']})\otimes h^\bullet(P_{\geq x'}) \To h^\bullet(P)
        \end{equation}
        the morphism $\mu_{x'}\circ(\mu_x\otimes\mathrm{id})=\mu_x\circ (\mathrm{id}\otimes \mu_{x'})$.

        \begin{prop}[Compatibility with pullback]\label{prop: concatenation and pullback}
        Let $f:P\to Q$ be a morphism of posets and $y\in Q$. For $x\in f^{-1}(\{y\})$ we let $f_{\leq x}:P_{\leq x}\to Q_{\leq y}$ and $f_{\geq x}:P_{\geq x}\to Q_{\geq y}$ denote the morphisms of posets induced by $f$. Assume that $f$ and all the $f_{\leq x}$ and $f_{\geq x}$ are min-max-compatible. 
        Then the following diagram commutes.
        $$\xymatrixcolsep{2cm}\xymatrixrowsep{2cm}\diagram{
        h^\bullet(Q_{\leq y})\otimes h^\bullet(Q_{\geq y}) \ar[r]^-{\mu_y} \ar[d]_-{\bigoplus_x f_{\leq x}^*\otimes f_{\geq x}^*} & h^\bullet(Q) \ar[d]^-{f^*} \\
        \displaystyle\bigoplus_{x\in f^{-1}(\{y\})} h^\bullet(P_{\leq x})\otimes h^\bullet(P_{\geq x}) \ar[r]_-{\bigoplus_x \mu_x}& h^\bullet(P)
         }$$
        \end{prop}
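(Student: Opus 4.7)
The plan is to verify the diagram commutes at the cochain level; it then passes to cohomology. Take a generator
\[\alpha\otimes\beta = [y_0<\cdots<y_{m-1}<y_m=y]\otimes[y=y_m<y_{m+1}<\cdots<y_{m+n}]\]
of $\chains{\bullet}(Q_{\leq y})\otimes \chains{\bullet}(Q_{\geq y})$, so $y_0\in\min(Q_{\leq y})$ and $y_{m+n}\in \max(Q_{\geq y})$. First I note the elementary identities
\[\min(Q_{\leq y}) = \min(Q)\cap Q_{\leq y},\qquad \max(Q_{\geq y})=\max(Q)\cap Q_{\geq y},\]
and similarly for $P_{\leq x}$ and $P_{\geq x}$: any strictly smaller (resp.\ larger) element in the ambient poset automatically lies in the truncated subposet. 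Together with the min-max-compatibility hypotheses on $f$, $f_{\leq x}$, $f_{\geq x}$, this makes all the cochain-level pullbacks genuine morphisms of the complexes $\chains{\bullet}$.

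Next I chase the diagram both ways. Going right then down: $\mu_y(\alpha\otimes\beta)=[y_0<\cdots<y_{m+n}]$, and by definition
\[f^*[y_0<\cdots<y_{m+n}] = \sum_{\substack{x_0<\cdots<x_{m+n}\\ f(x_i)=y_i}}[x_0<\cdots<x_{m+n}],\]
where the sum is over chains in $\chains{\bullet}(P)$ (so $x_0\in\min(P)$ and $x_{m+n}\in\max(P)$, automatic thanks to min-max-compatibility of $f$). Since $y_m=y$, the element $x:=x_m$ lies in $f^{-1}(\{y\})$, so I can index the sum by $x\in f^{-1}(\{y\})$ and by chains
\[x_0<\cdots<x_{m-1}<x \quad\text{in } P_{\leq x}, \qquad x<x_{m+1}<\cdots<x_{m+n}\quad\text{in } P_{\geq x},\]
mapping under $f$ to the prescribed $y_i$'s.

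Going down then right: for each $x\in f^{-1}(\{y\})$,
\[(f_{\leq x}^*\otimes f_{\geq x}^*)(\alpha\otimes\beta) = \!\!\!\sum_{\substack{x_0<\cdots<x_{m-1}<x\\ f(x_i)=y_i}}\!\!\! [x_0<\cdots<x]\ \otimes\!\!\! \sum_{\substack{x<x_{m+1}<\cdots<x_{m+n}\\ f(x_j)=y_j}}\!\!\! [x<\cdots<x_{m+n}],\]
where the minimality of $x_0$ in $P_{\leq x}$ and maximality of $x_{m+n}$ in $P_{\geq x}$ are ensured by the min-max-compatibility of $f_{\leq x}$ and $f_{\geq x}$ (combined with the identities $\min(P_{\leq x})=\min(P)\cap P_{\leq x}$ and $\max(P_{\geq x})=\max(P)\cap P_{\geq x}$, which show these conditions coincide with $x_0\in\min(P)$, $x_{m+n}\in\max(P)$). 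Applying $\mu_x$ and summing over $x$ then yields exactly the same sum of chains as above.

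There is no real obstacle here: the statement is essentially a bookkeeping identity, and the only thing to be careful about is that the min/max conditions on the subchains in $P_{\leq x}$ and $P_{\geq x}$ match up with the min/max conditions on the concatenated chain in $P$. The only part that could conceivably fail, namely the existence of a chain in $\chains{\bullet}(P_{\leq x})$ or $\chains{\bullet}(P_{\geq x})$ mapping to the given chain downstairs, is precisely what the min-max-compatibility of $f_{\leq x}$ and $f_{\geq x}$ guarantees. Hence the diagram commutes at the level of $\chains{\bullet}$, and therefore at the level of $h^\bullet$.
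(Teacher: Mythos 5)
Your proof is correct and does exactly what the paper's laconic ``by direct inspection'' leaves to the reader: it verifies the identity at the cochain level, using the key elementary fact that $\min(P_{\leq x})=\min(P)\cap P_{\leq x}$ and $\max(P_{\geq x})=\max(P)\cap P_{\geq x}$ to match the min/max constraints on both sides, and the min-max-compatibility of $f_{\leq x}$ and $f_{\geq x}$ to pin the middle vertex of each partial chain at $x$. The only slight imprecision is the closing sentence describing the role of min-max-compatibility as guaranteeing ``existence''; what it actually ensures is that the $c^\bullet$-conditions on the split sub-chains coincide with those on the concatenated chain (and that the pullback chains in $P_{\leq x}$ end at $x$ rather than some smaller preimage of $y$), but this does not affect the correctness of the body of the argument.
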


        \begin{proof}
        By direct inspection.
        \end{proof}

        \begin{prop}[Compatibility with K\"{u}nneth]\label{prop: concatenation and kunneth}
        Let $P,Q$ be two posets and $x\in P, y\in Q$. The following diagram commutes, where the vertical arrows are induced by K\"{u}nneth morphisms.
        $$\xymatrixcolsep{2cm}\xymatrixrowsep{2cm}\diagram{
        h^\bullet(P_{\leq x})\otimes h^\bullet(P_{\geq x})\otimes h^\bullet(Q_{\leq y})\otimes h^\bullet(Q_{\geq y}) \ar[r]^-{\mu_x\otimes \mu_y} \ar[d] & h^\bullet(P)\otimes h^\bullet(Q) \ar[d] \\
        h^\bullet(P_{\leq x}\times Q_{\leq y})\otimes h^\bullet(P_{\geq x}\times Q_{\geq y}) \ar[r]_-{\mu_{(x,y)}} & h^\bullet(P\times Q)
        }
        $$
        \end{prop}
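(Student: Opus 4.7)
The plan is to verify the commutativity of the diagram at the cochain level by direct computation on basis tensors, in the spirit of the two preceding propositions. Fix chains $\gamma_1=[x_0<\cdots<x_m=x]$ in $P_{\leq x}$, $\gamma_2=[x=x_m<\cdots<x_{m+k}]$ in $P_{\geq x}$, $\delta_1=[y_0<\cdots<y_n=y]$ in $Q_{\leq y}$, and $\delta_2=[y=y_n<\cdots<y_{n+\ell}]$ in $Q_{\geq y}$, and apply each composition to the basis tensor $[\gamma_1]\otimes[\gamma_2]\otimes[\delta_1]\otimes[\delta_2]$. Going around the upper path, $\mu_x\otimes\mu_y$ first produces $[x_0<\cdots<x_{m+k}]\otimes[y_0<\cdots<y_{n+\ell}]$, which the K\"unneth morphism then sends to the ``single-corner'' staircase chain $[(x_0,y_0)<\cdots<(x_{m+k},y_0)<(x_{m+k},y_1)<\cdots<(x_{m+k},y_{n+\ell})]$ in $P\times Q$. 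Going around the lower path, once one absorbs the Koszul sign $(-1)^{kn}$ from the tensor swap $[\gamma_2]\otimes[\delta_1]\mapsto[\delta_1]\otimes[\gamma_2]$, the K\"unneth morphism on each pair of factors yields two staircase chains meeting at $(x,y)=(x_m,y_n)$, which $\mu_{(x,y)}$ concatenates into the ``three-corner'' chain with corners at $(x_m,y_0)$, $(x_m,y_n)$, and $(x_{m+k},y_n)$. Both outputs are maximal chains with the same endpoints in the grid subposet $\{x_0<\cdots<x_{m+k}\}\times\{y_0<\cdots<y_{n+\ell}\}$ of $P\times Q$.

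The two cochains are distinct in general, so the diagram does not commute on the nose; the plan is then to exhibit an explicit cohomology between them inside $c^\bullet(P\times Q)$. The key combinatorial input is a ``corner-swap'' identity: by the differential formula \eqref{eq: formula differential min max}, the coboundary of a chain containing a contracted corner $\cdots<(a,b)<(a',b')<\cdots$ (with $a\lessdot a'$ in the $P$-factor and $b\lessdot b'$ in the $Q$-factor) picks up at the corresponding insertion position \emph{both} the expanded corners $[\cdots<(a,b)<(a',b)<(a',b')<\cdots]$ and $[\cdots<(a,b)<(a,b')<(a',b')<\cdots]$ with the same sign, so the two corner variants are cohomologous up to a global sign $-1$. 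Iterating this identity through the $k\cdot n$ unit cells of the rectangle $[x_m,x_{m+k}]\times[y_0,y_n]$ enclosed between the single-corner and three-corner paths then produces a total sign $(-1)^{kn}$ that exactly compensates the Koszul sign of the lower path, yielding equality in $h^\bullet(P\times Q)$.

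The principal obstacle is the bookkeeping of the \emph{extra} insertion terms that appear when $P$ or $Q$ contains elements strictly between $x_i$ and $x_{i+1}$ (respectively $y_j$ and $y_{j+1}$) lying outside the chosen chains: inside the full ambient poset $P\times Q$, the corner-swap coboundary identity produces, in addition to the two desired corner variants, insertions at every element of $P\times Q$ strictly between $(a,b)$ and $(a',b')$. These extra contributions have to be organised and seen to cancel in pairs across the iterated swaps; I would handle this either by an induction on the chain-length parameters with the degenerate base cases $m=0$, $k=0$, $n=0$, or $\ell=0$ (where the two compositions coincide on the nose), or, more slickly, by a naturality argument using the compatibility of concatenation with pullback (Proposition \ref{prop: concatenation and pullback}) to reduce, at a suitable universal level, to a chain-on-chain configuration where the grid subposet is the whole product and no extra insertions arise.
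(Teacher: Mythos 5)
Your central observation is right and important: the two compositions disagree at the cochain level (the upper route produces a single-corner staircase, the lower produces a three-corner staircase with a Koszul sign), so the commutativity can only hold in cohomology. The difficulty is in the proposed homotopy. The ``corner-swap'' coboundary identity you state is correct \emph{only} when the relevant jumps $a < a'$ in $P$ and $b < b'$ in $Q$ are covering relations, since only then is the full set of elements of $P\times Q$ strictly between $(a,b)$ and $(a',b')$ precisely $\{(a,b'),(a',b)\}$. For a chain $\gamma_1\cdot\gamma_2$ in $P$ whose steps need not be coverings, the coboundary formula \eqref{eq: formula differential min max} (applied in $P\times Q$, not in any grid subposet) introduces insertion terms at every element of $P\times Q$ strictly between, and these do not visibly cancel in the naive iteration over the $kn$ cells. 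You flag this yourself, but neither of your two proposed fixes closes the gap: the induction is only sketched and would still need to organize the extra insertions, and the naturality reduction via Proposition \ref{prop: concatenation and pullback} runs in the wrong variance (a pullback would let you pass from a larger ambient poset to the grid, not replace the ambient poset by the grid, since the relevant $\mu$'s land in $h^\bullet(P\times Q)$ rather than in $h^\bullet$ of the grid). So as written there is a genuine hole exactly where the hard work would be.

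The paper sidesteps all of this. It first reduces, without loss of generality, to the case where $P$ has a greatest element and $Q$ has a least element; this is legitimate because $c^\bullet(P)$ splits as a direct sum over pairs $(a,b)$ with $a\in\min(P)$, $b\in\max(P)$ of the subcomplexes of chains from $a$ to $b$, and the maps $\mu_x$, the K\"{u}nneth morphisms, and the diagram all respect this decomposition, so one may replace $P$ by $P_{[a,b]}$. After this reduction it rewrites \emph{both} compositions as a K\"{u}nneth morphism on the two middle tensor factors followed by a double concatenation morphism \eqref{eq: definition mu double} -- the first at $(x,\hat0)$ and $(\hat1,y)$ in $P\times Q$, the second at $(\hat0,x)$ and $(y,\hat1)$ in $Q\times P$ -- and deduces equality from the symmetry of K\"{u}nneth morphisms. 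This is conceptually the same Eilenberg--Zilber-type homotopy you are trying to build by hand, but packaged so that the only homotopy that has to be invoked is the already-established symmetry of the K\"{u}nneth morphism for a single pair of posets, rather than a bespoke iterated corner-swap in an arbitrary ambient $P\times Q$. If you want to rescue your explicit approach, the missing step is essentially to reprove (a fragment of) that symmetry by hand in this setting, and the WLOG reduction above, which makes $y_0=\hat0_Q$ and $x_{m+k}=\hat1_P$, is what makes the bookkeeping tractable.
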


        \begin{proof}
        We can assume that $P$ has a greatest element $\hat{1}$ and $Q$ has a least element $\hat{0}$. One checks that the composition involving $\mu_x\otimes \mu_y$ equals the composition
        $$\xymatrixcolsep{.6cm}\diagram{
        h^\bullet(P_{\leq x})\otimes h^\bullet(P_{\geq x})\otimes h^\bullet(Q_{\leq y})\otimes h^\bullet(Q_{\geq y}) \ar[r] & h^\bullet(P_{\leq x})\otimes h^\bullet(P_{\geq x}\times Q_{\leq y}) \otimes h^\bullet(Q_{\geq y}) \ar[r] & h^\bullet(P\times Q)
        }
        $$
        where the first map is induced by a K\"{u}nneth morphism and the second map is the double concatenation morphism \eqref{eq: definition mu double} for the elements $(x,\hat{0})$ and $(\hat{1},y)$. On the other hand, one checks that the composition involving $\mu_{(x,y)}$ equals the composition
        $$\xymatrixcolsep{.6cm}\diagram{
        h^\bullet(P_{\leq x})\otimes h^\bullet(P_{\geq x})\otimes h^\bullet(Q_{\leq y})\otimes h^\bullet(Q_{\geq y}) \ar[d]_-{\sim} && \\
        h^\bullet(P_{\leq x})\otimes h^\bullet(Q_{\leq y})\otimes h^\bullet(P_{\geq x})\otimes h^\bullet(Q_{\geq y}) \ar[r] & h^\bullet(P_{\leq x})\otimes h^\bullet(Q_{\leq y}\times P_{\geq x}) \otimes h^\bullet(Q_{\geq y}) \ar[r] & h^\bullet(Q\times P) \ar[d]^-{\sim}\\
        && h^\bullet(P\times Q)
        }
        $$
        where the first horizontal map is induced by a K\"{u}nneth morphism and the second horizontal map is a double concatenation morphism \eqref{eq: definition mu double} for the elements $(\hat{0},x)$ and $(y,\hat{1})$. The commutativity of the diagram therefore follows from the symmetry of K\"{u}nneth morphisms.
        \end{proof}

        \subsubsection{Variants}

        We will also use the variants
        $$\widecheck{\mu}_x:h^\bullet(P_{\leq x})\otimes \hbottom{\bullet}(P_{\geq x}) \To \hbottom{\bullet}(P) \qquad \mbox{ and } \qquad \widehat{\mu}_x:\htop{\bullet}(P_{\leq x})\otimes h^\bullet(P_{\geq x}) \To \htop{\bullet}(P),$$
        which are given by the same formulas and satisfy similar compatibilities. For the sake of completeness we also mention the variant 
        $$\htop{\bullet}(P_{\leq x})\otimes \hbottom{\bullet}(P_{\geq x})\To H^\bullet(P),$$
        which we will not use.

\section{Operadic poset species}\label{sec:main}

    We set up a formalism of ``families of posets with an operadic structure'', and construct operadic structures on their cohomology.

    \subsection{Poset species, and partition posets}
    
       A \emph{poset species} is a functor from the category of finite sets and bijections to the category of posets. We will write $P:S\mapsto P(S)$ for such a poset species, and for a bijection $f:S\to S'$ between finite sets, $P(f):P(S)\to P(S')$ for the induced isomorphism of posets. We will use the classical shorthand notation
       $$P(n) \defas P(\{1,\ldots,n\}).$$
       The fundamental example is the poset species of partitions 
        $$\Pi:S\mapsto \Pi(S).$$
        Recall that a partition of $S$ is a set $\pi$ of non-empty subsets of $S$, called the \emph{blocks} of $\pi$, which are pairwise disjoint and whose union equals $S$, and that $\Pi(S)$ denotes the poset of all partitions of $S$, where we take the convention that $\alpha\leq \beta$ if $\alpha$ is obtained by merging blocks of $\beta$. We warn the reader that the opposite convention is often found in the literature.

        Let us reinterpret the order relation on partitions while introducing some notation. For a partition $\pi$ of a finite set $S$, and a subset $T\subset S$, we let 
        $$\pi_{|T} \defas \pi\cap 2^T,$$
        whose elements are the blocks of $\pi$ contained in $T$. For $\alpha,\beta\in\Pi(S)$ we have the equivalent definitions:
        $$\alpha\leq \beta \quad \Longleftrightarrow \quad \{\beta_{|T},\, T\in\alpha\} \mbox{ is a partition of } \beta$$
        $$\alpha\leq \beta \quad \Longleftrightarrow \quad \forall\, T\in \alpha, \, \beta_{|T} \mbox{ is a partition of } T.$$
        This leads to the following fundamental structure of the partition posets. For every partition $\pi\in \Pi(S)$ we have isomorphisms of posets
        \begin{equation}\label{eq: phi and psi for partitions}
        \varphi_\pi: \Pi_{\leq \pi}(S)  \stackrel{\sim}{\To} \Pi(\pi) \hspace{2cm} \mbox{and} \hspace{2cm} \psi_\pi: \Pi_{\geq\pi}(S) \stackrel{\sim}{\To} \prod_{T\in \pi} \Pi(T)
        \end{equation}
        defined by $\alpha\mapsto \{\pi_{|T},T\in\alpha\}$ and $\beta\mapsto (\beta_{|T})_{T\in \pi}$ respectively.

        \begin{example}
        Let $S=\{a,b,c,d,e,f,g\}$ be a $7$-element set and let $\pi=\{T_1,T_2,T_3\}$ with $T_1=\{a,b,c\}$, $T_2=\{d,e\}$, $T_3=\{f,g\}$. We identify $\pi$ with the set $\{1,2,3\}$.
        The isomorphism
        $$\varphi_\pi:\Pi_{\leq \pi}(S) \stackrel{\sim}{\To} \Pi(\{1,2,3\})$$
        sends, for instance, the partition $\{T_1\cup T_2, T_3\}=\{\{a,b,c,d,e\},\{f,g\}\}$ to the partition $\{\{1,2\},\{3\}\}$ of $\{1,2,3\}$. The isomorphism
        $$\psi_\pi:\Pi_{\geq \pi}(S)\stackrel{\sim}{\To} \Pi(T_1)\times \Pi(T_2)\times \Pi(T_3)$$
        sends, for instance, the partition $\{\{a\},\{b,c\},\{d\},\{e\},\{f,g\}\}$ to $(\{\{a\},\{b,c\}\}, \{\{d\},\{e\}\}, \{\{f,g\}\})$.
        \end{example}

        For $\pi,\pi'\in\Pi(S)$ with $\pi\leq \pi'$, it will be convenient to write
        $$\pi/\pi' = \varphi_{\pi'}(\pi) = \{\pi'_{|T}, T \in \pi\} \;\; \in\Pi(\pi').$$
        Indeed, viewing the partition $\pi'$ as an equivalence relation on $S$ and its underlying set of blocks as the quotient $S/\pi'$, we can view $\pi/\pi'$ as the partition induced by $\pi$ on that quotient.
        
        The compatibilities between the maps $\varphi$ and $\psi$ are summarized in the following proposition, which will later be upgraded to the \emph{definition} of operadic poset species. 

        \begin{prop}
        Let $S$ be a finite set, and let $\pi,\pi'\in\Pi(S)$ with $\pi\leq \pi'$.
        \begin{enumerate}[$\triangleright$]
        \item (Composition of $\varphi$'s.) The isomorphism $\varphi_{\pi'}$ induces an isomorphism, denoted by the same symbol, between $\Pi_{\leq \pi}(S)$ and $\Pi_{\leq \pi/\pi'}(\pi')$, which fits into the following commutative diagram.
        \begin{equation}\label{eq: composition of phis for partitions}
        \begin{gathered}
        \diagram{
        \Pi_{\leq \pi}(S) \ar[rr]^{\varphi_\pi} \ar[rd]_{\varphi_{\pi'}} && \Pi(\pi) \\
        & \Pi_{\leq \pi/\pi'}(\pi') \ar[ru]_{\varphi_{\pi/\pi'}}&
        }
        \end{gathered}
        \end{equation}
        \item (Composition of $\psi$'s.) The isomorphism $\psi_\pi$ induces an isomorphism, denoted by the same symbol, between $\Pi_{\geq \pi'}(S)$ and the product of the $\Pi_{\geq \pi'_{|T}}(T)$ for $T\in\pi$, which fits into the following commutative diagram.
        \begin{equation}\label{eq: composition of psis for partitions}
        \begin{gathered}
        \diagram{
        \Pi_{\geq \pi'}(S) \ar[rr]^{\psi_{\pi'}} \ar[rd]_{\psi_\pi} && \displaystyle\prod_{U\in\pi'}\Pi(U) \\
        &\displaystyle\prod_{T\in \pi}\Pi_{\geq {\pi}'_{|T}}(T) \ar[ru]_{\left(\psi_{{\pi}'_{|T}}\right)_{T\in \pi}}&
        }
        \end{gathered}
        \end{equation}
        \item (Composition of $\varphi$'s and $\psi'$s.) The isomorphism $\varphi_{\pi'}$ induces an isomorphism, denoted by the same symbol, between $\Pi_{[\pi,\pi']}(S)$ and $\Pi_{\geq \pi/\pi'}(\pi')$. The isomorphism $\psi_\pi$ induces an isomorphism, that we denote by the same symbol, between $\Pi_{[\pi,\pi']}(S)$ and the product of the $\Pi_{\leq \pi'_{|T}}(T)$ for $T\in\pi$. They fit into the following commutative diagram.
        \begin{equation}\label{eq: composition of phis and psis for partitions}
        \begin{gathered}
        \xymatrixcolsep{2cm}\xymatrixrowsep{1.5cm}\diagram{
        \Pi_{[\pi,\pi']}(S) \ar[r]^-{\varphi_{\pi'}} \ar[d]_-{\psi_\pi} & \Pi_{\geq {\pi/\pi'}}(\pi') \ar[d]^-{\psi_{{\pi/\pi'}}}\\
        \displaystyle\prod_{T\in \pi}\Pi_{\leq {\pi}'_{|T}}(T) \ar[r]_-{\left(\varphi_{{\pi}'_{|T}}\right)_{T\in \pi}}& \displaystyle\prod_{T\in \pi}\Pi(\pi'_{|T})
        }
        \end{gathered}
        \end{equation}
        \end{enumerate}
        \end{prop}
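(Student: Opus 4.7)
The plan is to establish each of the three diagrams by a direct computation, tracing an arbitrary partition around both paths and verifying the outputs agree. Before commuting anything, however, I would first verify the preliminary claim hidden in each bullet: that the relevant map, which a priori is defined on a larger domain, does restrict to the sub-domain indicated. Concretely, for (1) I would check that if $\alpha\leq \pi \leq \pi'$, then $\varphi_{\pi'}(\alpha) = \{\pi'_{|T} : T\in\alpha\}$ lies below $\pi/\pi'$ in $\Pi(\pi')$; this follows from the observation that each $T\in\alpha$ is a union of blocks of $\pi$, so the set $\pi'_{|T}$ is a union of the sets $\pi'_{|T''}$ for $T''\in\pi$ with $T''\subset T$, i.e., a union of blocks of $\pi/\pi'$. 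Bijectivity onto $\Pi_{\leq\pi/\pi'}(\pi')$ then follows because $\varphi_{\pi'}$ is an isomorphism of posets. The analogous restriction claims for (2) and (3) are handled by the same kind of bookkeeping.

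Once well-definedness is in place, for (1) I would take $\alpha\leq\pi$ and compute both compositions: $\varphi_\pi(\alpha) = \{\pi_{|T} : T\in\alpha\}$, while $\varphi_{\pi/\pi'}\circ\varphi_{\pi'}(\alpha) = \{(\pi/\pi')_{|\pi'_{|T}} : T\in\alpha\}$. The identity to check is therefore $(\pi/\pi')_{|\pi'_{|T}} = \pi_{|T}$, which amounts to noting that the blocks of $\pi/\pi'$ contained in $\pi'_{|T}$ are precisely those of the form $\pi'_{|T''}$ with $T''\in\pi$ and $T''\subset T$, and under the canonical identification between $\pi$ and the set of blocks $\{\pi'_{|T''}\}_{T''\in\pi}$ of $\pi/\pi'$, these correspond exactly to the elements of $\pi_{|T}$. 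For (2), given $\beta\geq\pi'$, both compositions produce a tuple indexed by $U\in\pi'$, and the content of the commutation is the tautology $\beta_{|U} = (\beta_{|T})_{|U}$ when $U\in\pi'_{|T}$, valid because $U\subset T$.

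For (3), I would take $\gamma$ in the interval $[\pi,\pi']$; both paths land in $\prod_{T\in\pi}\Pi(\pi'_{|T})$. The path through $\varphi_{\pi'}$ gives $\varphi_{\pi'}(\gamma) = \{\pi'_{|T'} : T'\in\gamma\} \in \Pi_{\geq \pi/\pi'}(\pi')$, and applying $\psi_{\pi/\pi'}$ produces the tuple indexed by $T\in\pi$ whose $T$-component is $\{\pi'_{|T'} : T'\in\gamma, T'\subset T\}$. The path through $\psi_\pi$ gives the tuple $(\gamma_{|T})_{T\in\pi}$, and applying the $\varphi_{\pi'_{|T}}$ componentwise yields the tuple whose $T$-component is $\{\pi'_{|T'} : T'\in\gamma_{|T}\}$. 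These coincide because $T'\in\gamma$ with $T'\subset T$ is the same condition as $T'\in\gamma_{|T}$ (since $\gamma\geq\pi$ implies every block $T'$ of $\gamma$ is contained in a unique block of $\pi$).

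The only real obstacle is notational: keeping three nested levels of partitions ($\pi$-blocks of $S$, $\pi'$-blocks of $S$, and blocks-of-blocks in $\pi$ or $\pi'$) clearly distinguished. There is no genuine combinatorial content beyond unwinding the definition of $\pi_{|T}$ and of the quotient partition $\pi/\pi'$; each of the three diagrams reduces, after unpacking, to an identity between two indexed families of subsets that are manifestly equal.
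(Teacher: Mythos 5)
Your proposal is correct and follows essentially the same route as the paper's proof: a direct element-by-element computation of each diagram, reducing each to a tautological identity about restriction of partitions (together with the canonical identification between $\pi$ and $\pi/\pi'$, which you state more explicitly than the paper does). The paper's version achieves the same thing more compactly via the $\alpha/\pi'$ shorthand, but there is no mathematical difference.
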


        \begin{proof}
            \begin{enumerate}[$\triangleright$]
                \item (Composition of $\varphi$'s.)
                The first claim is obvious. For $\alpha\in \Pi_{\leq \pi}(S)$ we have the equalities
                $$(\varphi_{\pi/\pi'} \circ \varphi_{\pi'})(\alpha) = \varphi_{\pi/\pi'}(\alpha/\pi') = (\alpha/\pi')/(\pi/\pi') = \alpha/\pi = \varphi_\pi(\alpha),$$
                which give the commutativity of \eqref{eq: composition of phis for partitions}.
                \item (Composition of $\psi$'s.) The first claim is obvious. For $\beta\in \Pi_{\pi'}(S)$ we have the equalities
                $$\left(\left(\psi_{{\pi'}_{|T}} \right)_{T \in \pi} \circ \psi_{\pi}\right)(\beta) = \left(\psi_{\pi'_{|T}}(\beta_{|T})\right)_{T\in\pi} = \left((\beta_{|T})_{|U}\right)_{U\in\pi'} = \left(\beta_{|U}\right)_{U\in\pi'}=\psi_{\pi'}(\beta),$$
                which give the commutativity of \eqref{eq: composition of psis for partitions}.
                \item (Composition of $\varphi$'s and $\psi$'s.) The first claim is obvious. For $\gamma\in\Pi_{[\pi,\pi']}(S)$ we have the equalities
                $$ \left(\left(\varphi_{{\pi'}_{|T}} \right)_{T \in \pi} \circ \psi_{\pi}\right)(\gamma) = \left(\varphi_{\pi'_{|T}}(\gamma_{|T})\right)_{T\in\pi} = \left(\gamma_{|T}/\pi'_{|T}\right)_{T\in\pi}$$
                and
                $$\left(\psi_{\pi/\pi'}  \circ \varphi_{\pi'}\right)(\gamma) = \psi_{\pi/\pi'}(\gamma/\pi') = \left((\gamma/\pi')_{|T}\right)_{T\in\pi/\pi'}.$$
                The natural identification between the sets (of blocks of) $\pi$ and $\pi/\pi'$ is compatible with restriction in the sense that  $\left( \gamma / \pi'\right)_{|T}= {\gamma_{|T}}/\pi'_{|T}$ for all $T\in\pi$, and the commutativity of \eqref{eq: composition of phis and psis for partitions} follows.
                \end{enumerate}
        \end{proof}
        
    \subsection{Operadic posets species}
    
        Let $P$ be a poset species equipped with a morphism of poset species
        $$\diagram{P \ar[d]_-{a} \\ \Pi}$$
        that we think of as a ``structure morphism''. We will refer to $a(x)$ as the \emph{underlying partition} of $x$. We require that for every finite set $S$ the morphism of posets $a:P(S)\to \Pi(S)$ satisfies
        \begin{equation}\label{eq: axiom a min max compatible strong}
        a^{-1}(\min(\Pi(S)))=\min(P(S)) \quad \hbox{ and } \quad a^{-1}(\max(\Pi(S)))=\max(P(S)).
        \end{equation}
        This assumption is strictly stronger that min-max-compatibility (Definition \ref{defi: min max compatible}) and will ensure that other morphisms of posets considered below are min-max-compatible.

        For an element $x\in P(S)$ with underlying partition $a(x)=\pi$, we consider morphisms of posets
        $$\varphi_x:P_{\leq x}(S)\To P(\pi) \quad\quad\quad \mbox{ and } \quad\quad\quad \psi_x:P_{\geq x}(S)\To \prod_{T\in\pi}P(T).$$
        We require the following diagrams to commute.
        \begin{equation}\label{eq: commutative squares phi psi a}
        \begin{aligned}
        \xymatrixcolsep{1.5cm}\xymatrixrowsep{1.5cm}\diagram{
        P_{\leq x}(S) \ar[r]^-{\varphi_x} \ar[d]_{a} & P(\pi) \ar[d]^{a}\\
        \Pi_{\leq \pi}(S) \ar[r]^-{\sim}_-{\varphi_\pi} & \Pi(\pi)
        } \hspace{3cm}
        \xymatrixcolsep{1.5cm}\xymatrixrowsep{1cm}\diagram{
        P_{\geq x}(S) \ar[r]^-{\psi_x} \ar[d]_{a} & \displaystyle\prod_{T\in \pi}P(T) \ar[d]^-{a}\\
        \Pi_{\geq \pi}(S) \ar[r]^-{\sim}_-{\psi_\pi} & \displaystyle\prod_{T\in \pi}\Pi(T)
        }
        \end{aligned}
        \end{equation}
        
        \begin{defi}\label{def: operadic poset species}
        We say that $P$ equipped with $a$ and the $\varphi_x, \psi_x$ is an \emph{operadic poset species} if the axioms of equivariance, unitality, and associativity below are satisfied.
        \end{defi}

        \subsubsection{Equivariance}\label{axiom: equivariance}
        
        Let $f:S\to S'$ be a bijection, let $x\in P(S)$, and let $x'=P(f)(x)$, with underlying partitions $\pi=a(x)$ and $\pi'=a(x')$. Since $a$ is a morphism of species, the blocks of $\pi'$ are the $T'=f(T)$ for $T$ a block of $\pi$. We let $f:\pi\to \pi'$ denote the bijection $T\mapsto f(T)$, and for a block $T\in \pi$ we let $f_{|T}:T\to f(T)$ denote the natural bijection. We require the following diagrams to commute. 
        $$\xymatrixcolsep{1.5cm}\xymatrixrowsep{1.5cm}\diagram{
        P_{\leq x}(S) \ar[r]^{\varphi_x} \ar[d]_{P(f)} & P(\pi) \ar[d]^{P(f)}\\
        P_{\leq x'}(S') \ar[r]_{\varphi_{x'}} & P(\pi')
        }\hspace{2cm}\xymatrixcolsep{1.5cm}\xymatrixrowsep{1.5cm}\diagram{
        P_{\geq x}(S) \ar[r]^{\psi_x} \ar[d]_{P(f)} & \displaystyle \prod_{T\in\pi}P(T) \ar[d]^{(P(f_{|T}))_{T\in\pi}} \\
        P_{\geq x'}(S') \ar[r]_{\psi_{x'}} & \displaystyle\prod_{T'\in \pi'}P(T')
        } $$
        
        \subsubsection{Unitality}\label{axiom: unitality}

        For a singleton $\{s\}$, we require $P(\{s\})$ to be a singleton: $P(\{s\})=\{1_s\}$.

        \begin{rem}
        This axiom could easily be relaxed by requiring the existence of a distinguished element $1_s \in P(\{s\})$ for every singleton $\{s\}$, such that for $f:\{s\}\to \{t\}$ we have $P(f)(1_s)=1_t$, and such that those elements behave well with respect to the morphisms $\varphi_x$ for $x$ with underlying partition $a(x)=\hat{0}$ and $\psi_x$ for $x$ with underlying partition $a(x)=\hat{1}$.
        \end{rem}
                    
        \subsubsection{Associativity}\label{axiom: associativity}

        Let $S$ be a finite set and let $x\leq x'$ be elements of $P(S)$, with underlying partitions $\pi=a(x)\leq \pi'=a(x')$. To simplify notation we identify $\pi$ with $\varphi_{\pi'}(\pi)$ and therefore view $\pi$ as the partition of $\pi'$ whose blocks are the $\pi'_{|T}\defas\pi'\cap 2^T$, for all blocks $T\in \pi$. 

        \begin{enumerate}[$\triangleright$]
        \item (Composition of $\varphi$'s.) We let $$\underline{x}\defas\varphi_{x'}(x) \;\in P(\pi').$$ 
        Since $\varphi_{x'}$ is a morphism of posets it sends $P_{\leq x}(S)$ to $P_{\leq \underline{x}}(\pi')$, and we denote the corresponding restriction by the same symbol:
        $$\varphi_{x'}:P_{\leq x}(S)\To P_{\leq \underline{x}}(\pi').$$
        We require the following diagram to commute.
        \begin{equation}\label{eq: axiom composition of phis}
        \begin{gathered}
        \diagram{
        P_{\leq x}(S) \ar[rr]^{\varphi_x} \ar[rd]_{\varphi_{x'}} && P(\pi) \\
        & P_{\leq \underline{x}}(\pi') \ar[ru]_{\varphi_{\underline{x}}}&
        }
        \end{gathered}
        \end{equation}
        \item (Composition of $\psi$'s.) We let 
        $$(x'_{|T})_{T\in \pi}\defas\psi_x(x') \;\in \prod_{T\in\pi}P(T).$$ 
        Since $\psi_x$ is a morphism of posets it sends $P_{\geq x'}(S)$ to the product of the $P_{\geq x'_{|T}}(T)$ for $T\in\pi$, and we denote the corresponding restriction by the same symbol:
        $$\psi_{x}:P_{\geq x'}(S) \To \prod_{T\in\pi} P_{\geq x'_{|T}}(T).$$
        We require the following diagram to commute.
        \begin{equation}\label{eq: axiom composition of psis}
        \begin{gathered}
        \diagram{
        P_{\geq x'}(S) \ar[rr]^{\psi_{x'}} \ar[rd]_{\psi_x} && \displaystyle\prod_{U\in\pi'}P(U) \\
        &\displaystyle\prod_{T\in \pi}P_{\geq x'_{|T}}(T) \ar[ru]_{\left(\psi_{x'_{|T}}\right)_{T\in \pi}}&
        }
        \end{gathered}
        \end{equation}
        \item (Composition of $\varphi$'s and $\psi$'s.) Since $\varphi_{x'}$ is a morphism of posets it sends $P_{[x,x']}(S)$ to $P_{\geq \underline{x}}(\pi')$ and we denote the corresponding restriction by the same symbol:
        $$\varphi_{x'}: P_{[x,x']}(S) \To P_{\geq \underline{x}}(\pi').$$
        Similarly, since $\psi_x$ is a morphism of posets it sends $P_{[x,x']}(S)$ to the product of the $P_{\leq x'_{|T}}(T)$ for $T\in\pi$, and we denote the corresponding restriction by the same symbol:
        $$\psi_x:P_{[x,x']}(S) \To \prod_{T\in \pi}P_{\leq x'_{|T}}(T).$$
        We require the following diagram to commute.
        \begin{equation}\label{eq: axiom composition of phis and psis}
        \begin{gathered}
        \xymatrixcolsep{2cm}\xymatrixrowsep{1.5cm}\diagram{
        P_{[x,x']}(S) \ar[r]^-{\varphi_{x'}} \ar[d]_-{\psi_x} & P_{\geq \underline{x}}(\pi') \ar[d]^-{\psi_{\underline{x}}}\\
        \displaystyle\prod_{T\in \pi}P_{\leq x'_{|T}}(T) \ar[r]_-{\left(\varphi_{x'_{|T}}\right)_{T\in \pi}}& \displaystyle\prod_{T\in \pi}P(\pi'_{|T})
        }
        \end{gathered}
        \end{equation}
        The morphism corresponding to the diagonal of this commutative square will be denoted by
        \begin{equation}\label{eq: definition xi}
        \xi_{x,x'}: P_{[x,x']}(S)\To \prod_{T\in\pi} P(\pi'_{|T}).
        \end{equation}
        \end{enumerate}

        \begin{rem}\label{rem: strong min max compatibility for all}
        The axiom \eqref{eq: axiom a min max compatible strong} implies that all morphisms $\varphi_x$, $\psi_x$, along with the morphisms $\varphi_x'$ from \eqref{eq: axiom composition of phis}, $\psi_x$ from \eqref{eq: axiom composition of psis}, and $\xi_{x,x'}$ from \eqref{eq: definition xi}, satisfy the same strong min-max-compatibility.
        \end{rem}
        
    \subsection{Algebraic operads in the language of partitions}

        We refer the reader to \cite{lodayvallette} for basic definitions and properties of algebraic operads. A \emph{linear species} is a functor from the category of finite sets and bijections to the category of $\KK$-modules. The category of linear species has a monoidal structure $\circ$ defined by the formula
        $$(\mathrm{F}\circ \mathrm{G})(S)=\bigoplus_{\pi\in \Pi(S)} \left(\mathrm{F}(\pi)\otimes \bigotimes_{T\in \pi}\mathrm{G}(T)\right).$$
        Its unit $\mathrm{I}$ is the linear species that sends $S$ to $0$ if $|S|\neq 1$ and to $\KK=\KK S$ if $|S|=1$. 
        
        Recall that an \emph{algebraic operad} (or \emph{operad} for simplicity) is by definition an algebra in the monoidal category of linear species. In other words, it is given by a linear species $\mathrm{F}$ and morphisms of linear species
        $$\rho:\mathrm{F}\circ \mathrm{F}\to \mathrm{F} \quad\quad\quad \mbox{ and } \quad\quad\quad \eta:\mathrm{I}\to \mathrm{F}$$
        satisfying the usual axioms. Concretely, an operad is given by a linear species $\mathrm{F}$ and a collection of linear \emph{operadic composition} morphisms
        \begin{equation}\label{eq:operadic-composition}
        \rho_\pi\colon \mathrm{F}(\pi)\otimes \bigotimes_{T\in\pi}\mathrm{F}(T) \To \mathrm{F}(S)
        \end{equation}
        for all finite sets $S$ and all partitions $\pi\in\Pi(S)$, which are required to satisfy the axioms of equivariance, unitality, and associativity spelled out in the next paragraphs.

        \begin{rem}
        The operadic composition morphisms can all be obtained from the \emph{partial} operadic composition morphisms defined as follows. For disjoint finite sets $A$ and $B$, let $\pi$ denote the partition of $A\sqcup B$ whose blocks are $B$ and the singletons $\{a\}$ for $a\in A$. We identify the underlying set of $\pi$, or equivalently the quotient of $A\sqcup B$ by $\pi$, with the set $A\sqcup\{*\}$. Inserting the identities $1_a$, for $a\in A$, in the composition morphism $\rho_\pi$ gives rise to the partial operadic composition morphism
        \begin{equation}\label{eq:partial-operadic-composition}
        \circ_*\colon \diagram{
        \mathrm{F}(A\sqcup\{*\}) \otimes \mathrm{F}(B)  \To \mathrm{F}(A\sqcup B).
        }
        \end{equation}
        \end{rem} 
        
        \begin{rem}
        We will also consider \emph{(differential) graded} operads, which are algebras in the monoidal category of (differential) graded linear species. The monoidal structure on (differential) graded linear species is based on the symmetric monoidal structure on (differential) graded $\KK$-modules, for which we use the sign-graded symmetry convention. Concretely, for (differential) graded $\KK$-modules $V, W$, the symmetry isomorphism $V\otimes W\stackrel{\sim}{\to} W\otimes V$ is given, for homogeneneous elements $v\in V^a$ and $w\in W^b$, by the formula $v\otimes w\mapsto (-1)^{ab}w\otimes v$.
        \end{rem}
        
        \subsubsection{Equivariance} 
        
        Let $f:S\to S'$ be a bijection, and $\pi\in \Pi(S)$. We require the following diagram to commute, where the notation is the same as in \S\ref{axiom: equivariance}.
        $$\xymatrixcolsep{2cm}\xymatrixrowsep{1.5cm}\diagram{
        \mathrm{F}(\pi)\otimes\displaystyle\bigotimes_{T\in\pi}\mathrm{F}(T) \ar^-{\rho_\pi}[r] \ar[d]_-{\mathrm{F}(f)\otimes \bigotimes_{T\in\pi}\mathrm{F}(f_{|T})} & \mathrm{F}(S) \ar^-{\mathrm{F}(f)}[d]  \\
        \mathrm{F}(\pi')\otimes\displaystyle\bigotimes_{T'\in\pi'}\mathrm{F}(T') \ar_-{\rho_{\pi'}}[r] & \mathrm{F}(S') 
        }$$
        
        \subsubsection{Unitality} 
        
        There exists, for every one-element set $\{s\}$, a distinguished element 
        $$1_s \in \mathrm{F}(\{s\})$$
        such that for $f:\{s\}\to \{t\}$ we have $\mathrm{F}(f)(1_s)=1_t$, and such that the following conditions hold for every finite set $S$.
        \begin{enumerate}[$\triangleright$]
        \item If $\hat{0}$ denotes the partition of $S$ with only one block, then the composition
        $$\xymatrixcolsep{2cm}\diagram{
        \mathrm{F}(S) \ar[r]^-{1_S\otimes\mathrm{id}} & \mathrm{F}(\{S\})\otimes \mathrm{F}(S) = \mathrm{F}(\hat{0})\otimes \mathrm{F}(S) \ar[r]^-{\rho_{\hat{0}}} & \mathrm{F}(S)
        }$$
        is the identity.
        \item If $\hat{1}$ denotes the partition of $S$ whose blocks are singletons, then the composition
        $$\xymatrixcolsep{2cm}\diagram{
        \mathrm{F}(S) \ar[r]^-{\mathrm{id}\otimes\bigotimes_{s\in S}1_s} & \mathrm{F}(S)\otimes \displaystyle\bigotimes_{s\in S}\mathrm{F}(\{s\}) \simeq \mathrm{F}(\hat{1})\otimes \displaystyle\bigotimes_{s\in S}\mathrm{F}(\{s\}) \ar[r]^-{\rho_{\hat{1}}} & \mathrm{F}(S)
        }$$
        is the identity.
        \end{enumerate}

        \subsubsection{Associativity}

        Let $S$ be a finite set, and let $\pi,\pi'\in\Pi(S)$ with $\pi\leq \pi'$. The following diagram is required to commute.
        $$\xymatrixcolsep{3cm}\xymatrixrowsep{1.5cm}\diagram{
        \mathrm{F}(\pi)\otimes \displaystyle\bigotimes_{T\in\pi}\mathrm{F}(\pi'_{|T}) \otimes \displaystyle\bigotimes_{U\in\pi'}\mathrm{F}(U) \ar[r]^-{\mathrm{id}\otimes\bigotimes_{T\in\pi}\rho_{\pi'_{|T}}} \ar[d]_-{\rho_\pi\otimes\mathrm{id}} & \mathrm{F}(\pi)\otimes \displaystyle\bigotimes_{T\in\pi}\mathrm{F}(T) \ar[d]^-{\rho_\pi} \\
        \mathrm{F}(\pi')\otimes\displaystyle\bigotimes_{U\in\pi'}\mathrm{F}(U) \ar[r]_-{\rho_{\pi'}} & \mathrm{F}(S)
        }$$
        Note that to make sense of the upper horizontal arrow we make the identification
        $$\mathrm{F}(\pi)\otimes \displaystyle\bigotimes_{T\in\pi}\mathrm{F}(\pi'_{|T}) \otimes \displaystyle\bigotimes_{U\in\pi'}\mathrm{F}(U) = \mathrm{F}(\pi) \otimes \bigotimes_{T\in\pi}\left( \mathrm{F}(\pi'_{|T})\otimes\bigotimes_{U\in\pi'_{|T}}\mathrm{F}(U) \right).$$

     \subsection{The operad structure in cohomology}

        Let $P$ be an operadic poset species in the sense of Definition \ref{def: operadic poset species}. The functor $S\mapsto h^\bullet(P(S))$ is a graded linear species that we denote by $h^\bullet(P)$. Let $S$ be a finite set and $\pi\in\Pi(S)$ be a partition of $S$. We obtain a morphism
        $$\rho_\pi: h^{\bullet}(P(\pi)) \otimes \bigotimes_{T\in\pi} h^{\bullet}(P(T)) \To h^{\bullet}(P(S))$$
        by composing the K\"{u}nneth morphism
        $$\xymatrixcolsep{1.4cm}\diagram{ h^{\bullet}(P(\pi))\otimes \displaystyle\bigotimes_{T\in \pi}h^{\bullet}(P(T)) \ar[r]^-{\operatorname{id}\otimes\, \kappa} & h^{\bullet}(P(\pi))\otimes h^{\bullet}\left(\displaystyle\prod_{T\in \pi}P(T)\right)}$$
         and the sum over all elements $x\in P(S)$ with $a(x)=\pi$ of the compositions
         $$\xymatrixcolsep{2cm}\diagram{h^{\bullet}(P(\pi))\otimes h^{\bullet}\left(\displaystyle\prod_{T\in \pi}P(T)\right) \ar[r]^-{\varphi_x^*\,\otimes\,\psi_x^*} &  h^{\bullet}(P_{\leq x}(S))\otimes h^{\bullet}(P_{\geq x}(S)) \ar[r]^-{\mu_x} & h^\bullet(P(S))}.$$
       
        \begin{thm}\label{thm: main}
        The morphisms $\rho_\pi$ give $\h{\bullet}(P)$ the structure of a graded operad of $\KK$-modules.
        \end{thm}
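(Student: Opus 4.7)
The plan is to verify the three operad axioms — equivariance, unitality, and associativity — with the bulk of the work concentrated on associativity.

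Equivariance is the easiest: for a bijection $f\colon S\to S'$ and $\pi\in\Pi(S)$, the isomorphism $P(f)$ restricts to a bijection between $\{x\in P(S) \mid a(x)=\pi\}$ and $\{x'\in P(S')\mid a(x')=\pi'\}$ where $\pi'=\Pi(f)(\pi)$. The equivariance axiom \S\ref{axiom: equivariance} for $P$ gives, for each paired $x\leftrightarrow x'$, commuting squares for the $\varphi$'s and $\psi$'s. Combining these with the functoriality of pullback and the naturality of the Künneth and concatenation morphisms, the operad equivariance follows term-by-term.

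For unitality, the distinguished unit is the generator of $h^0(P(\{S\}))\cong\KK$ supplied by axiom \S\ref{axiom: unitality}. The strong min-max-compatibility \eqref{eq: axiom a min max compatible strong} identifies the summation indices $\{x\in P(S)\mid a(x)=\hat 0\}$ with $\max(P(S))$ and $\{x\in P(S)\mid a(x)=\hat 1\}$ with $\min(P(S))$. For $\pi=\hat 0$, I would unfold $\mu_x$, $\varphi_x^*$, and $\psi_x^*$ at the cochain level: since $P_{\geq x}(S)=\{x\}$ for maximal $x$, each summand reduces to a pullback through a map factoring as $P_{\leq x}(S)\to \{*\}$ paired with evaluation at $x$, and a direct inspection shows the total sum recovers the identity on $h^\bullet(P(S))$. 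The case $\pi=\hat 1$ is dual.

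The main obstacle is associativity. For $\pi\leq\pi'$ in $\Pi(S)$, the strategy is to expand both paths of the associativity square and identify them with a common canonical expression $\Sigma$: the sum, over all pairs $(x,x')\in P(S)^2$ with $x\leq x'$, $a(x)=\pi$, and $a(x')=\pi'$, of the double-concatenation $\mu_{x,x'}$ from \eqref{eq: definition mu double} applied to $\varphi_x^*(\alpha)\otimes \xi_{x,x'}^*(\beta)\otimes \psi_{x'}^*(\gamma)$, with the required Künneth morphisms absorbed and $\xi_{x,x'}$ given by \eqref{eq: definition xi}. To equate the left-bottom path with $\Sigma$, I expand the inner $\rho_\pi$ (at level $P(\pi')$) as a sum over $y\in P(\pi')$ with $a(y)=\pi$, and commute this sum past the pullback $\varphi_{x'}^*$ appearing in $\rho_{\pi'}$ by invoking Proposition \ref{prop: concatenation and pullback}; this converts the sum over $y$ into a sum over preimages $x\in \varphi_{x'}^{-1}(y)$, which are precisely the $x\leq x'$ with $\underline{x}=y$. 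Axiom \eqref{eq: axiom composition of phis} then identifies the composite $\varphi_{x'}^*\circ \varphi_y^*$ with $\varphi_x^*$, axiom \eqref{eq: axiom composition of phis and psis} identifies the middle factor with $\xi_{x,x'}^*$, and the associativity of concatenation folds $\mu_{x'}\circ(\mu_x\otimes\mathrm{id})$ into $\mu_{x,x'}$.

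The right-top path is handled symmetrically: the sum over tuples $(w_T)_{T\in\pi}$ with $w_T\in P(T)$ and $a(w_T)=\pi'_{|T}$ produced by the block-wise $\rho_{\pi'_{|T}}$ is commuted past the pullback $\psi_x^*$ appearing in $\rho_\pi$, using Propositions \ref{prop: concatenation and pullback} and \ref{prop: concatenation and kunneth} together with axioms \eqref{eq: axiom composition of psis} and \eqref{eq: axiom composition of phis and psis}, and rewritten as a sum over $x'\geq x$ with $\psi_x(x')=(w_T)_T$. Both expansions collapse to the same canonical sum $\Sigma$, which establishes the commutativity of the associativity square. The hard part is the bookkeeping in this last step — tracking the various pullbacks through all three compatibility axioms \eqref{eq: axiom composition of phis}, \eqref{eq: axiom composition of psis}, \eqref{eq: axiom composition of phis and psis} simultaneously — but each individual identification is forced by the axioms already in place.
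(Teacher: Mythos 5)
Your associativity argument matches the paper's almost exactly: you introduce precisely the same symmetric intermediary (the paper calls it $\rho_{\pi,\pi'}$, the ``symmetrical double operadic composition'', which sums $\mu_{x,x'}\circ(\varphi_x^*\otimes\xi_{x,x'}^*\otimes\psi_{x'}^*)$ over pairs $x\leq x'$ lying above $\pi\leq\pi'$), and your two resummation steps — reindexing the inner sum via Proposition~\ref{prop: concatenation and pullback} applied to $\varphi_{x'}$ on one side and to $\psi_x$ (together with Proposition~\ref{prop: concatenation and kunneth}) on the other, then invoking \eqref{eq: axiom composition of phis}, \eqref{eq: axiom composition of psis}, \eqref{eq: axiom composition of phis and psis}/\eqref{eq: definition xi} and the associativity of concatenation — are exactly the paper's two halves $A=\rho_{\pi,\pi'}$ and $B=\rho_{\pi,\pi'}$. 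The equivariance discussion also matches.

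One concrete slip in the unitality sketch: you have the min/max correspondence reversed. In the paper's convention $\alpha\leq\beta$ when $\alpha$ is obtained by merging blocks, so $\hat 0$ (one block) is the \emph{least} element of $\Pi(S)$ and $\hat 1$ (all singletons) the greatest; condition \eqref{eq: axiom a min max compatible strong} therefore gives $a^{-1}(\hat 0)=\min(P(S))$ and $a^{-1}(\hat 1)=\max(P(S))$. Consequently for $\pi=\hat 0$ the summation runs over \emph{minimal} $x$, for which it is $P_{\leq x}(S)=\{x\}$ that collapses to a point, and the unit enters through $\varphi_x^*(1)=[x]$ rather than through $\psi_x$ at a maximal element; the ``direct inspection'' then shows that only the term $x=x_0$ (the bottom of the chain) survives and gives back the chain. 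The paper declares unitality ``clear'' and does not spell this out, so this is a cosmetic discrepancy rather than a structural gap, but the step as you wrote it would not go through without swapping $\hat 0\leftrightarrow\hat 1$.
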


        The proof of this theorem will be given below in \S\ref{subsec: proof of main theorem}.

        \begin{rem}
        The operadic composition maps $\rho_\pi$ are naturally defined at cochain level, i.e. they are induced by morphisms
        \begin{equation}\label{eq: operadic composition at cochain level}
        c^\bullet(P(\pi)) \otimes \bigotimes_{T\in\pi} c^\bullet(P(T)) \To c^\bullet(P(S)).
        \end{equation}
        However, those do not define the structure of a differential graded operad on $c^\bullet(P)$. The reason is simply that K\"{u}nneth morphisms are neither commutative nor associative ``on the nose''. One should therefore think of $c^\bullet(P)$ as a differential graded operad \emph{up to homotopy}. In the sequel \cite{DOD2} we will explain how De Concini and Procesi's building sets \cite{deConciniProcesi, feichtnerkozlov} can be used to produce strict operadic models in the spirit of \cite{coron}, and use them to study the operadic poset species of hypertrees.
        \end{rem}

        \begin{rem}\label{rem: partial operadic composition poset species}
        The \emph{partial} operadic composition morphisms \eqref{eq:partial-operadic-composition} of $h^\bullet(P)$ are easier to compute because they do not require the K\"{u}nneth morphism. For disjoint finite sets $A$ and $B$, let $\pi$ denote the partition of $A\sqcup B$ whose blocks are $B$ and the singletons $\{a\}$ for $a\in A$. We identify the underlying set of $\pi$, or equivalently the quotient of $A\sqcup B$ by $\pi$, with the set $A\sqcup\{*\}$. For any element $x\in P(A\sqcup B)$ whose underlying partition is $a(x)=\pi$, we therefore have morphisms of posets
        $$\varphi_x:P_{\leq x}(A\sqcup B) \To P(A\sqcup \{*\}) \qquad \mbox{ and } \qquad \psi_x:P_{\geq x}(A\sqcup B)\To P(B),$$
        where for the latter we have used the fact that $P(\{a\})$ is a singleton for every $a\in A$. The partial composition morphism \eqref{eq:partial-operadic-composition} for $h^\bullet(P)$ is obtained as the sum over all such elements $x$ of the compositions
        $$\xymatrixcolsep{1.5cm}\diagram{h^{\bullet}(P(A\sqcup \{*\}))\otimes h^{\bullet}(P(B)) \ar[r]^-{\varphi_x^*\,\otimes\,\psi_x^*} &  h^{\bullet}(P_{\leq x}(A\sqcup B))\otimes h^{\bullet}(P_{\geq x}(A\sqcup B)) \ar[r]^-{\mu_x} & h^\bullet(P(A\sqcup B))}.$$
        \end{rem}

        \begin{exple}\label{ex: operadic composition partitions} 
        We illustrate the construction of the operad $h^\bullet(P)$ in the case $P=\Pi$ consisting of the partition posets. In this case the map $a$ is the identity and the maps $\varphi_\pi$ and $\psi_\pi$ are the isomorphisms \eqref{eq: phi and psi for partitions}. We will see below (Proposition \ref{prop: partitions and Lie}) that the operad $h^\bullet(\Pi)$ is the (operadic desuspension of the) Lie operad. Let us use Remark \ref{rem: partial operadic composition poset species} to compute the first non-trivial partial operadic composition,
        $$\circ_*\colon h^1(\Pi(\{1,*\}))\otimes h^1(\Pi(\{2,3\})) \To h^2(\Pi(\{1,2,3\})).$$
        It corresponds to the partition $\pi=1|23$ of $\{1,2,3\}$, for which we have
        $$\varphi_\pi^*([1*<1|*]) = [123<1|23] \qquad \mbox{ and } \qquad \varphi_\pi^*([23<2|3]) = [1|23<1|2|3].$$
        We therefore get:
        $$[1*<1|*] \circ_* [23<2|3] = [123 < 1|23 < 1|2|3].$$
        \end{exple}

\subsection{Variant: operadic modules}

        By using the variant $\widecheck{\mu}_x$ of the concatenation morphisms, we obtain morphisms
        $$\widecheck{\rho}_\pi:\h{\bullet}(P(\pi)) \otimes \bigotimes_{T\in\pi} \hbottom{\bullet}(P(T)) \To \hbottom{\bullet}(P(S)).$$
        
        \begin{thm}\label{thm: main left}
        The morphisms $\widecheck{\rho}_\pi$ give $\hbottom{\bullet}(P)$ the structure of a left operadic module over $\h{\bullet}(P)$.
        \end{thm}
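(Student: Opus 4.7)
The plan is to mirror the proof of Theorem \ref{thm: main}, substituting $\widecheck{\mu}_x$ for $\mu_x$ in every slot corresponding to the $P_{\geq x}$ factor. The first step is to record the appropriate variants of the three compatibilities of \S\ref{Poset cohomology} for $\widecheck{\mu}_x$: a mixed associativity (for $x\leq x'$ in $P$),
\begin{equation*}
\widecheck{\mu}_{x}\circ(\mathrm{id}\otimes \widecheck{\mu}_{x'}) \;=\; \widecheck{\mu}_{x'}\circ(\mu_x\otimes\mathrm{id}) \quad\text{on}\quad h^\bullet(P_{\leq x})\otimes h^\bullet(P_{[x,x']})\otimes \hbottom{\bullet}(P_{\geq x'}),
\end{equation*}
a compatibility with pullback along min-compatible morphisms (analog of Proposition \ref{prop: concatenation and pullback}), and a compatibility with Künneth (analog of Proposition \ref{prop: concatenation and kunneth}). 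All three follow by direct inspection of the defining cochain formulas, exactly as for $\mu_x$. That the pullbacks $\psi_x^*$ are well defined on $\hbottom{\bullet}$ is guaranteed by the strong min-compatibility recorded in Remark \ref{rem: strong min max compatibility for all}.

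With $\widecheck{\rho}_\pi$ defined as in the statement (Künneth, then $\varphi_x^*\otimes \psi_x^*$, then $\widecheck{\mu}_x$, summed over $x\in a^{-1}(\pi)$), the equivariance axiom follows from \S\ref{axiom: equivariance}, and unitality follows from \S\ref{axiom: unitality} together with the obvious identities of unit chains under $\widecheck{\mu}$. Both arguments are verbatim translations of their operad counterparts in the proof of Theorem \ref{thm: main}.

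The substantive point is associativity. For $\pi\leq \pi'$ in $\Pi(S)$, I must show that the two composites
\begin{equation*}
h^\bullet(P(\pi))\otimes \bigotimes_{T\in\pi} h^\bullet(P(\pi'_{|T})) \otimes \bigotimes_{U\in\pi'}\hbottom{\bullet}(P(U)) \;\rightrightarrows\; \hbottom{\bullet}(P(S))
\end{equation*}
obtained by applying the module action first on each block of $\pi$ and then on $\pi$, or by first composing in the operad along $\pi\leq \pi'$ and then acting on the module, coincide. Expanding each composite as a sum over pairs of pullbacks and concatenations, the associativity axiom of $P$ (diagrams \eqref{eq: axiom composition of phis}, \eqref{eq: axiom composition of psis}, \eqref{eq: axiom composition of phis and psis}) reindexes both sums by pairs $x\leq x'$ in $P(S)$ with $a(x)=\pi$ and $a(x')=\pi'$, identifying the maps $\varphi_x$, $\psi_{x'}$ and the diagonal $\xi_{x,x'}$ from \eqref{eq: definition xi} in the pullback factors. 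The mixed associativity of $\widecheck{\mu}$ above, combined with the Künneth and pullback compatibilities, then reduces each side to a common triple-concatenation expression
\begin{equation*}
\sum_{\substack{x\leq x' \\ a(x)=\pi,\,a(x')=\pi'}} \widecheck{\mu}_{x,x'}\circ\bigl(\varphi_x^*\otimes \xi_{x,x'}^*\otimes \psi_{x'}^*\bigr)\circ \kappa,
\end{equation*}
where $\widecheck{\mu}_{x,x'}$ is the obvious $\widecheck{\mu}$-version of \eqref{eq: definition mu double} and $\kappa$ is the appropriate iterated Künneth morphism.

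The main obstacle is the bookkeeping needed to commute Künneth morphisms past partial concatenations and pullbacks while passing from one composition order to the other; this is structurally identical to the work already required in the proof of Theorem \ref{thm: main}, and replacing the terminal $\mu_{x'}$ by $\widecheck{\mu}_{x'}$ introduces no new difficulty since the variant satisfies the same formal compatibilities whenever one of its inputs lives on the $\hbottom{\bullet}$ side.
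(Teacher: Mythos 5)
Your proposal is correct and is precisely the "obvious adaptation" that the paper's one-line proof refers to: you substitute $\widecheck{\mu}_x$ for $\mu_x$ in the module slot, verify the mixed associativity $\widecheck{\mu}_{x'}\circ(\mu_x\otimes\mathrm{id})=\widecheck{\mu}_x\circ(\mathrm{id}\otimes\widecheck{\mu}_{x'})$ and the pullback/K\"unneth compatibilities for $\widecheck{\mu}$, and then rerun the two-sided reduction $A=\widecheck{\rho}_{\pi,\pi'}=B$ through the intermediary $\widecheck{\mu}_{x,x'}$ exactly as in the proof of Theorem \ref{thm: main}. This matches the paper's intended argument in both structure and content.
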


        By using the variant $\widehat{\mu}_x$ of the concatenation morphisms, we obtain morphisms
        $$\widehat{\rho}_\pi:\htop{\bullet}(P(\pi)) \otimes \bigotimes_{T\in\pi} \h{\bullet}(P(T)) \To \htop{\bullet}(P(S)).$$

        \begin{thm}\label{thm: main right}
        The morphisms $\widehat{\rho}_\pi$ give $\htop{\bullet}(P)$ the structure of a right operadic module over $\h{\bullet}(P)$.
        \end{thm}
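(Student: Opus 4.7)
The proof runs in complete parallel to that of Theorem \ref{thm: main}, substituting the variant concatenation $\widehat{\mu}_x:\htop{\bullet}(P_{\leq x})\otimes h^\bullet(P_{\geq x}) \to \htop{\bullet}(P)$ for $\mu_x$ at every step. My first task is to establish for $\widehat{\mu}$ the three compatibilities analogous to the associativity, pullback compatibility (Proposition \ref{prop: concatenation and pullback}), and K\"unneth compatibility (Proposition \ref{prop: concatenation and kunneth}) of $\mu$. Each follows by direct inspection of the chain-level concatenation formula exactly as in the classical case; the only modification is that the ``hat'' is carried on the leftmost factor, and the hypotheses on functoriality read as max-compatibility for every map involving $\htop{\bullet}$ together with min-max-compatibility for every map involving $h^\bullet$.

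With these at hand, the operadic module composition $\widehat{\rho}_\pi$ is defined as stated: for every $x\in P(S)$ with $a(x)=\pi$, post-compose the pullback pair $\varphi_x^*\otimes\psi_x^*$ (after the appropriate K\"unneth morphism on the right tensor factor) with $\widehat{\mu}_x$, and sum over all such $x$. Axiom \eqref{eq: axiom a min max compatible strong} together with Remark \ref{rem: strong min max compatibility for all} ensures that every $\varphi_x$ is max-compatible and every $\psi_x$ is min-max-compatible, so both pullbacks are well-defined at the appropriate variant of poset cohomology. Equivariance of $\widehat{\rho}_\pi$ then follows from the equivariance axiom \S\ref{axiom: equivariance} and the naturality of $\widehat{\mu}$.

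For the right-unit axiom, the argument mirrors the operad case: by the unitality axiom of $P$, every $h^\bullet(P(\{s\}))$ is one-dimensional, spanned by the length-zero chain $[1_s]$, and concatenation with such a chain is the identity on chains. The associativity axiom is the main technical step. It asserts that for $\pi\leq\pi'$ in $\Pi(S)$, the two natural compositions
\[
\htop{\bullet}(P(\pi)) \otimes \bigotimes_{T\in \pi} h^\bullet(P(\pi'_{|T})) \otimes \bigotimes_{U\in \pi'} h^\bullet(P(U)) \To \htop{\bullet}(P(S))
\]
agree. I would prove this equality by following the same template as in Theorem \ref{thm: main}: expand both sides as sums over pairs $x\leq x'$ of elements of $P(S)$ with $a(x)=\pi$ and $a(x')=\pi'$, and for each such pair use the associativity of $\widehat{\mu}$ (through the ``double'' version $\widehat{\mu}_{x,x'}$), its pullback compatibility applied to the morphisms $\varphi_{x'}$ and $\psi_x$ produced by axiom \S\ref{axiom: associativity} (in particular via the map $\xi_{x,x'}$ of \eqref{eq: definition xi}), and the K\"unneth compatibility. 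The principal difficulty is the bookkeeping of the double sum over $(x,x')$, which recovers the same combinatorial identity as in the operad case; since the placement of the ``hat'' is preserved throughout the computation, the argument transports verbatim.
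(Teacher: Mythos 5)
Your proposal is correct and follows exactly the approach the paper takes: the paper itself states that Theorems~\ref{thm: main left} and~\ref{thm: main right} are proved by adapting the proof of Theorem~\ref{thm: main} in the obvious way, and your outline (establishing the analogues of associativity, pullback compatibility, and K\"unneth compatibility for $\widehat{\mu}$, then transporting the $A=\rho_{\pi,\pi'}=B$ argument with the ``hat'' carried on the left tensor factor, and using max-compatibility of $\varphi_x$ and min-max-compatibility of $\psi_x$ supplied by \eqref{eq: axiom a min max compatible strong} and Remark~\ref{rem: strong min max compatibility for all}) is precisely that adaptation.
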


        Theorems \ref{thm: main left} and \ref{thm: main right} are proved by adapting the proof of Theorem \ref{thm: main} in the obvious way.

        \begin{rem}
        The left (resp. right) operadic module $\hbottom{\bullet}(P)$ (resp. $\htop{\bullet}(P)$) is by definition a quotient of $h^\bullet(P)$, i.e. is a \emph{cyclic} left (resp. right) operadic module.
        \end{rem}

\subsection{Functoriality and Lie-operad structure}\label{sec: functoriality}

    Let $P$ and $Q$ be two operadic poset species. There is an obvious notion of a morphism of operadic poset species $f:P\to Q$. It amounts to a morphism of poset species which commutes with the structure morphisms $a$ and with the morphisms $\varphi$ and $\psi$. In particular, the commutativity of the following diagram implies that $f$ is min-max-compatible (Definition \ref{defi: min max compatible}) because of \eqref{eq: axiom a min max compatible strong}.
    $$\diagram{
    P \ar[rr]^-{f} \ar[rd]_-{a} && Q \ar[ld]^-{a}\\
    &\Pi&
    }$$
    By using the functoriality of K\"{u}nneth morphisms and concatenation morphisms (Proposition \ref{prop: concatenation and pullback}) one easily sees that the pullback morphisms
    $$f^*:h^\bullet(Q)\to h^\bullet(P) , \qquad  f^*:\hbottom{\bullet}(Q)\to \hbottom{\bullet}(P), \qquad f^*:\htop{\bullet}(Q)\to\htop{\bullet}(P)$$
    give rise to morphisms of graded operads and compatible morphisms of graded left and right operadic modules.
    
    The category of operadic poset species has a terminal object, which is $\Pi$, the poset species of partitions \eqref{eq: phi and psi for partitions}. Proposition \ref{prop: partitions and Lie} below therefore implies that for every operadic poset species $P$, the cohomology operad $h^\bullet(P)$ comes equipped with morphism of graded operads
    $$a^*:\Lambda^{-1}\operatorname{Lie} \to h^\bullet(P)$$
    from the operadic desuspension of the $\operatorname{Lie}$ operad. Upon operadic suspension, we obtain a morphism of graded operads
    $$\operatorname{Lie} \to \Lambda h^\bullet(P)$$
    which makes $\Lambda h^\bullet(P)$ into a (graded) \emph{Lie-operad}.

\subsection{Products}

    Let $P_1$ and $P_2$ be two operadic poset species and form the following fiber product (cartesian square).
    $$\diagram{
    P_1 \underset{\Pi}{\times} P_2 \ar[r]\ar[d] & P_1 \ar[d]^{a} \\
    P_2 \ar[r]_{a} & \Pi
    }$$
    Concretely, an element of $(P_1\underset{\Pi}{\times} P_2)(S)$ is a pair $(x_1,x_2)\in P_1(S)\times P_2(S)$ such that $a(x_1)=a(x_2)$. We view $(P_1\underset{\Pi}{\times} P_2)(S)$ as a subposet of the product poset $P_1(S)\times P_2(S)$.

    \begin{prop} \label{prop: fiber product}
    The fiber product $P_1\underset{\Pi}{\times} P_2$ is naturally equipped with the structure of an operadic poset species, that we simply call the \emph{product} of $P_1$ and $P_2$.
    \end{prop}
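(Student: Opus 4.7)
The plan is to build the operadic poset species structure on $P_1\underset{\Pi}{\times} P_2$ component-wise, using the structures on $P_1$ and $P_2$, and to verify each axiom of Definition \ref{def: operadic poset species} by reducing it, via the two projections, to the corresponding axiom for $P_1$ and $P_2$.

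First, I would take as structure morphism $a: P_1\underset{\Pi}{\times} P_2\to \Pi$ the common composition, which is well-defined by the very definition of the fibered product. For the strong min-max compatibility \eqref{eq: axiom a min max compatible strong}, note that $(x_1,x_2)$ is minimal (resp. maximal) in $(P_1\underset{\Pi}{\times}P_2)(S)$ if and only if both $x_1$ and $x_2$ are minimal (resp. maximal) in $P_i(S)$, since the two projections are morphisms of posets and $(x_1,x_2)\leq (y_1,y_2)$ means $x_i\leq y_i$ for $i=1,2$. Applying \eqref{eq: axiom a min max compatible strong} for $P_1$ and $P_2$ separately yields the required property.

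Next, for $(x_1,x_2)$ with common underlying partition $\pi$, I would identify the lower and upper intervals in the fibered product with the corresponding fibered products of intervals in $P_1$ and $P_2$, and define
\[
\varphi_{(x_1,x_2)} \defas (\varphi_{x_1},\varphi_{x_2}), \qquad \psi_{(x_1,x_2)} \defas (\psi_{x_1},\psi_{x_2}).
\]
The fact that these maps land in $(P_1\underset{\Pi}{\times} P_2)(\pi)$ and $\prod_{T\in\pi}(P_1\underset{\Pi}{\times} P_2)(T)$ respectively is precisely the content of the commutative squares \eqref{eq: commutative squares phi psi a} for $P_1$ and $P_2$: for $(y_1,y_2)\leq (x_1,x_2)$ one has $a(\varphi_{x_i}(y_i))=\varphi_\pi(a(y_i))$, and the two partitions agree because $a(y_1)=a(y_2)$. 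The analogous argument works for $\psi$. The commutative squares \eqref{eq: commutative squares phi psi a} for the fibered product follow immediately from those for $P_1$ and $P_2$.

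Finally, the equivariance, unitality, and associativity axioms all reduce to the corresponding axioms for $P_1$ and $P_2$. Indeed, two morphisms into a fibered product are equal as soon as their two projections agree, and our $\varphi$, $\psi$ for the fibered product are defined exactly so that their projections onto $P_i$ recover $\varphi, \psi$ for $P_i$. Thus each diagram required by the axioms becomes, after projection, a pair of diagrams known to commute for $P_1$ and $P_2$; unitality is immediate since $(P_1\underset{\Pi}{\times} P_2)(\{s\})=\{(1_s,1_s)\}$ is a singleton. The ``main obstacle'' is essentially bookkeeping in the associativity axiom \eqref{eq: axiom composition of phis and psis}, where one must verify that the elements $\underline{x}$ and $x'_{|T}$ constructed in the fibered product are the pairs of the corresponding elements for $P_1$ and $P_2$; this is a direct consequence of the componentwise definition of $\varphi$ and $\psi$.
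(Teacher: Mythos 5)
Your overall approach matches the paper's proof: you define $a$, $\varphi_{(x_1,x_2)}$ and $\psi_{(x_1,x_2)}$ componentwise and reduce each axiom to the corresponding statement for $P_1$ and $P_2$, which is exactly what the paper does.

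There is, however, a gap in your verification of the strong min-max compatibility \eqref{eq: axiom a min max compatible strong}. You assert that $(x_1,x_2)$ is minimal in the fibered product if and only if both $x_1$ and $x_2$ are minimal in $P_i(S)$, and justify the equivalence only by observing that the order on the fibered product is componentwise. That observation gives the ``if'' direction but not the ``only if'': in a general subposet of a product poset an element can well be minimal without either component being minimal (for instance $\{(1,0),(0,1)\}$ inside $\{0,1\}^2$). The correct argument must invoke the axiom \eqref{eq: axiom a min max compatible strong} for $P_1$ and $P_2$ already at this stage, not merely afterward: given $(x_1,x_2)$ in the fibered product, choose by finiteness minimal elements $y_i\leq x_i$ in $P_i(S)$. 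The axiom for $P_i$ forces $a(y_i)$ to be the least element $\hat{0}$ of $\Pi(S)$; hence $a(y_1)=a(y_2)$, so $(y_1,y_2)$ lies in the fibered product and is $\leq (x_1,x_2)$, and if $(x_1,x_2)$ is minimal it must equal $(y_1,y_2)$, whence both components are minimal. The maximal case is analogous. The rest of your proof, including the treatment of the associativity axioms, is sound and follows the same route as the paper.
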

    
    \begin{example}
    The product of the operadic poset species of left-decorated partitions ${}^{\mathcal{P}}\Pi$, defined in \S\ref{sec: Ldp}, and that of right-decorated partitions $\Pi^{\mathcal{Q}}$, defined in \S\ref{sec: Rdp}, is the operadic poset species of bi-decorated partitions ${}^{\mathcal{P}}\Pi^{\mathcal{Q}}$ defined in \S\ref{sec: Bdp}.
    \end{example}    

    \begin{proof}[Proof of Proposition \ref{prop: fiber product}]
    The natural morphism of posets $a:P_1\underset{\Pi}{\times} P_2\to \Pi \; , \; (x_1,x_2)\mapsto a(x_1)=a(x_2)$ is strictly increasing.
    For $x_1\in P_1(S)$ and $x_2\in P_2(S)$ such that $a(x_1)=a(x_2)=\pi$, we define a morphism of posets
    $$\varphi_{(x_1,x_2)}:(P_1\underset{\Pi}{\times} P_2)_{\leq (x_1,x_2)}(S) = (P_1)_{\leq x_1}(S) \underset{\Pi_{\leq \pi}(S)}{\times} (P_2)_{\leq x_2}(S) \To P_1(\pi)\underset{\Pi(\pi)}{\times} P_2(\pi).$$
    by the obvious formula $\varphi_{(x_1,x_2)}(y_1,y_2)=(\varphi_{x_1}(y_1),\varphi_{x_2}(y_2))$. The fact that it is well-defined is a consequence of the commutativity of the first diagram in \eqref{eq: commutative squares phi psi a} for $P_1$ and $P_2$. Similarly, we define a morphism of posets 
    $$\psi_{(x_1,x_2)}: ( P_1\underset{\Pi}{\times} P_2 )_{\geq (x_1,x_2)} (S) = (P_1)_{\geq x_1}(S) \underset{\Pi_{\geq \pi}(S)}{\times} (P_2)_{\geq x_2}(S)\To \prod_{T \in \pi} P_1(T) \underset{\Pi(T)}{\times} P_2(T).$$
    The commutativity of \eqref{eq: commutative squares phi psi a} for $P_1\underset{\Pi}{\times} P_2$ is clear, as well as the equivariance and unitality axioms. The commutativity axioms \eqref{eq: axiom composition of phis}, \eqref{eq: axiom composition of psis}, \eqref{eq: axiom composition of phis and psis} can be checked componentwise.
    \end{proof}

\subsection{Proof of Theorem \ref{thm: main}}\label{subsec: proof of main theorem}
    The equivariance axiom easily follows from the functoriality of K\"{u}nneth morphisms and composition morphisms (Proposition \ref{prop: concatenation and pullback}). The unitality axiom is clear. For the associativity axiom, let $S$ be a finite set and $\pi,\pi'\in\Pi(S)$ with $\pi\leq \pi'$. We let
        $$A=\rho_{\pi'}\circ (\rho_\pi\otimes\mathrm{id}) \qquad \mbox{ and } \qquad B=\rho_\pi\circ \left(\mathrm{id}\otimes\bigotimes_{T\in\pi}\rho_{\pi'_{|T}}\right).$$
        We break the proof of the equality $A=B$ into two parts:
        $$A=\rho_{\pi,\pi'} \quad \mbox{ and } \qquad B=\rho_{\pi,\pi'}.$$
        Here we are using as an intermediary the morphism (``symmetrical double operadic composition'')
        $$\rho_{\pi,\pi'}:h(P(\pi))\otimes\bigotimes_{T\in \pi}h(P(\pi'_{|T}))\otimes \bigotimes_{U\in \pi'}h(P(U)) \To h(P(S))$$
        obtained by composing the K\"{u}nneth morphism
        \begin{equation}\label{eq: double kunneth in proof}
        \xymatrixcolsep{1.35cm}
        \diagram{
        h(P(\pi))\otimes\displaystyle\bigotimes_{T\in \pi}h(P(\pi'_{|T}))\otimes \displaystyle\bigotimes_{U\in \pi'}h(P(U)) \ar[r]^-{\mathrm{id}\otimes\kappa\otimes\kappa} & h(P(\pi))\otimes h\left(\displaystyle\prod_{T\in \pi}P(\pi'_{|T})\right) \otimes h\left(\displaystyle\prod_{U\in\pi'}P(U)\right)
        }
        \end{equation}
        and the sum over pairs $(x,x')$ of elements of $P(S)$ with $x\leq x'$, $a(x)=\pi$, $a(x')=\pi'$, of the compositions of the morphisms
        $$\xymatrixcolsep{2.3cm}
        \diagram{
        h(P(\pi))\otimes h\left(\displaystyle\prod_{T\in \pi}P(\pi'_{|T})\right) \otimes h\left(\displaystyle\prod_{U\in\pi'}P(U)\right) \ar[r]^-{\varphi_x^*\otimes \xi_{x,x'}^*\otimes \psi_{x'}^*} & h(P_{\leq x}(S))\otimes h(P_{[x,x']}(S))\otimes h(P_{\geq x'}(S))
        }$$
        and
        $$\xymatrixcolsep{1.5cm}
        \diagram{h(P_{\leq x}(S))\otimes h(P_{[x,x']}(S))\otimes h(P_{\geq x'}(S)) \ar[r]^-{\mu_{x,x'}} & h(P(S)).
        }$$
        \medskip
        
        \subsubsection{Proof of $A=\rho_{\pi,\pi'}$}

        Note that $A$ is a sum of morphisms $A_{y,x'}$ indexed by pairs $(y,x')$ with $y\in P(\pi')$ such that $a(y)=\pi$ and $x'\in P(S)$ such that $a(x')=\pi'$. Concretely, $A_{y,x'}$ is obtained by composing the K\"{u}nneth morphism \eqref{eq: double kunneth in proof} and the morphisms in the following diagram.
        $$
        \xymatrixcolsep{1.5cm}\xymatrixrowsep{2cm}\diagram{
        h(P(\pi))\otimes h\left(\displaystyle\prod_{T\in \pi}P(\pi'_{|T})\right) \otimes h\left(\displaystyle\prod_{U\in\pi'}P(U)\right) \ar[d]_-{\varphi_y^*\otimes \psi_y^*\otimes\psi_{x'}^*} && \\
        h(P_{\leq y}(\pi'))\otimes h(P_{\geq y}(\pi'))\otimes h(P_{\geq x'}(S)) \ar[r]^-{\mu_y\otimes\mathrm{id}} & h(P(\pi'))\otimes h(P_{\geq x'}(S)) \ar[d]_-{\varphi_{x'}^*\otimes\mathrm{id}} & \\
        & h(P_{\leq x'}(S))\otimes h(P_{\geq x'}(S)) \ar[r]^-{\mu_{x'}} & h(P(S))
        }
        $$
        By applying Proposition \ref{prop: concatenation and pullback} to $\varphi_{x'}:P_{\leq x'}(S)\to P(\pi')$ and $y$ we get the following commutative diagram. (Note that the assumptions of Proposition \ref{prop: concatenation and pullback} are indeed satisfied by Remark \ref{rem: strong min max compatibility for all}.)
        $$\xymatrixcolsep{3cm}\xymatrixrowsep{2cm}\diagram{
        h(P_{\leq y}(\pi'))\otimes h(P_{\geq y}(\pi')) \ar[r]^-{\mu_y} \ar[d]_-{\bigoplus_x \varphi_{x'}^*\otimes\varphi_{x'}^*} & h(P(\pi')) \ar[d]^-{\varphi_{x'}^*} \\
        \displaystyle\bigoplus_{x\in P_{\leq x'}(S), \, \varphi_{x'}(x)=y} h(P_{\leq x}(S))\otimes h(P_{[x,x']}(S)) \ar[r]_-{\bigoplus_x \mu_x}   & h(P_{\leq x'}(S))
        }$$
        It allows to rewrite $A$ as a sum of morphisms $A_{x,x'}$ indexed by pairs $(x,x')$ of elements of $P(S)$ with $x\leq x'$, $a(x)=\pi$, and $a(x')=\pi'$. More specifically, $A_{x,x'}$ is obtained by composing the K\"{u}nneth morphism \eqref{eq: double kunneth in proof} and the morphisms in the following diagram.
        $$
        \xymatrixcolsep{1.5cm}\xymatrixrowsep{2cm}\diagram{
        h(P(\pi))\otimes h\left(\displaystyle\prod_{T\in \pi}P(\pi'_{|T})\right) \otimes h\left(\displaystyle\prod_{U\in\pi'}P(U)\right) \ar[d]_-{\varphi_{\underline{x}}^*\otimes \psi_{\underline{x}}^*\otimes\psi_{x'}^*}  && \\
        h(P_{\leq\underline{x}}(\pi'))\otimes h(P_{\geq \underline{x}}(\pi'))\otimes h(P_{\geq x'}(S)) \ar[d]_-{\varphi_{x'}^*\otimes \varphi_{x'}^*\otimes\mathrm{id}} &  & \\
        h(P_{\leq x}(S))\otimes h(P_{[x,x']}(S))\otimes h(P_{\geq x'}(S)) \ar[r]^-{\mu_x\otimes \mathrm{id}}& h(P_{\leq x'}(S))\otimes h(P_{\geq x'}(S)) \ar[r]^-{\mu_{x'}} & h(P(S))
        }
        $$
        The vertical composition in this diagram equals $\varphi_x^*\otimes \xi_{x,x'}^*\otimes \psi_{x'}^*$ by the commutativity of \eqref{eq: axiom composition of phis} and the definition of $\xi_{x,x'}$ \eqref{eq: definition xi}, while the horizontal composition equals $\mu_{x,x'}$ by definition \eqref{eq: definition mu double}. This concludes the proof of the equality $A= \rho_{\pi,\pi'}$.
        \medskip
                
        \subsubsection{Proof of $B=\rho_{\pi,\pi'}$} 
        
        Note that $B$ is a sum of morphisms $B_{x,y'}$ indexed by pairs $(x,y')$ with $x\in P(S)$ such that $a(x)=\pi$ and $y'=(y'_T)_{T\in \pi'}\in \prod_{T\in \pi}P(T)$ such that $a(y'_T)=\pi'_{|T}$ for all $T\in \pi'$. Concretely, $B_{x,y'}$ is obtained by composing the K\"{u}nneth morphism \eqref{eq: double kunneth in proof} and the morphisms in the following diagram, where we use the notation $\varphi_{y'}=(\varphi_{y'_T})_{T\in\pi}$ and $\psi_{y'}=(\psi_{y'_T})_{T\in\pi}$.
        $$
        \xymatrixcolsep{0.5cm}\xymatrixrowsep{2cm}\diagram{
        h(P(\pi))\otimes h\left(\displaystyle\prod_{T\in \pi}P(\pi'_{|T})\right) \otimes h\left(\displaystyle\prod_{U\in\pi'}P(U)\right) \ar[d]_-{\varphi_x^*\otimes\varphi_{y'}^*\otimes\psi_{y'}^*} &&& \\
        h(P_{\leq x}(S))\otimes h\left(\displaystyle\prod_{T\in\pi}P_{\leq y'_T}(T)\right)\otimes h\left(\displaystyle\prod_{T\in\pi}P_{\geq y'_T}(T)\right) \ar[rr]^-{\mathrm{id}\otimes \mu_{y'}} && h(P_{\leq x}(S))\otimes h\left(\displaystyle\prod_{T\in \pi}P(T)\right) \ar[d]_-{\mathrm{id}\otimes \psi_x^*} & \\
        && h(P_{\leq x}(S))\otimes h(P_{\geq x}(S)) \ar[r]^-{\mu_x}& h(P(S))
        }
        $$
        To be precise, this description of $B_{x,y'}$ follows from the compatibility of concatenation morphisms with K\"{u}nneth morphisms (Proposition \ref{prop: concatenation and kunneth}) and the functoriality of K\"{u}nneth morphisms.
        By applying Proposition \ref{prop: concatenation and pullback} to $\psi_x:P_{\geq x}(S)\to \prod_{T\in \pi}P(T)$ and $y'$ we get the following commutative diagram. (Note that the assumptions of Proposition \ref{prop: concatenation and pullback} are indeed satisfied by Remark \ref{rem: strong min max compatibility for all}.)
        $$\xymatrixcolsep{3cm}\xymatrixrowsep{2cm}\diagram{
        h\left(\displaystyle\prod_{T\in \pi}P_{\leq y'_T}(T)\right)\otimes h\left(\displaystyle\prod_{T\in\pi}P_{\geq y'_T}(T)\right) \ar[r]^-{\mu_{y'}} \ar[d]_-{\bigoplus_{x'}\psi_{x'}^*\otimes \psi_{x'}^*} & h\left(\displaystyle\prod_{T\in \pi}P(T)\right) \ar[d]^-{\psi_x^*} \\
        \displaystyle\bigoplus_{x'\in P_{\geq x}(S),\psi_x(x')=y'}h(P_{[x,x']}(S))\otimes h(P_{\geq x'}(S)) \ar[r]_-{\bigoplus_{x'}\mu_{x'}} & h(P_{\geq x}(S))
        }$$
        It allows to rewrite $B$ as a sum of morphisms $B_{x,x'}$ indexed by pairs $(x,x')$ of elements of $P(S)$ with $x\leq x'$, $a(x)=\pi$, $a(x')=\pi'$. More specifically, $B_{x,x'}$ is obtained by composing the K\"{u}nneth morphism \eqref{eq: double kunneth in proof} and the morphisms in the following diagram.
        $$
        \xymatrixcolsep{1.1cm}\xymatrixrowsep{2cm}\diagram{
        h(P(\pi))\otimes h\left(\displaystyle\prod_{T\in \pi}P(\pi'_{|T})\right) \otimes h\left(\displaystyle\prod_{U\in\pi'}P(U)\right) \ar[d]_-{\varphi_x^*\otimes\varphi_{\underline{x}'}^*\otimes\psi_{\underline{x}'}^*} && \\
        h(P_{\leq x}(S))\otimes h\left(\displaystyle\prod_{T\in\pi}P_{\leq x'_{|T}}(T)\right)\otimes h\left(\displaystyle\prod_{T\in\pi}P_{\geq x'_{|T}}(T)\right)\ar[d]_-{\mathrm{id}\otimes \psi_{x'}^*\otimes \psi_{x'}^*}  &  & \\
        h(P_{\leq x}(S))\otimes h(P_{[x,x']}(S))\otimes h(P_{\geq x'}(S)) \ar[r]^-{\mathrm{id}\otimes \mu_{x'}} & h(P_{\leq x}(S))\otimes h(P_{\geq x}(S)) \ar[r]^-{\mu_x}& h(P(S))
        }
        $$
        The vertical composition in this diagram equals $\varphi_x^*\otimes \xi_{x,x'}^*\otimes \psi_{x'}^*$ by the commutativity of \eqref{eq: axiom composition of psis} and the definition of $\xi_{x,x'}$ \eqref{eq: definition xi}, while the horizontal composition equals $\mu_{x,x'}$ by definition \eqref{eq: definition mu double}. This concludes the proof of the equality $B=\rho_{\pi,\pi'}$.

\section{Decorated partitions and their cohomology operads}\label{sec: decorated partitions operads}

    After Fresse's analysis \cite{fressepartitionposets} of the case of the operads $\mathrm{Com}$ and $\mathrm{Lie}$, Vallette \cite{vallettepartitionposets} interpreted the bar construction of certain operads as a complex that computes the cohomology of posets of decorated partitions. The latter had already been considered by Mendez and Yang \cite[Lemma 3.1]{MendezYang} without the operadic language. We recast this framework in our formalism, calling the objects at play \emph{right-decorated partitions}, first developing the dual case of \emph{left-decorated partitions}.

    \subsection{Partitions and the Lie operad}\label{subsec: partitions and lie}

        We start by reviewing the classical case of (undecorated) partitions: the partition posets $\Pi(S)$ form an operadic poset species $\Pi$, whose cohomology $h^\bullet(\Pi)$ is naturally isomorphic to the (desuspension of the) operad encoding Lie algebras (Proposition \ref{prop: partitions and Lie} below).
        
        Recall (from, e.g. \cite[\S 7.2.2]{lodayvallette}) the suspension of graded vector spaces $(sV)^\bullet=V^{\bullet-1}$, and the operadic suspension
        $$(\Lambda\mathcal{P})(n)=s^{1-n}\mathrm{sgn}_n\otimes\mathcal{P}(n),$$
        for $\mathcal{P}$ a differential graded operad. It is such that $\Lambda\mathcal{P}$-algebras are suspensions of $\mathcal{P}$-algebras. Its inverse operation, the operadic desuspension, is defined as
        $$(\Lambda^{-1}\mathcal{P})(n)=s^{n-1}\mathrm{sgn}_n\otimes\mathcal{P}(n).$$
        The following result is classical, and was first remarked by Joyal \cite{joyal} who compared the symmetric group representations appearing in the cohomology of partition posets \cite{Han81, Stan82} on the one hand and in free Lie algebras on the other \cite{brandt}.
        However, the point of view is slightly different here: Theorem \ref{thm: main} puts an \emph{a priori} operadic structure on $h^\bullet(\Pi)$, and the next proposition \emph{computes} it.
        
        \begin{prop}\label{prop: partitions and Lie}
        There exists a unique isomorphism of graded operads
        $$\Phi: \Lambda^{-1}\operatorname{Lie} \stackrel{\sim}{\To} h^\bullet(\Pi)$$
        that sends the Lie bracket to the generator $[12<1|2]$ of $h^1(\Pi(2))$. 
        \end{prop}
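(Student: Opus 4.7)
The plan is the following. Uniqueness is immediate, since the operad $\operatorname{Lie}$ is generated in arity $2$ by the Lie bracket, so any operad morphism out of $\Lambda^{-1}\operatorname{Lie}$ is determined by the image of that generator.

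For existence, I would verify that the assignment respects the defining relations of $\Lambda^{-1}\operatorname{Lie}$. Skew-symmetry comes for free: the chain $[12<1|2]$ is $\mathfrak{S}_2$-invariant (both partitions are stabilized by the transposition), which matches the behavior of the bracket in $\Lambda^{-1}\operatorname{Lie}(2) = s\,\mathrm{sgn}_2\otimes \operatorname{Lie}(2)$, where the sign twist makes the antisymmetric bracket invariant. For the twisted Jacobi relation, I would compute the three partial compositions $[12<1|2]\circ_i[12<1|2]$ via Example \ref{ex: operadic composition partitions} and the $\mathfrak{S}_3$-action. Together they yield, in $c^2(\Pi(3))$, the three chains
\[
[123<1|23<1|2|3],\quad [123<12|3<1|2|3],\quad [123<13|2<1|2|3],
\]
corresponding to the three two-block partitions of $\{1,2,3\}$. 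By the differential formula \eqref{eq: formula differential min max}, the signed sum of these three chains (with signs dictated by the Jacobi relation in $\Lambda^{-1}\operatorname{Lie}$) equals $\pm d[123<1|2|3]$, and therefore vanishes in $h^2(\Pi(3))$.

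For bijectivity, I invoke the classical theorem of Hanlon and Stanley that the partition lattice is Cohen--Macaulay of rank $n-1$, so that $h^\bullet(\Pi(n))$ is concentrated in top degree $n-1$ with dimension $(n-1)! = \dim\operatorname{Lie}(n)$. It then suffices to show surjectivity of $\Phi$. This follows by induction on arity: every top-degree basis element of $c^{n-1}(\Pi(n))$, being a maximal chain $[\hat{0}<\pi_1<\cdots<\pi_{n-2}<\hat{1}]$, can be written as an iterated partial composition of $[12<1|2]$ with itself (after suitable relabeling), so its cohomology class lies in the image of $\Phi$. Alternatively, one may conclude via Joyal's $\mathfrak{S}_n$-equivariant isomorphism $h^{n-1}(\Pi(n))\simeq \mathrm{sgn}_n\otimes \operatorname{Lie}(n)$, checking that $\Phi$ realizes it on the generator.

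The main obstacle is the sign bookkeeping in the Jacobi verification, since the operadic desuspension introduces signs from the twist $s^{n-1}\otimes \mathrm{sgn}_n$ and from the Koszul sign rule in the operadic composition, and these must match those occurring in the poset-cohomology differential. Once signs are tracked correctly the computation reduces to a small finite check in $c^\bullet(\Pi(3))$, and the rest is a standard dimension/generation argument.
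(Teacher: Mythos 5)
Your proof is essentially the same as the paper's: both reduce well-definedness to the symmetry of $[12<1|2]$ and the Jacobi identity (verified as a coboundary $d[123<1|2|3]$ in $c^2(\Pi(3))$), and then invoke the Cohen--Macaulay property and the rank $(n-1)!$ of $h^{n-1}(\Pi(n))$ together with surjectivity. The only difference is that for surjectivity the paper cites the spanning argument of Hanlon--Wachs, while you sketch a self-contained induction showing every maximal chain arises as an iterated partial composition of $[12<1|2]$ (via concatenation at the penultimate partition, which has exactly one non-singleton block); this is correct and slightly more explicit, but morally the same argument.
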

        
        \begin{proof}
        Note that the symmetric group $\mathfrak{S}_2$ acts trivially on the element $[12<1|2]$. We next check that the Jacobi identity is satisfied in $h^2(\Pi(3))$. Recalling the computation of $\Phi([1,[2,3]])$ from Example \ref{ex: operadic composition partitions}, we have:
        \begin{equation*}
\Phi([1,[2,3]]+ [2,[3,1]] + [3,[1,2]]) =  [123<1|23<1|2|3] + [123<2|13<1|2|3] + [123<3|12<1|2|3],   
        \end{equation*}
        which is the class of the coboundary $d\,[1|2|3<123]$, and therefore zero. It is well-known on the one hand that the $\KK$-module $\operatorname{Lie}(n)$ is free of rank $(n-1)!$, and on the other hand that $h^i(\Pi(n))=0$ for $i\neq n-1$ and $h^{n-1}(\Pi(n))$ is free of rank $(n-1)!$ by \cite{bjornershellable}. Therefore, it remains to prove that the morphism $\mathrm{sgn}_n\otimes\,\operatorname{Lie}(n)\to h^{n-1}(\Pi(n))$ induced by $\Phi$ is surjective. This is easily done by the same argument as in \cite[Lemma 3.1, Definition 3.2, Theorem 3.11]{hanlonwachs}.
        \end{proof}
        
    \subsection{Left-decorated partitions} \label{sec: Ldp}

        All set operads are implicitly assumed to be finite in each arity.

        \subsubsection{Definition}

        \begin{defi}
        Let $\mathcal{P}$ be a set operad satisfying $\mathcal{P}(0)=\varnothing$ and $\mathcal{P}(1)=\{*\}$. A \emph{left-$\mathcal{P}$-decorated partition} of a finite set $S$ is a pair $(\pi,\xi)$ where $\pi$ is a partition of $S$ and $\xi\in \mathcal{P}(\pi)$. We denote by ${}^{\mathcal{P}}\Pi(S)$ the set of left-$\mathcal{P}$-decorated partitions of $S$.
        \end{defi}

        \begin{rem}
        The terminology comes from the fact that ${}^{\mathcal{P}}\Pi$ equals the composition of species $\mathcal{P}\circ\mathbb{E}^+$, with $\mathcal{P}$ on the left, where $\mathbb{E}^+$ denotes the species of non-empty sets.
        \end{rem}

         We define an order relation on left-$\mathcal{P}$-decorated partitions by saying that
         $$(\alpha,\xi) \leq (\beta,\eta)$$
        if $\alpha$ is obtained by merging blocks of $\beta$ and $\eta$ is obtained from $\xi$ by operadic composition. More formally, this means that $\alpha\leq \beta$ as partitions and there exists a collection of elements $\mu_T\in \mathcal{P}(\beta_{|T})$, for $T\in\alpha$, such that $\eta=\xi\circ(\mu_T)_{T\in\alpha}$. 
        One checks that this gives each ${}^{\mathcal{P}}\Pi(S)$ the structure of a poset, and $S\mapsto {}^{\mathcal{P}}\Pi(S)$ the structure of a poset species. There is a morphism of poset species
        $$\diagram{{}^{\mathcal{P}}\Pi\;\; \ar[d]_a \\ \Pi}$$
        which forgets the decorations: $a(\pi,\xi)=\pi$.

    \subsubsection{Left-decorated partitions as an operadic poset species}

        In order to make left-decorated partitions into an operadic poset species, the following assumption on the decorating operad $\mathcal{P}$ is natural.

        \begin{defi}
        A set operad $\mathcal{P}$ satisfying $\mathcal{P}(0)=\varnothing$ and $\mathcal{P}(1)=\{*\}$ is said to be \emph{left-basic} if for every finite set $S$ and every left-$\mathcal{P}$-decorated partition $(\pi,\xi)$ of $S$, the operadic composition map
        \begin{equation}\label{eq: axiom left basic}
        \prod_{T\in\pi}\mathcal{P}(T)\To \mathcal{P}(S) \; , \; (\mu_T)_{T\in\pi} \mapsto \xi\circ (\mu_T)_{T\in\pi}
        \end{equation}
        is injective.
        \end{defi} 

        Let now $\mathcal{P}$ be a left-basic set operad. Let $(\pi,\xi)$ be a left-$\mathcal{P}$-decorated partition of a finite set $S$. The morphism of posets
        $$\varphi_{(\pi,\xi)}:{}^{\mathcal{P}}\Pi_{\leq (\pi,\xi)}(S)\To {}^{\mathcal{P}}\Pi(\pi)$$
        is easy to define and does not use the fact that $\mathcal{P}$ is left-basic. It sends a left-$\mathcal{P}$-decorated partition $(\alpha,\mu)\leq (\pi,\xi)$, to $(\varphi_\pi(\alpha)=\alpha/\pi,\widetilde{\mu})$ where the decoration $\widetilde{\mu}\in\mathcal{P}(\alpha/\pi)$ is induced by $\mu\in\mathcal{P}(\alpha)$ via the bijection $\alpha\simeq \alpha/\pi,T\leftrightarrow \pi_{|T}$. One can note that $\varphi_{(\pi,\xi)}$ induces an isomorphism of posets between ${}^{\mathcal{P}}\Pi_{\leq (\pi,\xi)}(S)$ and the interval ${}^{\mathcal{P}}\Pi_{\leq (\hat{1},\xi)}(\pi)$. We now define a morphism of posets
        \begin{equation} \label{eq:DefPsiLeftDec}
            \psi_{(\pi,\xi)}:{}^{\mathcal{P}}\Pi_{\geq (\pi,\xi)}(S)\To \prod_{T\in \pi} {}^{\mathcal{P}}\Pi(T).
        \end{equation}
        An element of ${}^{\mathcal{P}}\Pi_{\geq (\pi,\xi)}(S)$ is a left-$\mathcal{P}$-decorated partition $(\beta, \eta)$ where $\beta\geq \pi$ and such that there exist elements $\mu_T\in \mathcal{P}(\beta_{|T})$, for $T\in\pi$, such that $\eta=\xi\circ (\mu_T)_{T\in\pi}$. Since $\mathcal{P}$ is left-basic, $(\mu_T)_{T\in\pi}$ is unique and we can define the image of $(\beta,\eta)$ by $\psi_{(\pi,\xi)}$ to consist of the partitions $\beta_{|T}$, for $T\in\pi$, decorated by the $\mu_T$. One can note that $\psi_{(\pi,\xi)}$ is an isomorphism of posets.

        \begin{prop}
        For $\mathcal{P}$ a left-basic set operad, this structure makes ${}^{\mathcal{P}}\Pi$ into an operadic poset species.
        \end{prop}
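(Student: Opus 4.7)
The plan is to verify, in order, each of the conditions of Definition~\ref{def: operadic poset species}. The preliminary checks are straightforward: since $\mathcal{P}(1)=\{*\}$, the fiber of $a:{}^{\mathcal{P}}\Pi(S)\to\Pi(S)$ over $\hat{0}=\{S\}$ is the singleton $\{(\hat{0},*)\}$, hence the unique minimum of ${}^{\mathcal{P}}\Pi(S)$, while the fiber over $\hat{1}$ identifies with $\mathcal{P}(S)$ and consists of maximal elements because $\hat{1}$ admits no strict refinement; this gives \eqref{eq: axiom a min max compatible strong}. The commutativity of the squares \eqref{eq: commutative squares phi psi a} is immediate since $\varphi_{(\pi,\xi)}$ and $\psi_{(\pi,\xi)}$ were built to act on underlying partitions exactly as $\varphi_\pi$ and $\psi_\pi$. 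The equivariance axiom reduces to the naturality of $\mathcal{P}$ under bijections, and unitality is forced by $\mathcal{P}(\{s\})=\{*\}$.

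The substance of the proof lies in the three associativity diagrams. I would fix $x=(\pi,\xi)\leq x'=(\pi',\xi')$ in ${}^{\mathcal{P}}\Pi(S)$ and first extract, using the left-basic hypothesis, the \emph{unique} family $(\nu_T)_{T\in\pi}\in\prod_{T\in\pi}\mathcal{P}(\pi'_{|T})$ with $\xi'=\xi\circ(\nu_T)_{T\in\pi}$; this makes the element $\underline{x}=\varphi_{x'}(x)=(\pi/\pi',\widetilde{\xi})$ unambiguous, where $\widetilde{\xi}$ is the transport of $\xi$ through the canonical bijection $\pi\simeq\pi/\pi'$. Diagram \eqref{eq: axiom composition of phis} then projects on underlying partitions to \eqref{eq: composition of phis for partitions}, and on decorations it reduces to the identification of the composite bijection $\alpha\simeq\alpha/\pi'\simeq(\alpha/\pi')/(\pi/\pi')$ with the direct bijection $\alpha\simeq\alpha/\pi$, for $\alpha\leq\pi$, which is a tautology. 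The mixed diagram \eqref{eq: axiom composition of phis and psis} is handled in the same spirit, reducing to \eqref{eq: composition of phis and psis for partitions} on underlying partitions and to the same transport of decorations.

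The main obstacle will be diagram \eqref{eq: axiom composition of psis}. Given $(\beta,\eta)\geq(\pi',\xi')$, both sides of the diagram produce, for each block $U\in\pi'$, a decorated partition whose underlying partition is $\beta_{|U}$, and the task is to show the two decorations in $\mathcal{P}(\beta_{|U})$ coincide. Unwinding \eqref{eq:DefPsiLeftDec}, the top--right path extracts the unique family $(\mu_U)_{U\in\pi'}$ with $\eta=\xi'\circ(\mu_U)_{U\in\pi'}$, whereas the bottom--left path first extracts the unique family $(\lambda_T)_{T\in\pi}$ with $\eta=\xi\circ(\lambda_T)_{T\in\pi}$ and then, on each block $T\in\pi$, the unique family $(\mu_U)_{U\in\pi'_{|T}}$ with $\lambda_T=\nu_T\circ(\mu_U)_{U\in\pi'_{|T}}$. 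Associativity of the operadic composition in $\mathcal{P}$ produces the same final family $(\mu_U)_{U\in\pi'}$ either way, and left-basicness guarantees that every intermediate family is unambiguous. This is precisely where the left-basic hypothesis is indispensable, and the bookkeeping here is the only step that requires real care.
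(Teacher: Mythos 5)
Your proposal is correct and follows essentially the same route as the paper: after the routine checks on $a$, the commutative squares \eqref{eq: commutative squares phi psi a}, equivariance, and unitality, both proofs trace the decorations through the three associativity diagrams, using left-basicness to extract unique decompositions and associativity of operadic composition in $\mathcal{P}$ to match them. Your handling of the mixed $\varphi$--$\psi$ diagram \eqref{eq: axiom composition of phis and psis} is lighter than the paper's, which explicitly invokes the equivariance of the operadic composition $\mathcal{P}(\pi)\times\prod_{T\in\pi}\mathcal{P}(\beta_{|T})\to\mathcal{P}(\beta)$ under the natural bijection $\beta\simeq\varphi_{\pi'}(\beta)$, combined with the uniqueness afforded by left-basicness; your phrase ``the same transport of decorations'' points in the right direction but should be unpacked into those two ingredients.
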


        \begin{proof}
        The structure morphisms $a$ satisfy \eqref{eq: axiom a min max compatible strong}. The commutativity of the diagrams \eqref{eq: commutative squares phi psi a} is obvious, as are the equivariance and unitality axioms. We are left with proving the associativity axioms. In each case we only need to trace the decorations since the diagrams obviously commute at the level of the underlying partitions. Let $(\pi,\xi)\leq (\pi',\xi')$ be left-$\mathcal{P}$-decorated partitions of a finite set $S$.
        \begin{enumerate}[$\triangleright$]
        \item (Composition of $\varphi$'s.) This is clear since for every partition $\alpha\leq \pi$ the composition of the natural bijections $\alpha\simeq \varphi_{\pi'}(\alpha)\simeq \varphi_{\pi/\pi'}(\varphi_{\pi'}(\alpha))$ is the natural bijection $\alpha\simeq \varphi_\pi(\alpha)$.
        \item (Composition of $\psi$'s.) Let us write $\xi'=\xi\circ(\mu_T)_{T\in\pi}$. For $(\beta,\nu)\geq (\pi',\xi')$, we can write $\nu=\xi\circ (\sigma_T)_{T\in\pi}$, and then, for every $T\in\pi$, $\sigma_T=\mu_T\circ(\sigma')_{U\in\pi'_{|T}}$. The bottom composition in \eqref{eq: axiom composition of psis} sends $(\beta,\nu)$ to the collection of the $\beta_{|U}$, for $U\in\pi'$, decorated by the $\sigma'_U$. By the associativity of operadic composition we have
        $$\xi'\circ(\sigma'_U)_{U\in\pi} = \left(\xi\circ (\mu_T)_{T\in\pi}\right)\circ(\sigma'_U)_{U\in\pi} = \xi\circ\left(\mu_T\circ (\sigma'_U)_{U\in\pi'_{|T}}\right)_{T\in\pi} =\xi\circ(\sigma_T)_{T\in\pi} = \nu, $$
        and therefore the horizontal arrow in \eqref{eq: axiom composition of psis} sends $(\beta,\nu)$ to the collection of the $\beta_{|U}$, for $U\in\pi'$, decorated by the $\sigma'_U$.
        \item (Composition of $\varphi$'s and $\psi$'s.) We consider \eqref{eq: axiom composition of phis and psis} applied to some left-$\mathcal{P}$-decorated partition $(\beta,\nu)$ with $(\pi,\xi)\leq (\beta,\nu)\leq (\pi',\xi')$. The commutativity easily follows from the equivariance of the operadic composition 
        $$\mathcal{P}(\pi)\times \prod_{T\in\pi}\mathcal{P}(\beta_{|T})\to \mathcal{P}(\beta)$$
        with respect to the natural bijection $\beta\simeq \varphi_{\pi'}(\beta)$.
        \end{enumerate}
        \end{proof}

        \subsubsection{Relation to the cobar construction and Koszul duality}

        We now explain how to understand the cohomology operad $h^\bullet({}^{\mathcal{P}}\Pi)$ in the setting of operadic Koszul duality, following Fresse \cite{fressepartitionposets} (in the case of the trivial operad) and adapting Vallette's study of the case of right-decorated partitions \cite{vallettepartitionposets}, which we will treat below. 

        Let $\mathcal{C}$ be a linear cooperad concentrated in degree $0$ and such that $\mathcal{C}(0)=0$ and $\mathcal{C}(1)=\KK 1$. We recall the definition of its \emph{cobar construction} $\Omega(\mathcal{C})$, which is a differential graded operad, see, e.g. \cite[\S 6.5]{lodayvallette} for more details. For every finite set $S$, the dg $\KK$-module $\Omega(\mathcal{C})(S)$ is spanned by reduced rooted trees with leaves labeled by $S$ and nodes labeled by elements of $\mathcal{C}$, where the grading is the number of nodes, and the differential is induced by the operadic cocomposition of $\mathcal{C}$. The operadic structure of $\Omega(\mathcal{C})$ is simply induced by the grafting operation on trees.
        
        We will also use a variant, the \emph{cobar construction with levels}, defined in the same way with \emph{leveled trees}. (It is also known in the literature as the \emph{simplicial cobar construction}.) In a leveled tree, each node is assigned an integer called its \emph{level}, in such a way that the levels are strictly increasing on each path from the root to a leaf, and such that the set of levels is an interval $\{1,\ldots,n\}$ for some $n$. For every finite set $S$, the dg $\KK$-module $\Omega_{\mathrm{level}}(\mathcal{C})(S)$ is spanned by reduced rooted leveled trees with leaves labeled by $S$ and nodes labeled by elements of $\mathcal{C}$, where the grading is the number of levels (it may be strictly smaller than the number of nodes), and the differential is induced by the operadic cocomposition of $\mathcal{C}$. The grafting operation on leveled trees does not make $\Omega_{\mathrm{level}}(\mathcal{C})$ into an operad because the associativity axiom is not satisfied. The \emph{delevelization morphism}
        \begin{equation}\label{eq: delevelization morphism}
        \Omega_{\mathrm{level}}(\mathcal{C}) \To \Omega(\mathcal{C}),
        \end{equation}
        sends a (decorated) leveled tree to the underlying (decorated) tree if the nodes have distinct levels, and to $0$ otherwise. It is a quasi-isomorphism in each arity by the dual statement of \cite[Theorem 4.1.8]{fressepartitionposets}. Furthermore, it commutes with the grafting operations.

        The following proposition is the ``left-decorated'' version of the main result of \cite{vallettepartitionposets}, with the slight difference that in our formalism, there is an \emph{a priori} operadic structure on $h^\bullet({}^{\mathcal{P}}\Pi)$. Here we borrow the terminology from \cite{vallettepartitionposets} and say that a graded poset $P$ is \emph{Cohen--Macaulay} (over $\KK$) if for every $x\leq y$ in $P$ we have $h_n([x,y])=0$ for $n$ different from the rank of $[x,y]$.

        \begin{prop}\label{prop: left Koszul}
        Let $\mathcal{P}$ be a left-basic set operad. 
        \begin{enumerate}[1)]
        \item We have an isomorphism of complexes, for every finite set $S$,
        $$\chains{\bullet}({}^{\mathcal{P}}\Pi(S)) \simeq \Omega_{\mathrm{level}}(\KK\mathcal{P}^\vee)(S),$$
        which induces an isomorphism of graded operads
        $$\h{\bullet}({}^{\mathcal{P}}\Pi) \simeq \operatorname{H}^\bullet(\Omega(\KK\mathcal{P}^\vee)).$$
        \item Assume that $\mathcal{P}$ is quadratic, and that the ring of coefficients $\KK$ is hereditary (e.g. $\KK=\mathbb{Z}$ or a field). Then $\KK\mathcal{P}$ is Koszul if and only if the maximal intervals $[\hat{0},(\hat{1},\xi)]$ of ${}^{\mathcal{P}}\Pi(S)$ are Cohen--Macaulay for all finite sets $S$ and all $\xi\in\mathcal{P}(S)$. In this case, if $(\KK\mathcal{P})^!$ denotes the Koszul dual operad, we have an isomorphism of graded operads
        $$\h{\bullet}({}^{\mathcal{P}}\Pi) \simeq \Lambda^{-1}(\KK\mathcal{P})^!.$$
        \end{enumerate}
        \end{prop}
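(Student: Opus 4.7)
The plan for 1) is to construct an explicit chain-level isomorphism
$\Psi \colon \chains{\bullet}({}^{\mathcal{P}}\Pi(S)) \stackrel{\sim}{\To} \Omega_{\mathrm{level}}(\KK\mathcal{P}^\vee)(S)$
by translating chains into decorated leveled trees, and then to promote it at cohomology via the delevelization quasi-isomorphism \eqref{eq: delevelization morphism}. Concretely, given a chain
$\gamma = \big((\hat{0},*) = (\pi_0,\xi_0) < (\pi_1,\xi_1) < \cdots < (\pi_n,\xi_n)\big)$
with $(\pi_n,\xi_n)$ maximal, the left-basic hypothesis provides, for each $1 \leq i \leq n$, uniquely determined elements $\mu_T^{(i)} \in \mathcal{P}((\pi_i)_{|T})$ indexed by $T \in \pi_{i-1}$ with $\xi_i = \xi_{i-1} \circ (\mu_T^{(i)})_{T \in \pi_{i-1}}$. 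Define $\Psi([\gamma])$ to be the leveled tree with leaves labeled by $S$ which carries, at level $i$, one node for each block $T \in \pi_{i-1}$ with $|(\pi_i)_{|T}| \geq 2$, decorated by the dual basis element $(\mu_T^{(i)})^\vee \in \KK\mathcal{P}^\vee$ and with outgoing edges indexed by the blocks of $(\pi_i)_{|T}$. The left-basic property makes this a bijection onto the natural basis of $\Omega_{\mathrm{level}}(\KK\mathcal{P}^\vee)(S)$, and it matches degrees ($n$ strict inequalities $\leftrightarrow$ $n$ levels).

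I would then check that $\Psi$ is a morphism of complexes. A summand of $d[\gamma]$ corresponds to inserting some $y$ strictly between two consecutive terms $(\pi_{i-1},\xi_{i-1})$ and $(\pi_i,\xi_i)$; by left-basicness, such $y$'s are in bijection with compatible factorizations of the collection $(\mu_T^{(i)})_T$ through an intermediate refinement, which is precisely the local description of the cobar differential coming from the cocomposition of $\KK\mathcal{P}^\vee$. A direct sign verification completes this step. Composing $\Psi$ with the delevelization then yields the linear isomorphism $\h{\bullet}({}^{\mathcal{P}}\Pi) \simeq H^\bullet(\Omega(\KK\mathcal{P}^\vee))$. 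For operadic compatibility I would use the partial composition description of Remark \ref{rem: partial operadic composition poset species}: for $\pi$ the partition of $A \sqcup B$ whose only non-singleton block is $B$, the chain-level concatenation $\mu_x$ corresponds under $\Psi$ exactly to grafting of the underlying leveled trees at the marked leaf $*$. Since delevelization commutes with grafting, this matches the standard operad structure on $\Omega(\KK\mathcal{P}^\vee)$, yielding the operad isomorphism of 1).

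For 2), I would invoke standard operadic Koszul duality: under the quadratic/hereditary hypothesis, $\KK\mathcal{P}$ is Koszul if and only if $H^\bullet(\Omega(\KK\mathcal{P}^\vee))(S)$ is concentrated in top degree $|S|-1$ for every $S$, in which case it is canonically identified with $\Lambda^{-1}(\KK\mathcal{P})^!$ as graded operads. Thanks to 1), this concentration is equivalent to $\h{\bullet}({}^{\mathcal{P}}\Pi(S))$ being concentrated in top degree. Since ${}^{\mathcal{P}}\Pi(S)$ has a unique minimum, the interpretation reviewed in \S\ref{Poset cohomology} yields, for $n\geq 2$, a decomposition
$$\h{n}({}^{\mathcal{P}}\Pi(S)) \;\simeq\; \bigoplus_{\xi \in \mathcal{P}(S)} \widetilde{H}^{n-2}\!\left( {}^{\mathcal{P}}\Pi(S)_{((\hat{0},*),(\hat{1},\xi))} \right),$$
so concentration in top degree translates exactly into the Cohen--Macaulay property of each maximal interval $[\hat{0},(\hat{1},\xi)]$. (The $\psi$-isomorphism \eqref{eq:DefPsiLeftDec} factors every other sub-interval of ${}^{\mathcal{P}}\Pi(S)$ as a product of such maximal intervals in smaller arities, and Cohen--Macaulayness is stable under products, so the reduction to maximal intervals is genuinely equivalent to the full Cohen--Macaulay condition.)

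The main obstacle will be the operadic compatibility step at the end of 1): since $\Omega_{\mathrm{level}}(\KK\mathcal{P}^\vee)$ fails to be an operad on the nose, the comparison must be carried out after passing to cohomology, while the operadic composition $\rho_\pi$ on $\h{\bullet}({}^{\mathcal{P}}\Pi)$ involves K\"{u}nneth morphisms and concatenation morphisms in a somewhat subtle way. Unwinding this through $\Psi$ and the delevelization, and verifying strict agreement with tree grafting, requires a careful application of the compatibility results of \S\ref{sec:main} (Propositions \ref{prop: concatenation and pullback} and \ref{prop: concatenation and kunneth}). The accompanying sign bookkeeping, coming from the chain differential, the cobar differential, and the operadic suspension, is where the argument is most error-prone.
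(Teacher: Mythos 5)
Your proposal matches the paper's argument in all essentials: for 1) you build the same chain-level bijection between $\chains{\bullet}({}^{\mathcal{P}}\Pi(S))$ and the basis of $\Omega_{\mathrm{level}}(\KK\mathcal{P}^\vee)(S)$ (you write down the inverse direction, chain $\to$ leveled tree, of the paper's map $f_S$, extracting the level-$i$ decorations $\mu_T^{(i)}$ via left-basicness), then use the delevelization quasi-isomorphism and verify operadic compatibility by comparing partial compositions with tree grafting, exactly as the paper does. For 2) your sketch (Koszulity $\Leftrightarrow$ concentration in top degree $\Leftrightarrow$ Cohen--Macaulayness of maximal intervals, using that arbitrary intervals factor as products of smaller maximal intervals via $\varphi$ and $\psi$) is the argument of \cite[Theorem 9]{vallettepartitionposets} that the paper cites, so this is the same route rather than a new one.
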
 

        \begin{proof}
        \begin{enumerate}[1)]
        \item For every finite set $S$ we define a linear map
        \begin{equation}\label{eq: from leveled trees to chains in proof}
        f_S:\; \Omega_{\mathrm{level}}(\KK\mathcal{P}^\vee)(S)\To c^\bullet({}^{\mathcal{P}}\Pi(S)) \; , \; t\mapsto [(\pi_0,\xi_0)<(\pi_1,\xi_1)<\cdots <(\pi_n,\xi_n)]
        \end{equation}
        A basis of $\Omega_{\mathrm{level}}(\KK\mathcal{P}^\vee)(S)$ in degree $n$ is given by leveled (reduced rooted) trees $t$ with levels in $\{1,\ldots,n\}$ whose leaves are labeled by $S$ and whose nodes are labeled by elements of $\mathcal{P}$.
        For each $i=0,\ldots,n$, cutting the tree $t$ between the levels $i$ and $i+1$ produces a forest $f_i$ above the cut and a tree $t_i$ below the cut. The connected components of $f_i$ define a partition of $S$ that we denote by $\pi_i$. We decorate it with the element $\xi_i\in \mathcal{P}(\pi_i)$ obtained by composing all the decorations of $t_i$. By convention, $(\pi_0,\xi_0)$ is the partition with only one block decorated by the unit element in $\mathcal{P}(1)$, and $(\pi_n,\xi_n)$ is the partition whose blocks are singletons, decorated by the composition of all the decorations in $t$. One checks that the linear map \eqref{eq: from leveled trees to chains in proof} thus defined is an isomorphism of complexes for every finite set $S$ (with an appropriate sign convention for the differential on the cobar construction with levels). Thanks to the quasi-isomorphism \eqref{eq: delevelization morphism}, we therefore get an isomorphism of graded linear species
        $$h^\bullet({}^{\mathcal{P}}\Pi) \simeq \operatorname{H}^\bullet(\Omega_{\mathrm{level}}(\KK\mathcal{P}^\vee)) \simeq \operatorname{H}^\bullet(\Omega(\KK\mathcal{P}^\vee)).$$
        We are now left with proving that this is an isomorphism of graded operads. Let $A$ and $B$ be disjoint finite sets, and let $\pi$ denote the partition of $A\sqcup B$ whose blocks are $B$ and the singletons $\{a\}$ for $a\in A$. We need to prove the commutativity of the following diagram, for disjoint finite sets $A$ and $B$, where the top horizontal arrow is the grafting operation on leveled trees, and the bottom horizontal arrow is \eqref{eq: operadic composition at cochain level}.
        $$\xymatrixrowsep{1.5cm}\diagram{
        \Omega_{\mathrm{level}}(\KK\mathcal{P}^\vee)(A\sqcup\{*\}) \otimes \Omega_{\mathrm{level}}(\KK\mathcal{P}^\vee)(B) \ar[r]^-{\circ_*} \ar[d]_-{f_{A\sqcup\{*\}}\otimes f_B} & \Omega_{\mathrm{level}}(\KK\mathcal{P}^\vee)(A\sqcup B) \ar[d]^-{f_{A\sqcup B}} \\
        c^\bullet({}^{\mathcal{P}}\Pi(A\sqcup\{*\})) \otimes c^\bullet({}^{\mathcal{P}}\Pi(B)) \ar[r]^-{\circ_*} & c^\bullet({}^{\mathcal{P}}\Pi(A\sqcup B))
        }$$
        Let us write $c=f_{A\sqcup \{*\}}(t)$ and $c'=f_B(t')$, and let $\xi\in \mathcal{P}(A\sqcup\{*\})$ denote the composition of all the decorations in $t$, which decorates the maximal element of $c$. We view $\xi$ as a left-$\mathcal{P}$-decoration of $\pi$. Then by definition, the only non-zero term in $c\circ_* c'$ is
        $$\mu_{(\pi,\xi)}(\varphi_{(\pi,\xi)}^*(c)\otimes \psi_{(\pi,\xi)}^*(c')).$$
        \begin{enumerate}[$\triangleright$]
        \item Applying $\varphi_{(\pi,\xi)}^*$ to $c$ replaces $*$ with $B$ in every partition appearing in $c$, while keeping the decorations.
        \item Applying $\psi_{(\pi,\xi)}^*$ to $c'$ replaces each partition $\pi'_i\in \Pi(B)$ appearing in $c'$ with the partition $\{\{a\},a\in A\}\cup \pi'_i$, and replaces the corresponding decoration $\xi'_i\in\mathcal{P}(\pi'_i)$ with $\xi\circ_*\xi'_i \in\mathcal{P}(A\sqcup \pi'_i)$.
        \end{enumerate}
        It is therefore clear that the chain obtained by concatenating $\varphi_{(\pi,\xi)}^*(c)$ and $\psi_{(\pi,\xi)}^*(c')$ is $f_{A\sqcup B}(t\circ_* t')$. This concludes the proof of the commutativity of the diagram above.
        \item This follows from the same proof as \cite[Theorem 9]{vallettepartitionposets}.
        \end{enumerate}
        \end{proof}

       \subsubsection{Examples of left-decorated partition posets and their cohomology operads}\label{sec: example left operads}

       \subsubsection*{The operad $\mathrm{Com}$.}
        Consider $\mathcal{P}=\mathrm{Com}$, the set operad encoding (associative and) commutative algebras, which satisfies $\mathrm{Com}(S)=\{*\}$ for every non-empty finite set $S$. It is left-basic. A left-$\mathrm{Com}$-decorated partition is just a partition, and ${}^{\mathrm{Com}}\Pi=\Pi$. In this case, Proposition \ref{prop: left Koszul} is the classical Proposition \ref{prop: partitions and Lie} because $(\KK\mathrm{Com})^!\simeq \operatorname{Lie}$.

        \subsubsection*{The operad $\mathrm{As}$.}

        Consider $\mathcal{P}=\mathrm{As}$, the set operad encoding associative algebras, for which $\mathrm{As}(S)$ is the set of linear orders on $S$ for every non-empty finite set $S$. It is left-basic. A left-$\mathrm{As}$-decorated partition is a partition equipped with a linear order on the set of its blocks. In the poset ${}^{\mathrm{As}}\Pi(S)$, one only merges blocks which are consecutive for the linear order on the set of blocks. Since $\KK\mathrm{As}$ is Koszul self-dual, one gets the following description of the cohomology of the operadic poset species of partitions equipped with a linear order on the set of blocks:
        $$h^\bullet({}^{\mathrm{As}}\Pi) \simeq \Lambda^{-1}\KK\mathrm{As}.$$

        \subsubsection*{The Umbrella Pine operad $\mathrm{UP}$.}
        We introduce a simple example of a set operad which is left-basic but not right-basic in the sense of Definition \ref{def:rightBasic} below. Let us consider the set operad $\mathrm{UP}$ generated by two binary generators $a\dashv b$ and $a\vdash b$ subject to the following relations:
        \begin{align*}
            \left( a \dashv b \right) \vdash c & = \left( a \dashv b \right) \dashv c &
            \left( a \vdash b \right) \vdash c & = \left( a \vdash b \right) \dashv c \\
            a \vdash \left( b \dashv c \right) &=a \dashv \left( b \dashv c \right) &
            a \vdash \left( b \vdash c \right) &=a \dashv \left( b \vdash c \right)
        \end{align*}
        Let us call \emph{highest} internal node of a tree an internal node whose children are leaves. Then $\mathrm{UP}(S)$ can be identified with the set of binary trees whose leaves are labeled by $S$ and whose highest internal nodes are decorated by $\dashv$ or $\vdash$. (These trees with decorations only at the highest internal nodes remind us of the umbrella pines which can be found in the Mediterranean region, hence the name.) Indeed, the relations are effectively forgetting the labeling of an internal node which is not a highest internal node. The sets $\mathrm{UP}(n)$ are enumerated by \cite[A025227]{oeis}, and by $2^k \times$\cite[A091894]{oeis} when refined by the number $k$ of highest internal nodes.

        The operad $\mathrm{UP}$ is left basic: a tree $t\in \mathrm{UP}(S)$ has at most one preimage by the operadic composition map \eqref{eq: axiom left basic}, namely the forest obtained by removing the tree $\xi$ at the root of $t$. However, it is not right-basic in the sense of Definition \ref{def:rightBasic} below: the map $\mathrm{UP}(2) \rightarrow \mathrm{UP}(3)$ given by $\mu\mapsto \mu(1\vdash 2, 3)$ is not injective as $\vdash$ and $\dashv$ have the same image. 
        
        Using the methods of Gröbner bases and rewriting of \cite{DotsenkoKhoroshkin}, specifically using \cite[Theorem 8.3.1]{lodayvallette}, one can easily prove that $\KK\mathrm{UP}$ is Koszul, as follows. We order the generators of $\mathrm{UP}$ by $\vdash \,\geq\, \dashv$, and induce an order on trees in the free operad generated by $\vdash$ and $\dashv$ by the lexicographical order  \cite[Proposition 3.5]{Hoffbeck}. All the relations can then be viewed as rewriting rules from left to right, and one checks that the associated critical pairs are confluent, and deduces that $\KK\mathrm{UP}$ is a Koszul operad. Its Koszul dual $(\KK\mathrm{UP})^!$ is presented by two binary generators $\dashv$ and $\vdash$ with relations: 
        \begin{align*}
            \left( a \dashv b \right) \vdash c & = - \left( a \dashv b \right) \dashv c &
            \left( a \vdash b \right) \vdash c & = - \left( a \vdash b \right) \dashv c \\
            a \vdash \left( b \dashv c \right) & = - \, a \dashv \left( b \dashv c \right) &
            a \vdash \left( b \vdash c \right) & = - \, a \dashv \left( b \vdash c \right)
        \end{align*}
        Therefore, the posets of left-$\mathrm{UP}$-decorated partitions are Cohen--Macaulay and we have an isomorphism of graded operads: 
        $$h^\bullet({}^{\mathrm{UP}}\Pi)\simeq \Lambda^{-1}(\KK\mathrm{UP})^!.$$
        
        \subsubsection*{The operad $\mathrm{NAC}_2$}

        We introduce the set operad $\mathrm{NAC}_2$ encoding ``non-associative $2$-step commutative'' algebras. It has one binary generator $ab$ with only relation
        $$(ab)c=c(ab).$$
        In other words, is is obtained from the operad $\mathrm{UP}$ by quotienting it by the relation $a \vdash b = b \dashv a$. For a finite set $S$, the set $\mathrm{NAC}_2(S)$ consists of planar binary trees with leaves labeled by $S$ considered modulo swapping of the planar order at all the internal nodes except for the highest ones. For the same reason as for $\mathrm{UP}$, the operad $\mathrm{NAC}_2$ is left-basic but not right-basic in the sense of Definition \ref{def:rightBasic}. 
        
        The numbers $u_n=|\mathrm{NAC}_2(n)|$ satisfy $u_1=1$, $u_2=2$, and the recurrence relation
        $$u_n=\frac{1}{2}\sum_{k=1}^{n-1}\binom{n}{k}u_ku_{n-k} \quad (n\geq 3).$$
        From there, one readily sees that the exponential generating series of $\mathrm{NAC}_2$ equals
        $$\sum_{n\geq 1} u_n\frac{x^n}{n!} = 1-\sqrt{1-2x-x^2}.$$
        This corresponds to \cite[A182037]{oeis}.

        As in the case of the operad $\mathrm{UP}$, one can prove that $\KK\mathrm{NAC}_2$ is Koszul by using the methods of Gröbner bases and rewriting. By \cite[Proposition 7.6.8]{lodayvallette} one sees that $\KK\mathrm{NAC}_2$ is Koszul self-dual: $(\KK\mathrm{NAC}_2)^! = \KK\mathrm{NAC}_2$. One therefore gets the following description of the cohomology of the operadic poset species of left-$\mathrm{NAC}_2$-decorated partitions:
        $$h^\bullet({}^{\mathrm{NAC}_2}\Pi) \simeq \Lambda^{-1}\KK\mathrm{NAC}_2.$$

    \subsection{Right-decorated partitions} \label{sec: Rdp}

        We now switch from left-decorated to right-decorated partitions, which is the setting of \cite{vallettepartitionposets} and \cite{MendezYang}. Recall that all set operads are implicitly assumed to be finite in each arity.

        \subsubsection{Definitions and examples}

        \begin{defi}
        Let $\mathcal{Q}$ be a set operad satisfying $\mathcal{Q}(0)=\varnothing$ and $\mathcal{Q}(1)=\{*\}$. A right-$\mathcal{Q}$-decorated partition of a finite set $S$ is a pair $(\pi,\xi)$ where $\pi$ is a partition of $S$ and $\xi$ is a collection of elements $\xi_T\in \mathcal{Q}(T)$ for $T\in\pi$. We denote by $\Pi^{\mathcal{Q}}(S)$ the set of right-$\mathcal{Q}$-decorated partitions of $S$. 
        \end{defi} 

        \begin{rem}
        The terminology comes from the fact that $\Pi^{\mathcal{Q}}$ equals the composition of species $\mathbb{E}^+\circ\mathcal{Q}$, with $\mathcal{Q}$ on the right, where $\mathbb{E}^+$ denotes the species of non-empty sets.
        \end{rem}
        
        We define an order relation on right-$\mathcal{Q}$-decorated partitions by saying that
        $$(\alpha,\eta) \leq (\beta,\xi)$$
        if $\alpha$ is obtained by merging blocks of $\beta$ and the $\eta_A$'s are obtained accordingly as operadic compositions of the $\xi_B$'s. More formally, this means that $\alpha\leq \beta$ as partitions and for every $A\in \alpha$ there exists $\nu_A\in \mathcal{Q}(\beta_{|A})$ such that $\eta_A=\nu_A\circ(\xi_B)_{B\in \beta_{|A}}$.
        One checks that this gives each $\Pi^{\mathcal{Q}}(S)$ the structure of a poset, and $S\mapsto \Pi^{\mathcal{Q}}(S)$ the structure of a poset species. 
        There is a morphism of poset species
        $$\diagram{\;\;\Pi^{\mathcal{Q}} \ar[d]_a \\ \Pi}$$
        which forgets the decorations: $a(\pi,\xi)=\pi$.

        \subsubsection{Right-decorated partitions as an operadic poset species}

        In order to make right-decorated partitions into an operadic poset species, the following assumption on the decorating operad $\mathcal{Q}$ is natural.

        \begin{defi} \label{def:rightBasic}
        A set operad $\mathcal{Q}$ satisfying $\mathcal{Q}(0)=\varnothing$ and $\mathcal{Q}(1)=\{*\}$ is said to be \emph{right-basic} if for every finite set $S$ and every right-$\mathcal{Q}$-decorated partition $(\pi,\xi)$ of $S$, the composition map
        $$\mathcal{Q}(\pi)\To \mathcal{Q}(S) \; , \; \nu \mapsto \nu\circ (\xi_T)_{T\in\pi}$$
        is injective.
        \end{defi}

        \begin{rem}
        Right-basic operads are called \emph{set-basic} in \cite{vallettepartitionposets}. They correspond to the notion of $c$-monoid in \cite{MendezYang}.
        \end{rem}

        Let now $\mathcal{Q}$ be a right-basic set operad. Let $(\pi,\xi)$ be a right-$\mathcal{Q}$-decorated partition of a finite set $S$. We first define a morphism of posets
        $$\varphi_{(\pi,\xi)}:\Pi^{\mathcal{Q}}_{\leq (\pi,\xi)}(S)\To \Pi^{\mathcal{Q}}(\pi).$$
        An element of $\Pi^{\mathcal{Q}}_{\leq (\pi,\xi)}(S)$ is a right-$\mathcal{Q}$-decorated partition $(\alpha, \eta)$ where $\alpha\leq \pi$ and for every $A\in \alpha$ there exists $\nu_A\in\mathcal{Q}(\pi_{|A})$ such that $\eta_A=\nu_A\circ(\xi_T)_{T\in\pi_{|A}}$. Since $\mathcal{Q}$ is right-basic, each $\nu_A$ is unique and we can define the image of $(\alpha,\eta)$ by $\varphi_{(\pi,\xi)}$ to be the partition $\varphi_\pi(\alpha)=\{\pi_{|A},A\in\alpha\}$ decorated by the $\nu_A$. One can note that $\varphi_{(\pi,\xi)}$ is an isomorphism of posets. Defining the morphism of posets
        $$\psi_{(\pi,\xi)}:\Pi^{\mathcal{Q}}_{\geq (\pi,\xi)}(S)\To \prod_{T\in \pi} \Pi^{\mathcal{Q}}(T)$$
        is easier and does not use the fact that $\mathcal{Q}$ is right-basic: we define $\psi_{(\pi,\xi)}(\beta,\nu)$ to be the family of partitions $\beta_{|T}$, for $T\in \pi$, with the natural decorations induced by $\nu$. One can note that $\psi_{(\pi,\xi)}$ induces an isomorphism of posets between $\Pi^{\mathcal{Q}}_{\geq (\pi,\xi)}(S)$ and the product of the intervals $\Pi^{\mathcal{Q}}_{\geq (\hat{0},\xi_T)}(T)$, for $T\in\pi$.

        \begin{prop}
        For $\mathcal{Q}$ a right-basic set operad, this structure makes $\Pi^{\mathcal{Q}}$ into an operadic poset species.
        \end{prop}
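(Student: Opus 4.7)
The plan is to mirror the proof of the analogous statement for left-decorated partitions, exchanging the roles of $\varphi$ and $\psi$ and of ``left-basic'' and ``right-basic''. Concretely, in each of the axioms we only need to track what happens to the decorations, since at the level of underlying partitions everything reduces to the corresponding diagrams for the partition posets $\Pi$, which were established earlier in the section.

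First I would handle the easy parts. The map $a:\Pi^{\mathcal{Q}}(S)\to \Pi(S)$ obviously satisfies the strong min-max-compatibility \eqref{eq: axiom a min max compatible strong}: a right-$\mathcal{Q}$-decorated partition lies over $\hat{0}$ (resp.\ $\hat{1}$) if and only if its underlying partition is $\hat{0}$ (resp.\ $\hat{1}$), using $\mathcal{Q}(1)=\{*\}$. The commutativity of the two squares in \eqref{eq: commutative squares phi psi a} is built into the definitions of $\varphi_{(\pi,\xi)}$ and $\psi_{(\pi,\xi)}$: both send a decorated partition to a decorated partition whose underlying partition is the image under $\varphi_\pi$ or $\psi_\pi$ of the underlying partition. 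Equivariance follows because the assignment of decorations is natural in bijections, and unitality is immediate since $\mathcal{Q}(1)=\{*\}$.

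The heart of the proof is the associativity axiom. For a pair $(\pi,\xi)\leq (\pi',\xi')$ in $\Pi^{\mathcal{Q}}(S)$, right-basicness ensures that there is a unique family of ``intermediate decorations'' $\nu_T\in\mathcal{Q}(\pi'_{|T})$, for $T\in \pi$, satisfying $\xi_T=\nu_T\circ (\xi'_U)_{U\in\pi'_{|T}}$. This is the data that $\underline{x}\defas\varphi_{(\pi',\xi')}(\pi,\xi)=(\pi/\pi',(\nu_T)_T)$ records. For the composition of $\varphi$'s \eqref{eq: axiom composition of phis}, I would take an element $(\alpha,\eta)\leq (\pi,\xi)$, let $\mu_A\in\mathcal{Q}(\pi_{|A})$ be the unique decoration it produces via $\varphi_{(\pi,\xi)}$, and unfold definitions to see that both paths around the diagram recover the same family of decorations on $\alpha/\pi$, the key fact being the associativity of operadic composition in $\mathcal{Q}$ applied to the decompositions $\mu_A\circ(\xi_T)_T = \eta_A$ and $\xi_T=\nu_T\circ(\xi'_U)_U$. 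The composition of $\psi$'s is easier (dual to the left case) since $\psi$ simply restricts decorations to sub-blocks. The composition of $\varphi$'s and $\psi$'s \eqref{eq: axiom composition of phis and psis} is then a combination of the two and reduces to the equivariance of the operadic composition in $\mathcal{Q}$ under the bijection $\beta/\pi \simeq \beta_{|\pi}$ when restricted block-by-block.

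The main obstacle, and the only step requiring genuine care rather than bookkeeping, is the composition of $\varphi$'s. Unlike in the left-decorated setting (where right-basicness was unnecessary and left-basicness ensured unique reconstruction of the $\mu_T$'s in the composition of $\psi$'s), here right-basicness is exactly what guarantees that the ``intermediate decorations'' needed to define $\varphi_{(\pi',\xi')}$ on $\Pi^{\mathcal{Q}}_{\leq (\pi,\xi)}(S)$ are well-defined and functorial, and that they match, along the natural bijection $\alpha\simeq \alpha/\pi$, the decorations produced by the direct map $\varphi_{(\pi,\xi)}$. I would carefully check this compatibility of uniqueness statements by expanding the nested factorization $\eta_A=\mu_A\circ(\xi_T)_T = \mu_A\circ(\nu_T\circ(\xi'_U)_U)_T$ and invoking right-basicness of $\mathcal{Q}$ with respect to the decoration $(\xi'_U)_{U\in\pi'_{|A}}$ on each block to identify the two presentations uniquely. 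The remaining verifications are then routine.
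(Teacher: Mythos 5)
Your proposal is correct and follows essentially the same path as the paper's own proof: you correctly identify that the nontrivial verification is the composition of $\varphi$'s (dual to the composition of $\psi$'s being the hard one in the left-decorated case), that right-basicness is precisely what makes the intermediate decorations $\nu_T$ with $\xi_T = \nu_T \circ (\xi'_U)_{U\in\pi'_{|T}}$ unique, and that the key computation is the associativity rewriting $\eta_A = \mu_A\circ\bigl(\nu_T\circ(\xi'_U)_U\bigr)_T = \bigl(\mu_A\circ(\nu_T)_T\bigr)\circ(\xi'_U)_U$, which is exactly the equation the paper displays. The remaining pieces (strong min-max-compatibility of $a$, the squares \eqref{eq: commutative squares phi psi a}, equivariance, unitality, and the other two associativity diagrams) are handled in both treatments as routine; the paper disposes of the composition of $\psi$'s and of $\varphi$'s-with-$\psi$'s with ``by direct inspection,'' whereas you offer slightly more detail by pointing to equivariance of the operadic composition under block restriction, but this is the same content.
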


        \begin{proof}
        The structure morphisms $a$ satisfy \eqref{eq: axiom a min max compatible strong}. The commutativity of the diagrams \eqref{eq: commutative squares phi psi a} is obvious, as are the equivariance and unitality axioms. We are left with proving the associativity axioms. In each case we only need to trace the decorations since the diagrams obviously commute at the level of the underlying partitions. Let $(\pi,\xi)\leq (\pi',\xi')$ be right-$\mathcal{Q}$-decorated partitions of a finite set $S$.
        \begin{enumerate}[$\triangleright$]
        \item (Composition of $\varphi$'s.) Let us write $\xi_T=\nu_T\circ (\xi'_U)_{U\in\pi'_{|T}}$ for all $T\in\pi$, where $\nu_T\in\mathcal{Q}(\pi'_{|T})$. For $(\alpha,\eta)\leq (\pi,\xi)$ we can write $\eta_A=\mu_A\circ(\xi_T)_{T\in\pi_{|A}}$ for all $A\in\alpha$, where $\mu_A\in\mathcal{Q}(\pi_{|A})$. The horizontal arrow $\varphi_{(\pi,\xi)}$ in \eqref{eq: axiom composition of phis} sends $(\alpha,\eta)$ to $\varphi_\pi(\alpha)=\{\pi_{|A},A\in\alpha\}$ with decorations $\mu_A$. By the associativity of operadic composition we have, for all $A\in\alpha$, the equality
        $$\eta_A=\mu_A\circ\left(\nu_T\circ (\xi'_U)_{U\in\pi'_{|T}}\right)_{T\in\pi_{|A}} = \left(\mu_A\circ (\nu_T)_{T\in \pi_{|A}}\right)\circ (\xi'_{U})_{U\in \pi'_{|A}}.$$
        Therefore, in the bottom composition of \eqref{eq: axiom composition of phis}, the first arrow sends $(\alpha,\eta)$ to $\varphi_{\pi'}(\alpha)=\{\pi'_{|A},A\in\alpha\}$ with decorations 
        $$\mu_A\circ (\nu_T)_{T\in \pi_{|A}} \; \in \mathcal{Q}(\pi'_{|A}).$$
        Note that the object denoted by $\underline{x}$ in \eqref{eq: axiom composition of phis} is $\varphi_{\pi'}(\pi)=\{\pi'_{|T},T\in\pi\}$ with decorations the $\nu_T\in\mathcal{Q}(\pi'_{|T})$, and thus by definition the bottom composition of \eqref{eq: axiom composition of phis} sends $(\alpha,\eta)$ to $\varphi_\pi(\alpha)$ with decorations $\mu_A$.
        \item (Composition of $\psi$'s.) By direct inspection.
        \item (Composition of $\varphi$'s and $\psi$'s.) By direct inspection.
        \end{enumerate}
        \end{proof}

    \subsubsection{Relation to the cobar construction and Koszul duality}

        The following result is the right-decorated version of Proposition \ref{prop: left Koszul}, and was originally proved by Vallette \cite{vallettepartitionposets}, with the slight difference that in our formalism, there is an \emph{a priori} operadic structure on $h^\bullet(\Pi^{\mathcal{Q}})$.

        \begin{prop}\label{prop: right koszul}
        Let $\mathcal{Q}$ be a right-basic set operad.
        \begin{enumerate}[1)]
        \item There is an isomorphism of complexes, for every finite set $S$,
        $$\chains{\bullet}(\Pi^{\mathcal{Q}}(S)) \simeq \Omega_{\mathrm{level}}(\KK\mathcal{Q}^\vee)(S),$$
        which induces an isomorphism of graded operads
        $$\h{\bullet}(\Pi^{\mathcal{Q}}) \simeq \operatorname{H}^\bullet(\Omega(\KK\mathcal{Q}^\vee)).$$
        \item Assume that $\mathcal{Q}$ is quadratic, and that the ring of coefficients $\mathbb{K}$ is hereditary (e.g. $\KK=\mathbb{Z}$ or a field). Then $\KK\mathcal{Q}$ is Koszul if and only if the maximal intervals $[(\hat{0},\xi),\hat{1}]$ of $\Pi^{\mathcal{Q}}(S)$ are Cohen--Macaulay for all finite sets $S$ and all $\xi\in\mathcal{Q}(S)$. In this case, if $(\KK\mathcal{Q})^!$ denotes the Koszul dual operad, we have an isomorphism of graded operads
        $$\h{\bullet}(\Pi^{\mathcal{Q}}) \simeq \Lambda^{-1}(\KK\mathcal{Q})^!.$$
        \end{enumerate}
        \end{prop}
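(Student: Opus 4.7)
The plan is to imitate the proof of Proposition \ref{prop: left Koszul}, with the roles of the forest above and the tree below the cut interchanged: in the right-decorated setting, composing the labels in each connected component of the forest \emph{above} the cut produces the per-block decorations. For part 1), we first define
$$f_S : \Omega_{\mathrm{level}}(\KK\mathcal{Q}^\vee)(S) \To c^\bullet(\Pi^{\mathcal{Q}}(S))$$
by sending a leveled tree $t$ with levels $\{1,\ldots,n\}$, leaves labeled by $S$, and nodes labeled by $\mathcal{Q}$, to the chain $[(\pi_0,\xi_0) < \cdots < (\pi_n,\xi_n)]$ in which $\pi_i$ is the partition of $S$ whose blocks are the leaf sets of the connected components of the forest obtained by cutting $t$ between levels $i$ and $i+1$, and each $\xi_{i,T} \in \mathcal{Q}(T)$ is the operadic composition of the labels of the subtree with leaves $T$. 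By construction $(\pi_0,\xi_0) = (\hat{0}, \xi)$ where $\xi \in \mathcal{Q}(S)$ is the composition of all labels of $t$, so this is a minimal element of $\Pi^{\mathcal{Q}}(S)$, while $(\pi_n, \xi_n)$ is the unique maximum $\hat{1}$. A direct inspection of the cobar differential (splitting one level into two in all ways compatible with the cocomposition of $\KK\mathcal{Q}^\vee$) and of the simplicial differential \eqref{eq: formula differential min max} on $c^\bullet$ shows that, with an appropriate sign convention, $f_S$ is an isomorphism of complexes; composing with the delevelization quasi-isomorphism \eqref{eq: delevelization morphism} then yields an isomorphism $h^\bullet(\Pi^{\mathcal{Q}}) \simeq H^\bullet(\Omega(\KK\mathcal{Q}^\vee))$ of graded linear species.

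The main step is to upgrade this to an isomorphism of graded operads. Using the partial composition description of Remark \ref{rem: partial operadic composition poset species}, we fix disjoint finite sets $A, B$ and let $\pi$ denote the partition of $A \sqcup B$ into the singletons $\{a\}$, $a \in A$, and the single block $B$. Given leveled trees $t \in \Omega_{\mathrm{level}}(\KK\mathcal{Q}^\vee)(A\sqcup\{*\})$ and $t' \in \Omega_{\mathrm{level}}(\KK\mathcal{Q}^\vee)(B)$, and writing $\xi \in \mathcal{Q}(B)$ for the composition of the labels of $t'$, the only term contributing to $f_{A\sqcup\{*\}}(t) \circ_* f_B(t')$ is indexed by the decorated partition $(\pi, \xi)$ with trivial decorations on the singletons and $\xi$ on $B$. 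Pulling back $f_{A\sqcup\{*\}}(t)$ along the poset isomorphism $\varphi_{(\pi,\xi)}$ replaces $*$ by $B$ and lifts its decoration accordingly in every decorated partition of the chain, while pulling back $f_B(t')$ along $\psi_{(\pi,\xi)}$ inserts the singletons $\{a\}$ with their trivial decoration into every partition of $B$. Concatenating via $\mu_{(\pi,\xi)}$ then reconstitutes exactly the chain $f_{A \sqcup B}(t \circ_* t')$ associated with the grafted tree. The right-basic hypothesis on $\mathcal{Q}$ is used here precisely so that the lift along $\varphi_{(\pi,\xi)}$ is unique.

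For part 2), once the operadic isomorphism $h^\bullet(\Pi^{\mathcal{Q}}) \simeq H^\bullet(\Omega(\KK\mathcal{Q}^\vee))$ is established, we follow the argument of \cite[Theorem 9]{vallettepartitionposets}: under $f_S$, the cochain complex of a maximal interval $[(\hat{0},\xi),\hat{1}]$ of $\Pi^{\mathcal{Q}}(S)$ identifies with the summand of $\Omega_{\mathrm{level}}(\KK\mathcal{Q}^\vee)(S)$ spanned by leveled trees whose overall composition equals $\xi$. The vanishing of the cohomology of all these subcomplexes in non-top degree is equivalent to the acyclicity of the bar/cobar complex of $\KK\mathcal{Q}$ away from the diagonal, which under the hereditary hypothesis on $\KK$ is the definition of Koszulness. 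When this holds, the top-degree cohomology recovers $\Lambda^{-1}(\KK\mathcal{Q})^!$ in each arity, yielding the final isomorphism. The hard step is the operadic compatibility in the previous paragraph; everything else is either a routine diagram chase or a direct transport of Vallette's argument.
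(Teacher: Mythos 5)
Your proposal is correct and follows essentially the same path as the paper: define the isomorphism of cochain complexes by cutting leveled trees between levels, reading off the partition from the connected components of the forest above the cut and its block decorations from the compositions of labels in each such component, invoke the delevelization quasi-isomorphism, and then verify operadic compatibility on partial compositions by tracking how $\varphi^*_{(\pi,\xi)}$ and $\psi^*_{(\pi,\xi)}$ transform chains before concatenation. The treatment of part 2) by transport of \cite[Theorem 9]{vallettepartitionposets} is also exactly what the paper does.
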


        \begin{proof}
        \begin{enumerate}[1)]
        \item For every finite set $S$ we define a linear map
        \begin{equation}\label{eq: from leveled trees to chains in proof right}
        g_S:\; \Omega_{\mathrm{level}}(\KK\mathcal{Q}^\vee)(S)\To c^\bullet(\Pi^{\mathcal{Q}}(S)) \; , \; t\mapsto [(\pi_0,\xi_0)<(\pi_1,\xi_1)<\cdots <(\pi_n,\xi_n)]
        \end{equation}
        A basis of $\Omega_{\mathrm{level}}(\KK\mathcal{Q}^\vee)(S)$ in degree $n$ is given by leveled (reduced rooted) trees $t$ with levels in $\{1,\ldots,n\}$ whose leaves are labeled by $S$ and whose nodes are labeled by elements of $\mathcal{Q}$.
        For each $i=0,\ldots,n$, cutting the tree $t$ between the levels $i$ and $i+1$ produces a forest $f_i$ above the cut and a tree $t_i$ below the cut. The connected components of $f_i$ define a partition of $S$ that we denote by $\pi_i$. We decorate a block $T$ of $\pi_i$ with the element $\xi_T\in \mathcal{Q}(T)$ obtained by composing all the decorations of the corresponding connected component of $f_i$. By convention, $(\pi_0,\xi_0)$ is the partition with only one block decorated by the composition of all the decorations in $t$, and $(\pi_n,\xi_n)$ is the partition whose blocks are singletons, all decorated by the unit element in $\mathcal{Q}(1)$. One checks that the linear map \eqref{eq: from leveled trees to chains in proof right} thus defined is an isomorphism of complexes for every finite set $S$ (with an appropriate sign convention for the differential on the cobar construction with levels). Thanks to the quasi-isomorphism \eqref{eq: delevelization morphism}, we therefore get an isomorphism of graded linear species
        $$h^\bullet(\Pi^{\mathcal{Q}}) \simeq \operatorname{H}^\bullet(\Omega_{\mathrm{level}}(\KK\mathcal{Q}^\vee)) \simeq \operatorname{H}^\bullet(\Omega(\KK\mathcal{Q}^\vee)).$$
        We are now left with proving that this is an isomorphism of graded operads. 
        Let $A$ and $B$ be disjoint finite sets, and let $\pi$ denote the partition of $A\sqcup B$ whose blocks are $B$ and the singletons $\{a\}$ for $a\in A$. We need to prove the commutativity of the following diagram, for disjoint finite sets $A$ and $B$, where the top horizontal arrow is the grafting operation on leveled trees, and the bottom horizontal arrow is \eqref{eq: operadic composition at cochain level}.
        $$\xymatrixrowsep{1.5cm}\diagram{
        \Omega_{\mathrm{level}}(\KK\mathcal{Q}^\vee)(A\sqcup\{*\}) \otimes \Omega_{\mathrm{level}}(\KK\mathcal{Q}^\vee)(B) \ar[r]^-{\circ_*} \ar[d]_-{g_{A\sqcup\{*\}}\otimes g_B} & \Omega_{\mathrm{level}}(\KK\mathcal{Q}^\vee)(A\sqcup B) \ar[d]^-{g_{A\sqcup B}} \\
        c^\bullet(\Pi^{\mathcal{Q}}(A\sqcup\{*\})) \otimes c^\bullet(\Pi^{\mathcal{Q}}(B)) \ar[r]^-{\circ_*} & c^\bullet(\Pi^{\mathcal{Q}}(A\sqcup B))
        }$$
        Let us write $c=g_{A\sqcup \{*\}}(t)$ and $c'=g_B(t')$, and let $\xi\in \mathcal{Q}(B)$ denote the composition of all the decorations in $t'$, which decorates the minimal element of $c'$. We view $\xi$ as a right-$\mathcal{Q}$-decoration of $\pi$. Then by definition, the only non-zero term in $c\circ_* c'$ is
        $$\mu_{(\pi,\xi)}(\varphi_{(\pi,\xi)}^*(c)\otimes \psi_{(\pi,\xi)}^*(c')).$$
        \begin{enumerate}[$\triangleright$]
        \item Applying $\varphi_{(\pi,\xi)}^*$ to $c$ replaces $*$ with $B$ in every partition appearing in $c$, and replaces the corresponding decoration $\xi_i\in \mathcal{Q}(T\sqcup\{*\})$ with $\xi_i\cup_*\xi\in \mathcal{Q}(T\sqcup B)$. 
        \item Applying $\psi_{(\pi,\xi)}^*$ to $c'$ replaces each partition $\pi'_i\in \Pi(B)$ appearing in $c'$ with the partition $\{\{a\},a\in A\}\cup \pi'_i$, while keeping the decorations.
        \end{enumerate}
        It is therefore clear that the chain obtained by concatenating $\varphi_{(\pi,\xi)}^*(c)$ and $\psi_{(\pi,\xi)}^*(c')$ is $g_{A\sqcup B}(t\circ_* t')$. This concludes the proof of the commutativity of the diagram above.
        \item This is \cite[Theorem 9]{vallettepartitionposets}.
        \end{enumerate}
        \end{proof}

        \subsubsection{Examples of right-decorated partition posets and their cohomology operads}\label{sec: examples right operads}

        The following examples are taken from \cite{vallettepartitionposets}, which also contains more.

        \subsubsection*{The operad $\mathrm{Com}$}

        Consider $\mathcal{Q}=\mathrm{Com}$, the set operad encoding (associative and) commutative algebras, which satisfies $\mathrm{Com}(S)=\{*\}$ for every non-empty finite set $S$. It is right-basic. A right-$\mathrm{Com}$-decorated partition is just a partition, and $\Pi^{\mathrm{Com}}=\Pi$. In this case, Proposition \ref{prop: right koszul} is the classical Proposition \ref{prop: partitions and Lie} because $(\KK\mathrm{Com})^!\simeq \mathrm{Lie}$. Another special case of interest is $\mathcal{Q}=\mathrm{Com}_k$, the operad generated by a totally commutative and associative operation in arity $k+1$, for which one recovers the result of \cite{hanlonwachs}.

        \subsubsection*{The operad $\mathrm{As}$}

        Consider $\mathcal{Q}=\mathrm{As}$, the set operad encoding associative algebras, for which $\mathrm{As}(S)$ is the set of linear orders on $S$ for every non-empty finite set $S$. It is right-basic. A right-$\mathrm{As}$-decorated partition is a partition for which each block is equipped with a total order. In the poset $\Pi^{\mathrm{As}}$, one can only split a block into intervals for the linear order. Since $\KK\mathrm{As}$ is Koszul self-dual, one gets the following description of the cohomology of the operadic poset species of partitions into blocks with linear orders:
        $$h^\bullet(\Pi^{\mathrm{As}}) \simeq \Lambda^{-1}\mathrm{As}.$$
        
        \begin{rem}
        Right-$\mathrm{As}$-decorated partitions are called ``ordered partitions'' in \cite{vallettepartitionposets}, but we will not use that terminology since it could equally well apply to \emph{left}-$\mathrm{As}$-decorated partitions (see \S\ref{sec: example left operads} above).
        \end{rem}

        \subsubsection*{The operad $\mathrm{Perm}$}
        Consider the operad $\mathcal{Q}=\mathrm{Perm}$ encoding permutative algebras, introduced by Chapoton \cite{chapotonendofoncteur}. It is generated by a binary operation $*$ which is associative and satisfies the identity $x*y*z=x*z*y$. We have the description $\mathrm{Perm}(S)=S$, where an element $s\in S$ corresponds to the $\mathrm{Perm}$ operation where the variable decorated by $s$ is on the left. The operad $\mathrm{Perm}$ is right-basic but not left-basic. A right-$\mathrm{Perm}$-decorated partition is a partition in which each block has a distinguished element. In the poset $\Pi^{\mathrm{Perm}}$, one merges blocks by keeping the distinguished element of one of the merged blocks. Right-$\mathrm{Perm}$-decorated partitions are called ``pointed partitions'' in \cite{vallettepartitionposets}.

        By the results of \cite{vallettepartitionposets, chapotonvallette}, the operad $\KK\mathrm{Perm}$ is Koszul with Koszul dual the operad $\mathrm{PreLie}$ encoding pre-Lie algebras \cite{ChapotonLivernet}. We therefore recover the following description of the cohomology of the operadic poset species of pointed partitions:
        $$h^\bullet(\Pi^{\mathrm{Perm}}) \simeq \Lambda^{-1}\mathrm{PreLie}.$$

        \subsection{Bi-decorated partitions} \label{sec: Bdp}

        \begin{defi}
        Let $\mathcal{P}$ and $\mathcal{Q}$ be two set operads. A $(\mathcal{P},\mathcal{Q})$-decorated partition of a set $S$ is a triple $(\pi,{}^{\mathcal{P}}\xi, \xi^{\mathcal{Q}})$ where $\pi$ is a partition of $S$, ${}^{\mathcal{P}}\xi$ is an element of $\mathcal{P}(\pi)$, and $\xi^{\mathcal{Q}}$ is a collection of elements $\xi^{\mathcal{Q}}_T\in\mathcal{Q}(T)$ for $T\in\pi$. We denote by ${}^{\mathcal{P}}\Pi^{\mathcal{Q}}(S)$ the set of $(\mathcal{P},\mathcal{Q})$-decorated partitions of $S$.
        \end{defi}

        \begin{rem} 
        We have the equality of species: ${}^{\mathcal{P}}\Pi^{\mathcal{Q}}=\mathcal{P} \circ \mathcal{Q}$. 
        \end{rem}
        
        The following diagram of forgetful morphisms is cartesian.
        $$\diagram{
        {}^{\mathcal{P}}\Pi^{\mathcal{Q}} \ar[r]\ar[d] & {}^{\mathcal{P}}\Pi \ar[d] \\
        \Pi^{\mathcal{Q}} \ar[r] & \Pi
        }$$
        In other words, we have the equality
        $${}^{\mathcal{P}}\Pi^{\mathcal{Q}} = {}^{\mathcal{P}}\Pi\underset{\Pi}{\times} \Pi^{\mathcal{Q}}.$$
        and Proposition \ref{prop: fiber product} implies the following result.

        \begin{prop}
        Let $\mathcal{P}$ be a left-basic set operad, and $\mathcal{Q}$ be a right-basic set operad. Then there is a unique structure of an operadic poset species on ${}^{\mathcal{P}}\Pi^{\mathcal{Q}}$ for which the forgetful morphisms to ${}^{\mathcal{P}}\Pi$ and $\Pi^{\mathcal{Q}}$ are morphisms of operadic poset species.
        \end{prop}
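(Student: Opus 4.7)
The plan is to identify ${}^{\mathcal{P}}\Pi^{\mathcal{Q}}$ with the fiber product ${}^{\mathcal{P}}\Pi\underset{\Pi}{\times}\Pi^{\mathcal{Q}}$ of operadic poset species (as announced just before the proposition) and then invoke Proposition \ref{prop: fiber product}, which already does all the real work.

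First, I would make the identification explicit: a $(\mathcal{P},\mathcal{Q})$-decorated partition $(\pi,{}^{\mathcal{P}}\xi,\xi^{\mathcal{Q}})$ of $S$ corresponds to the pair of decorated partitions $\bigl((\pi,{}^{\mathcal{P}}\xi),(\pi,\xi^{\mathcal{Q}})\bigr)\in{}^{\mathcal{P}}\Pi(S)\times \Pi^{\mathcal{Q}}(S)$, which lies in the fiber product over $\Pi(S)$ because both components have the same underlying partition $\pi$. Checking that the partial order on ${}^{\mathcal{P}}\Pi^{\mathcal{Q}}(S)$ agrees with the subposet structure inherited from the product ${}^{\mathcal{P}}\Pi(S)\times \Pi^{\mathcal{Q}}(S)$ is a direct unfolding of the definitions: an inequality $(\alpha,{}^{\mathcal{P}}\eta,\eta^{\mathcal{Q}})\leq (\beta,{}^{\mathcal{P}}\xi,\xi^{\mathcal{Q}})$ amounts to having both $(\alpha,{}^{\mathcal{P}}\eta)\leq (\beta,{}^{\mathcal{P}}\xi)$ in ${}^{\mathcal{P}}\Pi(S)$ and $(\alpha,\eta^{\mathcal{Q}})\leq (\beta,\xi^{\mathcal{Q}})$ in $\Pi^{\mathcal{Q}}(S)$, which are exactly the two conditions defining the fiber product as a subposet of the product. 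One then observes that the equality holds functorially in bijections of $S$, giving the identification of poset species.

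For existence, Proposition \ref{prop: fiber product} applied to $P_1={}^{\mathcal{P}}\Pi$ and $P_2=\Pi^{\mathcal{Q}}$ immediately endows ${}^{\mathcal{P}}\Pi^{\mathcal{Q}}$ with the structure of an operadic poset species, for which the two projections ${}^{\mathcal{P}}\Pi^{\mathcal{Q}}\to {}^{\mathcal{P}}\Pi$ and ${}^{\mathcal{P}}\Pi^{\mathcal{Q}}\to \Pi^{\mathcal{Q}}$ are tautologically morphisms of operadic poset species. Concretely, $a(\pi,{}^{\mathcal{P}}\xi,\xi^{\mathcal{Q}})=\pi$, and the maps $\varphi_{(\pi,{}^{\mathcal{P}}\xi,\xi^{\mathcal{Q}})}$ and $\psi_{(\pi,{}^{\mathcal{P}}\xi,\xi^{\mathcal{Q}})}$ are defined componentwise from their analogues for left- and right-decorated partitions constructed in \S\ref{sec: Ldp} and \S\ref{sec: Rdp}.

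For uniqueness, suppose two operadic poset species structures on ${}^{\mathcal{P}}\Pi^{\mathcal{Q}}$ render both forgetful morphisms into morphisms of operadic poset species. The structure morphism $a$ is forced because both forgetful maps must commute with $a$ and both ${}^{\mathcal{P}}\Pi$ and $\Pi^{\mathcal{Q}}$ project to the same underlying partition. The maps $\varphi_x$ and $\psi_x$ must similarly commute with the forgetful morphisms to ${}^{\mathcal{P}}\Pi$ and $\Pi^{\mathcal{Q}}$; but an element of ${}^{\mathcal{P}}\Pi^{\mathcal{Q}}$ is, by the fiber product identification, completely determined by its two images in ${}^{\mathcal{P}}\Pi$ and $\Pi^{\mathcal{Q}}$, so this componentwise determination pins $\varphi_x$ and $\psi_x$ uniquely. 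I do not expect any real obstacle: the whole argument is formal once the bijection with the fiber product is set up, and the main (mild) bookkeeping point is the verification that the two notions of order on ${}^{\mathcal{P}}\Pi^{\mathcal{Q}}$ coincide.
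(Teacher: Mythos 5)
Your proof is correct and follows the same route as the paper: the paper itself notes, immediately before the proposition, that ${}^{\mathcal{P}}\Pi^{\mathcal{Q}} = {}^{\mathcal{P}}\Pi\underset{\Pi}{\times}\Pi^{\mathcal{Q}}$ and invokes Proposition \ref{prop: fiber product}, leaving the details implicit. Your spelling out of the order-relation identification and the componentwise uniqueness argument (both $a$ and $\varphi_x$, $\psi_x$ are pinned down by requiring the forgetful maps to commute with the structure) is exactly the bookkeeping the paper elides.
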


        We leave it to the reader to explore interesting examples of operads $h^\bullet({}^{\mathcal{P}}\Pi^{\mathcal{Q}})$, and their left and right operadic modules $\hbottom{\bullet}({}^{\mathcal{P}}\Pi^{\mathcal{Q}})$ and $\htop{\bullet}({}^{\mathcal{P}}\Pi^{\mathcal{Q}})$.

\section{Decorated partitions revisited: operadic modules}\label{sec: decorated partitions modules}

    We now explore the operadic modules appearing in the variants $\hbottom{\bullet}$ (left operadic module) and $\htop{\bullet}$ (right operadic module) of the cohomology of posets of decorated partitions.

    \subsection{Left-decorated partitions and left operadic modules}\label{subsec: Ldp mod}
    
        \subsubsection{Interpretation as cobar construction with coefficients}
        
        The poset ${}^{\mathcal{P}}\Pi(S)$ of left-$\mathcal{P}$-decorated partitions of $S$ has a least element but potentially many maximal elements, which are in bijection with $\mathcal{P}(S)$. This means that the right operadic module $\htop{\bullet}({}^{\mathcal{P}}\Pi)$ is trivial, but the left operadic module $\hbottom{\bullet}({}^\mathcal{P}\Pi)$ may be interesting. It can be expressed in terms of the cobar construction with coefficients, as follows. 
        
        Recall \cite{fressepartitionposets} that for a linear cooperad $\mathcal{C}$ as in the previous paragraph, and a left $\mathcal{C}$-comodule $L$, we have the cobar construction with coefficients and its variant with levels:
        $$\Omega(\mathcal{C},L) = \Omega(\mathcal{C})\circ L \qquad \mbox{ and } \qquad \Omega_{\mathrm{level}}(\mathcal{C},L) = \Omega_{\mathrm{level}}(\mathcal{C})\circ L.$$ Note that $\Omega(\mathcal{C},L)$ is a left operadic $\Omega(\mathcal{C})$-module. We have the delevelization quasi-isomorphism
        $$\Omega_{\mathrm{level}}(\mathcal{C},L) \stackrel{\sim}{\To} \Omega(\mathcal{C},L).$$

        Like any set operad, $\mathcal{P}$ has a canonical left operadic module $L_{\mathcal{P}}$, defined by
        $$L_{\mathcal{P}}(S)=\{*\}$$
        for every non-empty finite set $S$, and $L_{\mathcal{P}}(\varnothing)=\varnothing$. Therefore, the linear cooperad $\KK\mathcal{P}^\vee$ has a left operadic comodule $\KK L_{\mathcal{P}}^\vee$. The cobar construction $\Omega(\KK\mathcal{P}^\vee,\KK L_{\mathcal{P}}^\vee)$ has a basis consisting of reduced rooted trees with nodes labeled by elements of $\mathcal{P}$ and leaves labeled by non-empty subsets of $S$ which form a partition of $S$.

        \begin{prop}\label{prop: left operadic module decorated partitions}
        Let $\mathcal{P}$ be a left-basic set operad. We have an isomorphism of complexes
        $$\chainsbottom{\bullet}({}^{\mathcal{P}}\Pi(S)) \simeq \Omega_{\mathrm{level}}(\KK\mathcal{P}^\vee, \KK L_{\mathcal{P}}^\vee)(S)$$
        which induces an isomorphism 
        $$\hbottom{\bullet}({}^{\mathcal{P}}\Pi) \simeq \operatorname{H}^\bullet(\Omega(\KK\mathcal{P}^\vee,\KK L_{\mathcal{P}}^\vee)),$$
        compatible with the left operadic module structures over $h^\bullet({}^{\mathcal{P}}\Pi) \simeq \operatorname{H}^\bullet(\Omega(\KK\mathcal{P}^\vee))$.
        \end{prop}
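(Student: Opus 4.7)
The plan is to mimic the proof of Proposition \ref{prop: left Koszul}, replacing leveled trees whose leaves are labeled by the elements of $S$ with leveled trees whose leaves are labeled by the blocks of an arbitrary partition of $S$. This is precisely the change of basis that reflects the fact that a chain in $\chainsbottom{\bullet}({}^{\mathcal{P}}\Pi(S))$ starts at the unique minimum $(\hat{0}, *)$ but is allowed to terminate at any left-decorated partition $(\pi_n, \xi_n)$, not just at a maximal one.

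Concretely, I would define a map
$$f_S:\Omega_{\mathrm{level}}(\KK\mathcal{P}^\vee, \KK L_{\mathcal{P}}^\vee)(S) \To \chainsbottom{\bullet}({}^{\mathcal{P}}\Pi(S))$$
on basis elements by the very same recipe as in \eqref{eq: from leveled trees to chains in proof}: given a leveled tree $t$ with $n$ levels, leaves labeled by a partition $\{T_1,\ldots,T_k\}$ of $S$ and internal nodes labeled by elements of $\mathcal{P}$, cut $t$ at each level, read off the partition $\pi_i$ of $S$ from the forest above the cut, and decorate it by the operadic composition $\xi_i \in \mathcal{P}(\pi_i)$ of the nodes below the cut (with the conventions $\pi_0 = \hat{0}$ and $\pi_n = \{T_1,\ldots,T_k\}$). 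Bijectivity of $f_S$ on basis elements is a direct consequence of the left-basic property of $\mathcal{P}$, which provides, for each step $(\pi_{i-1}, \xi_{i-1}) < (\pi_i, \xi_i)$ of the chain, a unique family $(\mu_T)_{T \in \pi_{i-1}}$ satisfying $\xi_i = \xi_{i-1} \circ (\mu_T)_{T \in \pi_{i-1}}$, and these become the nodes added at level $i$ of the tree.

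Compatibility of $f_S$ with the differentials is a formal check: the terms in the cobar differential coming from cooperadic cocompositions at interior levels produce the terms $[\cdots < y < \cdots]$ of the simplicial cochain differential on $\chainsbottom{\bullet}$, while the term coming from the coaction of $\KK\mathcal{P}^\vee$ on $\KK L_{\mathcal{P}}^\vee$ at the top produces the extra terms $[\cdots < x_n < y]$ that distinguish the differential on $\chainsbottom{\bullet}$ from that on $\chains{\bullet}$. Invoking the delevelization quasi-isomorphism $\Omega_{\mathrm{level}}(\mathcal{C}, L) \stackrel{\sim}{\To} \Omega(\mathcal{C}, L)$, which is the obvious adaptation with coefficients of the quasi-isomorphism \eqref{eq: delevelization morphism} used in the proof of Proposition \ref{prop: left Koszul}, then yields the cohomology isomorphism $\hbottom{\bullet}({}^\mathcal{P}\Pi) \simeq H^\bullet(\Omega(\KK\mathcal{P}^\vee, \KK L_\mathcal{P}^\vee))$.

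It remains to verify the compatibility with the left operadic module structures. By Remark \ref{rem: partial operadic composition poset species} and its obvious analogue for $\widecheck{\rho}$, it suffices to check, for disjoint finite sets $A, B$, that $f_{A \sqcup B}$ intertwines the grafting $\circ_*$ on the cobar side with the partial left module action on chains. This is proven by exactly the same calculation as in the operad case: the pullback along $\varphi_{(\pi,\xi)}$ replaces the leaf $*$ by the subset $B$ in every partition of the upper chain, the pullback along $\widecheck{\psi}_{(\pi,\xi)}$ enlarges each partition of $B$ in the lower chain with the $A$-singletons, and the concatenation $\widecheck{\mu}_{(\pi,\xi)}$ reproduces the chain associated to the grafted leveled tree. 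The main obstacle is the bookkeeping in this final step, which is routine given the template already developed in Proposition \ref{prop: left Koszul}.
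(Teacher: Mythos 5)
Your proposal is correct and is exactly the "straightforward adaptation of the proof of Proposition \ref{prop: left Koszul}'' that the paper's one-line proof invokes: same cut-at-levels recipe with leaves now labeled by the blocks of an arbitrary partition rather than by the elements of $S$, same use of left-basicness for bijectivity of $f_S$ on basis elements, same delevelization quasi-isomorphism with coefficients, and same reduction to the grafting operation $\circ_*$.

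One remark on the final step, which is implicit in the paper as well: unlike for operads, the partial actions $h^\bullet(P(A\sqcup\{*\}))\otimes\hbottom{\bullet}(P(B))\to\hbottom{\bullet}(P(A\sqcup B))$ do not by themselves determine the full left module action $\widecheck{\rho}_\pi$ when $\pi$ has several non-singleton blocks, since one cannot iterate a left module action. So ``it suffices to check partial compositions'' needs an extra ingredient. Here one can use that both $\hbottom{\bullet}({}^{\mathcal{P}}\Pi)$ and $\operatorname{H}^\bullet(\Omega(\KK\mathcal{P}^\vee,\KK L_\mathcal{P}^\vee))$ are \emph{cyclic} left modules (quotients of the corresponding operads, as the paper observes), so that the full module structure is already forced by the operad isomorphism of Proposition \ref{prop: left Koszul} together with the compatibility of $f_S$ with the quotient maps and the arity-one units; the partial composition check is then one convenient way to verify that compatibility. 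Alternatively, one can check the full $\widecheck{\rho}_\pi$ directly at the cochain level, which only adds a routine K\"{u}nneth bookkeeping step to the argument you wrote.
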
 

        \begin{proof}
        It follows from a straighforward adaptation of the proof of Proposition \ref{prop: left Koszul}.
        \end{proof}

    \subsubsection{($\mathfrak{S}$-equivariant) Euler characteristics} 
    
        We do not know of any operadic tools which would allow to compute the cohomology of the cobar construction appearing in Proposition \ref{prop: left operadic module decorated partitions}, in the spirit of Koszul duality (Proposition \ref{prop: left Koszul}). However, when $\KK\mathcal{P}$ is Koszul, we can give formulas for the corresponding ($\mathfrak{S}$-equivariant) Euler characteristics, as follows. 
        
        We work with the ring of coefficients $\KK=\QQ$, and the ring $R_\QQ \defas \QQ[[p_1,p_2,p_3,\ldots]]$ of polynomials in countably many formal variables $p_n$, for $n\geq 1$, viewed as the (completed) ring of symmetric functions. For a linear species $\mathrm{F}$  such that $\mathrm{F}(n)$ is finite dimensional for all $n$, we define its \emph{cycle index series}
        $$\mathrm{Z}_{\mathrm{F}} = \sum_{n\geq 0} \frac{1}{n!} \sum_{\sigma\in\mathfrak{S}_n} \mathrm{Tr}(\sigma|\mathrm{F}(n)) \, p_{\lambda(\sigma)} \;\;\in R_\QQ,$$
        where $p_\lambda(\sigma)\defas p_{\lambda_1}\cdots p_{\lambda_r}$ if $\sigma$ is the product of disjoint cycles of lengths $\lambda_1,\ldots, \lambda_r$. For two elements $\mathrm{Z}=\mathrm{Z}(p_1,p_2,p_3,\ldots)$ and $\mathrm{Z}'=\mathrm{Z}'(p_1,p_2,p_3,\ldots)$ of $R_\QQ$ such that $\mathrm{Z}'(0,0,0,\ldots)=0$, their \emph{plethysm} is:
        $$\mathrm{Z}\circ \mathrm{Z}' \defas \mathrm{Z}(\mathrm{Z}'(p_1,p_2,p_3,\ldots), \mathrm{Z}'(p_2,p_4,p_6, \ldots), \mathrm{Z}'(p_3,p_6,p_9,\ldots),\ldots).$$
        For instance, $p_m\circ p_n=p_{mn}$ for all $m,n\geq 0$, and when $\mathrm{G}(0)=0$, we have $\mathrm{Z}_{\mathrm{F}\circ \mathrm{G}}=\mathrm{Z}_{\mathrm{F}}\circ \mathrm{Z}_{\mathrm{G}}$. For an element $\mathrm{Z}\in R_\QQ$, its \emph{suspension} is 
        $$\Sigma \mathrm{Z} = -\mathrm{Z}(-p_1,-p_2,\ldots).$$
        
        \begin{prop}\label{prop: cycle index series left operadic module}
        Let $\mathcal{P}$ be a left-basic set operad such that $\QQ\mathcal{P}$ is Koszul, with Koszul dual $(\QQ\mathcal{P})^!$. Then we have the equality in the ring $R_\QQ$:
            $$\sum_{k\geq 0}(-1)^k \mathrm{Z}_{\hbottom{k}({}^{\mathcal{P}}\Pi)}  = (\Sigma \mathrm{Z}_{(\QQ\mathcal{P})^!} ) \circ \left( \exp\Bigg(\sum_{n\geq 1}\frac{p_n}{n}\Bigg) -1 \right). $$
        \end{prop}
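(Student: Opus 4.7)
The plan is to compute both sides as Euler characteristic cycle index series, via the cobar-construction-with-coefficients description of Proposition~\ref{prop: left operadic module decorated partitions}. Throughout, for a graded linear species $\mathrm{E}^{\bullet}$ with finite-dimensional components, set
\begin{equation*}
\chi_{\mathrm{E}^\bullet} \defas \sum_{k\geq 0}(-1)^k \mathrm{Z}_{\mathrm{E}^k} \;\in R_\QQ,
\end{equation*}
so that $\chi_{H^\bullet(C)}=\chi_C$ for any dg linear species $C$. By Proposition~\ref{prop: left operadic module decorated partitions} and the delevelization quasi-isomorphism, the left-hand side equals $\chi_{\Omega(\QQ\mathcal{P}^\vee, \QQ L_{\mathcal{P}}^\vee)}$.

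Next, I use that $\Omega(\mathcal{C},L)=\Omega(\mathcal{C})\circ L$ as graded linear species. The formula $\mathrm{Z}_{\mathrm{F}\circ \mathrm{G}}=\mathrm{Z}_\mathrm{F}\circ \mathrm{Z}_\mathrm{G}$ extends at the level of Euler characteristic cycle index series by additivity of $\circ$ in each factor (combined with multiplicativity of $\mathrm{Tr}$ on tensor products of graded $\mathfrak{S}_n$-modules); since $\QQ L_{\mathcal{P}}^\vee$ is concentrated in degree $0$, this gives
\begin{equation*}
\chi_{\Omega(\QQ\mathcal{P}^\vee)\circ \QQ L_\mathcal{P}^\vee} \;=\; \chi_{\Omega(\QQ\mathcal{P}^\vee)} \circ \mathrm{Z}_{\QQ L_\mathcal{P}^\vee}.
\end{equation*}
By the Koszul hypothesis and Proposition~\ref{prop: left Koszul}, $H^\bullet(\Omega(\QQ\mathcal{P}^\vee))\simeq \Lambda^{-1}(\QQ\mathcal{P})^!$, hence $\chi_{\Omega(\QQ\mathcal{P}^\vee)} = \chi_{\Lambda^{-1}(\QQ\mathcal{P})^!}$.

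It then remains to identify the two factors. For any linear species $\mathrm{F}$ concentrated in degree $0$, a short character computation—using $\mathrm{sgn}(\sigma)=(-1)^{n-r(\sigma)}$ for $\sigma\in\mathfrak{S}_n$ with $r(\sigma)$ cycles, applied to $\Lambda^{-1}\mathrm{F}(n) = s^{n-1}\mathrm{sgn}_n\otimes \mathrm{F}(n)$—yields
\begin{equation*}
\chi_{\Lambda^{-1}\mathrm{F}} \;=\; -\mathrm{Z}_{\mathrm{F}}(-p_1,-p_2,\ldots) \;=\; \Sigma\, \mathrm{Z}_{\mathrm{F}}.
\end{equation*}
On the other hand, since $\QQ L_\mathcal{P}(n)=\QQ L_\mathcal{P}^\vee(n)=\QQ$ carries the trivial $\mathfrak{S}_n$-action for $n\geq 1$,
\begin{equation*}
\mathrm{Z}_{\QQ L_\mathcal{P}^\vee} \;=\; \sum_{n\geq 1} h_n \;=\; \exp\Bigg(\sum_{n\geq 1}\frac{p_n}{n}\Bigg)-1.
\end{equation*}
Assembling the displayed equalities gives the stated formula. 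The only mildly delicate points are the compatibility of the Euler characteristic with plethysm and the translation of the operadic desuspension into the $\Sigma$ operation; both are routine, and I expect the bookkeeping of signs in the second to be the main thing to check carefully.
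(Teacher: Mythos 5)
Your proof is correct and follows essentially the same route as the paper's: both reduce to the cobar construction with coefficients via Proposition~\ref{prop: left operadic module decorated partitions}, split the Euler characteristic of the composition $\Omega(\QQ\mathcal{P}^\vee)\circ\QQ L_{\mathcal{P}}^\vee$ as a plethysm, use Koszulity to evaluate the outer factor as $\Sigma\mathrm{Z}_{(\QQ\mathcal{P})^!}$, and compute the inner factor as the cycle index series of the trivial representations. The sign bookkeeping you flag is exactly what the paper handles by noting that cohomology is concentrated in degree $n-1$ and tensored by $\mathrm{sgn}_n$; your use of the isomorphism $H^\bullet(\Omega(\QQ\mathcal{P}^\vee))\simeq\Lambda^{-1}(\QQ\mathcal{P})^!$ from Proposition~\ref{prop: left Koszul} plus the desuspension formula is equivalent.
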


        Note that the sum on the left-hand side makes sense since we have $\hbottom{k}({}^{\mathcal{P}}\Pi(n))=0$ for $k\geq n$.

        \begin{proof}
        By Proposition \ref{prop: left operadic module decorated partitions}, $\hbottom{\bullet}({}^{\mathcal{P}}\Pi)$ is isomorphic to the cohomology of the dg species $\Omega(\QQ\mathcal{P}^\vee)\circ \QQ L_{\mathcal{P}}^\vee$. This implies the equality in the ring $R_\QQ$:
        $$\sum_{k\geq 0}(-1)^k \mathrm{Z}_{\hbottom{k}({}^{\mathcal{P}}\Pi)}  = \left( \sum_{k\geq 0} (-1)^k \mathrm{Z}_{\mathrm{H}^k(\Omega(\QQ\mathcal{P}^\vee))} \right) \circ \mathrm{Z}_{\QQ L_{\mathcal{P}}^\vee}. $$
        Since $\QQ\mathcal{P}$ is Koszul, the only non-zero cohomology group of $\Omega(\QQ\mathcal{P}^\vee)(n)$ is the $\mathrm{H}^{n-1}$, which is isomorphic to $(\QQ\mathcal{P})^!(n)\otimes \mathrm{sgn}_n$. By using the standard formula for the signature of a cycle, we readily deduce:
        $$\sum_{k\geq 0} (-1)^k \mathrm{Z}_{\mathrm{H}^k(\Omega(\QQ\mathcal{P}^\vee))} = \Sigma \mathrm{Z}_{(\QQ\mathcal{P})^!}.$$
        Finally, $\QQ L_{\mathcal{P}}^\vee(n)$ is the $1$-dimensional trivial representation of $\mathfrak{S}_n$ in every arity $n\geq 1$, and zero in arity $n=0$.
        A classical computation shows that its cycle index series is 
        $$\mathrm{Z}_{\QQ L_{\mathcal{P}}^\vee}=\exp\Bigg(\sum_{n\geq 1}\frac{p_n}{n}\Bigg)-1.$$ 
        The equality follows.
        \end{proof}

        As a special case of Proposition \ref{prop: cycle index series left operadic module} we now derive a formula for the alternating sums of the dimensions of the cohomology groups $\hbottom{k}({}^{\mathcal{P}}\Pi(n))$, also known as M\"{o}bius numbers. For a poset $P$, we let
        $$\widecheck{\mu}(P) = \sum_{k\geq 0} (-1)^k \mathrm{dim}(\hbottom{k}(P)).$$ 
        For a linear species $F$, we introduce its \emph{exponential generating series}
        $$\mathrm{C}_{\mathrm{F}}(x) = \sum_{n\geq 0} \mathrm{dim}(\mathrm{F}(n))\frac{x^n}{n!}.$$
        When $\mathrm{G}(0)=0$, we note that $\mathrm{C}_{\mathrm{F}\circ \mathrm{G}}$ is the usual composition $\mathrm{C}_{\mathrm{F}}\circ \mathrm{C}_{\mathrm{G}}$.

        \begin{prop}\label{prop: generating series left operadic module}
        Let $\mathcal{P}$ be a left-basic set operad such that $\QQ\mathcal{P}$ is Koszul, with Koszul dual $(\QQ\mathcal{P})^!$. Then we have the equality of formal power series:
        \begin{equation}\label{eq: generating series mobius numbers left operadic module}
        \sum_{n\geq 1} \widecheck{\mu}({}^{\mathcal{P}}\Pi(n)) \frac{x^n}{n!} = -\mathrm{C}_{(\QQ\mathcal{P})^!}(1-\exp(x)).    
        \end{equation}
        \end{prop}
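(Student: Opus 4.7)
The plan is to deduce this as a direct corollary of Proposition \ref{prop: cycle index series left operadic module} by specializing the cycle index series identity to the exponential generating series. The standard specialization is the ring homomorphism $R_\QQ \to \QQ[[x]]$ defined by $p_1 \mapsto x$ and $p_n \mapsto 0$ for $n \geq 2$. Under this specialization, for any linear species $\mathrm{F}$ the cycle index series $\mathrm{Z}_\mathrm{F}$ maps to $\mathrm{C}_\mathrm{F}(x)$, because in $\mathrm{Z}_\mathrm{F}$ only the term indexed by $\sigma = \mathrm{id}$ survives, contributing $\frac{\dim \mathrm{F}(n)}{n!} p_1^n$.

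Applied to the left-hand side of Proposition \ref{prop: cycle index series left operadic module}, this specialization produces
\begin{equation*}
\sum_{k \geq 0} (-1)^k \mathrm{C}_{\hbottom{k}({}^{\mathcal{P}}\Pi)}(x) = \sum_{n \geq 1} \widecheck{\mu}({}^{\mathcal{P}}\Pi(n)) \frac{x^n}{n!},
\end{equation*}
which is exactly the left-hand side of \eqref{eq: generating series mobius numbers left operadic module} (the $n=0$ term vanishes since $\mathcal{P}(0)=\varnothing$). For the right-hand side, I would proceed in two steps. First, I would observe that the plethysm $A \circ B$, which by definition substitutes the $p_{kn}$-specializations of $B$ into the $p_k$-variables of $A$, reduces under our specialization to ordinary composition: indeed $B(p_k, p_{2k}, p_{3k}, \ldots)$ becomes $B(0, 0, \ldots) = 0$ for $k \geq 2$, so only the $k=1$ slot contributes a nonzero value. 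Second, I would compute
\begin{equation*}
\left( \exp\Bigg(\sum_{n\geq 1}\frac{p_n}{n}\Bigg) - 1 \right)\Bigg|_{p_1 = x,\, p_n = 0 \,(n \geq 2)} = \exp(x) - 1.
\end{equation*}

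It remains to handle the suspension $\Sigma$: by definition $(\Sigma \mathrm{Z}_{(\QQ\mathcal{P})^!})(p_1, p_2, \ldots) = -\mathrm{Z}_{(\QQ\mathcal{P})^!}(-p_1, -p_2, \ldots)$, so after specialization and composition with $\exp(x) - 1$ we obtain
\begin{equation*}
-\mathrm{Z}_{(\QQ\mathcal{P})^!}(-(\exp(x)-1), 0, 0, \ldots) = -\mathrm{C}_{(\QQ\mathcal{P})^!}(1 - \exp(x)),
\end{equation*}
which is precisely the right-hand side of \eqref{eq: generating series mobius numbers left operadic module}. The proof is then complete. There is essentially no obstacle: the entire argument is a bookkeeping exercise in the well-known compatibility between plethysm and ordinary composition of exponential generating series, together with the sign conventions of operadic/species suspension. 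The only care required is verifying that the specialization commutes with plethysm in the appropriate sense, which follows from the vanishing of $B(0,0,\ldots)$ when $B$ has no constant term.
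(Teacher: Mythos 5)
Your proof is correct and takes exactly the same approach as the paper, which disposes of the statement in a single sentence by observing that $\mathrm{C}_{\mathrm{F}}(x) = \mathrm{Z}_{\mathrm{F}}|_{p_1=x,\,p_2=p_3=\cdots=0}$ and applying this specialization to the identity of Proposition~\ref{prop: cycle index series left operadic module}. You have simply spelled out the routine verifications (plethysm specializes to ordinary composition because the inner series has no constant term; the suspension $\Sigma$ introduces the sign and the $-x$ substitution) that the paper leaves implicit.
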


      \begin{proof}
        This follows from Proposition \ref{prop: cycle index series left operadic module} by noting that $\mathrm{C}_{\mathrm{F}}(x) = \mathrm{Z}_{\mathrm{F}}|_{p_1=x, p_2=p_3=\cdots =0}$.
        \end{proof}
        
        The sequences $\widecheck{\mu}({}^{\mathcal{P}}\Pi(n))$ for some examples are presented in Table \ref{tab:checkmu}. All the operads $\mathcal{P}$ in this table are left-basic with $\QQ\mathcal{P}$ Koszul, so that Proposition \ref{prop: generating series left operadic module} applies. We see that in the cases $\mathcal{P}=\mathrm{NAP},\mathrm{2as},\mathrm{Dipt}$, the operad $\Lambda\hbottom{\bullet}({}^{\mathcal{P}}\Pi)$ is not concentrated in degree zero. We develop some of these examples in \S\ref{sec: open questions}.

  \begin{table}[h!]
           \centering
           \begin{tabular}{|P{25mm}|c|P{90mm}|}
           \hline
$\mathcal{P} $ &      $-\mathrm{C}_{(\QQ\mathcal{P})^!}(-x)$ 
& $\widecheck{\mu}({}^{\mathcal{P}} \Pi(n)),  n\geq 1$  
\\ \hline
            \begin{minipage}{20mm} \vspace{2pt} As \cite{PeirceAs} \vspace{2pt}\end{minipage}& $\frac{x}{1+x}$  & $1,-1,1,-1,1,-1,1,-1,1,-1$ \\ \hline
           \begin{minipage}{25mm}  \vspace{2pt} $\mathrm{Com}_2$ \cite{DotsenkoKhoroshkinCom2}  \vspace{2pt}\end{minipage}& $\sum_{n \geq 1} (-n)^{n-1} \frac{x^n}{n!}$  & $1, -1, 4, -23, 181, -1812, 22037$ \cite[A383985]{oeis}
 \\ 
             \hline
        \begin{minipage}{25mm} \vspace{2pt} $\mathrm{NAC}_2$ (\ref{sec: example left operads}) \vspace{2pt} \end{minipage} & $-1+\sqrt{1+2x-x^2}$ & $1, -1, 1, -13, 61, -601, 5881, -73333$ \cite[A383986]{oeis} \\ \hline
         \rowcolor{lightgray} \begin{minipage}{20mm}  \vspace{2pt}  NAP \cite{LivernetNAP} \vspace{2pt} \end{minipage}  & $x\exp(-x)$  & $1,-1,-2,1,11,18,-41,-317,-680$  \cite[A101851(-x)]{oeis}\\ \hline
      \rowcolor{lightgray}    \begin{minipage}{20mm}\vspace{2pt} 2as \cite{LodayRonco2as}/    Dipt \cite{LodayRoncoDipt} \vspace{2pt}\end{minipage} & $\frac{x+x^2}{1-x}$  & $1, 5, 25, 149, 1081, 9365, 94585, 1091669$ \cite[A002050]{oeis}
             \\ \hline
             \begin{minipage}{20mm} \vspace{2pt} Dup \cite{LodayDup} \vspace{2pt}\end{minipage} &  $\frac{x}{(1+x)^2}$  & $1, -3, 7, -15, 31, -63, 127, -255, 511$ \cite[A225883]{oeis} \\ \hline
             \begin{minipage}{25mm}  \vspace{2pt} Terp \cite{BurgunderDO}/ TriDup \cite{NovelliThibonTridup} \vspace{2pt}\end{minipage}& $-\frac{1}{1+2x}+\frac{1}{1+x}$  & $1, -5, 25, -149, 1081, -9365, 94585$
            \cite[A002050]{oeis}
            \\ \hline
              \end{tabular}
            \vspace{.2cm}
           \caption{Table of the sequences $\widecheck{\mu}({}^{\mathcal{P}} \Pi(n))$ for some examples of operads $\mathcal{P}$, named as in the \emph{Operadia} database \cite{operadia}. A row is grayed if $(-1)^{n-1}\widecheck{\mu}({}^{\mathcal{P}}\Pi(n))<0$ for some $n$, which implies that $\Lambda \hbottom{\bullet}({}^{\mathcal{P}}\Pi)$ is not concentrated in degree zero.}
           \label{tab:checkmu}
       \end{table}

        \begin{rem}\label{rem: cycle index series via chains left operadic module} 
Proposition \ref{prop: cycle index series left operadic module} can also be recovered by applying the methods of \cite[Proposition 1.21]{ogerhomologyhypertree}, used in \cite[Theorem 3.2]{ogerhomologyhypertree} and extended in \cite[Proposition 1.7]{OgerSemiPointedPartition} to compute $\mathfrak{S}$-equivariant Euler characteristics. For a poset $P$ and an integer $n$, let us denote by $\mathrm{M}_n(P)$ (resp. $\widecheck{\mathrm{M}}_n(P)$) the species of multichains $x_0\leq x_1\leq \cdots \leq x_n$ of elements of $P$ such that $x_0$ is a minimal element and $x_n$ is a maximal element (resp. such that $x_0$ is a minimal element). By \cite[Proposition 1.6]{OgerSemiPointedPartition}, their cycle index series are polynomial in $n$, and are an $\mathfrak{S}$-equivariant version of the well-known \emph{zeta polynomial} introduced by Stanley \cite[p. 201]{StanleyZeta} and further studied by Edelman \cite{EdelmanZeta}. The value of this polynomial for $\widecheck{\mathrm{M}}_n(P)$ at $n=-1$ is the $\mathfrak{S}$-equivariant Euler characteristic appearing in Proposition \ref{prop: cycle index series left operadic module}. One can compute it by using the following relations, where $\mathbb{X}$ is the singleton species and $\mathbb{E}^+$ is the species of non-empty sets:
\begin{align*}
   \mathrm{M}_0({}^{\mathcal{P}}\Pi) &= \mathbb{X} \\
    \mathrm{M}_n({}^{\mathcal{P}}\Pi) & =   \mathrm{M}_{n-1}({}^{\mathcal{P}}\Pi) \circ \mathcal{P}\\
    \widecheck{\mathrm{M}}_n(^{\mathcal{P}}\Pi) &= \mathrm{M}_n({}^{\mathcal{P}}\Pi) \circ \mathbb{E}^+.
\end{align*}
Indeed, if $\QQ\mathcal{P}$ is Koszul, then $\mathrm{Z}_{\QQ\mathcal{P}}$ is invertible for $\circ$ with inverse equal to $\Sigma\mathrm{Z}_{(\QQ\mathcal{P})^!}$, and one finds formally that $\mathrm{Z}_{\widecheck{\mathrm{M}}_{-1}({}^{\mathcal{P}}\Pi)}=\Sigma\mathrm{Z}_{(\QQ\mathcal{P})^!}\circ \mathrm{Z}_{\mathbb{E}^+}$, which is the content of Proposition \ref{prop: cycle index series left operadic module}. 
        \end{rem}

    \subsubsection{A worked example of a left operadic module: left-$\operatorname{As}$-decorated partitions}\label{subsubsec: left As}

    Let us consider the operadic poset species of left-$\operatorname{As}$-decorated partitions ${}^{\operatorname{As}}\Pi$, i.e. partitions equipped with a linear order on the set of blocks (see Figure \ref{figComp}), also known as set compositions, or faces of the permutohedron. 

\begin{figure}[h]
    \centering
    \resizebox{\textwidth}{!}{
    \begin{tikzpicture}
\node (min) at (0,0) {$\{1,2,3\}$};
\coordinate[above=1cm of min] (c1);
\coordinate[above=2cm of c1] (c2);
\node[left=0.5cm of c1] (13-2) {$\{1,3\}\{2\}$};
\node[left=1cm of 13-2] (1-23) {$\{1\}\{2,3\}$};
\node[left=1cm of 1-23] (12-3) {$\{1,2\}\{3\}$};
\node[right=0.5cm of c1] (3-12) {$\{3\}\{1,2\}$};
\node[right=1cm of 3-12] (23-1) {$\{2,3\}\{1\}$};
\node[right=1cm of 23-1] (2-13) {$\{2\}\{1,3\}$};
\node[left=0.5cm of c2] (312) {$\{3\}\{1\}\{2\}$};
\node[left=1cm of 312] (132) {$\{1\}\{3\}\{2\}$};
\node[left=1cm of 132] (123) {$\{1\}\{2\}\{3\}$};
\node[right=0.5cm of c2] (321) {$\{3\}\{2\}\{1\}$};
\node[right=1cm of 321] (231) {$\{2\}\{3\}\{1\}$};
\node[right=1cm of 231] (213) {$\{2\}\{1\}\{3\}$};
\draw (min)--(13-2);
\draw (min)--(1-23);
\draw (min)--(12-3);
\draw (min)--(3-12);
\draw (min)--(23-1);
\draw (min)--(2-13);
\draw (123)--(12-3.north)--(213.south)--(2-13)--(231)--(23-1)--(321)--(3-12)--(312)--(13-2)--(132)--(1-23)--(123);
    \end{tikzpicture}
    }
    \caption{The poset of left-$\operatorname{As}$-decorated partitions on three elements $^{\operatorname{As}}\Pi\left( \{1,2,3\}\right)$.}
    \label{figComp}
\end{figure}
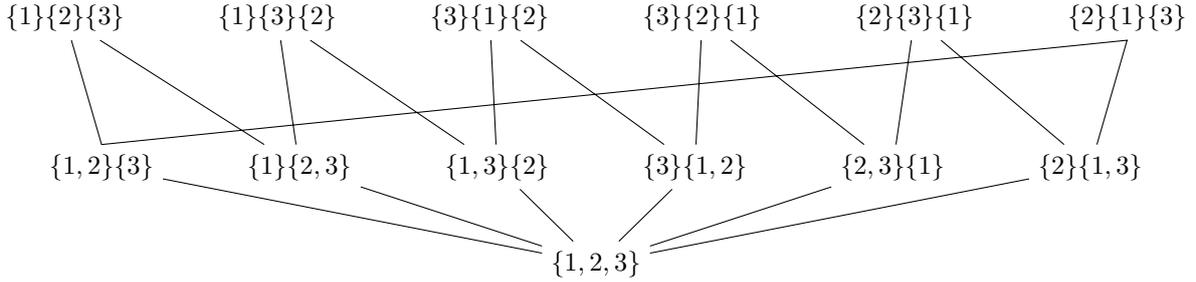

    As explained in \S\ref{sec: example left operads}, the operad structure on its cohomology $h^\bullet({}^{\operatorname{As}}\Pi)$ is the operadic desuspension of the $\operatorname{As}$ operad. We now study its left operadic module $\hbottom{\bullet}({}^{\operatorname{As}}\Pi)$. Up to a shift, it is computed by the cohomology groups of the augmented posets
    ${}^{\operatorname{As}}\Pi^+(S)$
    , obtained by adding a greatest element.    
    We also denote by ${}^{\operatorname{As}}\Pi^+(S)^{\operatorname{op}}$
     the dual poset, obtained by reversing the order.
    The first step is to prove that this poset is Cohen--Macaulay. We will prove the existence of a recursive atom ordering. Let us first recall some topological results from \cite{BjornerWachsLexSh}.

    \begin{defi}\cite[Definition 3.1]{BjornerWachsLexSh} \label{DefRAO}
        A bounded poset $P$ admits a \emph{recursive atom ordering} if it has only two elements $\hat{0}<\hat{1}$ or if it has at least three elements and there is an ordering $a_1, \ldots, a_k$ of its atoms such that:
        \begin{enumerate}
            \item For all $j=1, \ldots, k$, the interval $[a_j, \hat{1}]$ admits a recursive atom ordering in which the atoms of $[a_j, \hat{1}]$ that come first in the ordering are those covering some $a_i$, $i<j$.
            \item For all $i<j$, if $a_i,a_j<y$ then there is $k<j$ and an element $z$ such that $a_k$ and $a_j$ are covered by $z$ and $z \leq y$.
        \end{enumerate}
    \end{defi}

    \begin{prop}\cite[Theorem 3.2]{BjornerWachsLexSh}
        If a poset admits a recursive atom ordering, then it is CL-shellable, hence Cohen--Macaulay.
    \end{prop}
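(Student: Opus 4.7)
The plan is to deduce this result in two logically separate steps: first, use the recursive atom ordering to construct an explicit CL-labeling of $P$ by induction on the rank; second, invoke the classical fact that any CL-shellable bounded poset has a shellable order complex, which is in particular Cohen--Macaulay over any ring.

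First I would construct a chain-edge labeling $\lambda$ on rooted maximal chains of $P$. Given the recursive atom ordering $a_1,\ldots,a_k$ of $[\hat{0},\hat{1}]$, I assign to each cover relation $\hat{0}\lessdot a_j$ the label $j$. For a rooted cover relation $x\lessdot y$ with $x\neq \hat{0}$, I proceed recursively: the root selects an atom $a_j$, and the chain from $a_j$ to $x$ is a rooted chain in $[a_j,\hat{1}]$, which carries a recursive atom ordering by hypothesis; the label of $x\lessdot y$ is then inherited from this recursive labeling of $[a_j,\hat{1}]$. The recursive definition terminates because the rank strictly decreases.

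Next I would verify that $\lambda$ is a CL-labeling in the sense of Björner--Wachs, i.e., that on every rooted closed interval $[x,y]$ there is a unique strictly increasing maximal chain, and that this chain is lexicographically smallest among all maximal chains of $[x,y]$. The argument proceeds by induction on $\mathrm{rk}(P)$; the base case is trivial. For the inductive step, the case $x=\hat{0}$ is handled by combining condition $(1)$ of Definition \ref{DefRAO} (which guarantees that the relevant sub-ordering of atoms of $[a_j,\hat{1}]$ extends the ordering coming from the global one) with the inductive hypothesis applied to $[a_j,\hat{1}]$. The case $x>\hat{0}$ reduces to the previous one via the recursive definition of $\lambda$, after replacing $P$ by the interval $[a_j,\hat{1}]$ determined by the root. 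The main work is extracting from condition $(2)$ the desired uniqueness: whenever two atoms $a_i,a_j$ with $i<j$ both lie below an element $y$, condition $(2)$ produces $z\leq y$ covering both $a_j$ and some $a_k$ with $k<j$, which is exactly what prevents a chain starting with $a_j$ from being the unique ascending chain in $[\hat{0},y]$ and forces the lexicographic minimum to start with an earlier atom.

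The main obstacle, as usual with shellability proofs, is the subtle bookkeeping in the inductive step: one must track how the labels on an interval depend on the root, and show that the conditions of a recursive atom ordering propagate correctly through that dependence. Once $\lambda$ is shown to be a CL-labeling, the conclusion is immediate: the descending maximal chains of $\lambda$ yield a shelling order on the facets of the order complex $\Delta(P\setminus\{\hat{0},\hat{1}\})$ (this is the content of \cite[Theorem 3.2]{BjornerWachsLexSh}), and shellable simplicial complexes are Cohen--Macaulay over any commutative ring.
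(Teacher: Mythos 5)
The paper states this proposition as a citation of Björner--Wachs \cite[Theorem 3.2]{BjornerWachsLexSh} and gives no proof of its own, so there is no internal proof to compare your argument against; your sketch is a reconstruction of the classical Björner--Wachs argument itself. At that level your outline of the construction (build a chain-edge labeling by sending $\hat{0}\lessdot a_j$ to $j$ and recursing into $[a_j,\hat{1}]$, then verify the CL axioms by induction on rank using conditions (1) and (2) of Definition \ref{DefRAO}) is the correct strategy, though the ``bookkeeping'' you flag as the main obstacle is genuinely substantial and is where the actual work of their proof lives; a reader could not reconstruct it from the sketch as written.

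One concrete slip: in your final paragraph you justify the passage from a CL-labeling to a shelling of $\Delta(P\setminus\{\hat{0},\hat{1}\})$ by again invoking \cite[Theorem 3.2]{BjornerWachsLexSh}, but that is the statement you are in the middle of proving (recursive atom ordering implies CL-shellable), so the citation is circular. The implication ``CL-shellable implies the order complex is shellable'' is a separate, earlier result of Björner--Wachs (stated for CL-labelings in general, independent of recursive atom orderings); that is what should be cited to close the argument. With that citation corrected the logical structure is sound: recursive atom ordering gives a CL-labeling, CL-labelings give shellings, and shellable complexes are Cohen--Macaulay over any commutative ring.
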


    The concept of total semimodularity will allow us to dispense of the first point of Definition \ref{DefRAO}:

\begin{defi} \cite[\S 5]{BjornerWachsLexSh} \label{DefTotSemiMod}
    A finite poset $P$ is \emph{totally semimodular} if it is bounded and if for any interval $[x,y]$ in $P$ and any $u, v$ in $[x,y]$ that both cover $x$, there exists $z$ in 
    $[x,y]$ such that $z$ covers both $u$ and $v$.
\end{defi}

    \begin{prop} \cite[Theorem 5.1]{BjornerWachsLexSh} \label{BjornerWachsTotSemiMod}
        A graded poset $P$ is totally semimodular if and only if for every interval $[x,y]$ of $P$, every atom ordering of $[x,y]$ is a recursive atom ordering.
    \end{prop}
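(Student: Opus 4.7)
The plan is to prove the two implications separately. The reverse direction is essentially immediate from condition (2) of Definition \ref{DefRAO}, while the forward direction requires an induction on the rank of intervals.

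For the reverse direction ($\Leftarrow$), assume every atom ordering of every interval of $P$ is a recursive atom ordering. Fix an interval $[x,y]$ and two distinct atoms $u, v$ of $[x,y]$; the existence of two distinct atoms forces $\mathrm{rank}([x,y]) \geq 2$, hence $u, v < y$. Choose an atom ordering of $[x,y]$ starting with $a_1 = u$ and $a_2 = v$. Applying condition (2) of Definition \ref{DefRAO} with $i=1$, $j=2$, and the element $y$ itself produces an element $z \leq y$ covering both $u$ and $v$. This $z$ lies in $[x,y]$ and witnesses total semimodularity.

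For the forward direction ($\Rightarrow$), assume $P$ is totally semimodular. The first observation is that any sub-interval $[x',y']$ of an interval $[x,y]$ of $P$ is again an interval of $P$, so sub-intervals automatically inherit total semimodularity. The proof then proceeds by induction on the rank of $[x,y]$; the cases $\mathrm{rank}([x,y]) \leq 1$ are handled by the two-element convention in Definition \ref{DefRAO}. Inductively, fix an atom ordering $a_1, \ldots, a_k$ of $[x,y]$. Condition (2) of Definition \ref{DefRAO} follows directly from total semimodularity: given $a_i, a_j < w$ with $i < j$, both $a_i$ and $a_j$ cover $x$ in the sub-interval $[x,w]$, so there exists $z \in [x,w]$ covering both, and the index $i$ serves as the required $k$. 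For condition (1), the interval $[a_j, y]$ is totally semimodular of strictly smaller rank, hence by the inductive hypothesis every atom ordering of $[a_j, y]$ is already a recursive atom ordering; it then suffices to choose one whose initial segment lists the atoms of $[a_j, y]$ covering some $a_i$ with $i<j$, which is always possible.

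The main obstacle is conceptual rather than technical: one must recognize that condition (1) of Definition \ref{DefRAO} grants us the freedom to \emph{choose} the recursive atom ordering of $[a_j, y]$, so that once the inductive hypothesis guarantees every atom ordering of $[a_j, y]$ is recursive, condition (1) reduces to the trivial combinatorial freedom of rearranging a finite set of atoms. Beyond this, the argument is a straightforward unwinding of definitions.
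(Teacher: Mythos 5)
The paper does not prove this proposition; it is cited directly from Björner and Wachs, so there is no in-text proof to compare yours against. Your argument is correct. In the reverse direction you correctly observe that for distinct atoms $u,v$ of $[x,y]$ the rank must be at least $2$ so that $u,v<y$, and then reading off condition (2) of Definition \ref{DefRAO} from a recursive atom ordering that begins $a_1=u,a_2=v$ (which exists by hypothesis) produces the covering element $z\in[x,y]$ that total semimodularity demands. In the forward direction, your induction on rank is the right device: condition (2) follows by applying total semimodularity to the sub-interval $[x,w]$, with the index $i$ itself serving as the required $k$; and for condition (1), the interval $[a_j,y]$ has strictly smaller rank, so the inductive hypothesis says \emph{every} atom ordering of it is recursive, reducing the requirement to the trivial freedom to list first those atoms covering some $a_i$ with $i<j$. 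Your closing remark pinpoints the one genuinely non-obvious step in the proof, namely that the inductive hypothesis trivializes condition (1) by decoupling ``being recursive'' from the particular ordering chosen.
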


We can then state the main result of this paragraph:
    \begin{prop}\label{prop: AsPi Cohen Macaulay}
    For every finite set $S$, the dual augmented poset ${}^{\operatorname{As}}\Pi^+(S)^{\operatorname{op}}$ admits a recursive atom ordering, and the poset ${}^{\operatorname{As}}\Pi^+(S)$ is therefore Cohen--Macaulay.
    \end{prop}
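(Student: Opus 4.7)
The plan is to exhibit a recursive atom ordering (RAO) on $P = {}^{\operatorname{As}}\Pi^+(S)^{\operatorname{op}}$, from which Cohen--Macaulayness of ${}^{\operatorname{As}}\Pi^+(S)$ follows since the order complex is unchanged when passing to the opposite poset. After fixing a total order on $S$, the atoms of $P$ are the permutations of $S$ (the maximal elements of ${}^{\operatorname{As}}\Pi(S)$), which I would order lexicographically: $a_1 <_{\mathrm{lex}} \cdots <_{\mathrm{lex}} a_{n!}$.

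Axiom~(1) of Definition~\ref{DefRAO} is essentially free: the dualized interval $[a_j, \hat{1}]_P$ corresponds to the opposite of the interval $[(S), \tau] \subset {}^{\operatorname{As}}\Pi(S)$, where $\tau = a_j$, and this latter interval is isomorphic to the Boolean lattice on the $n-1$ possible cut positions of the linearly ordered sequence $\tau$. Boolean lattices are readily seen to be totally semimodular, so by Proposition~\ref{BjornerWachsTotSemiMod} any atom ordering of $[a_j, \hat{1}]_P$ is a recursive atom ordering; in particular, I can list first those atoms that cover a previously ordered $a_i$, as required.

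The heart of the proof is axiom~(2). Given $i<j$ and $y \in P$ with $a_i, a_j <_P y$, write $\sigma = a_i$, $\tau = a_j$; translating into ${}^{\operatorname{As}}\Pi(S)$, both $\sigma$ and $\tau$ are linearizations of the set composition $y$, with $\sigma <_{\mathrm{lex}} \tau$, and I must produce a permutation $\tau' <_{\mathrm{lex}} \tau$ together with a set composition $z$ such that $z \lessdot \tau, \tau'$ in ${}^{\operatorname{As}}\Pi(S)$ and $z \geq y$. Inspection of the cover relation shows that two distinct permutations share a common cover-ee precisely when they differ by a swap at some pair of adjacent positions $(p, p+1)$, and that the resulting $z$ (obtained by merging the corresponding pair of consecutive singletons of $\tau$) lies above $y$ if and only if positions $p, p+1$ sit inside a common block of~$y$. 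The key observation is that among all linearizations of $y$ the lex-smallest is the \emph{canonical} one, in which the blocks of $y$ appear in their prescribed order and each is sorted increasingly: if $\tau$ is not canonical, some block $B$ of $y$ fails to be sorted in $\tau$, yielding an internal adjacent descent within $B$ that produces the required $\tau'$ and $z$; if $\tau$ is canonical, no linearization $\sigma <_{\mathrm{lex}} \tau$ lying above $y$ exists and the hypothesis is vacuous. This canonical-linearization observation is the only delicate point; once it is in place the axioms are mechanical, and Cohen--Macaulayness follows from the cited Bj\"{o}rner--Wachs theorem.
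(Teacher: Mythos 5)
Your proof is correct and has the same overall shape as the paper's — Boolean upper intervals handle axiom~(1) by total semimodularity, and axiom~(2) is verified by producing a common cover via an adjacent swap — but you replace the Steinhaus--Johnson--Trotter ordering of the atoms with the lexicographic ordering. This is a genuine simplification of the argument. For you, axiom~(2) reduces to the elementary observation that a non-lex-minimal linearization of a set composition $y$ must have an adjacent descent inside one of $y$'s blocks; swapping at that descent immediately produces a lex-smaller atom $\tau'$ and a common cover $z\geq y$, with no case analysis. The paper's SJT argument instead identifies the smallest letter $\ell$ at which the two atoms disagree, branches into two cases according to the direction $\ell$ moves, and relies on the hereditary property of the SJT order under deletion of the largest letters, namely $a\preccurlyeq_{\mathrm{SJT}} b \Rightarrow a^{\leq k}\preccurlyeq_{\mathrm{SJT}} b^{\leq k}$. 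That hereditary property actually \emph{fails} for the lexicographic order (e.g.\ $213<_{\mathrm{lex}}312$ but $21>_{\mathrm{lex}}12$), so the two arguments are genuinely different; your descent observation simply makes the hereditary property unnecessary, yielding a shorter and more transparent proof.
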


    \begin{proof}
    The maximal intervals in ${{}^{\operatorname{As}}\Pi}(S)^{\operatorname{op}}$  are isomorphic to boolean posets, hence totally semi-modular. Therefore, by Proposition \ref{BjornerWachsTotSemiMod}, the first point of Definition \ref{DefRAO} is satisfied for any ordering of the atoms. It remains to define a linear ordering on the set of atoms of  ${}^{\operatorname{As}}\Pi^+(n)^{\operatorname{op}}$, i.e. the maximal elements of ${{}^{\operatorname{As}}\Pi}(n)$, which satisfies the second point of the definition. 
    
    An atom of ${}^{\operatorname{As}}\Pi^+(n)^{\operatorname{op}}$ is of the form $a=\{k_1\}\cdots \{k_n\}$ with $\{k_1,\ldots,k_n\}=\{1,\ldots,n\}$, and we view it as the permutation $\sigma\in\mathfrak{S}_n$ defined by $\sigma(i)=k_i$, or simply as the word $k_1\cdots k_n$.
    The Steinhaus--Johnson--Trotter algorithm \cite{Steinhaus, Johnson, Trotter} provides a linear ordering on $\mathfrak{S}_n$, that we denote by $\prec_{\mathrm{SJT}}$, and which is defined recursively as follows (see \cite[2. end of p. 282]{Johnson}). Suppose that we have constructed the ordering $a_1 \prec_{\mathrm{SJT}}\cdots \prec_{\mathrm{SJT}} a_{n!}$ for the elements of $\mathfrak{S}_n$. For $a\in\mathfrak{S}_n$ and $j\in\{0,\ldots,n\}$, we denote by $a^j\in \mathfrak{S}_{n+1}$ the word obtained by inserting $n+1$ in position $j+1$ in $a$. Then the Steinhaus--Johnson--Trotter ordering of the elements of $\mathfrak{S}_{n+1}$ is defined as the transitive closure of:
    $$\begin{cases}
    a_i^k \prec_{\mathrm{SJT}} a_j^l & \mbox{ if } i<j;\\
    a_{2i+1}^k \prec_{\mathrm{SJT}} a_{2i+1}^l & \mbox{ if } k>l;\\
    a_{2i}^k \prec_{\mathrm{SJT}} a_{2i}^l & \mbox{ if } k<l.
    \end{cases}$$
    For instance, the ordering from left to right of the maximal elements of ${}^{\mathrm{As}}\Pi(3)$ in Figure \ref{figComp} is the Steinhaus--Johnson--Trotter ordering for $\mathfrak{S}_3$. For $a\in\mathfrak{S}_n$ and an integer $k\leq n$, we let $a^{\leq k}\in\mathfrak{S}_k$ denote the word obtained from $a$ by removing all the letter $>k$ in $a$. We have the following remarkable property of the Steinhaus--Johnson--Trotter orderings: for $a,b\in\mathfrak{S}_n$, and for some $k\leq n$,
    \begin{equation}\label{eq: SJT hereditary}
     a \preccurlyeq_{\mathrm{SJT}} b \qquad \Longrightarrow \qquad a^{\leq k} \preccurlyeq_{\mathrm{SJT}} b^{\leq k} .
    \end{equation}

    Let us now consider two maximal elements $a,b$ of ${}^{\mathrm{As}}\Pi(n)$ with $a\prec_{\mathrm{SJT}} b$, and such that there exists an element $y\in {}^{\mathrm{As}}\Pi(n)$ satisfying $a>y$ and $b>y$. We want to find an atom $c\prec_{\mathrm{SJT}}b$ and an element $z\in{}^{\mathrm{As}}\Pi(n)$ which is $\geq y$ and is covered by both $b$ and $c$ in ${}^{\mathrm{As}}\Pi(n)$. There exists a unique integer $1<\ell \leq n$ such that $a^{\leq \ell-1}=b^{\leq \ell-1}$ and $a^{\leq \ell}\neq b^{\leq \ell}$. By \eqref{eq: SJT hereditary}, we have that $a^{\leq \ell}\prec_{\mathrm{SJT}} b^{\leq \ell}$. There are two cases to treat depending on whether the letter $\ell$ appears in a smaller position in $a^{\leq \ell}$ or in $b^{\leq \ell}$.

    \begin{enumerate}[$\triangleright$]
    \item First case: the position of $\ell$ in $a^{\leq \ell}$ is smaller than in $b^{\leq \ell}$. This means that in the Steinhaus--Johnson--Trotter algorithm, we move from $a^{\leq \ell}$ from $b^{\leq \ell}$ by inserting $\ell$ from the left to the right. Let $c$ be the atom obtained from $b$ by swapping $\ell$ and its left neighbor, denoted by $g$. Then we have $a^{\leq \ell}\preccurlyeq_{\mathrm{SJT}} c^{\leq \ell}\prec_{\mathrm{SJT}} b^{\leq \ell}$ if $g\leq \ell$, and $c^{\leq \ell}=b^{\leq \ell}$ if $g>\ell$. Using \eqref{eq: SJT hereditary} we deduce that $c\prec_{\mathrm{SJT}} b$.

    Recall that one goes down in the poset ${}^{\mathrm{As}}\Pi(n)$ by merging consecutive blocks. Let $z\in{}^{\mathrm{As}}\Pi(n)$ denote the element obtained from $b$ or $c$ by merging the blocks $\{g\}$ and $\{\ell\}$. Then clearly $z$ is covered by both $b$ and $c$, and it remains to prove that $z\geq y$ in ${}^{\mathrm{As}}\Pi(n)$. Since $b\geq y$, this is equivalent to proving that $g$ and $\ell$ are in the same block of $y$. Write $a^{\leq \ell}=w_1\ell w_2 w_3$ and $b^{\leq \ell} = w_1w_2\ell w_3$, where $w_1, w_2, w_3$ are words in $\{1,\ldots, \ell-1\}$ and $w_2$ contains at least one letter. Since $a>y$ and $b>y$, all the letters in $w_2$ are in the same block as $\ell$ in $y$. But $g$ is either a letter of $w_2$ (if $g<\ell$) or is between all the letters of $w_2$ and $\ell$ in $b$ (if $g>\ell)$. Therefore, $g$ is in the same block as $\ell$ in $y$, and we are done.
    \vspace{.15cm}
    \item Second case: the position of $\ell$ in $a^{\leq \ell}$ is greater than in $b^{\leq \ell}$. This case is treated similarly: define $c$ to be the atom obtained from $b$ by swapping $\ell$ and its right neighbor instead.
    \end{enumerate}
    \end{proof}

    \begin{prop}\label{prop: AsPi left module}
    The left $\Lambda^{-1}\operatorname{As}$-module $\hbottom{\bullet}({}^{\operatorname{As}}\Pi)$ is isomorphic to the operadic quotient $\Lambda^{-1}\operatorname{Com}$.
    \end{prop}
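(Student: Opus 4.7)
The plan is to combine a vanishing statement for $\hbottom{\bullet}({}^{\operatorname{As}}\Pi(S))$, an identification of the underlying symmetric species, and a cyclic-module uniqueness argument.

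First, I would use Proposition \ref{prop: AsPi Cohen Macaulay}: since the augmented poset ${}^{\operatorname{As}}\Pi^+(S)$ is Cohen--Macaulay of rank $|S|$, the reduced cohomology of ${}^{\operatorname{As}}\Pi^+(S)\setminus\{\hat{0},\hat{1}\}= ({}^{\operatorname{As}}\Pi(S))_{>\hat{0}}$ is concentrated in degree $|S|-2$. Via \eqref{Relhtop}, this translates into $\hbottom{n}({}^{\operatorname{As}}\Pi(S))=0$ for $n\neq |S|-1$, so that $\Lambda\hbottom{\bullet}({}^{\operatorname{As}}\Pi)$ sits entirely in degree $0$. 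To identify the underlying $\mathfrak{S}_S$-representation on $\hbottom{|S|-1}({}^{\operatorname{As}}\Pi(S))$, I would use the classical fact that the order complex of $({}^{\operatorname{As}}\Pi(S))_{>\hat{0}}$ is the barycentric subdivision of the boundary of the permutohedron $P_{|S|-1}\subset \RR^S$, which is an $(|S|-2)$-sphere; since $\mathfrak{S}_S$ acts on the affine hyperplane containing $P_{|S|-1}$ by orthogonal transformations of determinant equal to the signature, the induced action on top cohomology is $\operatorname{sgn}_S$. Hence $\hbottom{|S|-1}({}^{\operatorname{As}}\Pi(S))\simeq \operatorname{sgn}_S$, and operadic suspension gives $\Lambda\hbottom{\bullet}({}^{\operatorname{As}}\Pi)(S)\simeq \operatorname{sgn}_S\otimes \operatorname{sgn}_S\simeq \KK$ in degree $0$, so $\Lambda\hbottom{\bullet}({}^{\operatorname{As}}\Pi)$ and $\operatorname{Com}$ agree as linear species.

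Next, I would upgrade this to an isomorphism of left $\operatorname{As}$-modules by cyclic-module uniqueness. By the remark after Theorem \ref{thm: main left}, $\hbottom{\bullet}({}^{\operatorname{As}}\Pi)$ is a cyclic left $h^\bullet({}^{\operatorname{As}}\Pi)$-module, so after operadic suspension it is cyclic over $\operatorname{As}$ via a surjection $\pi\colon\operatorname{As}\twoheadrightarrow \Lambda\hbottom{\bullet}({}^{\operatorname{As}}\Pi)$ of left $\operatorname{As}$-modules sending $1\in\operatorname{As}(1)$ to the canonical generator. Since the target is one-dimensional trivial in every arity $n\geq 1$ and $\operatorname{Hom}_{\mathfrak{S}_n}(\operatorname{As}(n),\KK)\simeq \KK$ by Frobenius reciprocity, $\pi(n)$ must coincide (up to a non-zero scalar) with the augmentation $\operatorname{As}(n)\to\KK$, whose kernel is exactly the kernel of the canonical surjection $\operatorname{As}(n)\twoheadrightarrow \operatorname{Com}(n)$. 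Therefore $\pi$ factors as $\operatorname{As}\twoheadrightarrow \operatorname{Com}\twoheadrightarrow \Lambda\hbottom{\bullet}({}^{\operatorname{As}}\Pi)$, and the second map is an $\operatorname{As}$-module isomorphism by dimension count.

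The hard part will be the identification of the $\mathfrak{S}_S$-action on $\hbottom{|S|-1}({}^{\operatorname{As}}\Pi(S))$ as the sign representation. The permutohedral argument is conceptually clean but requires a careful verification that the order complex of the proper-set-composition poset is $\mathfrak{S}_S$-equivariantly homeomorphic to the barycentric subdivision of $\partial P_{|S|-1}$. A more algebraic alternative would be to invoke the cycle-index computation of Proposition \ref{prop: cycle index series left operadic module}: because $\operatorname{As}$ is Koszul self-dual, that result yields $\sum_k(-1)^k \mathrm{Z}_{\hbottom{k}({}^{\operatorname{As}}\Pi)} = \Sigma\mathrm{Z}_{\operatorname{Com}}$, which combined with the concentration in top degree uniquely determines the $\mathfrak{S}_n$-character in every arity.
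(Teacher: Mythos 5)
Your proof is correct and takes a genuinely different route from the paper's. The paper and your proof agree on the first step (Proposition \ref{prop: AsPi Cohen Macaulay} to get concentration in top degree) and on the final step (factoring the cyclic-module surjection through $\operatorname{Com}$ by a dimension count). They differ in the middle: you compute the full $\mathfrak{S}_n$-character of $\hbottom{n-1}({}^{\operatorname{As}}\Pi(n))$ upfront, identifying it as $\operatorname{sgn}_n$ either via the $\mathfrak{S}_n$-equivariant homeomorphism with the boundary of the permutohedron or via the $\mathfrak{S}$-equivariant Euler characteristic of Proposition \ref{prop: cycle index series left operadic module}, and then invoke the uniqueness of an equivariant surjection $\operatorname{As}(n)\twoheadrightarrow\KK$ to identify $\pi(n)$ with the augmentation. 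The paper instead only extracts the \emph{rank} of the top cohomology (via the M\"obius-number generating series of Proposition \ref{prop: generating series left operadic module}, i.e.\ the dimension shadow of the cycle index formula) and does not pre-compute the $\mathfrak{S}_n$-module structure at all; it derives the relation $m_2 = m_2^{(12)}$ directly in $\Lambda\hbottom{1}({}^{\operatorname{As}}\Pi(2))$ and then uses the left operadic module structure to propagate it to $m_n^\sigma=m_n$ for all $n$ and $\sigma$. Your route is more geometric and pins down the character first; the paper's route is more ``internal'' and illustrates how the operadic module structure alone forces commutativity, which is closer to the spirit of the formalism being developed. Both are correct; one small thing worth flagging in your version is that the permutohedron identification, while classical, would need a line of justification that the order complex of the proper set-composition poset is indeed the barycentric subdivision of $\partial P_{|S|-1}$ (equivariantly) — your own suggested fallback to the cycle-index characters bypasses that cleanly.
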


    Here the surjection $h^\bullet({}^\mathrm{As}\Pi)\twoheadrightarrow \hbottom{\bullet}({}^{\mathrm{As}}\Pi)$, which in general is simply a morphism of left operadic modules over $h^\bullet({}^\mathrm{As}\Pi)$, is here a morphism of operads, even though there is no a priori operadic structure on $\hbottom{\bullet}({}^{\mathrm{As}}\Pi)$. This seems to be a coincidence.

    \begin{proof}
    By an application of Proposition \ref{prop: generating series left operadic module}, we have
    $$\sum_{n\geq 1} \widecheck{\mu}({}^{\mathrm{As}}\Pi(n))\frac{x^n}{n!} = -\mathrm{C}_{\KK\mathrm{As}}(1-\exp(x)) = 1-\exp(-x),$$
    where we have used the equality $\mathrm{C}_{\KK\mathrm{As}}(x)=\frac{x}{1-x}$.
    Therefore, we get $\widecheck{\mu}({}^{\mathrm{As}}\Pi(n))=(-1)^{n-1}$ for all $n\geq 1$. (This also readily follows from the identity $\sum_{k=1}^n (-1)^{k-1} S(n,k)k! = (-1)^n$ involving Stirling numbers of the second kind.)
    Proposition \ref{prop: AsPi Cohen Macaulay} therefore implies that $\hbottom{k}({}^{\mathrm{As}}\Pi(n))=0$ if $k\neq n-1$, and isomorphic to $\KK$ for $k=n-1$.
    We write $m_n$ for the element of $\mathrm{As}(n)$ which corresponds to the multiplication $x_1\cdots x_n$. By looking at the $n=2$ case, one sees that the relation $m_2=m_2^{(12)}$ is satisfied in $\hbottom{1}({}^{\mathrm{As}}\Pi(2))$. By using the left operadic module structure, one deduces that the relation $m_n^\sigma=m_n$ holds in $\hbottom{\bullet}({}^{\mathrm{As}}(n))$ for all $n$ and all $\sigma \in \mathfrak{S}_n$. Therefore, the surjection $h^\bullet({}^\mathrm{As}\Pi)\twoheadrightarrow \hbottom{\bullet}({}^{\mathrm{As}}\Pi)$ factors as $\Lambda^{-1}\mathrm{As}\twoheadrightarrow \Lambda^{-1}\mathrm{Com}\twoheadrightarrow \hbottom{\bullet}({}^{\mathrm{As}}\Pi)$. This last map is an isomorphism for dimension reasons.
    \end{proof}

    \begin{rem}
    Shortly after a first version of the present article appeared on arXiv, Sagan and Sundaram posted an independent study \cite{sagansundaram} of the posets ${}^{\mathrm{As}}\Pi(n)$, in which they constructed an alternative recursive atom ordering. They also explain that the existence of a recursive atom ordering, along with the formula $\widecheck{\mu}({}^{\mathrm{As}}\Pi(n))=(-1)^{n-1}$, is a consequence of the fact that ${}^{\mathrm{As}}\Pi(n)$ is the face lattice of the associahedron \cite[Theorems 3.2 and 4.5]{BjornerWachsLexSh}. The results of \cite{sagansundaram} on the generalizations $\breve{\Omega}_n^{(d)}$, where all blocks have size congruent to $1$ modulo a fixed integer $d$, are ordered versions of \cite{hanlonwachs} and could be interpreted in our formalism as an instance of bi-decorated partitions (\S\ref{sec: Bdp}).
    \end{rem}

\subsection{Right-decorated partitions and right operadic modules}\label{subsec: Rdp mod}

  \subsubsection{Interpretation as cobar construction with coefficients}

        The poset $\Pi^{\mathcal{Q}}(S)$ of right-$\mathcal{Q}$-decorated partitions of $S$ has a greatest element but potentially many minimal elements, which are in bijection with $\mathcal{P}(S)$. This means that the left operadic module $\hbottom{\bullet}({}^{\mathcal{P}}\Pi)$ is trivial, but the right operadic module $\htop{\bullet}({}^\mathcal{P}\Pi)$ may be interesting. It can be expressed in terms of the cobar construction with coefficients, as follows. 
        
        Recall \cite{fressepartitionposets} that for a linear cooperad $\mathcal{C}$ as in the previous paragraph, and a right $\mathcal{C}$-comodule $R$, we have the cobar construction with coefficients and its variant with levels:
        $$\Omega(R,\mathcal{C}) = R\circ \Omega(\mathcal{C}) \qquad \mbox{ and } \qquad \Omega_{\mathrm{level}}(R,\mathcal{C}) = R\circ \Omega_{\mathrm{level}}(\mathcal{C}).$$ Note that $\Omega(R,\mathcal{C})$ is a right operadic $\Omega(\mathcal{C})$-module. We have the delevelization quasi-isomorphism
        $$\Omega_{\mathrm{level}}(R,\mathcal{C}) \stackrel{\sim}{\To} \Omega(R,\mathcal{C}).$$

        Like any set operad, $\mathcal{Q}$ has a canonical right operadic module $R_{\mathcal{Q}}$, defined by
        $$R_{\mathcal{Q}}(S)=\{*\}$$
        for every non-empty finite set $S$, and $R_{\mathcal{Q}}(\varnothing)=\varnothing$. Therefore, the linear cooperad $\KK\mathcal{Q}^\vee$ has a right operadic comodule $\KK R_{\mathcal{Q}}^\vee$. The cobar construction $\Omega(\KK R_{\mathcal{Q}}^\vee, \KK\mathcal{Q}^\vee)$ has a basis consisting of reduced forests with nodes labeled by elements of $\mathcal{Q}$ and leaves labeled by elements of $S$.

        \begin{prop}\label{prop: right operadic module decorated partitions}
        Let $\mathcal{Q}$ be a right-basic set operad. We have an isomorphism of complexes
        $$\chainstop{\bullet}(\Pi^{\mathcal{Q}}(S)) \simeq \Omega_{\mathrm{level}}(\KK R_{\mathcal{Q}}^\vee,\KK\mathcal{Q}^\vee)(S)$$
        which induces an isomorphism 
        $$\htop{\bullet}(\Pi^{\mathcal{Q}}) \simeq \operatorname{H}^\bullet(\Omega(\KK R_{\mathcal{Q}}^\vee, \KK\mathcal{Q}^\vee)),$$
        compatible with the right operadic module structures over $h^\bullet(\Pi^{\mathcal{Q}}) \simeq \operatorname{H}^\bullet(\Omega(\KK\mathcal{Q}^\vee))$.
        \end{prop}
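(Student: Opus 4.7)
The plan is to adapt the proof of Proposition \ref{prop: right koszul} essentially verbatim, replacing leveled trees by leveled forests throughout. This reflects the transition from $\Omega_{\mathrm{level}}(\KK\mathcal{Q}^\vee)$ to its right module $\Omega_{\mathrm{level}}(\KK R_{\mathcal{Q}}^\vee, \KK\mathcal{Q}^\vee) = \KK R_{\mathcal{Q}}^\vee \circ \Omega_{\mathrm{level}}(\KK\mathcal{Q}^\vee)$. Since $R_{\mathcal{Q}}(T)$ is a singleton for every non-empty finite set $T$, a basis of $\Omega_{\mathrm{level}}(\KK R_{\mathcal{Q}}^\vee, \KK\mathcal{Q}^\vee)(S)$ in degree $n$ is given by leveled reduced rooted forests with levels in $\{1,\ldots,n\}$, leaves labeled by $S$, and nodes labeled by elements of $\mathcal{Q}$. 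On the poset side, the transition from $c^\bullet$ to $\chainstop{\bullet}$ precisely amounts to dropping the requirement that a chain start at the minimum.

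First, I would define a linear map $g_S^{\max}\colon \Omega_{\mathrm{level}}(\KK R_{\mathcal{Q}}^\vee, \KK\mathcal{Q}^\vee)(S) \to \chainstop{\bullet}(\Pi^{\mathcal{Q}}(S))$ by the same recipe as $g_S$ in the proof of Proposition \ref{prop: right koszul}: to a leveled forest $F$ with levels in $\{1,\ldots,n\}$, I associate the chain $[(\pi_0,\xi_0) < \cdots < (\pi_n,\xi_n)]$, where $\pi_i$ is the partition of $S$ into the leaf sets of the connected components of the forest obtained by cutting $F$ above level $i$, and the decoration $\xi_{i,T}\in\mathcal{Q}(T)$ of a block $T\in\pi_i$ is obtained by composing all the labels inside the corresponding component. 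With this convention, $(\pi_n,\xi_n)$ is always the singleton partition with trivial decorations, hence a maximal element of $\Pi^{\mathcal{Q}}(S)$, while $(\pi_0,\xi_0)$ is the partition determined by the connected components of $F$ itself, which is no longer required to be a single block. A direct inspection (identical to the tree case, with the same sign convention on the cobar construction) shows that $g_S^{\max}$ is an isomorphism of complexes. Applying the delevelization quasi-isomorphism \eqref{eq: delevelization morphism}, which extends verbatim to the case with right coefficients, then yields the desired isomorphism $\htop{\bullet}(\Pi^{\mathcal{Q}}) \simeq \operatorname{H}^\bullet(\Omega(\KK R_{\mathcal{Q}}^\vee, \KK\mathcal{Q}^\vee))$.

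Finally, I would check compatibility with the right operadic module structures. By Remark \ref{rem: partial operadic composition poset species}, it suffices to verify commutativity of a square analogous to that appearing in the proof of Proposition \ref{prop: right koszul}: for disjoint finite sets $A, B$ with $\pi$ the partition of $A\sqcup B$ whose blocks are $B$ and the singletons $\{a\}$ for $a\in A$, the square whose top horizontal arrow grafts a leveled tree $t' \in \Omega_{\mathrm{level}}(\KK\mathcal{Q}^\vee)(B)$ into the leaf labeled $*$ of a leveled forest $F \in \Omega_{\mathrm{level}}(\KK R_{\mathcal{Q}}^\vee, \KK\mathcal{Q}^\vee)(A \sqcup \{*\})$, whose vertical arrows are $g^{\max}_{A\sqcup\{*\}}\otimes g_B$ and $g^{\max}_{A\sqcup B}$, and whose bottom horizontal arrow is the right module partial composition built from $\widehat{\mu}_{(\pi,\xi)}$, where $\xi\in\mathcal{Q}(B)$ is the composition of the decorations of $t'$. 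Setting $c=g^{\max}_{A\sqcup\{*\}}(F)$ and $c'=g_B(t')$, the only non-zero term in $c \circ_* c'$ is $\widehat{\mu}_{(\pi,\xi)}(\varphi_{(\pi,\xi)}^*(c)\otimes\psi_{(\pi,\xi)}^*(c'))$; $\varphi_{(\pi,\xi)}^*$ replaces $*$ by $B$ in the partitions appearing in $c$ and pre-composes the corresponding decorations by $\xi$, while $\psi_{(\pi,\xi)}^*$ adds singletons $\{a\}$ for $a\in A$ to each partition of $c'$ without modifying decorations. Concatenating the two is then seen to recover $g^{\max}_{A\sqcup B}(F\circ_* t')$ by inspection.

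The main obstacle, if any, is purely bookkeeping: one must check that the variant $\widehat{\mu}$ of the concatenation morphism, which now pairs a chain ending at a maximal element with an arbitrary chain, interacts with the pullbacks $\varphi_x^*, \psi_x^*$ exactly as $\mu$ does in the tree case. Since $\widehat{\mu}$ satisfies the same functoriality properties as $\mu$ (as indicated in the ``variants'' subsection of \S\ref{Poset cohomology}), no genuinely new subtlety appears, and the argument transfers \emph{mutatis mutandis} from Proposition \ref{prop: right koszul}.
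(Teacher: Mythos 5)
Your proposal is correct and follows exactly the approach the paper takes: the paper's own proof consists of the single sentence that it follows from a straightforward adaptation of the proof of Proposition \ref{prop: right koszul}, and you have carried out that adaptation in detail (replacing leveled trees by leveled forests, dropping the minimality condition on $x_0$, and replacing $\mu$ by $\widehat\mu$). Your tracking of the degree via the number of levels of the whole forest, rather than via the naive tensor-product degree suggested by the shorthand $\KK R_{\mathcal{Q}}^\vee\circ\Omega_{\mathrm{level}}(\KK\mathcal{Q}^\vee)$, is the right interpretation and is what makes the cochain-level isomorphism hold.
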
 

        \begin{proof}
        It follows from a straighforward adaptation of the proof of Proposition \ref{prop: right koszul}.
        \end{proof} 

    \subsubsection{($\mathfrak{S}$-equivariant) Euler characteristics}

        We do not know of any operadic tools which would allow to compute the cohomology of the cobar construction appearing in Proposition \ref{prop: right operadic module decorated partitions}, in the spirit of Koszul duality (Proposition \ref{prop: right koszul}). However, we have the following analogue of Proposition \ref{prop: cycle index series left operadic module}.

        \begin{prop}\label{prop: cycle index series right operadic module}
        Let $\mathcal{Q}$ be a right-basic set operad such that $\QQ\mathcal{Q}$ is Koszul, with Koszul dual $(\QQ\mathcal{Q})^!$. Then we have the equality in the ring $R_\QQ$:
            $$\sum_{k\geq 0}(-1)^k \mathrm{Z}_{\htop{k}(\Pi^{\mathcal{Q}})}  =  \left( \exp\Bigg(\sum_{n\geq 1}\frac{p_n}{n}\Bigg) -1 \right) \circ (\Sigma \mathrm{Z}_{(\QQ\mathcal{Q})^!} ). $$
        \end{prop}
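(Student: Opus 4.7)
The plan is to mirror the proof of Proposition \ref{prop: cycle index series left operadic module}, swapping in Proposition \ref{prop: right operadic module decorated partitions} for Proposition \ref{prop: left operadic module decorated partitions}. First I would use the quasi-isomorphism $\htop{\bullet}(\Pi^{\mathcal{Q}}) \simeq \mathrm{H}^\bullet(\Omega(\QQ R_{\mathcal{Q}}^\vee, \QQ\mathcal{Q}^\vee))$ together with the identification $\Omega(\QQ R_{\mathcal{Q}}^\vee, \QQ\mathcal{Q}^\vee) = \QQ R_{\mathcal{Q}}^\vee \circ \Omega(\QQ\mathcal{Q}^\vee)$. Passing to $\mathfrak{S}$-equivariant Euler characteristics, and using that cycle index series turn composition of species into plethysm, translates the left-hand side of the claimed identity into
$$\sum_{k\geq 0}(-1)^k \mathrm{Z}_{\htop{k}(\Pi^{\mathcal{Q}})}  =  \mathrm{Z}_{\QQ R_{\mathcal{Q}}^\vee} \circ \left(\sum_{k\geq 0}(-1)^k \mathrm{Z}_{\mathrm{H}^k(\Omega(\QQ\mathcal{Q}^\vee))}\right),$$
where the signs are carried only by the inner argument since $R_{\mathcal{Q}}$ is concentrated in cohomological degree zero.

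Next, I would apply the Koszul assumption on $\QQ\mathcal{Q}$: the only non-zero cohomology group of $\Omega(\QQ\mathcal{Q}^\vee)(n)$ is in degree $n-1$, where it is isomorphic to $(\QQ\mathcal{Q})^!(n)\otimes \mathrm{sgn}_n$. Just as in the proof of Proposition \ref{prop: cycle index series left operadic module}, the standard formula for the signature of a permutation in terms of its cycle type yields
$$\sum_{k\geq 0}(-1)^k \mathrm{Z}_{\mathrm{H}^k(\Omega(\QQ\mathcal{Q}^\vee))} = \Sigma \mathrm{Z}_{(\QQ\mathcal{Q})^!}.$$
Finally, $\QQ R_{\mathcal{Q}}^\vee(n)$ is the trivial one-dimensional representation of $\mathfrak{S}_n$ for every $n\geq 1$ and vanishes in arity zero, so the same classical computation as in the left case gives $\mathrm{Z}_{\QQ R_{\mathcal{Q}}^\vee} = \exp\bigl(\sum_{n\geq 1}p_n/n\bigr)-1$. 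Plugging these two identities into the plethysm above yields the announced formula.

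Since every step strictly parallels the left case, I do not anticipate any serious obstacle. The one point that warrants a sentence of comment is the placement of the coefficient species in the plethysm: for right-decorated partitions $\QQ R_{\mathcal{Q}}^\vee$ appears as the \emph{outer} argument of $\circ$, whereas for left-decorated partitions $\QQ L_{\mathcal{P}}^\vee$ appeared as the \emph{inner} argument. This is exactly the source of the asymmetry between the formulas of Propositions \ref{prop: cycle index series left operadic module} and \ref{prop: cycle index series right operadic module}, and it reflects the difference between left and right cobar constructions with coefficients.
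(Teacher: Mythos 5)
Your proof is correct and matches what the paper intends: the paper merely states that this is ``the following analogue'' of Proposition \ref{prop: cycle index series left operadic module} and does not spell out a proof, leaving the reader to reproduce exactly the argument you give (substitute Proposition \ref{prop: right operadic module decorated partitions} for Proposition \ref{prop: left operadic module decorated partitions}, keep the same Koszul and cycle-index computations, and swap the two arguments of the plethysm because $\Omega(R,\mathcal{C}) = R\circ\Omega(\mathcal{C})$ rather than $\Omega(\mathcal{C})\circ L$). One small remark: the proposition as printed writes $\hbottom{k}(\Pi^{\mathcal{Q}})$ on the left-hand side, but this is evidently a typo for $\htop{k}(\Pi^{\mathcal{Q}})$ --- the left module $\hbottom{\bullet}(\Pi^{\mathcal{Q}})$ is trivial since $\Pi^{\mathcal{Q}}(S)$ has a greatest element --- and you correctly worked with $\htop{\bullet}$ throughout. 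The paper's Remark \ref{rem: cycle index series via chains right operadic module} also sketches an alternative route via zeta polynomials of multichains, but that is presented as a second proof, not the primary one.
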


        For a poset $P$, we let
        $$\widehat{\mu}(P) = \sum_{k\geq 0} (-1)^k \mathrm{dim}(\htop{k}(P)).$$ 
        We have the following analogue of Proposition \ref{prop: generating series left operadic module}.

        \begin{prop}\label{prop: generating series right operadic module}
        Let $\mathcal{Q}$ be a right-basic set operad such that $\QQ\mathcal{Q}$ is Koszul, with Koszul dual $(\QQ\mathcal{Q})^!$. Then we have the equality of formal power series:
        \begin{equation}\label{eq: generating series mobius numbers right operadic module}
        \sum_{n\geq 1} \widehat{\mu}(\Pi^{\mathcal{Q}}(n)) \frac{x^n}{n!} = \exp(-\mathrm{C}_{(\QQ\mathcal{Q})^!}(-x))-1.    
        \end{equation}
        \end{prop}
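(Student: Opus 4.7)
The plan is to deduce Proposition \ref{prop: generating series right operadic module} directly from Proposition \ref{prop: cycle index series right operadic module} by specializing cycle index series to exponential generating series, in complete analogy with the proof of Proposition \ref{prop: generating series left operadic module}. The specialization in question is the ring homomorphism $R_\QQ \to \QQ[[x]]$ defined by $p_1 \mapsto x$ and $p_n \mapsto 0$ for $n \geq 2$, which satisfies $\mathrm{Z}_F|_{p_1=x,\, p_n=0} = \mathrm{C}_F(x)$ for every linear species $F$.

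Applied to the left-hand side of Proposition \ref{prop: cycle index series right operadic module}, this specialization produces $\sum_{k\geq 0}(-1)^k\, \mathrm{C}_{\htop{k}(\Pi^{\mathcal{Q}})}(x)$, which by definition of $\widehat{\mu}$ equals $\sum_{n \geq 1}\widehat{\mu}(\Pi^{\mathcal{Q}}(n))\frac{x^n}{n!}$. For the right-hand side, the key observation is that plethysm degenerates to ordinary composition under this specialization: in the formula $\mathrm{Z} \circ \mathrm{Z}'(p_1, p_2, \ldots) = \mathrm{Z}(\mathrm{Z}'(p_1, p_2, \ldots), \mathrm{Z}'(p_2, p_4, \ldots), \ldots)$, all arguments of $\mathrm{Z}$ beyond the first involve $p_n$ with $n \geq 2$ and therefore vanish after specialization, using the constant-term condition $\mathrm{Z}'(0,0,\ldots) = 0$. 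Hence $(\mathrm{Z} \circ \mathrm{Z}')|_{\mathrm{spec}}(x) = \mathrm{C}_{\mathrm{Z}}\bigl(\mathrm{C}_{\mathrm{Z}'}(x)\bigr)$.

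Applying this to the right-hand side of Proposition \ref{prop: cycle index series right operadic module}, the outer factor $\exp\bigl(\sum_{n \geq 1} p_n/n\bigr) - 1$ specializes to $\exp(x) - 1$, while by definition of suspension $\Sigma \mathrm{Z}_{(\QQ\mathcal{Q})^!}$ specializes to $-\mathrm{C}_{(\QQ\mathcal{Q})^!}(-x)$. Composing gives $\exp\bigl(-\mathrm{C}_{(\QQ\mathcal{Q})^!}(-x)\bigr) - 1$, matching the claimed right-hand side of \eqref{eq: generating series mobius numbers right operadic module}. There is no real obstacle here: the argument is a routine specialization exercise, the only points requiring attention being the constant-term condition needed for the plethysm specialization (automatic since $\mathcal{Q}(0) = \varnothing$ and hence $(\QQ\mathcal{Q})^!$ and its suspension vanish at $p_n = 0$) and the correct sign tracking in the definition of $\Sigma$.
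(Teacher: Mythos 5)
Your proof is correct and takes essentially the same approach as the paper: the paper deduces Proposition~\ref{prop: generating series right operadic module} from Proposition~\ref{prop: cycle index series right operadic module} (exactly as Proposition~\ref{prop: generating series left operadic module} is deduced from Proposition~\ref{prop: cycle index series left operadic module}) by applying the specialization $\mathrm{C}_{\mathrm{F}}(x)=\mathrm{Z}_{\mathrm{F}}|_{p_1=x,\,p_2=p_3=\cdots=0}$, under which plethysm degenerates to ordinary composition and $\Sigma$ becomes $y\mapsto -\,\mathrm{C}(-y)$. You spell out the degeneration of plethysm and the sign bookkeeping in slightly more detail than the paper does, but the route is identical.
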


        The sequences $\widecheck{\mu}(\Pi^{\mathcal{Q}}(n))$ for some examples are presented in Table \ref{tab:hatmu}. All the operads $\mathcal{Q}$ in this table are right-basic with $\QQ\mathcal{Q}$ Koszul, so that Proposition \ref{prop: generating series right operadic module} applies. We see that in the cases $\mathcal{Q}=\mathrm{As}, \mathrm{NAP}, \mathrm{Dias}, \mathrm{2as}, \mathrm{Dipt}, \mathrm{Dup}, \mathrm{Terp},\mathrm{TriDup}$, the operad $\htop{\bullet}(\Pi^{\mathcal{Q}})$ is not concentrated in degree zero. We develop some of these examples in \S\ref{sec: open questions}.
        
          \begin{table}[h!]
           \centering
           \begin{tabular}{|P{25mm}|c|P{90mm}|}
           \hline
$\mathcal{Q} $ &      $-\mathrm{C}_{(\QQ\mathcal{Q})^!}(-x)$
& $\widehat{\mu}(\Pi^{\mathcal{Q}}(n)),  n\geq 1$
\\ \hline
     \rowcolor{lightgray}   \begin{minipage}{20mm}\vspace{2pt}      As \cite{PeirceAs} \vspace{2pt}\end{minipage} & $\frac{x}{1+x}$  & $1, -1, 1, 1, -19, 151, -1091, 7841$ \cite[A111884]{oeis}
             \\ \hline
           \begin{minipage}{25mm} \vspace{2pt}  Perm \cite{ChapotonPerm}/ $\mathrm{Com}_2$ \cite{DotsenkoKhoroshkinCom2}  \vspace{2pt}\end{minipage}  & $\sum_{n \geq 1} (-n)^{n-1} \frac{x^n}{n!}$  & $1, -1, 4, -27, 256, -3125, 46656$ 
     \cite[A177885]{oeis}       
 \\ 
             \hline
    \rowcolor{lightgray}          NAP \cite{LivernetNAP}   & $x\exp(-x)$  & $1, -1, -2, 9, -4, -95, 414, 49, -10088$ \cite[A003725]{oeis}
           \\ \hline
     \rowcolor{lightgray}        Dias \cite{LodayDias} & $\sum_{n \geq 1}\frac{-(2n)!}{(n+1)!} \frac{(-x)^n}{n!}$  & $1, -3, 19, -191, 2661, -47579, 1040047$ \cite[A383990]{oeis}
\\ \hline
             Trias \cite{LodayRoncoTrias} & $\frac{1+3x-\sqrt{1+6x+x^2}}{4x}$  & $1, -5, 49, -743, 15421, -407909$ \cite[A383991]{oeis}
\\ \hline
    \rowcolor{lightgray}    \begin{minipage}{20mm} \vspace{2pt} 2as \cite{LodayRonco2as}/    Dipt \cite{LodayRoncoDipt} \vspace{2pt}\end{minipage}& $\frac{x+x^2}{1-x}$  & $1, 5, 25, 169, 1361, 12781, 136585$ \cite[A112242]{oeis}
             \\ \hline
           \begin{minipage}{25mm} \vspace{2pt}   ComTrias~\cite{vallettepartitionposets} \end{minipage}  & $\log\left(\frac{1+\sqrt{1+4x}}{2}\right)$ & $1, -2, 12, -120, 1680, -30240, 665280$ \cite[A001813]{oeis}
\\ \hline
     \rowcolor{lightgray}         Dup \cite{LodayDup} & $\frac{x}{(1+x)^2}$  & $1, -3, 7, 1, -219, 2581, -22973, 162177$ \cite[A318215]{oeis}
             \\ \hline
   \rowcolor{lightgray}   \begin{minipage}{25mm} \vspace{2pt}        Terp \cite{BurgunderDO}/ TriDup \cite{NovelliThibonTridup} \vspace{2pt}\end{minipage}  & $-\frac{1}{1+2x}+\frac{1}{1+x}$  & $1, -5,
25, -119, 301, 5611, -171275$ \cite[A383993]{oeis} \\
    \hline
              \end{tabular}
            \vspace{.2cm}
           \caption{Table of the sequences $\widehat{\mu}(\Pi^{\mathcal{Q}}(n))$ for some examples of operads $\mathcal{Q}$, named as in the \emph{Operadia} database \cite{operadia}. A row is grayed if $(-1)^{n-1}\widehat{\mu}(\Pi^{\mathcal{Q}}(n))<0$ for some $n$, which implies that $\Lambda \htop{\bullet}(\Pi^{\mathcal{Q}})$ is not concentrated in degree zero.}
           \label{tab:hatmu}
       \end{table}

        \begin{rem}\label{rem: cycle index series via chains right operadic module}
        One can give an alternative proof of Proposition \ref{prop: cycle index series right operadic module} as in the case of left-decorated partitions (Remark \ref{rem: cycle index series via chains left operadic module}). For a poset $P$ and an integer $n$, let us denote by $\widehat{\mathrm{M}}_n(P)$ the species of multichains $x_0\leq x_1\leq \cdots\leq x_n$ for which $x_n$ is a maximal element of $P$. One argues as in Remark \ref{rem: cycle index series via chains left operadic module} by using the following identities between species:
        \begin{align*}
        \mathrm{M}_0(\Pi^{\mathcal{Q}})&=\mathbb{X} \\
        \mathrm{M}_n(\Pi^{\mathcal{Q}})  & = \mathcal{Q} \circ   \mathrm{M}_{n-1}(\Pi^{\mathcal{Q}}) \\
        \widehat{\mathrm{M}}_n(\Pi^{\mathcal{Q}}) &= \mathbb{E}^+ \circ \mathrm{M}_n(\Pi^{\mathcal{Q}}).
        \end{align*}    
        \end{rem}

    \subsubsection{A worked example of a right operadic module: right-$\operatorname{As}$-decorated partitions}\label{sec: As right operadic module}

           Let us consider the operadic poset species of right-$\operatorname{As}$-decorated partitions, i.e. partitions equipped with a linear order on each block (see Figure \ref{figComp2}). 

        \begin{figure}[h]
    \centering
    \begin{tikzpicture}
\node (min) at (0,0) {$\{1,2,3\}$};
\coordinate[below=1cm of min] (c1);
\coordinate[below=2cm of c1] (c2);
\node[left=0.5cm of c2] (231) {$\{231\}$};
\node[left=1cm of 231] (312) {$\{312\}$};
\node[left=1cm of 312] (123) {$\{123\}$};
\node[right=0.5cm of c2] (132) {$\{132\}$};
\node[right=1cm of 132] (321) {$\{321\}$};
\node[right=1cm of 321] (213) {$\{213\}$};
\node[above=1cm of 312] (31-2) {$\{31,2\}$};
\node[above=1cm of 132] (1-32) {$\{1,32\}$};
\node[above=1cm of 123] (12-3) {$\{12,3\}$};
\node[above=1cm of 321] (21-3) {$\{21,3\}$};
\node[above=1cm of 231] (1-23) {$\{1,23\}$};
\node[above=1cm of 213] (13-2) {$\{13,2\}$};
\draw (min)--(31-2);
\draw (min)--(1-32);
\draw (min)--(12-3);
\draw (min)--(1-23);
\draw (min)--(21-3);
\draw (min)--(13-2);
\draw (123.north)--(12-3.south)--(312.north)--(31-2.south)--(231.north)--(1-23.south)--(123.north);
\draw (213.north)--(21-3.south)--(321.north)--(1-32.south)--(132.north)--(13-2.south)--(213.north);
    \end{tikzpicture}
    \caption{The poset of right-$\operatorname{As}$-decorated partitions on three elements $\Pi^{\operatorname{As}}\left( \{1,2,3\}\right)$.}
    \label{figComp2}
\end{figure}
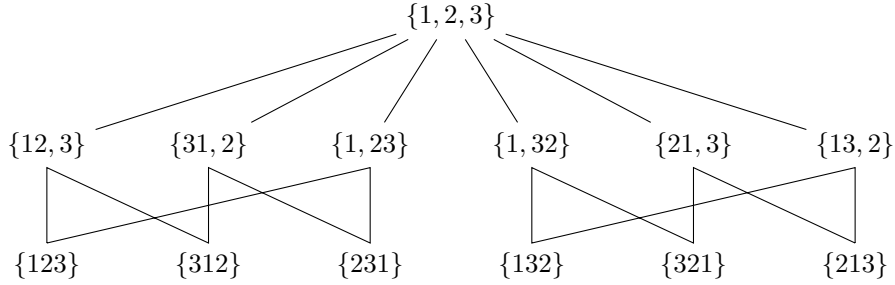

 As explained in \S\ref{sec: examples right operads}, the operad structure on its cohomology $h^\bullet(\Pi^{\operatorname{As}})$ is the operadic desuspension of the $\operatorname{As}$ operad. We now study its right operadic module $\htop{\bullet}(\Pi^{\operatorname{As}})$.
By contemplating the Hasse diagram of $\Pi^{\mathrm{As}}(\{1,2,3\})$ in Figure \ref{figComp2}, we get by \eqref{Relhbot} that $\htop{\bullet}(\Pi^{\mathrm{As}}(\{1,2,3\}))$ is up to a shift the reduced cohomology of the disjoint union of two circles. Therefore, it is not concentrated in just one degree. The results (obtained with SageMath) of the computation of $\htop{\bullet}(\Pi^{\mathrm{As}}(n))$ for $n\leq 5$, for $\KK=\mathbb{Z}$, are presented in Table \ref{tab:cohomSage}. We leave it to the motivated reader to detemine the structure of $\Lambda\htop{\bullet}(\Pi^{\mathrm{As}})$ as a (graded) right module over the $\mathrm{As}$ operad.

                \renewcommand{\arraystretch}{1.5}
                \begin{table}[h!]
                \centering
                \begin{tabular}{|c|c|c|}
                \hline
                $n$ & $\htop{\bullet}(\Pi^{\operatorname{As}} (n))$ & $\widehat{\mu}(\Pi^{\mathrm{As}}(n)) $\\ \hline
                1 & $\htop{0}=\ZZ$ & $1$ \\ \hline
                2 & $\htop{1}=\ZZ$ & $-1$ \\ \hline 
                3 & $\htop{1}=\ZZ$ , $\htop{2}=\ZZ^2$ & $1$
                \\ \hline
                4& $\htop{2}=\ZZ^7$ , $\htop{3}=\ZZ^6$ & $1$ 
                \\ \hline
                5 & $\htop{3}=\ZZ^{43}$ , $\htop{4}=\ZZ^{24}$ & $-19$ 
                \\ \hline
                \end{tabular}
                \vspace{.2cm}
                \caption{Experimental results for $\htop{\bullet}(\Pi^{\operatorname{As}}(n))$ for $n\leq 5$ (we only mention the non-zero cohomology groups).}
                \label{tab:cohomSage}
              \end{table}

              Regarding Euler characteristics, Proposition \ref{prop: generating series right operadic module} yields the identity:
              \begin{equation}\label{eq: generating series Pi As}
              \sum_{n\geq 1} \widehat{\mu}(\Pi^{\mathrm{As}}(n))  \frac{x^n}{n!} = \exp\left(\frac{x}{1+x}\right) -1.
              \end{equation}
This is \cite[A111884]{oeis}, and we see that $(-1)^{n-1}\widehat{\mu}(\Pi^{\mathrm{As}}(n))<0$ for all $4 \leq n \leq  12$, hence $\htop{\bullet}(\Pi^{\mathrm{As}}(n))$ cannot be concentrated in degree $n-1$ for those integers $n$. Therefore, the posets $\Pi^{\mathrm{As}}_+(n)$, obtained from $\Pi^{\mathrm{As}}$ by adding a least element, are not Cohen--Macaulay for $3 \leq n \leq  12$. 

\begin{rem}
One can refine the identity \eqref{eq: generating series Pi As} as follows. Let us denote by $f(n,k)$ the number of elements of rank $k$ in $\Pi^{\mathrm{As}}(n)$, i.e. the number of partitions of $\{1,\ldots,n\}$ into $k$ lists. The exponential generating series of these numbers equals
$$\sum_{n \geq 1, k \geq 1} f(n,k) \,t^k \frac{x^n}{n!} = \mathrm{C}_{\KK\mathrm{Com}}(t\,\mathrm{C}_{\KK\mathrm{As}}(x)) = \exp\left( \frac{tx}{1-x} \right) -1,$$
and \eqref{eq: generating series Pi As} follows from setting $t=-1$ and replacing $x$ with $-x$ in this equality.
The numbers $f(n,k)$ correspond to the OEIS sequence \cite[A105278]{oeis} and are given by:
\begin{equation*}
    f(n,k) = \binom{n}{k} \frac{(n-1)!}{(k-1)!}.
\end{equation*}
We mention the following recurrence relation, which does not appear in the OEIS at the time of writing:
\begin{equation}\label{eq: new recurrence relation partition into lists}
    f(n,k)= f(n-1,k-1)+(n+k-1)f(n-1,k).
\end{equation}
It follows from enumerating the $f(n,k)$ partitions of $\{1,\ldots,n\}$ into $k$ lists by looking at the position of $n$, as follows. The number of such partitions for which $n$ is alone in its list is $f(n-1,k-1)$. The others are obtained by inserting $n$ in one of the $f(n-1,k)$ partitions of $\{1,\ldots,n-1\}$ into $k$ lists. There are $(n+k-1)$ ways of doing so; indeed, if $\ell_1,\ldots, \ell_k$ denote the lengths of the lists, with $\ell_1+\cdots +\ell_k=n-1$, then the number of insertions is $(\ell_1+1)+\cdots +(\ell_k+1)=n+k-1$.
\end{rem}

\subsubsection{A worked example of a right operadic module: right-$\operatorname{Perm}$-decorated partitions}\label{sec: perm right operadic module}

Let us consider the operadic poset species of right-$\operatorname{Perm}$-decorated partitions, i.e. partitions with a pointed element in each block (see Figure \ref{fig:permright}).

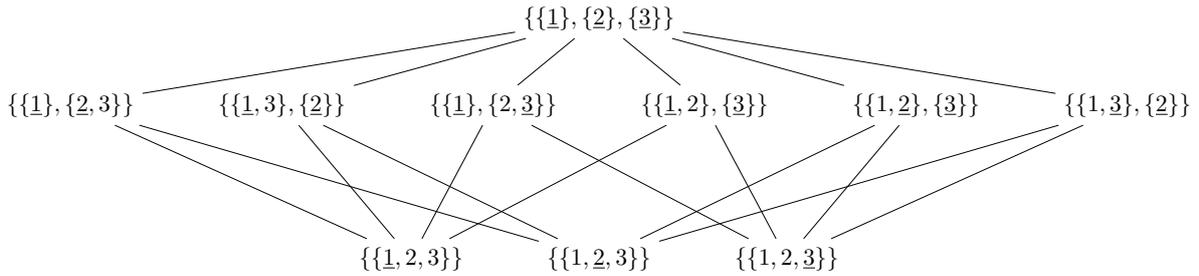
\begin{figure}[h]
    \centering
    \resizebox{\textwidth}{!}{
    \begin{tikzpicture}
\node (min) at (0,0) {$\{\{\underline{1}\},\{\underline{2}\},\{\underline{3}\}\}$};
\coordinate[below=1cm of min] (c1);
\node[left=0.5cm of c1] (3) {$\{\{\underline{1}\},\{2,\underline{3}\}\}$};
\node[left=1cm of 3] (2) {$\{\{\underline{1},3\},\{\underline{2}\}\}$};
\node[left=1cm of 2] (1) {$\{\{\underline{1}\},\{\underline{2},3\}\}$};
\node[right=0.5cm of c1] (4) {$\{\{\underline{1},2\},\{\underline{3}\}\}$};
\node[right=1cm of 4] (5) {$\{\{1,\underline{2}\},\{\underline{3}\}\}$};
\node[right=1cm of 5] (6) {$\{\{1,\underline{3}\},\{\underline{2}\}\}$};
\node[below=2cm of c1] (2p) {$\{\{1,\underline{2},3\}\}$};
\node[left=1cm of 2p] (1p) {$\{\{\underline{1},2,3\}\}$};
\node[right=1cm of 2p] (3p) {$\{\{1,2,\underline{3}\}\}$};
\draw (min)--(1);
\draw (min)--(2);
\draw (min)--(3);
\draw (min)--(4);
\draw (min)--(5);
\draw (min)--(6);
\draw (1p)--(1)--(2p);
\draw (1p)--(2)--(2p);
\draw (1p)--(3)--(3p);
\draw (1p)--(4)--(3p);
\draw (3p)--(5)--(2p);
\draw (3p)--(6)--(2p);
    \end{tikzpicture}
    }
    \caption{The poset of right-$\operatorname{Perm}$-decorated partitions on three elements $\Pi^{\operatorname{Perm}}(\{1,2,3\})$. The pointed element in each block is underlined.}
    \label{fig:permright}
\end{figure}

As explained in \S\ref{sec: examples right operads}, the operad structure on its cohomology $h^\bullet(\Pi^{\operatorname{Perm}})$ is the operadic desuspension of the $\operatorname{PreLie}$ operad. We now study its right operadic module $\htop{\bullet}(\Pi^{\operatorname{Perm}})$. Up to a shift, it is computed by the cohomology groups of the augmented posets $\Pi^{\operatorname{Perm}}_+(S)$, obtained by adding a least element. We denote by $\Pi^{\operatorname{Perm}}_+(S)^{\mathrm{op}}$ the dual poset, obtained by reversing the order. Recalling Definition \ref{DefTotSemiMod}, we have:

\begin{prop}\label{prop: Pi Perm Cohen Macaulay}
For every finite set $S$, the poset $\Pi^{\operatorname{Perm}}_+(S)^{\mathrm{op}}$ is totally semimodular, and therefore $\Pi^{\operatorname{Perm}}_+(S)$ is Cohen--Macaulay.
\end{prop}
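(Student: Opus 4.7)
The plan is to verify total semimodularity of $\Pi^{\operatorname{Perm}}_+(S)^{\mathrm{op}}$ by a direct case analysis; the Cohen--Macaulayness of $\Pi^{\operatorname{Perm}}_+(S)$ will then follow from Proposition~\ref{BjornerWachsTotSemiMod} combined with the CL-shellability statement recalled just above it. Dualizing Definition~\ref{DefTotSemiMod}, the condition to verify is: for every interval $[y,x]$ of $\Pi^{\operatorname{Perm}}_+(S)$ and every pair of distinct elements $u,v\in[y,x]$ that are covered by $x$ in $\Pi^{\operatorname{Perm}}_+(S)$, there exists $z\in[y,x]$ which is covered by both $u$ and $v$.

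The case $x=\hat{0}$ is vacuous, so I take $x=(\pi,\xi)\in\Pi^{\operatorname{Perm}}(S)$. A cover $u\lessdot x$ amounts to a choice of two blocks $T_1,T_2$ of $\pi$ (which get merged into $T_1\cup T_2$) together with a pointed element $\xi_u\in\{\xi_{T_1},\xi_{T_2}\}$ for the new block. The construction of $z$ splits into three sub-cases. \emph{Case (A):} if $u$ and $v$ merge the same pair $\{T_1,T_2\}$ but with different pointed elements, I take $z$ to be obtained from $x$ by merging $T_1$, $T_2$ and a third block $T_3$ of $\pi$ into one block decorated by $\xi_{T_3}$, this being the unique choice compatible with both $z\lessdot u$ and $z\lessdot v$. \emph{Case (B1):} if $u$ and $v$ merge disjoint pairs $\{T_1,T_2\}$ and $\{T_3,T_4\}$, I take $z$ to perform both merges simultaneously, keeping the pointed elements of $u$ and $v$ in the respective merged blocks. \emph{Case (B2):} if $u$ and $v$ merge overlapping pairs $\{T_1,T_2\}$ and $\{T_2,T_3\}$ with $T_1,T_2,T_3$ pairwise distinct, the requirements $z\lessdot u$ and $z\lessdot v$ force $z$ to merge $T_1,T_2,T_3$ into a single block, and a short case analysis on the four possible values of $(\xi_u,\xi_v)$ shows that the intersection of the two sets of admissible pointed elements is always non-empty.

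The core of the proof is to verify that $y\leq z$ holds in each case. I would analyze, for each block $Y$ of $y$ meeting the blocks involved in the merges, the constraints imposed on the pointed element of $Y$ by $y\leq u$ and by $y\leq v$ separately: a direct bookkeeping shows that the intersection of these two constraints coincides with the constraint imposed by $y\leq z$, so $y\leq z$ follows automatically. The main obstacle is the degenerate situation of Case~(A) when $\pi$ has only two blocks, since no third block $T_3$ is available to build $z$ inside $\Pi^{\operatorname{Perm}}(S)$. In that case, however, $u$ and $v$ are both minimal elements of $\Pi^{\operatorname{Perm}}(S)$, which forces $y=\hat{0}$, and one can simply take $z=\hat{0}$.
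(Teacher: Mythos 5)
Your approach differs genuinely from the paper's. Rather than verifying total semimodularity by a direct case analysis, the paper invokes twice the result of Chapoton--Vallette (their Lemma~1.10) that every maximal interval of $\Pi^{\mathrm{Perm}}(S)^{\mathrm{op}}$ is totally semimodular: for intervals $[y,x]$ with $y\neq\hat0$ this is applied directly; for $y=\hat0$ and $x=(\beta,\nu)$ with $\beta$ having at least three blocks, the paper observes that since $u$ and $v$ each point at all but at most one of the elements $\nu_B$ ($B\in\beta$), a simple counting argument produces an $s\in S$ pointed by both $\xi_u$ and $\xi_v$, which places $u$, $v$ and $x$ in the maximal interval above $(\{S\},s)$, to which the lemma is applied again. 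Your direct route is more self-contained but in effect reproves that lemma, and as written it contains a genuine gap.

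The gap is in Case~(A): for an arbitrary choice of the third block $T_3$, the relation $y\leq z$ does \emph{not} ``follow automatically''. Concretely, take $S=\{1,2,3,4\}$, let $x$ be the discrete pointed partition, let $u$ (resp.\ $v$) merge $\{1\},\{2\}$ with the merged block pointed at $1$ (resp.\ at $2$), and let $y$ have blocks $\{1,2,4\}$ pointed at $4$ and $\{3\}$ pointed at $3$; one checks that $y\leq u$ and $y\leq v$. If you take $T_3=\{3\}$, then $z$ has blocks $\{1,2,3\}$ and $\{4\}$, so the block $\{1,2,4\}$ of $y$ is not a union of blocks of $z$ and $y\not\leq z$. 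The fix, when $y\neq\hat0$, is to let $Y$ be the block of $y$ containing $T_1\cup T_2$; your bookkeeping then shows $\eta_Y\notin T_1\cup T_2$, hence $\eta_Y$ lies in a unique block $T_3$ of $\pi$ contained in $Y$, and it is \emph{this} $T_3$ (which depends on $y$) that must be merged. A similar $y$-dependence that you gloss over occurs in the sub-case of Case~(B2) with $\xi_u=\xi_{T_1}$ and $\xi_v=\xi_{T_3}$: both $\xi_{T_1}$ and $\xi_{T_3}$ are admissible decorations for $z$, but only one of them will in general satisfy $y\leq z$, and which one is again dictated by where $\eta_Y$ lives.
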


\begin{proof}
By \cite[Lemma 1.10]{chapotonvallette}, every maximal interval of $\Pi^{\operatorname{Perm}}(S)^{\mathrm{op}}$ is totally semimodular. Therefore, all that remains is the case of the intervals of $\Pi^{\mathrm{Perm}}_+(S)^{\mathrm{op}}$ whose lower bound is the added least element. We now phrase things in terms of the order of $\Pi^{\mathrm{Perm}}$: let $(\pi,\xi), (\pi',\xi')\in \Pi^{\mathrm{Perm}}(S)$ be two pointed partitions of $S$ which are covered by a common element $(\beta,\nu)$, and let us produce an element $(\alpha,\mu)$ of $\Pi^{\mathrm{Perm}}_+(S)$ which is covered by both $(\pi,\xi)$ and $(\pi',\xi')$. If $\beta$ has $2$ blocks, then both $\pi$ and $\pi'$ consist of only one block and one can choose $(\alpha,\mu)$ to be the added least element of $\Pi^{\mathrm{Perm}}_+(S)$. Otherwise, $\beta$ has at least $3$ blocks, and since $\pi$ (resp. $\pi'$) is obtained from $\beta$ by merging exactly two blocks, there exists an element $s\in S$ which is pointed by both $\xi$ and $\xi'$. This implies that both $(\pi,\xi)$ and $(\pi',\xi')$ are larger than the pointed partition consisting of a block $S$ pointed by $s$. Therefore, $(\pi,\xi)$ and $(\pi',\xi')$ live in an interval of $\Pi^{\operatorname{Perm}}$ and applying again \cite[Lemma 1.10]{chapotonvallette} yields the result.
\end{proof}

\begin{rem} Recall that the existence of an EL-shelling implies the existence of a recursive atom ordering, while the converse is false by \cite{liCLvsEL}\footnote{We thank Sheila Sundaram for providing this reference.} . In \cite{whitneyTwins}, Gonz\'{a}lez D'Le\'{o}n, Hallam and Quiceno Dur\'{a}n give an EL-labeling of the maximal intervals of the poset $\Pi^{\mathrm{Perm}}(S)$. It would be interesting to determine whether this labeling can be extended to an EL-labeling of the poset $\Pi^{\operatorname{Perm}}_+(S)$. This would give another proof of the fact that this poset is Cohen--Macaulay and a stronger property of this poset: EL-shellability.
\end{rem}

\begin{prop} \label{prop:dim htop right Perm}
For all $n\geq 1$, the cohomology $\htop{\bullet}(\Pi^{\mathrm{Perm}}(n))$ is concentrated in degree $n-1$, and 
$$\dim\htop{n-1}(\Pi^{\mathrm{Perm}}(n)) = (n-1)^{n-1}.$$
\end{prop}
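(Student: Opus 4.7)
The plan is to first establish concentration of cohomology in the top degree using Proposition \ref{prop: Pi Perm Cohen Macaulay}, and then compute the dimension via a signed Euler characteristic argument relying on Proposition \ref{prop: generating series right operadic module} and Lagrange inversion.

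For the concentration, Proposition \ref{prop: Pi Perm Cohen Macaulay} asserts that the augmented poset $\Pi^{\mathrm{Perm}}_+(n)$ is Cohen--Macaulay. This poset has a unique minimum $\hat{0}$ and a unique maximum $\hat{1}$, and is graded of rank $n$ (a maximal chain is $\hat{0}<\pi_1<\pi_2<\cdots<\pi_n=\hat{1}$, where $\pi_k$ has $k$ blocks), so Cohen--Macaulayness forces $h^k(\Pi^{\mathrm{Perm}}_+(n))=0$ for $k\neq n$. To transfer this to $\htop{\bullet}(\Pi^{\mathrm{Perm}}(n))$, I would observe that the map which forgets the initial $\hat{0}$ induces an isomorphism of cochain complexes $\chains{\bullet}(\Pi^{\mathrm{Perm}}_+(n))\simeq \chainstop{\bullet-1}(\Pi^{\mathrm{Perm}}(n))$ (up to a global sign), yielding $\htop{j}(\Pi^{\mathrm{Perm}}(n))\simeq h^{j+1}(\Pi^{\mathrm{Perm}}_+(n))$ and hence the desired vanishing for $j\neq n-1$.

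With cohomology concentrated in a single degree, the dimension is recovered from the signed Euler characteristic: $\dim\htop{n-1}(\Pi^{\mathrm{Perm}}(n)) = (-1)^{n-1}\widehat{\mu}(\Pi^{\mathrm{Perm}}(n))$. Applying Proposition \ref{prop: generating series right operadic module} with $\mathcal{Q}=\mathrm{Perm}$ and Koszul dual $(\QQ\mathrm{Perm})^!=\mathrm{PreLie}$, and using Cayley's formula $\mathrm{C}_{\mathrm{PreLie}}(x)=\sum_{n\geq 1}n^{n-1}x^n/n!$, the generating series \eqref{eq: generating series mobius numbers right operadic module} becomes $\exp(G(x))-1$ where $G(x)\defas -\mathrm{C}_{\mathrm{PreLie}}(-x)$. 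From the functional equation $\mathrm{C}_{\mathrm{PreLie}}(x)=x\exp(\mathrm{C}_{\mathrm{PreLie}}(x))$ one deduces $G(x)=x\exp(-G(x))$. Lagrange inversion applied to $\phi(u)=e^u-1$ and $\psi(u)=e^{-u}$ then yields
$$[x^n]\bigl(\exp(G(x))-1\bigr) \;=\; \frac{1}{n}\,[u^{n-1}]\bigl(e^{u}\cdot e^{-nu}\bigr) \;=\; \frac{1}{n}\cdot\frac{(-(n-1))^{n-1}}{(n-1)!} \;=\; \frac{(-1)^{n-1}(n-1)^{n-1}}{n!},$$
so that $\widehat{\mu}(\Pi^{\mathrm{Perm}}(n))=(-1)^{n-1}(n-1)^{n-1}$ and therefore $\dim\htop{n-1}(\Pi^{\mathrm{Perm}}(n))=(n-1)^{n-1}$.

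The only place requiring a bit of care is checking the signs and indexing in the shift isomorphism $\chains{\bullet}(\Pi^{\mathrm{Perm}}_+(n))\simeq \chainstop{\bullet-1}(\Pi^{\mathrm{Perm}}(n))$: after removing $\hat{0}$ from a chain, the terms of the form $y<x_1$ arise in $\chains{\bullet}$ from the $i=1$ summand of the differential with sign $(-1)^1=-1$, whereas in $\chainstop{\bullet}$ the same terms appear with sign $+1$, which accounts for the global sign. Everything else is either a direct invocation of the previously established results or a standard Lagrange inversion computation.
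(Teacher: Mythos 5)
Your proof is correct and follows the same route as the paper: concentration in degree $n-1$ from Proposition \ref{prop: Pi Perm Cohen Macaulay}, then the dimension from the Euler characteristic via Proposition \ref{prop: generating series right operadic module} and the functional equation $\mathrm{C}_{\mathrm{PreLie}}(x)e^{-\mathrm{C}_{\mathrm{PreLie}}(x)}=x$. You merely spell out the shift isomorphism $\htop{j}(P)\simeq h^{j+1}(P_+)$ and the Lagrange inversion, which the paper leaves implicit.
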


\begin{proof}
The first statement is a consequence of Proposition \ref{prop: Pi Perm Cohen Macaulay}. For the second statement, note that $\dim\htop{n-1}(\Pi^{\mathrm{Perm}}(n))=(-1)^{n-1}\widehat{\mu}(\Pi^{\mathrm{Perm}}(n))$, and that Proposition \ref{prop: generating series right operadic module} yields the identity:
\begin{equation}\label{eq: generating series Pi Perm}
\sum_{n\geq 1} \widehat{\mu}(\Pi^{\mathrm{Perm}}(n))  \frac{x^n}{n!} = \exp(-\mathrm{C}_{\mathrm{PreLie}}(-x))-1.
\end{equation}
Since $\mathrm{C}_{\mathrm{PreLie}}(x)\exp(-\mathrm{C}_{\mathrm{PreLie}}(x)) = x$, we get that $\widehat{\mu}(\Pi^{\mathrm{Perm}}(n)) = (-1)^{n-1}(n-1)^{n-1}$ (see \cite[A177885]{oeis}) and the claim follows.
\end{proof}

We leave it to the motivated reader to detemine the structure of $\Lambda\htop{\bullet}(\Pi^{\mathrm{Perm}})$ as a right module over the $\mathrm{PreLie}$ operad, and to potentially draw connections to the many combinatorial objects enumerated by the sequence $(n-1)^{n-1}$, see \cite[A000312]{oeis}.

\section{More examples of operadic poset species and their cohomology}\label{sec: more examples}

            \subsection{Non-singleton boolean posets and metabelian Lie algebras}\label{sec:metablie}

\begin{defi}
Let $S$ be a finite set. The \emph{non-singleton boolean poset} on $S$ is the subposet
$$\mathrm{NS}(S)\subset \Pi(S)$$
consisting of those partitions of $S$ with at most one block of cardinality $\geq 2$.
\end{defi}

The name comes from the fact that for $|S|\geq 2$ we can also describe $\mathrm{NS}(S)$ as the poset of non-singleton subsets of $S$ ordered by reverse inclusion, such a subset $T$ corresponding to the partition of $S$ whose only non-singleton block is $T$ if $T\neq\varnothing$, and to the partition of $S$ whose blocks are all singletons if $T=\varnothing$. We have $\mathrm{NS}(S)=\Pi(S)$ if $|S|\leq 3$, and the Hasse diagram of $\mathrm{NS}(4)$ is given in Figure \ref{fig:BuildingSetPoset}.

\begin{figure}[h!]
    \centering
    \resizebox{\textwidth}{!}{
\begin{tikzpicture}
    \node (min) at (0,0) {$\{\{1,2,3,4\}\}$};
\coordinate[above=1cm of min] (c1);
\coordinate[above=2cm of c1] (c2);
 \node[above=1cm of c2] (max) {$\{\{1\},\{2\},\{3\},\{4\}\}$};
\node[left=0.5cm of c1] (124) {$\{\{1,2,4\},\{3\}\}$};
\node[left=1cm of 124] (123) {$\{\{1,2,3\},\{4\}\}$};
\node[right=0.5cm of c1] (134) {$\{\{1,3,4\},\{2\}\}$};
\node[right=1cm of 134] (234) {$\{\{2,3,4\},\{1\}\}$};
\node[left=0.5cm of c2] (14) {$\{\{1,4\},\{2\},\{3\}\}$};
\node[left=1cm of 14] (13) {$\{\{1,3\},\{2\},\{4\}\}$};
\node[left=1cm of 13] (12) {$\{\{1,2\},\{3\},\{4\}\}$};
\node[right=0.5cm of c2] (23) {$\{\{2,3\},\{1\},\{4\}\}$};
\node[right=1cm of 23] (24) {$\{\{2,4\},\{1\},\{3\}\}$};
\node[right=1cm of 24] (34) {$\{\{3,4\},\{1\},\{2\}\}$};
\draw (min)--(123);
\draw (min)--(124);
\draw (min)--(134);
\draw (min)--(234);
\draw (123.north)--(12.south)--(124.north)--(14.south)--(134.north)--(13.south)--(123.north);
\draw (123.north)--(23.south)--(234.north)--(24.south)--(124.north);
\draw (134.north)--(34.south)--(234.north);
\draw (13)--(max)--(12);
\draw (14)--(max)--(23);
\draw (24)--(max)--(34);
\end{tikzpicture}
}

 \vspace{.3cm}
 \resizebox{0.7\textwidth}{!}{
\begin{tikzpicture}
    \node (min) at (0,0) {$\{1,2,3,4\}$};
\coordinate[above=1cm of min] (c1);
\coordinate[above=2cm of c1] (c2);
 \node[above=1cm of c2] (max) {$\emptyset$};
\node[left=0.5cm of c1] (124) {$\{1,2,4\}$};
\node[left=1cm of 124] (123) {$\{1,2,3\}$};
\node[right=0.5cm of c1] (134) {$\{1,3,4\}$};
\node[right=1cm of 134] (234) {$\{2,3,4\}$};
\node[left=0.5cm of c2] (14) {$\{1,4\}$};
\node[left=1cm of 14] (13) {$\{1,3\}$};
\node[left=1cm of 13] (12) {$\{1,2\}$};
\node[right=0.5cm of c2] (23) {$\{2,3\}$};
\node[right=1cm of 23] (24) {$\{2,4\}$};
\node[right=1cm of 24] (34) {$\{3,4\}$};
\draw (min)--(123);
\draw (min)--(124);
\draw (min)--(134);
\draw (min)--(234);
\draw (123.north)--(12.south)--(124.north)--(14.south)--(134.north)--(13.south)--(123.north);
\draw (123.north)--(23.south)--(234.north)--(24.south)--(124.north);
\draw (134.north)--(34.south)--(234.north);
\draw (13)--(max)--(12);
\draw (14)--(max)--(23);
\draw (24)--(max)--(34);
\end{tikzpicture}
}
    \caption{The Hasse diagram of the non-singleton boolean poset $\mathrm{NS}(4)$, in the language of partitions (top) or subsets (bottom).}
    \label{fig:BuildingSetPoset}
\end{figure}
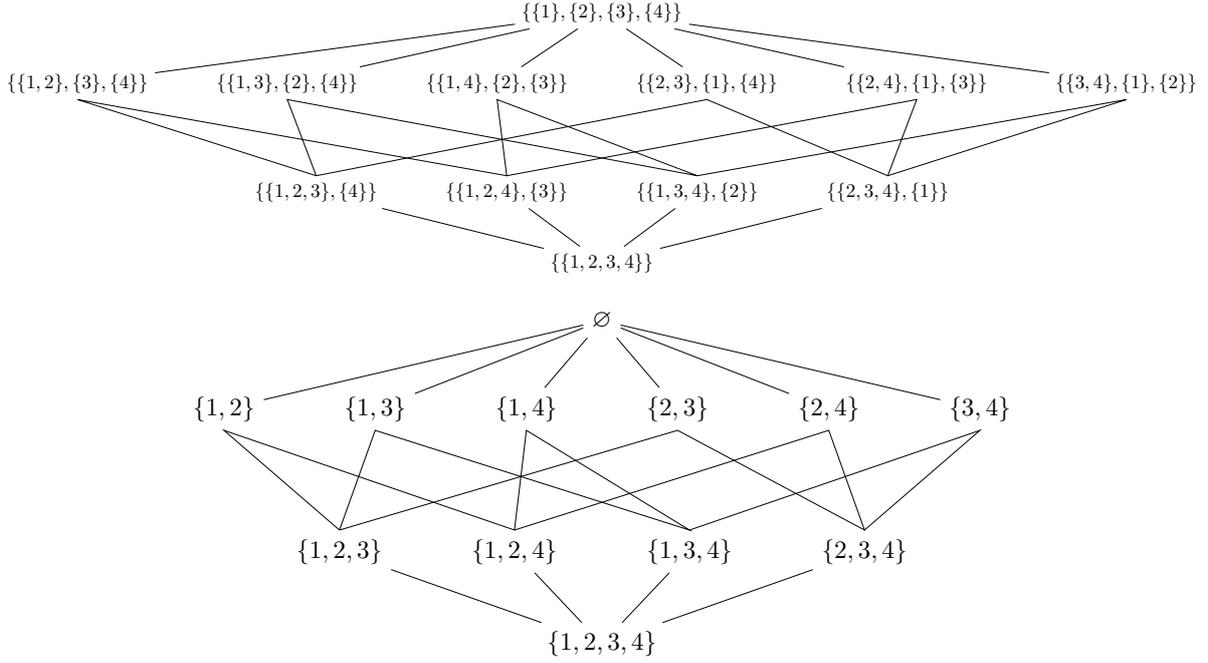

\begin{rem}
The partitions of $S$ with exactly one block of cardinality $\geq 2$ form the minimal building set of the dual of $\Pi(S)$ in the sense of \cite{deConciniProcesi, feichtnerkozlov}.
\end{rem}

For $\pi\in \mathrm{NS}(S)$, we have morphisms of posets
$$\varphi_\pi:\mathrm{NS}_{\leq \pi}(S) \To \mathrm{NS}(\pi) \qquad\qquad \mbox{ and } \qquad\qquad \psi_\pi:\mathrm{NS}_{\geq \pi}(S) \To \prod_{T\in \pi}\mathrm{NS}(T),$$
obtained as restrictions of \eqref{eq: phi and psi for partitions}. The former is injective but not surjective, and the latter is an isomorphism. Together with the injection $a:\mathrm{NS}\to \Pi$, they give the poset species $\mathrm{NS}$ the structure of an operadic poset species (that one could call an operadic poset \emph{subspecies} of $\Pi$.)

\begin{defi}
A Lie algebra $\mathfrak{g}$ is \emph{metabelian} if for all $a,b,c,d\in\mathfrak{g}$ we have $[[a,b],[c,d]]=0$.
\end{defi}

Metabelian Lie algebras are the algebras over an operad $\mathrm{MetabLie}$, which is a quotient of $\mathrm{Lie}$.

\begin{thm}\label{thm: metablie}
There exists a unique isomorphism of graded operads
$$\Lambda^{-1}\mathrm{MetabLie} \stackrel{\sim}{\To} h^\bullet(\mathrm{NS})$$
that sends the Lie bracket to the generator $[12<1|2]$ of $h^1(\mathrm{NS}(2))$.
\end{thm}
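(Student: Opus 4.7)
The plan is to imitate the construction and proof of Proposition~\ref{prop: partitions and Lie} for the partition poset case. First, the inclusion $a:\mathrm{NS}\hookrightarrow\Pi$ is by construction a morphism of operadic poset species, so by functoriality (\S\ref{sec: functoriality}) and Proposition~\ref{prop: partitions and Lie} we obtain a morphism of graded operads $a^*:\Lambda^{-1}\mathrm{Lie}\simeq h^\bullet(\Pi)\to h^\bullet(\mathrm{NS})$ sending the Lie bracket to $[12<1|2]\in h^1(\mathrm{NS}(2))$. To prove that $a^*$ factors through the canonical surjection $\Lambda^{-1}\mathrm{Lie}\twoheadrightarrow\Lambda^{-1}\mathrm{MetabLie}$, it suffices by $\mathfrak{S}_4$-equivariance to show that $[[1,2],[3,4]]$ maps to $0$ in $h^\bullet(\mathrm{NS}(4))$. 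In $\Lambda^{-1}\mathrm{Lie}(4)$ this element equals $\rho_\pi(\lambda_\pi\otimes\lambda_{\{1,2\}}\otimes\lambda_{\{3,4\}})$ for the partition $\pi=\{\{1,2\},\{3,4\}\}$, where $\lambda_U$ denotes the bracket generator in arity $2$. Since $a^*$ is a morphism of operads, its image is computed by the analogous operadic composition $\rho_\pi$ in $h^\bullet(\mathrm{NS})$, which by construction is a sum indexed by $\{x\in\mathrm{NS}(4):a(x)=\pi\}$; but $\pi$ has two non-singleton blocks and hence lies outside $\mathrm{NS}(4)$, so this index set is empty and the sum vanishes. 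This produces the desired morphism $\Phi:\Lambda^{-1}\mathrm{MetabLie}\to h^\bullet(\mathrm{NS})$; its uniqueness is automatic because $\mathrm{MetabLie}$ is generated by the bracket in arity $2$.

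The next and most substantial step is to prove that $\mathrm{NS}(n)$ is Cohen--Macaulay of rank $n-1$. The grading follows from an explicit determination of the covering relations: in the subset description, a cover $T_1\lessdot T_2$ with $T_2\neq\varnothing$ corresponds to $T_1=T_2\cup\{i\}$, while $\hat{1}=\varnothing$ is covered only by the $2$-element subsets. For Cohen--Macaulayness, the plan is to exhibit a recursive atom ordering of the dual poset $\mathrm{NS}(n)^{\mathrm{op}}$, following the strategy of Proposition~\ref{prop: AsPi Cohen Macaulay}. The atoms of $\mathrm{NS}(n)^{\mathrm{op}}$ are the $2$-element subsets of $\{1,\ldots,n\}$, and each maximal interval $[\{i,j\},\hat{1}_{\mathrm{NS}^{\mathrm{op}}}]$ is isomorphic to the boolean lattice on $\{1,\ldots,n\}\setminus\{i,j\}$, hence totally semimodular. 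Proposition~\ref{BjornerWachsTotSemiMod} then allows us to freely choose atom orderings of each such interval, provided the ``initial segment'' condition of Definition~\ref{DefRAO}(1) is met---this can be arranged by placing first those atoms $\{i,j,m\}$ for which $m$ is suitably small. At the top level, I would order the $2$-subsets lexicographically; condition~(2) of Definition~\ref{DefRAO} then reduces to a short case analysis depending on whether two $2$-subsets $a_i<a_j$ meet or are disjoint, the disjoint case being handled by taking $m$ to be the smallest element of $a_i$ (which is necessarily smaller than the maximum of $a_j$, because $a_i<a_j$ lexicographically forces $\min a_i<\min a_j$).

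Once Cohen--Macaulayness is established, $h^k(\mathrm{NS}(n))=0$ for $k\neq n-1$ and $\dim h^{n-1}(\mathrm{NS}(n))=|\mu_{\mathrm{NS}(n)}(\hat{0},\hat{1})|$. A direct M\"obius computation, exploiting the fact that $[\hat{0},T]$ is isomorphic to the boolean lattice on $\{1,\ldots,n\}\setminus T$ for any non-singleton proper subset $T$, gives $\mu_{\mathrm{NS}(n)}(\hat{0},T)=(-1)^{n-|T|}$ and then, using the binomial identity $\sum_{k=0}^n\binom{n}{k}(-1)^{n-k}=0$, one obtains $\mu_{\mathrm{NS}(n)}(\hat{0},\hat{1})=(-1)^{n+1}(n-1)$. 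On the other hand, a standard Hall-type argument shows $\dim\mathrm{MetabLie}(n)=n-1$ for $n\geq 2$: every element admits a right-nested form $[x_{\sigma(1)},[\cdots [x_{\sigma(n-1)},x_n]\cdots]]$ with $\sigma\in\mathfrak{S}_{n-1}$, and the metabelian identity $[x,[y,R]]=[y,[x,R]]$ for $R\in[\mathrm{MetabLie},\mathrm{MetabLie}]$ permits symmetrizing the first $n-2$ entries of $\sigma$, leaving $(n-1)!/(n-2)!=n-1$ independent classes. Finally, surjectivity of $\Phi$ follows from the observation that $\mathrm{NS}(n)\hookrightarrow\Pi(n)$ preserves the rank, so that every maximal chain of $\mathrm{NS}(n)$ is a maximal chain of $\Pi(n)$; hence $a^*:c^{n-1}(\Pi(n))\to c^{n-1}(\mathrm{NS}(n))$ is surjective and, combined with Proposition~\ref{prop: partitions and Lie}, so is $\Phi$. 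The matching dimensions then force $\Phi$ to be an isomorphism in every arity.

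The main obstacle will be the Cohen--Macaulay step: unlike in Proposition~\ref{prop: AsPi Cohen Macaulay}, the dual poset $\mathrm{NS}(n)^{\mathrm{op}}$ is \emph{not} totally semimodular---two disjoint $2$-subsets admit no common cover in the inclusion order---so Proposition~\ref{BjornerWachsTotSemiMod} cannot be invoked at the top level and one has to verify condition~(2) of Definition~\ref{DefRAO} by hand. Once this combinatorial verification is completed, the remaining pieces (M\"obius computation, dimension of $\mathrm{MetabLie}$, and surjectivity) are essentially formal.
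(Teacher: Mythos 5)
Your overall structure matches the paper's: construct $\overline{\Phi}$ by showing $a^*([[1,2],[3,4]])=0$ (same reason — no element of $\mathrm{NS}(4)$ lies over $\{\{1,2\},\{3,4\}\}$), prove $\mathrm{NS}(n)$ is Cohen--Macaulay with $h^{n-1}$ free of rank $n-1$, establish surjectivity via maximal chains, and conclude by a rank count against $\dim\mathrm{MetabLie}(n)=n-1$. The one genuinely different ingredient is the Cohen--Macaulay step. The paper's Proposition~\ref{prop: rank of cohomology NS} proves that $\mathrm{NS}(S)$ is a \emph{geometric lattice} — a short verification of atomisticity and the semimodular inequality $\operatorname{rk}(A)+\operatorname{rk}(B)\geq \operatorname{rk}(A\wedge B)+\operatorname{rk}(A\vee B)$ in the subset model — and then invokes Folkman's theorem. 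You instead build a recursive atom ordering of the dual poset, correctly observing that $\mathrm{NS}(n)^{\mathrm{op}}$ is \emph{not} totally semimodular (disjoint $2$-subsets have no common cover), so Proposition~\ref{BjornerWachsTotSemiMod} only dispatches condition (1) of Definition~\ref{DefRAO}, leaving condition (2) as a hands-on check. Your sketch of that check is correct and does close: for $a_i<a_j$ disjoint, taking $m=\min a_i$ gives $z=a_j\cup\{m\}\subseteq a_i\cup a_j\subseteq y$ covering $a_j$ and the lexicographically smaller atom $a_k=\{m,\min a_j\}$. So the route works, but it is more laborious than the paper's two-line geometric-lattice argument, and one should not present the RAO as "essentially formal" — it is the bulk of the combinatorial work. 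For $\dim\mathrm{MetabLie}(n)=n-1$ the paper simply cites Bahturin; your Hall-type spanning argument is a reasonable replacement, and in fact for the final rank-count argument (a surjection from a module generated by $n-1$ elements onto a free module of rank $n-1$ over a commutative ring is an isomorphism) spanning already suffices — the linear-independence claim you add is not needed.
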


In order to prove this theorem, we first need to compute the rank of the cohomology of $\mathrm{NS}(S)$.

\begin{prop}\label{prop: rank of cohomology NS}
For every finite set $S$, the poset $\mathrm{NS}(S)$ is a geometric lattice, and hence is Cohen--Macaulay. Its only non-zero cohomology group is $h^{|S|-1}(\mathrm{NS}(S))$, which is free of rank $|S|-1$.
\end{prop}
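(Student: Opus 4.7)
The plan is to identify $\mathrm{NS}(S)$ as the lattice of flats of a well-known matroid, invoke the classical Cohen--Macaulay property of geometric lattices, and then compute the top M\"{o}bius number by hand.

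First, I would observe that the map $T \mapsto S \setminus T$ sends $\mathrm{NS}(S)$ (viewed, for $|S|\geq 2$, as the poset of non-singleton subsets under reverse inclusion) order-isomorphically onto the poset of subsets of $S$ of sizes $0, 1, \ldots, |S|-2, |S|$ ordered by inclusion---that is, onto the lattice of flats of the uniform matroid $U_{|S|-1,|S|}$ of rank $|S|-1$ on the ground set $S$. This identification (or a direct verification: the meet of $T,T'$ in $\mathrm{NS}(S)$ equals $T\cup T'$ and their join equals $T\cap T'$ if $|T\cap T'|\geq 2$ and $\varnothing$ otherwise; the atoms are the size-$(|S|-1)$ subsets, and every element $T$ is the join of the atoms $\{S\setminus\{x\}:x\notin T\}$ lying above it; semimodularity follows from the rank formula $\mathrm{rk}(T)=|S|-|T|$ for $T\neq \varnothing$ and $\mathrm{rk}(\varnothing)=|S|-1$ by an inclusion--exclusion case check) shows that $\mathrm{NS}(S)$ is a geometric lattice of rank $|S|-1$.

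Next, I would invoke Bj\"{o}rner's classical theorem \cite{bjornershellable} that every geometric lattice is shellable, hence Cohen--Macaulay, and that the order complex of its proper part is homotopy equivalent to a wedge of $(r-2)$-spheres whose number equals $|\mu(\hat{0},\hat{1})|$, where $r$ is the rank. Combined with the degree-shift isomorphism $h^n(\mathrm{NS}(S)) \simeq \widetilde{H}^{n-2}(\mathrm{NS}(S)\setminus\{\hat{0},\hat{1}\})$ recalled in \S\ref{Poset cohomology}, this yields that only $h^{|S|-1}(\mathrm{NS}(S))$ is non-zero, and it is free of rank $|\mu(\hat{0},\hat{1})|$.

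Finally, I would compute $\mu(\hat{0},\hat{1})$ directly. For every element $T \in \mathrm{NS}(S)$ with $2 \leq |T| \leq |S|-1$, the lower interval $[\hat{0}, T]$ consists of the non-singleton subsets $U$ with $T \subseteq U \subseteq S$ (under reverse inclusion), and this is isomorphic to the boolean lattice on $S \setminus T$; hence $\mu(\hat{0}, T) = (-1)^{|S|-|T|}$. The defining identity $\sum_T \mu(\hat{0}, T) = 0$, combined with $\mu(\hat{0},\hat{0})=1$ and the binomial identity $\sum_{k=0}^{|S|} \binom{|S|}{k}(-1)^{|S|-k} = 0$, then gives $\mu(\hat{0}, \hat{1}) = (-1)^{|S|-1}(|S|-1)$, so $|\mu| = |S|-1$, as required. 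The main non-routine step is the identification of $\mathrm{NS}(S)$ with $L(U_{|S|-1,|S|})$ (or equivalently the semimodularity verification); the rest is a short inclusion--exclusion exercise combined with a black-box use of the topology of geometric lattices.
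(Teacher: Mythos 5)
Your proof is correct and follows essentially the same route as the paper: establish that $\mathrm{NS}(S)$ is a geometric lattice, deduce Cohen--Macaulayness from the general theory, and then compute the M\"{o}bius number to pin down the rank. The one genuine addition is the observation that $\mathrm{NS}(S)$ is, via $T\mapsto S\setminus T$, the lattice of flats of the uniform matroid $U_{|S|-1,|S|}$; the paper instead verifies the geometric-lattice axioms (lattice, atomistic, graded, semimodular) by hand, which is exactly the ``direct verification'' you offer in parentheses. The matroid identification is a clean shortcut to the geometric-lattice property and places the poset in a familiar family, but both paths lead to the same black-box invocation of topology of geometric lattices (you cite Bj\"{o}rner's shellability, the paper cites Folkman's concentration-of-homology theorem---either suffices). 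Your M\"{o}bius computation via $\mu(\hat{0},T)=(-1)^{|S|-|T|}$ on the boolean lower intervals, summed against the binomial identity, agrees with the paper's (and, incidentally, your sign bookkeeping is cleaner than the displayed intermediate formula in the paper).
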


\begin{proof}
We assume that $|S|\geq 2$ and identify $\mathrm{NS}(S)$ with the poset of non-singleton subsets of $S$, ordered by reverse inclusion. The least element is $S$, the greatest element is $\emptyset$, and the atoms are the sets $S \setminus \{a\}$, for $a\in S$.
\begin{enumerate}[$\triangleright$]
    \item It is clear that $\mathrm{NS}(S)$ is a lattice. The join of two elements $A$ and $B$ is given by $A \vee B = A \cap B$ if $A \cap B$ is not a singleton and by $A \vee B =\emptyset$ otherwise. The meet of two elements $A$ and $B$ is given by $A \wedge B =A \cup B$.
    \item This poset is atomistic because any non-singleton non-empty subset $A\subset S$ is the intersection of the sets $S\setminus \{b\}$ for $b\notin A$.
    \item The lattice $\mathrm{NS}(S)$ is graded, where the rank function is given by $\operatorname{rk}(A)=n-|A|$ if $A \neq \emptyset$ and $\operatorname{rk}(\emptyset)=n-1$. It remains to check that the following (semimodularity) inequality is satisfied for all $A,B\in\mathrm{NS}(S)$:
    \begin{equation}
        \operatorname{rk}(A)+\operatorname{rk}(B) \geq \operatorname{rk}(A \wedge B) + \operatorname{rk}(A \vee B).
    \end{equation}
    It is enough to treat the case $A\neq\varnothing$, $B\neq \varnothing$, for which $\operatorname{rk}(A)+\operatorname{rk}(B) = 2n-|A|-|B|$. On the other hand, we have
    $$\operatorname{rk}(A\wedge B) + \operatorname{rk}(A\vee B) = \begin{cases} 2n-|A\cup B|-|A\cap B| = 2n-|A|-|B| & \mbox{ if } A\vee B\neq\varnothing ;\\ 2n-1 - |A\cup B| = 2n-1-|A|-|B|+|A\cap B| & \mbox{ otherwise.}\end{cases}$$
    The inequality follows from the fact that $A \vee B = \emptyset$ if and only if $|A \cap B| \leq 1$.
\end{enumerate}
The non-singleton boolean poset is therefore a geometric lattice, and hence is Cohen--Macaulay by \cite{folkman}. Moreover, the Möbius number of $\mathrm{NS}(n)$ equals $\sum_{k=2}^n (-1)^{n-k} \binom{n}{k} = (-1)^{n-1}(n-1)$, and the second claim follows.
\end{proof}

\begin{proof}[Proof of Theorem \ref{thm: metablie}]
We consider the following commutative diagram, where the isomorphism $\Phi$ in the first row is that of Proposition \ref{prop: partitions and Lie}.
$$\diagram{
\Lambda^{-1}\mathrm{Lie} \ar[r]^-{\Phi} \ar@{->>}[d] & h^\bullet(\Pi) \ar[d] \\
\Lambda^{-1}\mathrm{MetabLie} \ar@{..>}[r]^-{\overline{\Phi}} & h^\bullet(\mathrm{NS})
}$$ 
The dotted arrow $\overline{\Phi}$ is (uniquely) well-defined since $\Phi([[1,2],[3,4]])$ is zero in $h^3(\mathrm{NS}(4))$ by definition, the partition $\{\{1,2\},\{3,4\}\}$ not being an element of $\mathrm{NS}(4)$. The morphism $h^{n-1}(\Pi(n))\to h^{n-1}(\mathrm{NS}(n))$ is surjective for all $n$ because every chain of maximal length in $\mathrm{NS}(n)$ is the image of itself by that map. Therefore, $\overline{\Phi}$ is surjective. By the work of Bahturin \cite[\S 4.7.1]{bahturinbook}\footnote{We thank Vladimir Dotsenko for providing this reference.}, the space $\mathrm{MetabLie}(n)$ is a free $\KK$-module of rank $n-1$ for all $n$, and Proposition \ref{prop: rank of cohomology NS} implies that $\overline{\Phi}$ is an isomorphism.
\end{proof}
    
        \subsection{Non-crossing 2-partitions, a.k.a.~parking functions}\label{sec:NC2P}

We recall from \cite{Edelman} the definition of non-crossing 2-partitions. (As explained in \cite{DOJVR}, they are in bijection with parking functions.)

First, let us recall that a \emph{non-crossing partition} of $\{1, \ldots, n\}$ is a partition of $\{1, \ldots, n\}$ such that for any $i<j<k<l$, if $i$ is in the same block as $k$ and $j$ is in the same block as $l$, then the four elements are in the same block. Graphically, one can associate to any partition $\pi$ a diagram as follows:  first align elements $\{1, \ldots, n\}$ from left to right and draw an arc between $i$ and $j$ if and only if $i$ and $j$ are in the same block of the partition and no $k$ between $i$ and $j$ is. Then a partition is non-crossing if and only if no arcs cross.

\begin{defi} A \emph{non-crossing 2-partition} of a finite set $S$ is a triple $(\pi,\rho,f)$ where
     \begin{enumerate}[$\triangleright$]
         \item $\pi$ is a partition of $S$,
         \item $\rho$ is a non-crossing partition of $\{1,\ldots,|S|\}$,
         \item $f:\pi\stackrel{\sim}{\to}\rho$ is a bijection between the blocks of $\pi$ and the blocks of $\rho$ which respects cardinalities, i.e. such that for all $T\in \pi$, $|f(T)|=|T|$.
     \end{enumerate}
 \end{defi}

 For instance, there are $3$ non-crossing 2-partitions of the set $S=\{1,2\}$, namely, one with one block, for which $\pi=\rho=\{\{1,2\}\}$ have one element, and two with two blocks, for which $\pi=\rho=\{\{1\},\{2\}\}$ consist of two elements and $f$ is either of the two permutations of the set $\{\{1\},\{2\}\}$. We picture them as follows:
 \begin{center}
 \begin{tikzpicture}[scale=0.6, anchor=base, baseline] 
   \tikzstyle{ver} = [circle, draw, fill, inner sep=0.5mm]
   \tikzstyle{edg} = [line width=0.6mm]
   \node[ver] at (1,0) {};
   \node[ver] at (2,0) {};
   \node      at (1,-0.6) {1};
   \node      at (2,-0.6) {2};
   \draw[edg] (1,0) to[bend left=90, looseness=2] (2,0);
 \end{tikzpicture}
 \hspace{1cm}
     \begin{tikzpicture}[scale=0.6, anchor=base, baseline]   
   \tikzstyle{ver} = [circle, draw, fill, inner sep=0.5mm]
   \tikzstyle{edg} = [line width=0.6mm]
   \node[ver] at (1,0) {};
   \node[ver] at (2,0) {};
   \node      at (1,-0.6) {1};
   \node      at (2,-0.6) {2};
 \end{tikzpicture}
 \hspace{1cm}
 \begin{tikzpicture}[scale=0.6, anchor=base, baseline] 
   \tikzstyle{ver} = [circle, draw, fill, inner sep=0.5mm]
   \tikzstyle{edg} = [line width=0.6mm]
   \node[ver] at (1,0) {};
   \node[ver] at (2,0) {};
   \node      at (1,-0.6) {2};
   \node      at (2,-0.6) {1};
 \end{tikzpicture}
 \end{center}
 We denote by $\Pi_2(S)$ the set of non-crossing partitions of $S$. It is endowed with a poset structure, where $(\pi,\rho,f)\leq (\pi',\rho',f')$ if and only if $\pi\leq \pi'$, $\rho\leq \rho'$, and for all blocks $T\in\pi$ and $T'\in\pi$, $T'\subset T\Rightarrow f'(T')\subset f(T)$. The poset $\Pi_2(\{1,2,3\})$ is pictured in Figure \ref{fig:poset2NCP}.

  \begin{figure}[h]
     \centering
     \resizebox{\textwidth}{!}{
   \includegraphics{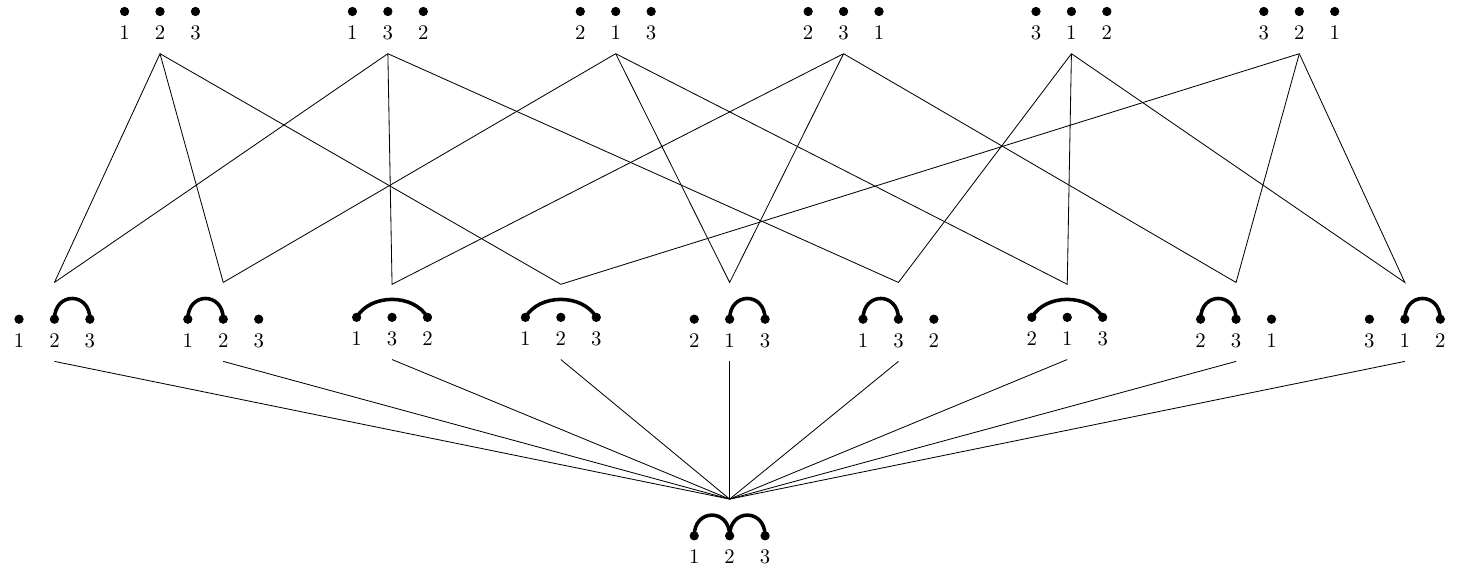}}
 \caption{The Hasse diagram of the poset of non-crossing 2-partitions $\Pi_2(\{1,2,3\})$.}
     \label{fig:poset2NCP}
 \end{figure}

We now endow $\Pi_2:S\mapsto \Pi_2(S)$ with the structure of an operadic poset species. There is an obvious morphism of poset species $a:\Pi_2\to \Pi$ defined by $a(\pi,\rho,f)=\pi$. Let $(\pi,\rho,f)$ be a non-crossing 2-partition of a finite set $S$. There is an obvious (iso)morphism
$$\psi_{(\pi,\rho,f)}:(\Pi_2)_{\geq (\pi,\rho,f)}(S)\To \prod_{T\in \pi}\Pi_2(T) \;\; ,\;\; (\beta,\mu,g)\mapsto \left((\beta_{|T},\mu_{|f(T)},g_{|T})\right)_{T\in \pi},$$
where $g_{|T}:\beta_{|T}\stackrel{\sim}{\to}\mu_{|f(T)}$ is the restriction of $g:\beta\stackrel{\sim}{\to}\mu$. In order to define $\varphi_{(\pi,\rho,f)}$, we first need an operation at the level of non-crossing partitions. For two non-crossing partitions $\mu\leq \rho$ of $\{1,\ldots,n\}$, we can consider the partition $\varphi_\rho(\mu)\in \Pi(\rho)$ and view it as a partition of $\{1,\ldots,|\rho|\}$ via the bijection $\sigma_\rho:\rho\stackrel{\sim}{\to} \{1,\ldots, |\rho|\}$ which orders the blocks of $\rho$ according to their minimal elements, i.e. such that $\sigma_\rho(T_1)<\sigma_\rho(T_2)$ if and only if $\min(T_1)<\min(T_2)$. One easily checks that $\varphi_\rho(\mu)$ is then a non-crossing partition of $\{1,\ldots,|\rho|\}$. We can then define
$$\varphi_{(\pi,\rho,f)}:(\Pi_2)_{\leq (\pi,\rho,f)} \To \Pi_2(\pi) \;\; , \;\; (\alpha,\mu,g)\mapsto (\varphi_\pi(\alpha),\varphi_\rho(\mu), g),$$
where we abuse notation and still denote by $g$ the bijection $\varphi_\pi(\alpha)\stackrel{\sim}{\to}\varphi_\rho(\mu)$ induced by $g:\alpha\stackrel{\sim}{\to}\mu$ and the obvious bijections $\alpha\simeq \varphi_\pi(\alpha)$ and $\mu\simeq \varphi_\rho(\mu)$.
 
\begin{prop}
This structure makes $\Pi_2$ into an operadic poset species.
\end{prop}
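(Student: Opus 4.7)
The plan is to verify each axiom of Definition \ref{def: operadic poset species} in turn, exploiting the fact that the morphism $a:\Pi_2 \to \Pi$ is essentially a "projection to the $\pi$-component" and that the definitions of $\varphi_{(\pi,\rho,f)}$ and $\psi_{(\pi,\rho,f)}$ reduce componentwise to the corresponding operations for the partition operadic poset species, together with their analogues for non-crossing partitions and an induced bijection between blocks. First I would check that $a$ satisfies \eqref{eq: axiom a min max compatible strong}: since $\Pi_2(S)$ has a unique element above the coarsest partition (namely the triple with $\rho$ the single-block non-crossing partition) and a unique element above the discrete partition, we have $a^{-1}(\min(\Pi(S)))=\min(\Pi_2(S))$ and similarly for maxima. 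The commutative squares in \eqref{eq: commutative squares phi psi a} follow immediately from the componentwise definitions.

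Next I would check the well-definedness of $\varphi_{(\pi,\rho,f)}$: for $(\alpha,\mu,g)\leq (\pi,\rho,f)$ the inclusion-preservation condition on $g$ says exactly that $g$ respects the equivalence relation "being in the same $\pi$-block" (resp.~$\rho$-block), so it descends to a bijection $\varphi_\pi(\alpha)\stackrel{\sim}{\to} \varphi_\rho(\mu)$ respecting cardinalities. That $\varphi_\rho(\mu)$ is still non-crossing when viewed on $\{1,\ldots,|\rho|\}$ via $\sigma_\rho$ is a short direct check: a crossing $i<j<k<l$ in $\varphi_\rho(\mu)$ would lift via $\sigma_\rho^{-1}$ and the non-crossing structure of $\rho$ to a crossing of $\mu$. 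That $\varphi_{(\pi,\rho,f)}$ and $\psi_{(\pi,\rho,f)}$ are morphisms of posets is then clear. Unitality is trivial since $\Pi_2(\{s\})$ has a unique element. Equivariance is immediate because a bijection $S\to S'$ only acts on the first component $\pi$ and transports $f$ accordingly, leaving $\rho$ (which lives over the canonical ordered set $\{1,\ldots,|S|\}$) untouched.

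The main obstacle is the associativity axioms \eqref{eq: axiom composition of phis}, \eqref{eq: axiom composition of psis}, \eqref{eq: axiom composition of phis and psis}. Since the $\pi$- and $\rho$-components are handled separately, each reduces to two associativity statements: one for partitions (which is exactly the content of the analogous axioms in $\Pi$, already established via \eqref{eq: composition of phis for partitions}, \eqref{eq: composition of psis for partitions}, \eqref{eq: composition of phis and psis for partitions}), and one for non-crossing partitions. The subtle point for the latter is that the normalization isomorphism $\sigma_\rho:\rho\stackrel{\sim}{\to}\{1,\ldots,|\rho|\}$ (ordering blocks by minimum) must be compatible along both paths of \eqref{eq: axiom composition of phis}; this follows from the elementary observation that if $\mu\leq \rho\leq \rho'$ are non-crossing, then the minimum of a $\rho'$-block equals the minimum of the minima of the $\rho$-blocks it contains, so ordering by minima commutes with coarsening. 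Finally, I would check that the three induced bijections at the block level match on the nose along each diagram, which is a direct verification using the fact that all bijections involved are obtained by restriction and corestriction of the single bijection $g$ attached to a given triple.
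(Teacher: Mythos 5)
Your proof takes essentially the same approach as the paper: verify the axioms in the order stated, observe that everything commutes at the level of underlying partitions, and isolate the only nontrivial point as the compatibility of the block-ordering map $\sigma_\rho$ with coarsening in the composition-of-$\varphi$'s axiom, which you correctly reduce to the observation that the minimum of a block commutes with merging blocks. One small factual slip: you write that $\Pi_2(S)$ has ``a unique element above the discrete partition,'' but $a^{-1}(\hat 1)$ consists of the $|S|!$ triples with $\pi,\rho$ discrete and $f$ an arbitrary bijection (see Figure~\ref{fig:poset2NCP}); the conclusion $a^{-1}(\max\Pi(S))=\max\Pi_2(S)$ still holds, just not by uniqueness. (You also appear to momentarily revert to the opposite order convention in the sentence about ``$\mu\leq\rho\leq\rho'$,'' but the intended fact is the correct one.)
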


\begin{proof}
The structure morphisms $a$ satisfy \eqref{eq: axiom a min max compatible strong}. The commutativity of the diagrams \eqref{eq: commutative squares phi psi a} is obvious, as well as the equivariance and unitality axioms. We are left with proving the associativity axioms. In each case we only need to trace the non-crossing partitions since the diagrams obviously commute at the level of the underlying partitions. Let $(\pi,\rho,f)\leq (\pi',\rho',f')$ be non-crossing 2-partitions of a finite set $S$.
\begin{enumerate}[$\triangleright$]
\item (Composition of $\varphi$'s.) Let $k=|\rho|$, and denote by $\underline{\rho}=\varphi_{\rho'}(\rho)$ the non-crossing partition of $\{1,\ldots,|\rho'|\}$ with $k$ blocks induced by $\rho$. We only need to check that the following diagram of bijections commutes.
$$\diagram{
\rho \ar[rr]^-{\sigma_\rho} \ar[rd] && \{1,\ldots, k\} \\
& \underline{\rho} \ar[ru]_-{\sigma_{\underline{\rho}}}&
}$$
This is clear because for every block $T$ of $\rho$ we have
$$\min(T) = \min\left\{\min(T'), T'\in \rho'_{|T}\right\}.$$
\item (Composition of $\psi$'s.) This is clear.
\item (Composition of $\varphi$'s and $\psi$'s.) This is clear.
\end{enumerate}
\end{proof}

\begin{prop}\label{prop:operad-NC2P}
For all $n\geq 1$, the cohomology $h^\bullet(\Pi_2(n))$ is concentrated in degree $n-1$, and $h^{n-1}(\Pi_2(n))$ is a free $\KK$-module of rank
$$\mathrm{rk}\, h^{n-1}(\Pi_2(n)) = n!C_n,$$
where $C_n$ denotes the $n$th Catalan number. Furthermore, the $\mathfrak{S}_n$-module $h^{n-1}(\Pi_2(n))$ is isomorphic to the direct sum of $C_n$ copies of the regular representation. 
\end{prop}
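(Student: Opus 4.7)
The plan is to reduce everything to the classical Cohen--Macaulay property of the non-crossing partition lattice $\mathrm{NC}(n)$, by identifying each maximal interval of $\Pi_2(n)$ with a copy of $\mathrm{NC}(n)$, and then tracking how the symmetric group permutes those intervals.

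First I would pin down the extremal elements of $\Pi_2(n)$. There is a unique minimum $\hat{0} = (\{S\}, \{\{1,\ldots,n\}\}, f_0)$, while the maxima are precisely the $n!$ triples $y_\sigma = (\pi_{\max}, \rho_{\max}, f_\sigma)$, where $\pi_{\max} = \rho_{\max} = \{\{1\},\ldots,\{n\}\}$ and $f_\sigma(\{i\}) = \{\sigma(i)\}$, for $\sigma \in \mathfrak{S}_n$.

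Second, for each $\sigma$ I would prove the key poset isomorphism $[\hat{0}, y_\sigma] \cong \mathrm{NC}(n)$. Inspecting the order relation, the condition $(\pi, \rho, g) \leq y_\sigma$ forces, block by block, $g(T) = \sigma(T)$, hence $\rho = \sigma(\pi)$ and $g$ is entirely determined by $\sigma|_\pi$; conversely, any $\pi \in \Pi(n)$ for which $\sigma(\pi)$ is non-crossing yields an element of the interval, and the assignment $(\pi, \sigma(\pi), \sigma|_\pi) \mapsto \sigma(\pi)$ is an order-preserving bijection onto $\mathrm{NC}(n)$. Since $\mathrm{NC}(n)$ is Cohen--Macaulay of rank $n-1$ (Bj\"orner--Edelman), the open interval $\mathrm{NC}(n) \setminus \{\hat{0}, \hat{1}\}$ has reduced cohomology concentrated in degree $n-3$, of rank equal to $|\mu_{\mathrm{NC}(n)}(\hat{0}, \hat{1})|$, which is a Catalan number by Kreweras's classical formula. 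Combined with the formula $h^m(P) \simeq \bigoplus_{y \in \max(P)} \widetilde{H}^{m-2}([\hat{0}, y] \setminus \{\hat{0}, y\})$ recalled in \S\ref{Poset cohomology}---applicable because $\Pi_2(n)$ has a unique minimum---this immediately yields the concentration of $h^\bullet(\Pi_2(n))$ in degree $n-1$ and the claimed rank, obtained as $n!$ (the number of maxima) times the Catalan number appearing in the Möbius of $\mathrm{NC}(n)$.

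Third, for the $\mathfrak{S}_n$-module structure, I would trace through the species functoriality of $\Pi_2$: a permutation $\sigma \in \mathfrak{S}_n$ sends $y_\tau$ to $y_{\tau \sigma^{-1}}$, so the action on the set of maxima is free and transitive. Fixing $y_e$ as a basepoint with trivial stabiliser, the direct sum decomposition of $h^{n-1}(\Pi_2(n))$ over maxima realises this top cohomology as an induced representation from the trivial subgroup of a vector space $V = \widetilde{H}^{n-3}(\mathrm{NC}(n) \setminus \{\hat{0}, \hat{1}\})$ that carries no non-trivial $\mathfrak{S}_n$-structure. Such an induced representation is the regular representation tensored with $V$, i.e., $\dim V$ copies of the regular representation of $\mathfrak{S}_n$, as required.

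I do not foresee a serious obstacle. The only subtleties are the careful bookkeeping needed to transport non-crossing-ness through $\sigma$ in order to verify the interval identification $[\hat{0}, y_\sigma] \cong \mathrm{NC}(n)$, and reading off the species action on maxima as $y_\tau \mapsto y_{\tau \sigma^{-1}}$ to confirm that stabilisers are trivial, so that no averaging destroys the regular-representation structure.
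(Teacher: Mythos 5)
Your proposal is correct and follows essentially the same strategy as the paper's proof: identify each maximal interval $[\hat{0}, y_\sigma]$ of $\Pi_2(n)$ with a copy of the non-crossing partition lattice $\mathrm{NC}(n)$, invoke Bj\"orner--Edelman for Cohen--Macaulayness and Kreweras for the M\"obius number $\pm C_n$, and observe that $\mathfrak{S}_n$ acts freely and transitively on the $n!$ maxima, yielding $C_n$ copies of the regular representation. The paper's write-up is terser, but the decomposition, the references, and the representation-theoretic conclusion are identical; your explicit verification of the interval isomorphism and the action $y_\tau \mapsto y_{\tau\sigma^{-1}}$ is exactly the bookkeeping left implicit in the paper.
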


\begin{proof}
By definition, $h^\bullet(\Pi_2(n))$ is isomorphic to the direct sum of all the shifted reduced cohomology groups of the maximal intervals of $\Pi_2(n)$. Those are copies indexed by permutations of $\{1,\ldots,n\}$ of the poset $\mathrm{NC}(n)$ of non-crossing partitions of $\{1,\ldots,n\}$, from which we see that $h^\bullet(\Pi_2(n))$ is isomorphic as an $\mathfrak{S}_n$-module to $\KK[\mathfrak{S}_n]\otimes_\KK h^\bullet(\mathrm{NC}(n))$ where $h^\bullet(\mathrm{NC}(n))$ is considered here as a trivial $\mathfrak{S}_n$-module. The poset obtained from $\mathrm{NC}(n)$ by removing its least element is Cohen--Macaulay by Bj\"{o}rner and Edelman \cite[Example 2.9]{bjornershellable} and Kreweras proved in \cite[Th\'{e}or\`{e}me 6]{Kreweras} that its M\"{o}bius number equals $C_n$ up to a sign. The claim follows.
\end{proof}

Proposition \ref{prop:operad-NC2P} shows that $h^\bullet(\Pi_2)$ has the same dimension in each arity as the (desuspension of the) magmatic operad $\operatorname{Mag}$ freely generated by an arity $2$ element (with no symmetry and no relation). However, it cannot be isomorphic to $\Lambda^{-1}\mathrm{Mag}$ as an operad. Indeed, by functoriality (see \S\ref{sec: functoriality}), if we let 
$$ 1 \prec 2 \defas  \begin{tikzpicture}[scale=0.6, anchor=base, baseline]
   \tikzstyle{ver} = [circle, draw, fill, inner sep=0.5mm]
   \tikzstyle{edg} = [line width=0.6mm]
   \node[ver] at (1,0) {};
   \node[ver] at (2,0) {};
   \node      at (1,-0.6) {1};
   \node      at (2,-0.6) {2};
   \draw[edg] (1,0) to[bend left=90, looseness=2] (2,0);
 \end{tikzpicture} < \begin{tikzpicture}[scale=0.6, anchor=base, baseline]
   \tikzstyle{ver} = [circle, draw, fill, inner sep=0.5mm]
   \tikzstyle{edg} = [line width=0.6mm]
   \node[ver] at (1,0) {};
   \node[ver] at (2,0) {};
   \node      at (1,-0.6) {1};
   \node      at (2,-0.6) {2};
 \end{tikzpicture} \; \in h^1(\Pi_2(2)),$$
 then
the sum $1\prec 2+2\prec 1 = a^*(\{1,2\}<\{\{1\},\{2\}\})$ has to satisfy the (desuspension of the) Jacobi identity. In fact, the next proposition shows that $1\prec 2$ even satisfies the pre-Lie identity. Since the space $\mathrm{PreLie}(3)$ has rank $9$, which is smaller than $\mathrm{rk}\,h^2(\Pi_2(3))=30$, this proposition implies that the operad $h^\bullet(\Pi_2)$ is not generated in arity $2$.
        
        \begin{prop}\label{prop: prelie in Pi 2}
        We have the equality in $h^2(\Pi_2(3))$:
        $$(1\prec 2)\prec 3+1\prec (2\prec 3) + (1\prec 3)\prec 2 + 1\prec (3\prec2)=0.$$
        In particular, the morphism of graded operads $a^*:\Lambda^{-1}\mathrm{Lie}\to h^\bullet(\Pi_2)$ factors through $\Lambda^{-1}\mathrm{PreLie}$.
        \end{prop}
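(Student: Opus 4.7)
The plan is to compute each of the four terms explicitly as a cochain in $c^2(\Pi_2(3))$ via partial operadic composition (Remark \ref{rem: partial operadic composition poset species}), and to exhibit their sum as a coboundary. First, I would set up notation for the basis of $c^2(\Pi_2(3))$: the minimum is the unique element $\hat{0}$ with one-block underlying partition, and the six maximal elements $M_\sigma = (\hat{1}, \hat{1}, \sigma)$ are indexed by $\sigma \in \mathfrak{S}_3$ (the bijection between singletons). For each $\sigma \in \mathfrak{S}_3$ and $a \in \{1,2,3\}$, there is a unique chain $C(\sigma, a) = [\hat{0} < x_1 < M_\sigma]$ with $\pi_{x_1} = \{\{a\},\{b,c\}\}$ (where $\{b,c\} = \{1,2,3\}\setminus\{a\}$), since the bijection $f_{x_1}$ is forced by $\sigma$ and $\rho_{x_1}$ is the unique non-crossing partition of $\{1,2,3\}$ whose blocks are $\{\sigma(a)\}$ and $\{\sigma(b),\sigma(c)\}$. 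These $18$ chains form a basis of $c^2(\Pi_2(3))$.

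Next, I would apply Remark \ref{rem: partial operadic composition poset species} to each term. For instance, $1 \prec (2 \prec 3) = (1 \prec *)\circ_* (2 \prec 3)$ is a sum over elements $x$ with $a(x) = \{\{1\},\{2,3\}\}$: only the lift with $\rho_x = \{\{1\},\{2,3\}\}$ satisfies $\varphi_x(x) = 1\prec *$; the two other lifts give $\varphi_x(x) = *\prec 1$ (after the $\sigma_{\rho_x}$-reindexing), so $\varphi_x^*$ kills the outer cochain. The surviving term produces $C(\mathrm{id}, 1)$. Analogous case analyses should give
\begin{align*}
1\prec(2\prec 3) &= C(\mathrm{id}, 1), & 1\prec(3\prec 2) &= C(\tau, 1), \\
(1\prec 2)\prec 3 &= C(\mathrm{id}, 3) + C(\tau, 3), & (1\prec 3)\prec 2 &= C(\mathrm{id}, 2) + C(\tau, 2),
\end{align*}
where $\tau$ denotes the transposition $(2\,3)\in\mathfrak{S}_3$ (the other permutation fixing $1$).

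Summing the four terms yields $\sum_{a=1}^3 C(\mathrm{id}, a) + \sum_{a=1}^3 C(\tau, a)$. By the formula \eqref{eq: formula differential min max}, for every $\sigma$ one has $d[\hat{0} < M_\sigma] = -\sum_{a=1}^3 C(\sigma, a)$, since the three intermediate elements $x_1 \in (\hat{0}, M_\sigma)$ are in bijection with the choice of the singleton $a\in\{1,2,3\}$. Hence the total equals $-d\bigl([\hat{0} < M_{\mathrm{id}}] + [\hat{0} < M_{\tau}]\bigr)$, a coboundary, and vanishes in $h^2(\Pi_2(3))$. For the factorization, the pre-Lie identity just established supplies, by the universal property of $\Lambda^{-1}\mathrm{PreLie}$, a morphism of graded operads $\Lambda^{-1}\mathrm{PreLie}\to h^\bullet(\Pi_2)$ sending $\prec$ to $1\prec 2$. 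Composing with the canonical morphism $\Lambda^{-1}\mathrm{Lie}\to \Lambda^{-1}\mathrm{PreLie}$ sends the Lie bracket to the symmetric combination $1\prec 2 + 2\prec 1$, which coincides with $a^*$ applied to the generator $[\hat{0} < \hat{1}]\in h^1(\Pi(2))$ (computable directly as the sum of the two chains in $\Pi_2(2)$ lifting it); since $\Lambda^{-1}\mathrm{Lie}$ is generated in arity $2$, this identifies the factorization.

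The hard part will be the careful bookkeeping in computing each $\varphi_x(x)$: for each of the three lifts $x$ of a given underlying partition, one must reorder the blocks of $\rho_x$ according to their minimal elements (the $\sigma_{\rho_x}$-reindexing) and transport the bijection $f_x$ through this reindexing, then determine whether the resulting element of $\Pi_2(\pi)$ agrees with the outer generator in its left/right orientation. The structural pattern that emerges — that contributions are supported precisely on the permutations $\sigma$ fixing the external slot $1$, and assemble block by block into the coboundaries $d[\hat{0} < M_\sigma]$ — is what ultimately produces the pre-Lie identity.
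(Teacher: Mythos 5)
Your proof is correct, and the underlying chain-level calculations coincide with the paper's (both hinge on the same careful tracking of the $\sigma_{\rho}$-reindexing in $\varphi_x$, which is indeed the delicate part). The organization differs in a small but real way: the paper computes only the two terms with inner argument $(3,2)$, namely $(1\prec 3)\prec 2$ and $1\prec(3\prec 2)$, and reduces their sum modulo $d[\hat 0 < M_\tau]$ to a cochain that is manifestly antisymmetric under the transposition $(2\,3)$, so that the full four-term symmetrization vanishes; you compute all four terms and exhibit their sum directly as the coboundary $-d\bigl([\hat 0<M_{\mathrm{id}}]+[\hat 0<M_\tau]\bigr)$, using the count of three intermediate elements below each maximal element $M_\sigma$ in $\Pi_2(3)$. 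The paper's version is slightly more economical (two explicit compositions instead of four), while yours is more symmetric and makes the coboundary entirely explicit. Two minor points worth noting: your stated identities $(1\prec 3)\prec 2 = C(\mathrm{id},2)+C(\tau,2)$ and $1\prec(3\prec 2)=C(\tau,1)$ match the paper's chains exactly (so the case analysis you postpone as ``the hard part'' does check out); and your closing paragraph on the factorization through $\Lambda^{-1}\mathrm{PreLie}$ spells out the standard operadic argument that the paper leaves implicit, which is fine.
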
 

\begin{proof}
      
    Let us compute the operation 
    $$ (1\prec 3) \prec 2 = \left(\begin{tikzpicture}[scale=0.6, anchor=base, baseline]
   \tikzstyle{ver} = [circle, draw, fill, inner sep=0.5mm]
   \tikzstyle{edg} = [line width=0.6mm]
   \node[ver] at (1,0) {};
   \node[ver] at (2,0) {};
   \node      at (1,-0.6) {$*$};
   \node      at (2,-0.6) {2};
   \draw[edg] (1,0) to[bend left=90, looseness=2] (2,0);
 \end{tikzpicture} < \begin{tikzpicture}[scale=0.6, anchor=base, baseline]
   \tikzstyle{ver} = [circle, draw, fill, inner sep=0.5mm]
   \tikzstyle{edg} = [line width=0.6mm]
   \node[ver] at (1,0) {};
   \node[ver] at (2,0) {};
   \node      at (1,-0.6) {$*$};
   \node      at (2,-0.6) {2};
 \end{tikzpicture} \right) \circ_* \left(\begin{tikzpicture}[scale=0.6, anchor=base, baseline]
   \tikzstyle{ver} = [circle, draw, fill, inner sep=0.5mm]
   \tikzstyle{edg} = [line width=0.6mm]
   \node[ver] at (1,0) {};
   \node[ver] at (2,0) {};
   \node      at (1,-0.6) {1};
   \node      at (2,-0.6) {3};
   \draw[edg] (1,0) to[bend left=90, looseness=2] (2,0);
 \end{tikzpicture} < \begin{tikzpicture}[scale=0.6, anchor=base, baseline]
   \tikzstyle{ver} = [circle, draw, fill, inner sep=0.5mm]
   \tikzstyle{edg} = [line width=0.6mm]
   \node[ver] at (1,0) {};
   \node[ver] at (2,0) {};
   \node      at (1,-0.6) {1};
   \node      at (2,-0.6) {3};
 \end{tikzpicture} \right).$$ 
 This operadic composition has underlying partition $\pi = \{\{1,3\},\{2\}\}$, and is by definition a sum over all elements $x$ in $\Pi_2(\{1,2,3\})$ such that $a(x)=\pi$. There are 3 such $x$, namely:
 \begin{center} \utu \hspace{1cm} \dud \hspace{1cm} \udu\end{center} 
 Let us now compute:
     \begin{align*}
         \varphi^*_{\utu} \left(\begin{tikzpicture}[scale=0.6, anchor=base, baseline]
   \tikzstyle{ver} = [circle, draw, fill, inner sep=0.5mm]
   \tikzstyle{edg} = [line width=0.6mm]
   \node[ver] at (1,0) {};
   \node[ver] at (2,0) {};
   \node      at (1,-0.6) {$*$};
   \node      at (2,-0.6) {2};
   \draw[edg] (1,0) to[bend left=90, looseness=2] (2,0);
 \end{tikzpicture} < \begin{tikzpicture}[scale=0.6, anchor=base, baseline]
   \tikzstyle{ver} = [circle, draw, fill, inner sep=0.5mm]
   \tikzstyle{edg} = [line width=0.6mm]
   \node[ver] at (1,0) {};
   \node[ver] at (2,0) {};
   \node      at (1,-0.6) {$*$};
   \node      at (2,-0.6) {2};
 \end{tikzpicture} \right)  & = \uuu < \utu \\
       \varphi^*_{\dud} \left(\begin{tikzpicture}[scale=0.6, anchor=base, baseline]
   \tikzstyle{ver} = [circle, draw, fill, inner sep=0.5mm]
   \tikzstyle{edg} = [line width=0.6mm]
   \node[ver] at (1,0) {};
   \node[ver] at (2,0) {};
   \node      at (1,-0.6) {$*$};
   \node      at (2,-0.6) {2};
   \draw[edg] (1,0) to[bend left=90, looseness=2] (2,0);
 \end{tikzpicture} < \begin{tikzpicture}[scale=0.6, anchor=base, baseline]
   \tikzstyle{ver} = [circle, draw, fill, inner sep=0.5mm]
   \tikzstyle{edg} = [line width=0.6mm]
   \node[ver] at (1,0) {};
   \node[ver] at (2,0) {};
   \node      at (1,-0.6) {$*$};
   \node      at (2,-0.6) {2};
 \end{tikzpicture} \right)  & = 0 \\
       \varphi^*_{\udu} \left(\begin{tikzpicture}[scale=0.6, anchor=base, baseline]
   \tikzstyle{ver} = [circle, draw, fill, inner sep=0.5mm]
   \tikzstyle{edg} = [line width=0.6mm]
   \node[ver] at (1,0) {};
   \node[ver] at (2,0) {};
   \node      at (1,-0.6) {$*$};
   \node      at (2,-0.6) {2};
   \draw[edg] (1,0) to[bend left=90, looseness=2] (2,0);
 \end{tikzpicture} < \begin{tikzpicture}[scale=0.6, anchor=base, baseline]
   \tikzstyle{ver} = [circle, draw, fill, inner sep=0.5mm]
   \tikzstyle{edg} = [line width=0.6mm]
   \node[ver] at (1,0) {};
   \node[ver] at (2,0) {};
   \node      at (1,-0.6) {$*$};
   \node      at (2,-0.6) {2};
 \end{tikzpicture} \right)  & = \uuu < \udu.
     \end{align*}
     On the other hand, we have: 
    \begin{align*}
         \psi^*_{\utu} \left(\begin{tikzpicture}[scale=0.6, anchor=base, baseline]
   \tikzstyle{ver} = [circle, draw, fill, inner sep=0.5mm]
   \tikzstyle{edg} = [line width=0.6mm]
   \node[ver] at (1,0) {};
   \node[ver] at (2,0) {};
   \node      at (1,-0.6) {1};
   \node      at (2,-0.6) {3};
   \draw[edg] (1,0) to[bend left=90, looseness=2] (2,0);
 \end{tikzpicture} < \begin{tikzpicture}[scale=0.6, anchor=base, baseline]
   \tikzstyle{ver} = [circle, draw, fill, inner sep=0.5mm]
   \tikzstyle{edg} = [line width=0.6mm]
   \node[ver] at (1,0) {};
   \node[ver] at (2,0) {};
   \node      at (1,-0.6) {1};
   \node      at (2,-0.6) {3};
 \end{tikzpicture} \otimes \begin{tikzpicture}[scale=0.6, anchor=base, baseline]
   \tikzstyle{ver} = [circle, draw, fill, inner sep=0.5mm]
   \tikzstyle{edg} = [line width=0.6mm]
   \node[ver] at (1,0) {};
   \node      at (1,-0.6) {2};
 \end{tikzpicture} \right)  & = \utu < \utd \\
       \psi^*_{\dud} \left(\begin{tikzpicture}[scale=0.6, anchor=base, baseline]
   \tikzstyle{ver} = [circle, draw, fill, inner sep=0.5mm]
   \tikzstyle{edg} = [line width=0.6mm]
   \node[ver] at (1,0) {};
   \node[ver] at (2,0) {};
   \node      at (1,-0.6) {1};
   \node      at (2,-0.6) {3};
   \draw[edg] (1,0) to[bend left=90, looseness=2] (2,0);
 \end{tikzpicture} < \begin{tikzpicture}[scale=0.6, anchor=base, baseline]
   \tikzstyle{ver} = [circle, draw, fill, inner sep=0.5mm]
   \tikzstyle{edg} = [line width=0.6mm]
   \node[ver] at (1,0) {};
   \node[ver] at (2,0) {};
   \node      at (1,-0.6) {1};
   \node      at (2,-0.6) {3};
 \end{tikzpicture} \otimes \begin{tikzpicture}[scale=0.6, anchor=base, baseline]
   \tikzstyle{ver} = [circle, draw, fill, inner sep=0.5mm]
   \tikzstyle{edg} = [line width=0.6mm]
   \node[ver] at (1,0) {};
   \node      at (1,-0.6) {2};
 \end{tikzpicture} \right)  & =  \dud < \dut\\
       \psi^*_{\udu} \left(\begin{tikzpicture}[scale=0.6, anchor=base, baseline]
   \tikzstyle{ver} = [circle, draw, fill, inner sep=0.5mm]
   \tikzstyle{edg} = [line width=0.6mm]
   \node[ver] at (1,0) {};
   \node[ver] at (2,0) {};
   \node      at (1,-0.6) {1};
   \node      at (2,-0.6) {3};
   \draw[edg] (1,0) to[bend left=90, looseness=2] (2,0);
 \end{tikzpicture} < \begin{tikzpicture}[scale=0.6, anchor=base, baseline]
   \tikzstyle{ver} = [circle, draw, fill, inner sep=0.5mm]
   \tikzstyle{edg} = [line width=0.6mm]
   \node[ver] at (1,0) {};
   \node[ver] at (2,0) {};
   \node      at (1,-0.6) {1};
   \node      at (2,-0.6) {3};
 \end{tikzpicture} \otimes \begin{tikzpicture}[scale=0.6, anchor=base, baseline]
   \tikzstyle{ver} = [circle, draw, fill, inner sep=0.5mm]
   \tikzstyle{edg} = [line width=0.6mm]
   \node[ver] at (1,0) {};
   \node      at (1,-0.6) {2};
 \end{tikzpicture} \right)  & = \udu < \udt.
     \end{align*} 
   The operadic composition is then given by: 
    \begin{equation*}
        (1\prec 3) \prec 2 =  \uuu<\utu<\utd + \uuu<\udu<\udt .
   \end{equation*}     
The same kind of computation gives: 
\begin{equation*}
   1 \prec (3 \prec 2) = \uuu<\udd<\utd. 
\end{equation*}
Using the relation 
\begin{align*}
    d\left( \uuu < \utd \right) & =  \uuu< \utu< \utd \\& +  \uuu< \uud< \utd  \\
    &+ \uuu< \udd< \utd,
\end{align*}
we arrive the expression
\begin{align*}
    (1 \prec 3) \prec 2 + 1 \prec (3 \prec 2) & = \uuu<\udu<\udt \\ & - \uuu< \uud< \utd, 
\end{align*}
which is manifestly antisymmetric in the entries $2$, $3$. The claim follows.
\end{proof}

We now turn to the left operadic module $\hbottom{\bullet}(\Pi_2)$ (note that the right operadic module $\htop{\bullet}(\Pi_2)$ is trivial because every poset $\Pi_2(S)$ has a least element).

\begin{prop}[\cite{DOJVR}, Theorem 4.2]\label{prop:DOJVR}
For every $n\geq 1$, the cohomology $\hbottom{\bullet}(\Pi_2)$ is concentrated in degree $n-1$, and $\hbottom{n-1}(\Pi_2(n))$ is a free $\KK$-module of rank
$$\mathrm{rk}\,\hbottom{n-1}(\Pi_2(n)) = (n-1)^{n-1}.$$
Furthermore, the character for the action of the symmetric group on $\hbottom{n-1}(\Pi_2(n))$  is given by 
           \begin{equation}
           \sigma \mapsto (-1)^{n-z(\sigma)} (n-1)^{z(\sigma)-1},
           \end{equation}
           where $z(\sigma)$ is the number of cycles of the permutation $\sigma$. 
\end{prop}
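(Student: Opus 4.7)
The plan is to mirror the strategy of \cite{DOJVR}, exploiting the fact that $\Pi_2(n)$ has a unique minimum $\hat{0}$ (the one-block non-crossing $2$-partition). By the isomorphism \eqref{Relhtop}, $\hbottom{k}(\Pi_2(n))\simeq \widetilde{H}^{k-1}\bigl((\Pi_2(n))_{>\hat{0}}\bigr)$, so it suffices to show that $(\Pi_2(n))_{>\hat{0}}$ is Cohen--Macaulay with top reduced cohomology in rank $n-2$ of the claimed dimension and character.

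First I would establish Cohen--Macaulayness by producing an EL-labeling of the bounded poset obtained from $\Pi_2(n)$ by adjoining a greatest element. The key observation is that every maximal interval $[\hat{0},(\hat{1},\rho,f)]$ in $\Pi_2(n)$ is isomorphic, via $\psi_{(\hat{1},\rho,f)}$ (and forgetting the now-rigid bijection data), to the product $\prod_{T\in\rho}\mathrm{NC}(|T|)$ of classical non-crossing partition lattices. Since each $\mathrm{NC}(k)$ is EL-shellable by Björner--Edelman, one can paste local EL-labelings together---using $f$ to break ties between parallel blocks---to obtain an EL-labeling of $\Pi_2(n)$. This yields Cohen--Macaulayness (hence concentration of $\hbottom{\bullet}(\Pi_2(n))$ in degree $n-1$) together with a combinatorial basis of decreasing maximal chains.

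Second, the factorization $[\hat{0},(\hat{1},\rho,f)]\simeq\prod_{T}\mathrm{NC}(|T|)$ and the classical evaluation $|\mu_{\mathrm{NC}(k)}|=C_{k-1}$ express the total rank of $\hbottom{n-1}(\Pi_2(n))$ as a sum which a short generating-function manipulation---equivalent to the parking-function-type bijection from \cite{DOJVR}---evaluates to $(n-1)^{n-1}$. For the character I would use Hopf's trace formula at the cochain level,
$$(-1)^{n-1}\operatorname{tr}\bigl(\sigma\mid\hbottom{n-1}(\Pi_2(n))\bigr)=\sum_{\gamma\text{ fixed by }\sigma}(-1)^{|\gamma|},$$
where $\gamma$ ranges over basis chains in $\chainsbottom{\bullet}(\Pi_2(n))$. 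A chain is $\sigma$-fixed iff each $x_i=(\pi_i,\rho_i,f_i)$ appearing in it has $\pi_i$ refined by the cycle partition of $\sigma$, $\rho_i$ left invariant, and $f_i$ intertwining the induced actions; the fixed-chain sum then factors over the cycles of $\sigma$, and a zeta-polynomial computation in the spirit of Remark \ref{rem: cycle index series via chains left operadic module} produces $(-1)^{n-z(\sigma)}(n-1)^{z(\sigma)-1}$.

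The main obstacle will be harmonizing the EL-labeling with both the non-crossing block factorization and the $\mathfrak{S}_n$-action via the bijections $f$, so that the dimension and character emerge from a single combinatorial identity. One may alternatively bypass this delicate bookkeeping by constructing an explicit basis of $\hbottom{n-1}(\Pi_2(n))$ indexed by a natural family of combinatorial objects of cardinality $(n-1)^{n-1}$ on which $\mathfrak{S}_n$ acts with the stated character, as is done in \cite{DOJVR}.
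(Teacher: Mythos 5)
The paper does not actually prove this proposition: it is stated with the citation \cite{DOJVR}, Theorem~4.2, and no argument is given in the present article. There is therefore no ``paper's own proof'' to compare against, and your proposal has to be assessed on its own terms.

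The structural observation you make is incorrect, and it is the load-bearing step. You claim that a maximal interval of $\Pi_2(n)$ is isomorphic via $\psi_{(\hat{1},\rho,f)}$ to a product $\prod_{T\in\rho}\mathrm{NC}(|T|)$. But for a maximal element $x=(\pi,\rho,f)$ of $\Pi_2(n)$ one necessarily has $\pi=\hat{1}$ (all singletons), hence, because $f$ is cardinality-preserving, also $\rho=\hat{1}$, and $f$ is a permutation of $\{1,\dots,n\}$. The product $\prod_{T\in\rho}\mathrm{NC}(|T|)$ is then a product of one-element posets and carries no information. Moreover $\psi_x$ is defined on the \emph{upper} interval $(\Pi_2)_{\geq x}$, which is the singleton $\{x\}$ when $x$ is maximal; the lower interval $[\hat{0},x]$ is governed by $\varphi_x$, not $\psi_x$. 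In fact, as the paper already records in the proof of the immediately preceding proposition, each maximal interval of $\Pi_2(n)$ is a copy of $\mathrm{NC}(n)$, indexed by the permutation $f$. With the factorization gone, your plan of ``pasting local EL-labelings together, using $f$ to break ties between parallel blocks'' has no basis and does not produce an EL-labeling of the augmented poset $\Pi_2(n)^+$.

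This matters because Cohen--Macaulayness of the augmented poset is exactly where the difficulty lies, and shellability of the individual maximal intervals (which is classical for $\mathrm{NC}(n)$) does not imply it. The paper illustrates this pointedly in \S\ref{sec: As right operadic module}: each maximal interval of $\Pi^{\mathrm{As}}(n)$ is a product of non-crossing partition lattices and is perfectly shellable, yet the augmented poset $\Pi^{\mathrm{As}}_+(n)$ fails to be Cohen--Macaulay for $3\leq n\leq 12$. So the concentration of $\hbottom{\bullet}(\Pi_2(n))$ in degree $n-1$ needs a genuine argument about how the many maximal elements of $\Pi_2(n)$ interact below the added top, not a product decomposition that does not exist. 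The remainder of the proposal is then either vague (the Hopf trace computation asserts a factorization over cycles that you do not justify, and the non-crossing condition on $\rho$ is a global constraint that does not obviously split along the cycle partition of $\sigma$) or circular (the closing suggestion to ``construct an explicit basis \ldots as is done in \cite{DOJVR}'' is precisely the cited result). As written, the proposal does not constitute a proof.
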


Note that this character coincides with that of the action of the symmetric group on prime parking functions (see \cite{ArmstrongLoehrWarrington}). We leave it to the motivated reader to give a description of the (degree zero) operad $\Lambda h^\bullet(\Pi_2)$ and its left operadic module $\Lambda\hbottom{\bullet}(\Pi_2)$. A starting point would be the EL-labelings of Björner and Edelman \cite{Edelman} and Stanley \cite{stanleyparking}, which give rise to two different bases of $h^{n-1}(\mathrm{NC}(n))$ indexed by Catalan objets.

\subsection{Multilabeled trees, Kontsevich's operad of trees, and pre-Lie algebras}\label{sec: MLT}

Here, a tree is an abstract graph (without a planar embedding), without double edges or self-loops, in which two vertices are connected by exactly one path.

\subsubsection{The operadic poset species of multilabeled trees}

\begin{defi}
Let $S$ be a finite set. An $S$-multilabeled tree is a tree whose set of nodes is a partition of $S$.
\end{defi}

In other words, an $S$-multilabeled tree is a tree in which each node is labeled by a subset of $S$ and such that the labelings form a partition of $S$. We denote by $\mathrm{MLT}(S)$ the set of $S$-multilabeled trees. It is a poset for which $t\leq t'$ if $t$ is obtained from $t'$ by contracting edges and merging the corresponding blocks of the partitions. The Hasse diagram of the poset $\mathrm{MLT}(3)$ is shown in Figure \ref{fig:MLT3}.

\begin{figure}[h!]
    \centering
\begin{tikzpicture}
    \node (min) at (0,0) {\begin{tikzpicture}
\node[draw, ellipse] {$1 \ 2 \ 3$};
    \end{tikzpicture}};

\node[above=1cm of min] (3-12) {\includegraphics{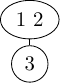}};
\node[left=2.6cm of 3-12] (1-23) {\includegraphics{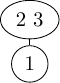}};
\node[right=2.6cm of 3-12](2-13) {\includegraphics{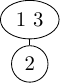}};

\node[above = 4cm of min] (132) {\includegraphics{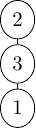}};
\node[left=3cm of 132] (123) {\includegraphics{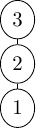}};
\node[right=3cm of 132] (213) {\includegraphics{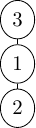}};

\draw (min.north) -- (3-12.south);
\draw (min.north) -- (1-23.south);
\draw (min.north) -- (2-13.south);

\draw (1-23.north) -- (123.south);
\draw (1-23.north) -- (132.south);
\draw (3-12.north) -- (123.south);
\draw (3-12.north) -- (213.south);
\draw (2-13.north) -- (213.south);
\draw (2-13.north) -- (132.south);
\end{tikzpicture}

    \caption{The poset $\mathrm{MLT}(3)$ of multilabeled trees on $\{1,2,3\}$.}
    \label{fig:MLT3}
\end{figure}

We now endow $\mathrm{MLT}:S\mapsto \mathrm{MLT}(S)$ with the structure of an operadic poset species. There is an obvious morphism of poset species $a:\mathrm{MLT}\to \Pi$ which forgets the tree structure. Let $t\in \mathrm{MLT}(S)$ and set $\pi=a(t)$. The morphism of posets
$$\varphi_t:\mathrm{MLT}_{\leq t}(S)\To \mathrm{MLT}(\pi)$$
relabels a multilabeled tree $\leq t$ by replacing a label $T$ by the set $\pi_{|T}$. The morphism of posets
$$\psi_t:\mathrm{MLT}_{\geq t}(S) \To \prod_{T\in \pi}\mathrm{MLT}(T)$$
sends a multilabeled tree $\geq t$ to the forest obtained by cutting the edges of $t$. One easily checks that the axioms of an operadic poset species are satisfied.

\subsubsection{Kontsevich's operad of trees}

We define an operad $\mathrm{Tree}$, that we call \emph{Kontsevich's operad of trees} because it is the supoperad of Kontsevich's operad of graphs $\mathrm{Gra}$ \cite{kontsevichoperadsmotives, kontsevichderived} spanned by trees. (With a little bit of effort, it would presumably be possible to fit the full operad of graphs in our formalism, but we will not try to do so here.)

\begin{defi}
Let $S$ be a finite set. An $S$-labeled tree on $S$ is a tree whose set of nodes is $S$.
\end{defi}

By definition, the $\KK$-module $\mathrm{Tree}(S)$ is spanned by pairs $(t,\omega)$ where $t$ is an $S$-labeled tree and $\omega$ is a linear order on the set $E(t)$ of edges of $t$, modulo the relation $(t,\omega)=\mathrm{sgn}(\sigma)(t,\sigma\cdot\omega)$ for every permutation $\sigma$ of $E(t)$. We declare that $\mathrm{Tree}(S)$ is concentrated in degree $|S|-1$, which is the number of edges of an $S$-labeled tree. The operadic composition morphism
$$\circ_*:\mathrm{Tree}(A\sqcup\{*\})\otimes \mathrm{Tree}(B)\To \mathrm{Tree}(A\sqcup B)$$
is defined by the formula
$$(t,\omega)\circ_* (t',\omega') = \sum (t'',\omega'')$$
where the sum ranges over  the $A\sqcup B$-labeled trees which have $t'$ as a $B$-labeled subtree and such that contracting $t'$ into a node labeled $*$ yields $t$; the ordering $\omega''$ on the edges of $t''$ is obtained by concatenating the orderings $\omega$ and $\omega'$. Put differently, in order to compute $(t,\omega)\circ_* (t',\omega')$ one disconnects $*$ in $t$, replaces it with $t'$, and reconnects the loose ends of $t$ to $t'$ in all possible ways, summing over all the possibilities. For instance, in arity $3$,
\begin{equation}\label{eq: composition tree}
\begin{tikzpicture}[grow=up, scale=0.5, level distance=2cm, baseline={([yshift=-.5ex]current bounding box.center)}]
\node[draw,ellipse] {$1$}
    child{node[draw, ellipse]{$*$}
    };
    \end{tikzpicture} 
\,\circ_*\,
\begin{tikzpicture}[grow=up, scale=0.5, level distance=2cm, baseline={([yshift=-.5ex]current bounding box.center)}]
\node[draw,ellipse] {$2$}
    child{node[draw, ellipse]{$3$}
    };
    \end{tikzpicture} 
\;=\;
\begin{tikzpicture}[grow=up, scale=0.5, level distance=2cm, baseline={([yshift=-.5ex]current bounding box.center)}]
\node[draw,ellipse] {$1$}
    child{node[draw, ellipse]{$2$}
        child{node[draw, ellipse]{$3$} edge from parent node[left,draw=none] {\tiny{II}}
        } edge from parent node[left,draw=none] {\tiny{I}}
    };
    \end{tikzpicture} 
\;+\,
\begin{tikzpicture}[grow=up, scale=0.5, level distance=2cm, baseline={([yshift=-.5ex]current bounding box.center)}]
\node[draw,ellipse] {$1$}
    child{node[draw, ellipse]{$3$}
        child{node[draw, ellipse]{$2$} edge from parent node[left,draw=none] {\tiny{II}}
        } edge from parent node[left,draw=none] {\tiny{I}}
    };
    \end{tikzpicture},
\end{equation}
where the labels I and II indicate the linear order on the edges.

\begin{prop}
There is an isomorphism of operads:
\begin{equation}\label{eq: MLT Tree}
h^\bullet(\mathrm{MLT})\simeq \mathrm{Tree}.
\end{equation}
\end{prop}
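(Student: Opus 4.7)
The plan is to first analyze the poset structure of $\mathrm{MLT}(S)$ and compute its cohomology. The minimum is the single-node tree labeled by $S$, and the maxima are exactly the $S$-labeled trees (multilabeled trees whose underlying partition consists of singletons). For each maximal element $t$, the interval $[\hat{0},t]$ is isomorphic to the Boolean lattice on the edge set $E(t)$: an element is obtained from $t$ by contracting a subset of edges, and such subsets correspond bijectively to elements below $t$. Since the top cohomology of a Boolean lattice of rank $n$ is free of rank $1$ and lower cohomology groups vanish, and since $c^\bullet(\mathrm{MLT}(S))$ decomposes as the direct sum over maxima $t$ of $c^\bullet([\hat{0},t])$ (each chain uniquely determines its top), we conclude that $h^\bullet(\mathrm{MLT}(S))$ is concentrated in degree $|S|-1$ and free of rank equal to the number of $S$-labeled trees, which by Cayley's formula is $|S|^{|S|-2}$---matching the rank of $\mathrm{Tree}(S)$.

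\textbf{Step 2: Defining the isomorphism of species.}
For a pair $(t,\omega)$ with $\omega=(e_1,\ldots,e_{|S|-1})$ a linear order on $E(t)$, I will define $\Phi(t,\omega) \in h^{|S|-1}(\mathrm{MLT}(S))$ to be the class of the maximal chain $\hat{0}=x_0<x_1<\cdots<x_{|S|-1}=t$, where $x_i$ is obtained from $t$ by contracting $e_{i+1},\ldots,e_{|S|-1}$. Transposing $e_i$ and $e_{i+1}$ in $\omega$ replaces $x_i$ by the unique other element of the rank-$2$ Boolean interval $[x_{i-1},x_{i+1}]$, and the sum of the two corresponding chains is the coboundary $d[\cdots\,x_{i-1}\,x_{i+1}\cdots]$; therefore $\Phi$ descends to the antisymmetrization relation defining $\mathrm{Tree}(S)$. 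The map $\Phi$ is $\mathfrak{S}_S$-equivariant and sends a basis (pairs $(t,\omega)$ modulo sign) bijectively to a basis of $h^{|S|-1}(\mathrm{MLT}(S))$ (maximal chains modulo coboundary), hence is an isomorphism of graded linear species.

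\textbf{Step 3: Checking the operad structure.}
By Remark~\ref{rem: partial operadic composition poset species}, it suffices to verify compatibility of partial operadic compositions $\circ_*$. Fix disjoint finite sets $A,B$ and let $\pi$ be the partition of $A\sqcup B$ with blocks $B$ and the singletons $\{a\}$. The elements $x\in \mathrm{MLT}(A\sqcup B)$ with $a(x)=\pi$ are in bijection with $(A\sqcup\{*\})$-labeled trees, via replacing the block $B$ with the symbol $*$. For $(t,\omega)\in \mathrm{Tree}(A\sqcup\{*\})$ and $(t',\omega')\in \mathrm{Tree}(B)$, the pullback $\varphi_x^*\Phi(t,\omega)$ vanishes unless $t$ equals $\varphi_x(x)$, which forces $x$ to be the unique tree $x_t$ having tree structure $t$ with $*$ replaced by $B$; for this $x_t$, the pullback is the chain in $\mathrm{MLT}_{\leq x_t}$ associated to $(x_t,\omega)$. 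Next, $\psi_{x_t}^*\Phi(t',\omega')$ enumerates all chains in $\mathrm{MLT}_{\geq x_t}$ whose maxima $y$ are $(A\sqcup B)$-labeled trees refining $x_t$ such that the $B$-subtree of $y$ is $t'$; such a $y$ is precisely obtained from $x_t$ by expanding the $B$-node into the subtree $t'$ and reattaching the edges of $x_t$ incident to $B$ to nodes of $t'$. Concatenating via $\mu_{x_t}$ produces maximal chains of $\mathrm{MLT}(A\sqcup B)$ indexed by such $y$, with concatenated edge ordering $\omega$ followed by $\omega'$. Comparing with~\eqref{eq: composition tree}, this is exactly $\Phi$ applied to $(t,\omega)\circ_*(t',\omega')$.

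\textbf{Main obstacle.}
The bulk of the work, and the main obstacle, is the combinatorial identification in Step 3 between the sum over pullback chains in $\mathrm{MLT}_{\geq x_t}$ and the sum over reattachment choices in $\mathrm{Tree}$: each way of extending a chain $x_t<y_1<\cdots<y_{|B|-1}$ that projects onto the $t'$-chain under $\psi_{x_t}$ corresponds to a simultaneous choice, at each un-contraction step, of which side of the split node absorbs each edge of $x_t$ originally incident to $B$. These combined choices recover precisely the target tree $y$, so the two summations agree term by term. Sign bookkeeping is straightforward since the concatenation of chains via $\mu_{x_t}$ directly corresponds to concatenation of edge orderings, which matches the sign convention defining the Tree operad composition.
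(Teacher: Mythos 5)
Your proof is correct and follows essentially the same route as the paper: identify the maximal intervals $[\hat{0},t]$ with Boolean lattices on the edge sets $E(t)$ to conclude that $h^\bullet(\mathrm{MLT}(S))\simeq\bigoplus_t \det(E(t))$ concentrated in degree $|S|-1$, then identify an ordered tree $(t,e_1,\dots,e_{n-1})$ with the corresponding maximal chain of contractions. Your Step~3 is a welcome elaboration of the paper's one-line ``one then easily checks that we do get an isomorphism of operads,'' carried out correctly via the partial compositions from Remark~\ref{rem: partial operadic composition poset species}.
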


\begin{proof}
The maximal intervals of $\mathrm{MLT}(S)$ are in bijection with $S$-labeled trees, and the maximal interval corresponding to an $S$-labeled tree is isomorphic to the boolean poset on the set $E(t)$ of edges of $t$. Therefore we have that $h^\bullet(\mathrm{MLT}(S))$ is concentrated in degree $|S|-1$, with
$$h^{|S|-1}(\mathrm{MLT}(S)) \simeq \bigoplus_t \det(E(t)),$$
where the sum ranges over the $S$-labeled trees. This gives rise to an isomorphism of graded species $h^\bullet(\mathrm{MLT})\simeq \mathrm{Tree}$. Concretely, a generator $(t,e_1,\ldots, e_{n-1})$ of $\mathrm{Tree}(S)$, for $|S|=n$, corresponds to the (co)chain
$$t/(e_1,\ldots, e_{n-1}) < t/(e_2,\ldots, e_{n-1})< \cdots < t/e_{n-1} < t$$
in $\mathrm{MLT}(S)$. One then easily checks that we do get an isomorphism of operads.
\end{proof}

We note that by functoriality (see \S\ref{sec: functoriality}) we get the well-known morphism of operads
$$\Lambda^{-1}\mathrm{Lie}\To \mathrm{Tree} \;\; ,\;\; [1,2] \; \mapsto \; \begin{tikzpicture}[grow=up, scale=0.5, level distance=2cm, baseline={([yshift=-.5ex]current bounding box.center)}]
\node[draw,ellipse] {$1$}
    child{node[draw, ellipse]{$2$}
    };
    \end{tikzpicture}.$$

\begin{rem}
Contrary to its rooted version (see below), the operad $\mathrm{Tree}$ is not generated in arity $2$, because $\dim\mathrm{Lie}(3)=2<\dim \mathrm{Tree}(3)=3$.
\end{rem}

Our formalism produces a non-trivial left module $\hbottom{\bullet}(\mathrm{MLT})$ over the operad $\mathrm{Tree}$, which is the main focus of \cite{DDOJV}.

\subsubsection{The rooted variant, and pre-Lie algebras}\label{sec: MLRT}

\begin{defi}
Let $S$ be a finite set. An $S$-multilabeled rooted tree is an $S$-multilabeled tree equipped with a distinguished node called its root.
\end{defi}

The set $\mathrm{MLRT}(S)$ of $S$-multilabeled rooted trees is a poset as in the non-rooted case. The Hasse diagram of the poset $\mathrm{MLRT}(3)$ is shown on Figure \ref{fig:MLRT3}. The posets of multilabeled rooted trees originated from the work of Bishal Deb, Bérénice Delcroix-Oger and Matthieu Josuat-Vergès and will be further studied in \cite{DDOJV}.

\begin{figure}[h!]
    \centering
    \resizebox{\textwidth}{!}{
\begin{tikzpicture}
    \node (min) at (0,0) {\begin{tikzpicture}
\node[draw, rectangle] {$1 \ 2 \ 3$};
    \end{tikzpicture}};
\coordinate[above=2cm of min] (c1);
\coordinate[above=4cm of c1] (c2);
\node[left=0.8cm of c1] (2-13) {\includegraphics{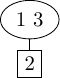}};
\node[left=1.6cm of 2-13](23-1) {\includegraphics{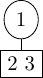}};
\node[left=1.6cm of 23-1] (1-23) {\includegraphics{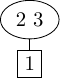}};
\node[right=0.8cm of c1] (12-3) {\includegraphics{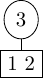}};
\node[right=1.6cm of 12-3] (13-2) {\includegraphics{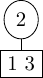}};
\node[right=1.6cm of 13-2](3-12) {\includegraphics{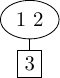}}; 
\node at (c2) (cor2) {\includegraphics{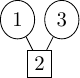}};
\node[left=1cm of c2] (cor1) {\includegraphics{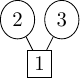}};
\node[left=1cm of cor1](231) {\includegraphics{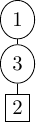}};
\node[left=1cm of 231] (132) {\includegraphics{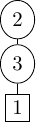}};
\node[left=1cm of 132] (123) {\includegraphics{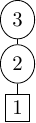}};
\node[right=1cm of c2] (cor3) {\includegraphics{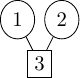}};
\node[right=1cm of cor3] (213) {\includegraphics{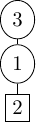}};
\node[right=1cm of 213] (312) {\includegraphics{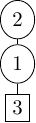}};
\node[right=1cm of 312] (321) {\includegraphics{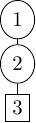}};
\draw (min.north)--(1-23.south);
\draw (123.south)--(1-23.north)--(132.south);
\draw (min.north)--(23-1.south);
\draw (321.south)--(23-1.north)--(231.south);
\draw (min.north)--(2-13.south);
\draw (213.south)--(2-13.north)--(231.south);
\draw (min.north)--(13-2.south);
\draw (132.south)--(13-2.north)--(312.south);
\draw (min.north)--(3-12.south);
\draw (321.south)--(3-12.north)--(312.south);
\draw (min.north)--(12-3.south);
\draw (123.south)--(12-3.north)--(213.south);
\draw (12-3.north)--(cor1.south)--(13-2.north);
\draw (12-3.north)--(cor2.south)--(23-1.north);
\draw (13-2.north)--(cor3.south)--(23-1.north);
\end{tikzpicture}
}
    \caption{The poset $\mathrm{MLRT}(3)$ of multilabeled rooted trees on $\{1,2,3\}$. The root is the node in a rectangle.}
    \label{fig:MLRT3}
\end{figure}

As in the non-rooted case, there is a natural operadic poset species structure on $\mathrm{MLRT}:S\mapsto \mathrm{MLRT}(S)$. Its cohomology is the rooted version of Kontsevich's operad of trees, which we denote by $\mathrm{RTree}$, hence an isomorphism of graded operads
\begin{equation}\label{eq: MLRT RTree}
h^\bullet(\mathrm{MLRT})\simeq \mathrm{RTree}\simeq \Lambda^{-1}\mathrm{PreLie},
\end{equation}
where the second isomorphism follows from \cite{ChapotonLivernet}. An example of operadic composition in $\mathrm{RTree}$ is as follows (compare with \eqref{eq: composition tree}):
$$ 
\begin{tikzpicture}[grow=up, scale=0.5, level distance=2cm, baseline={([yshift=-.5ex]current bounding box.center)}]
\node[draw,rectangle] {$1$}
    child{node[draw, ellipse]{$*$}
    };
    \end{tikzpicture} 
\,\circ_*\,
\begin{tikzpicture}[grow=up, scale=0.5, level distance=2cm, baseline={([yshift=-.5ex]current bounding box.center)}]
\node[draw,rectangle] {$2$}
    child{node[draw, ellipse]{$3$}
    };
    \end{tikzpicture} 
\;=\;
\begin{tikzpicture}[grow=up, scale=0.5, level distance=2cm, baseline={([yshift=-.5ex]current bounding box.center)}]
\node[draw,rectangle] {$1$}
    child{node[draw, ellipse]{$2$}
        child{node[draw, ellipse]{$3$} edge from parent node[left,draw=none] {\tiny{II}}
        } edge from parent node[left,draw=none] {\tiny{I}}
    };
    \end{tikzpicture}.
$$
Thanks to \eqref{eq: MLRT RTree}, our formalism therefore produces a non-trivial left pre-Lie module $\Lambda \hbottom{\bullet}(\mathrm{MLRT})$, which is the main focus of \cite{DDOJV}.

\begin{rem}
There is a natural morphism of operadic poset species (see \S\ref{sec: functoriality})
$$\mathrm{MLRT}\To \mathrm{MLT}$$
which forgets the root of a multilabeled rooted tree. It leads to a pullback morphism in cohomology, which thanks to \eqref{eq: MLT Tree} and \eqref{eq: MLRT RTree} gives rise to a morphism of graded operads:
$$\mathrm{Tree}\To \mathrm{RTree}\simeq \Lambda^{-1}\mathrm{PreLie}.$$
This morphism sends a labeled tree to the sum of all the corresponding labeled \emph{rooted} trees.
\end{rem}

\section{Conclusion and open questions}\label{sec: open questions}

In this article we have introduced and studied a formalism which produces, from a family $P$ of posets with the structure of an ``operadic poset species'', a graded operad $h^\bullet(P)$ along with left and right operadic modules on (variants of) poset cohomology, denoted by $\hbottom{\bullet}(P)$ and $\htop{\bullet}(P)$ respectively.

We collect in this final section some open questions that arose in the study of certain classes of examples. Most of those concern (left and right) decorated partitions and the corresponding (left and right) operadic modules, studied in \S\ref{sec: decorated partitions modules}, expanding on the data of Tables \ref{tab:checkmu} and \ref{tab:hatmu}. We work with the ring of coefficients $\KK=\QQ$ throughout.

\subsection{Right-$\mathrm{As}$-decorated partitions}

The operad $\mathrm{As}$, which encodes associative algebras, is both left- and right-basic. We refer to \S\ref{subsubsec: left As} for a study of left-$\mathrm{As}$-decorated partitions, and focus now on right-$\mathrm{As}$-decorated partitions.

Recall from \S\ref{sec: As right operadic module} that $\Pi^{\mathrm{As}}(n)$ denotes the poset of partitions equipped with a total order on each block (see the Hasse diagram for $n=3$ in Figure \ref{figComp2}). Proposition \ref{prop: right koszul} and the fact that the operad $\mathrm{As}$ is Koszul yield an isomorphism of operads
$$\Lambda h^\bullet(\Pi^{\mathrm{As}}) \simeq \mathrm{As},$$
which had already appeared in \cite{vallettepartitionposets} at the level of linear species.
On the variant $\Lambda\htop{\bullet}(\Pi^{\mathrm{As}})$ of poset cohomology, we therefore get a right operadic module over $\mathrm{As}$, which seems more difficult to compute. Indeed, the poset $\Pi^{\mathrm{As}}_+(n) := \Pi^{\mathrm{As}}(n)\cup\{\hat{0}\}$ is not always Cohen--Macaulay, as Table \ref{tab:cohomSage} shows, and therefore $\Lambda \htop{\bullet}(\Pi^{\mathrm{As}})$ is not concentrated in degree zero. The corresponding Euler characteristics are given by \eqref{eq: generating series Pi As}, while Proposition \ref{prop: cycle index series right operadic module} yields the $\mathfrak{S}_n$-equivariant Euler characterics:
\begin{equation}\label{eq: cycle index series for Pi As}
\sum_{k\geq 1}(-1)^k \mathrm{Z}_{\htop{k}(\Pi^{\mathrm{As}})} = \left(\exp\left(\sum_{n\geq 1}\frac{p_n}{n}\right)-1\right)\circ \frac{p_1}{1+p_1} = \exp\left(\sum_{n\geq 1} \frac{1}{n}\frac{p_n}{1+p_n}\right) -1 .
\end{equation}

\begin{tcolorbox}[colback=green!5!white,colframe=green!50!black,title=Open questions]
  \begin{itemize}
    \item Describe the cohomology groups $\htop{\bullet}(\Pi^{\mathrm{As}}(n))\simeq \widetilde{H}^{\bullet-1}(\Pi^{\mathrm{As}}_+(n))$, e.g. using combinatorial bases. The $\mathfrak{S}$-equivariant Euler characteristic is given by \eqref{eq: cycle index series for Pi As}.
      \item Describe the (graded) right $\mathrm{As}$-module $\Lambda \htop{\bullet}(\Pi^{\mathrm{As}})$.
  \end{itemize}
\end{tcolorbox}

\subsection{Right-$\mathrm{Perm}$-decorated partitions}
The operad $\mathrm{Perm}$ encoding permutative algebras, introduced by Chapoton \cite{chapotonendofoncteur}, is right-basic but not left-basic (\S\ref{sec: examples right operads}).
Recall from \S\ref{sec: perm right operadic module} that $\Pi^{\mathrm{Perm}}(n)$ denotes the poset of partitions equipped with a pointed element in each block (see the Hasse diagram for $n=3$ in Figure \ref{fig:permright}). Chapoton and Vallette \cite{chapotonvallette} proved that the maximal intervals in $\Pi^{\mathrm{Perm}}(n)$ are totally semimodular, and hence Cohen--Macaulay, and Proposition \ref{prop: right koszul} yields an isomorphism of operads
$$\Lambda h^\bullet(\Pi^{\mathrm{Perm}})\simeq \mathrm{PreLie},$$
which already appeared in \cite{chapotonvallette, vallettepartitionposets} at the level of linear species.
On the variant $\Lambda\htop{\bullet}(\Pi^{\mathrm{Perm}})$, we therefore get a right operadic module over $\mathrm{PreLie}$. We have proved in Proposition \ref{prop: Pi Perm Cohen Macaulay} that the poset $\Pi_+^{\operatorname{Perm}}(n)=\Pi^{\operatorname{Perm}}(n) \cup \{\hat{0}\}$ is Cohen--Macaulay and in Proposition \ref{prop:dim htop right Perm} that the dimension of its unique cohomology group is
$$\dim\htop{n-1}(\Pi^{\operatorname{Perm}}(n)) = (n-1)^{n-1},$$ 
which is, among others, the number of prime parking functions of length $n$. Furthermore, Proposition \ref{prop: cycle index series right operadic module} yields the following formula for the characters of the action of $\mathfrak{S}_n$ on those cohomology groups:
      \begin{equation}\label{eq: cycle index series for Pi Perm}
        \sum_{n\geq 1}(-1)^{n-1} \frac{1}{n!}\sum_{\sigma\in \mathfrak{S}_n} \mathrm{Tr}\left(\sigma\, \Big|\,\htop{n-1}(\Pi^{\operatorname{Perm}}(n))\right) p_{\lambda(\sigma)} = \left(\exp\left(\sum_{n\geq 1}\frac{p_n}{n}\right)-1\right)\circ \Sigma\mathrm{Z}_{\mathrm{PreLie}},
      \end{equation}
    where $\Sigma\mathrm{Z}_{\mathrm{PreLie}}$ is the plethystic inverse of $\mathrm{Z}_{\mathrm{Perm}} = p_1\exp(\sum_{n\geq 1}\frac{p_n}{n})$, or in other words:
    $$p_1= \left(p_1 \exp\left(\sum_{n\geq 1}\frac{p_n}{n}\right) \right)\circ \Sigma\mathrm{Z}_{\mathrm{PreLie}}.$$

\begin{tcolorbox}[colback=green!5!white,colframe=green!50!black,title=Open questions]
  \begin{itemize}
      \item Describe the cohomology groups $\htop{n-1}(\Pi^{\mathrm{Perm}}(n)) \simeq \widetilde{H}^{n-2}(\Pi^{\mathrm{Perm}}_+(n))$, e.g. using combinatorial bases. The character for the $\mathfrak{S}_n$-action is given by \eqref{eq: cycle index series for Pi Perm}.
      \item Is there a closed-form expression for the right-hand side of \eqref{eq: cycle index series for Pi Perm}?
      \item  Describe the (degree zero) right $\mathrm{PreLie}$-module $\Lambda\htop{\bullet}(\Pi^{\operatorname{Perm}})$.
  \end{itemize}
\end{tcolorbox}

\subsection{Left- and right-$\mathrm{Com}_2$-decorated partitions}

The operad $\operatorname{Com}_2$, introduced by Dotsenko and Khoroshkin \cite{DotsenkoKhoroshkinCom2}, is both left- and right-basic. It has the same cardinality in every arity as $\mathrm{Perm}$, but a different species structures: in arity $n\geq 1$ we have $\operatorname{Com}_2(n)=\{0,\ldots, n-1\}$ with the trivial action of the symmetric group $\mathfrak{S}_n$.

\subsubsection{Right-$\mathrm{Com}_2$-decorated partitions}

The poset $\Pi^{{\operatorname{Com}_2}}(n)$ is the poset of ``weighted partitions'' introduced in \cite{DotsenkoKhoroshkinCom2} and further studied by Gonz\'{a}lez D'Le\'{o}n and Wachs \cite{WeightedPart} and Gonz\'{a}lez D'Le\'{o}n, Hallam and Quiceno Dur\'{a}n \cite{whitneyTwins}. We refer to \cite[Figure 1]{WeightedPart} for the Hasse diagram of the poset $\Pi^{\mathrm{Com}_2}(3)$ (with the opposite convention).
Dotsenko and Khoroshkin proved that the maximal intervals in $\Pi^{\mathrm{Com}_2}(n)$ are Cohen--Macaulay, which by Proposition \ref{prop: right koszul} implies that $\QQ\mathrm{Com}_2$ is Koszul, and yields an isomorphism of operads
\begin{equation}\label{eq:iso-weighted-partitions-Lie2}
\Lambda h^\bullet(\Pi^{\mathrm{Com}_2}) \simeq \mathrm{Lie}_2,
\end{equation}
where $\mathrm{Lie}_2 = (\QQ\mathrm{Com}_2)^!$ was also introduced in \cite{DotsenkoKhoroshkinCom2}.  At the level of linear species, this isomorphism follows from \cite{DotsenkoKhoroshkinCom2} and Vallette's general formalism \cite{vallettepartitionposets}. Such an isomorphism was worked out more explicitly by Gonz\'{a}lez D'Le\'{o}n and Wachs \cite{WeightedPart}.

On the variant $\Lambda\htop{\bullet}(\Pi^{\mathrm{Com}_2})$, we therefore get a right module over the operad $\mathrm{Lie}_2$.
It was proved in \cite{WeightedPart} that $\Pi^{\operatorname{Com}_2}_+(n) := \Pi^{\operatorname{Com}_2}(n)\cup\{\hat{0}\}$ is EL-shellable, hence Cohen--Macaulay, which implies that $\Lambda \htop{\bullet}(\Pi^{\operatorname{Com}_2})$ is concentrated in degree zero. Its sequence of dimensions was worked out in \emph{op.~cit.} (and also follows from Proposition \ref{prop: generating series right operadic module}) and is given, in our notation, by
$$\dim\htop{n-1}(\Pi^{\operatorname{Com}_2}(n))=(n-1)^{n-1}.$$
Furthermore, Proposition \ref{prop: cycle index series right operadic module} yields the following formula for the characters of the action of $\mathfrak{S}_n$ on those cohomology groups:
      \begin{equation}\label{eq: cycle index series for Pi Com2}
        \sum_{n\geq 1}(-1)^{n-1} \frac{1}{n!}\sum_{\sigma\in\mathfrak{S}_n} \mathrm{Tr}\left( \sigma\,\Big|\,\htop{n-1}(\Pi^{\operatorname{Com}_2}(n)) \right) p_{\lambda(\sigma)} = \left(\exp\left(\sum_{n\geq 1}\frac{p_n}{n}\right)-1\right)\circ \Sigma\mathrm{Z}_{\mathrm{Lie_2}},
      \end{equation}
      where $\Sigma\mathrm{Z}_{\mathrm{Lie}_2}$ is the plethystic inverse of $\mathrm{Z}_{\mathrm{Com}_2}=\sum_{n\geq 1}n h_n= \sum_{n \geq 1}\sum_{\lambda} n \prod_{i=1}^n \frac{p_i^{\lambda_i}}{\lambda_i! i^{\lambda_i}}$ (sum over the tuples $\lambda=(\lambda_1, \ldots, \lambda_k)$, with $\lambda_i\geq 0$, satisfying $\sum_{i=1}^k i \lambda_i=n$).
      A formula for $\mathrm{Z}_{\mathrm{Lie}_2}$ was worked out more explicitly in \cite{DotsenkoKhoroshkinCom2}.

      \begin{tcolorbox}[colback=green!5!white,colframe=green!50!black,title=Open questions]
  \begin{itemize}
        \item Compare the isomorphism \eqref{eq:iso-weighted-partitions-Lie2} with that of \cite{WeightedPart}.
        \item Describe the cohomology groups $\htop{n-1}(\Pi^{\mathrm{Com}_2}(n))\simeq \widetilde{H}^{n-2}(\Pi^{\mathrm{Com}_2}_+(n))$, e.g. using combinatorial bases in the spirit of \cite{WeightedPart}. The character for the $\mathfrak{S}_n$-action is given by \eqref{eq: cycle index series for Pi Com2}.
        \item Is there a closed-form expression for the right-hand side of \eqref{eq: cycle index series for Pi Com2}?
      \item  Describe the (degree zero) right $\mathrm{Lie}_2$-module $\Lambda\htop{\bullet}(\Pi^{\operatorname{Com}_2})$. 
  \end{itemize}
\end{tcolorbox}

\subsubsection{Left-$\mathrm{Com}_2$-decorated partitions}

To our knowledge, the left-decorated version ${}^{\operatorname{Com}_2}\Pi$ has not been studied. The poset  ${}^{\operatorname{Com}_2}\Pi(n)$ of ``left-weighted partitions'' of $\{1,\ldots,n\}$ consists of pairs $(\pi,i)$ with $\pi\in \Pi(n)$ and $i\in\{0,\ldots,|\pi|-1\}$, and the covering relation is given by
\begin{equation*}
    (\pi,i) \lessdot (\pi',j) \quad \Longleftrightarrow \quad  \pi\lessdot \pi' \,\mbox{ and }\, j \in \{i,i+1\}.
\end{equation*}
The Hasse diagram of the poset ${}^{\operatorname{Com}_2}\Pi(3)$ is shown in Figure \ref{fig:Com2Pi}. 

\begin{figure}[h!]
    \centering
    \includegraphics[width=\linewidth]{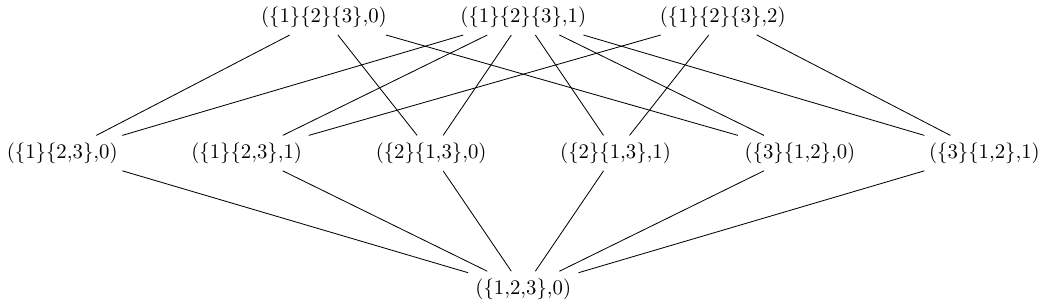}
    \caption{The poset of ``left-weighted partitions'' on three elements ${}^{\operatorname{Com}_2}\Pi(3)$. The poset ${}^{\operatorname{Com}_2}\Pi^+(3) := {}^{\operatorname{Com}_2}\Pi(3)\cup\{\hat{1}\}$ is Cohen--Macaulay with M\"{o}bius number $-4$.}
    \label{fig:Com2Pi}
\end{figure}

Since $\QQ\mathrm{Com}_2$ is Koszul, Proposition \ref{prop: left Koszul} implies that the maximal intervals of ${}^{\operatorname{Com}_2}\Pi(n)$ are Cohen--Macaulay and that we have an isomorphism of operads
\begin{equation}\label{eq: iso h of Com2 Pi Lie2}
\Lambda h^\bullet({}^{\operatorname{Com}_2}\Pi) \simeq \mathrm{Lie}_2.
\end{equation}
On the variant $\Lambda\hbottom{\bullet}({}^{\mathrm{Com}_2}\Pi)$, we therefore get a left operadic module over $\mathrm{Lie}_2$. We do not know if it is concentrated in degree zero, i.e. if the poset ${}^{\mathrm{Com}_2}\Pi^+(n) := {}^{\mathrm{Com}_2}\Pi(n) \cup\{\hat{1}\}$ is Cohen--Macaulay for all $n$. Proposition \ref{prop: generating series left operadic module} implies that the corresponding Möbius numbers are such that the  $(-1)^{n-1}\widecheck{\mu}({}^{\mathrm{Com}_2}\Pi(n))$ are given by the sequence \cite[A058863]{oeis}. The fact that the latter consists of positive numbers is consistent with the hypothesis that ${}^{\mathrm{Com}_2}\Pi^+(n)$ is indeed Cohen--Macaulay for all $n$. Furthermore, Proposition \ref{prop: cycle index series left operadic module} yields the $\mathfrak{S}_n$-equivariant Euler characteristics:
\begin{equation}\label{eq: cycle index series for Com2 Pi}\sum_{k\geq 1}(-1)^k \mathrm{Z}_{\hbottom{k}({}^{\mathrm{Com_2}}\Pi)} = \Sigma\mathrm{Z}_{\mathrm{Lie}_2}\circ \left(\exp\left(\sum_{n\geq 1}\frac{p_n}{n}\right)-1\right).
\end{equation}

\begin{tcolorbox}[colback=green!5!white,colframe=green!50!black,title=Open questions]
  \begin{itemize}   
     \item Give a combinatorial description of the isomorphism \eqref{eq: iso h of Com2 Pi Lie2} in the spirit of \cite{WeightedPart}.
      \item Is the poset ${}^{\mathrm{Com}_2}\Pi^+(n)$ Cohen--Macaulay for all $n$?
      \item Describe the cohomology groups $\hbottom{\bullet}({}^{\mathrm{Com}_2}\Pi(n))\simeq \widetilde{H}^{\bullet-1}({}^{\mathrm{Com}_2}\Pi^+(n))$, e.g. using combinatorial bases. The $\mathfrak{S}$-equivariant Euler characteristic is given by \eqref{eq: cycle index series for Com2 Pi}.
        \item Is there a closed-form expression for the right-hand side of \eqref{eq: cycle index series for Com2 Pi}?
      \item Describe the (graded) left $\operatorname{Lie}_2$-module $\Lambda \hbottom{\bullet}({}^{\mathrm{Com}_2}\Pi)$.
  \end{itemize}
\end{tcolorbox}

\subsection{Left-$\mathrm{Dup}$-decorated partitions}

The operad $\operatorname{Dup}$, encoding \emph{duplicial} algebras, was introduced by Loday \cite{LodayDup}, and is left-basic. It is the symmetrization of a non-symmetric operad.
An element of $\operatorname{Dup}(S)$ is a planar binary tree equipped with a labeling of its internal nodes by $S$. An element of the poset
${}^{\operatorname{Dup}}\Pi(S)$ is therefore an element of $\operatorname{Dup}(\pi)$, for a partition $\pi$ of $S$, i.e. a 
planar binary tree equipped with a labeling of its internal nodes by blocks of $\pi$.
The covering relations are given by the following local rules, for $T$ and $T'$ two disjoint subsets of $S$ and $\tau_l$, $\tau_r$, two subtrees:
\begin{align*}
    \begin{tikzpicture}[grow=up, scale=0.5, baseline=0.3cm]
        \node{$T \sqcup T'$}
           child{node{$\tau_r$}}
           child{node{$\tau_l$}};
    \end{tikzpicture} & \lessdot 
    \begin{tikzpicture}[grow=up, scale=0.5, baseline=0.3cm]    
    \node{$T$}
           child{node{$T'$} child{node{$\tau_r$}}child{}}
           child{node{$\tau_l$}};
    \end{tikzpicture} \\
        \begin{tikzpicture}[grow=up, scale=0.5, baseline=0.3cm]
        \node{$T\sqcup T'$}
           child{node{$\tau_r$}}
           child{node{$\tau_l$}};
    \end{tikzpicture} & \lessdot \begin{tikzpicture}[grow=up, scale=0.5, baseline=0.3cm]
        \node{$T$}
           child{node{$\tau_r$}}
           child{node{$T'$} child{} child{node{$\tau_l$}}};
    \end{tikzpicture}
\end{align*}
The Hasse diagram of the poset ${}^{\operatorname{Dup}}\Pi(3)$ is shown in Figure \ref{fig:DupPi}. 

\begin{figure}
    \centering
    \includegraphics[width=0.9\textheight, angle=90]{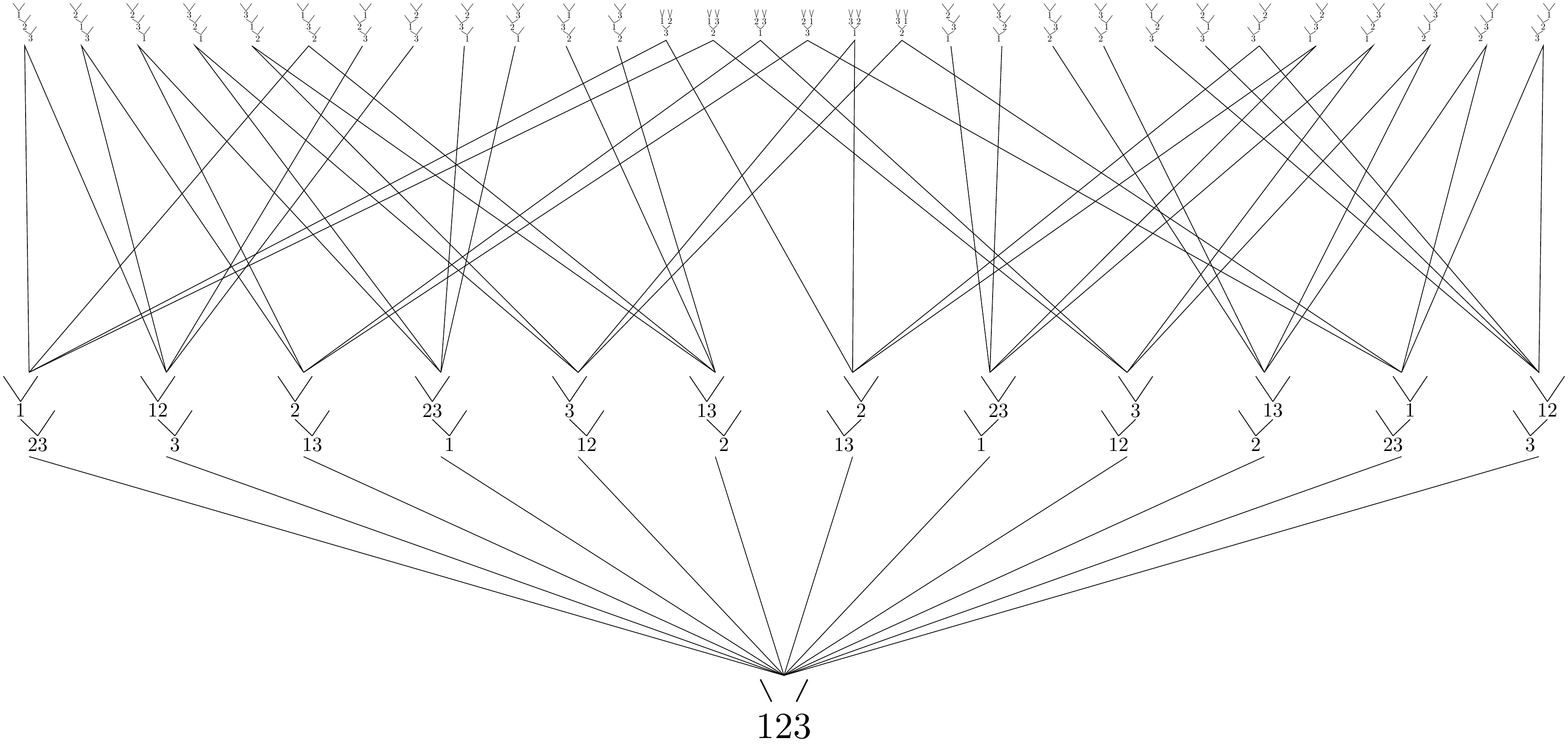}
    \caption{The Hasse diagram of the poset of left-$\mathrm{Dup}$-decorated partitions on three elements, ${}^{\operatorname{Dup}}\Pi(3)$. The poset ${}^{\operatorname{Dup}}\Pi^+(3):={}^{\operatorname{Dup}}\Pi(3)\cup\{\hat{1}\}$ is Cohen--Macaulay with M\"{o}bius number $-7$.}
    \label{fig:DupPi}
\end{figure}

Since $\operatorname{Dup}$ is a Koszul operad by \cite[5.4]{LodayDup}, Proposition \ref{prop: left Koszul} implies that the maximal intervals of ${}^{\operatorname{Dup}}\Pi(S)$ are Cohen--Macaulay and that we have an isomorphism of operads 
$$\Lambda\h{\bullet}({}^{\operatorname{Dup}}\Pi) \simeq (\QQ\mathrm{Dup})^!.$$
The Koszul dual operad $(\QQ\mathrm{Dup})^!$ was described in \emph{loc.~cit.}, and $(\QQ\mathrm{Dup})^!(n)$ consists of $n$ copies of the regular representation of $\mathfrak{S}_n$. On the variant $\Lambda\hbottom{\bullet}({}^{\mathrm{Dup}}\Pi)$, we get a left operadic module over $(\QQ\mathrm{Dup})^!$. We do not know if it is concentrated in degree zero, i.e. if the poset ${}^{\mathrm{Dup}}\Pi^+(n) := {}^{\mathrm{Dup}}\Pi(n) \cup\{\hat{1}\}$ is Cohen--Macaulay for all $n$. Proposition \ref{prop: generating series left operadic module} yields the Möbius numbers $(-1)^{n-1}\widecheck{\mu}({}^{\mathrm{Dup}}\Pi(n))=2^n-1$. The fact that these numbers are all positive is consistent with the hypothesis that ${}^{\mathrm{Dup}}\Pi^+(n)$ is indeed Cohen--Macaulay for all $n$. Furthermore, Proposition \ref{prop: cycle index series left operadic module} yields the $\mathfrak{S}_n$-equivariant Euler characteristic:
\begin{equation}\label{eq: cycle index series for Dup Pi}\sum_{k\geq 1}(-1)^k \mathrm{Z}_{\hbottom{k}({}^{\mathrm{Dup}}\Pi)} = \frac{p_1}{(1+p_1)^2}\circ \left(\exp\left(\sum_{n\geq 1}\frac{p_n}{n}\right)-1\right) = \exp\left(-\sum_{n\geq 1}\frac{p_n}{n}\right) - \exp\left(-2\sum_{n\geq 1}\frac{p_n}{n}\right).
\end{equation}

\begin{tcolorbox}[colback=green!5!white,colframe=green!50!black,title=Open questions]
  \begin{itemize}
      \item Is the poset ${}^{\operatorname{Dup}}\Pi^+(n)$ Cohen--Macaulay for all $n$?
      \item Describe the cohomology groups $\hbottom{\bullet}({}^{\mathrm{Dup}}\Pi(n))\simeq \widetilde{H}^{\bullet-1}({}^{\mathrm{Dup}}\Pi^+(n))$, e.g. using combinatorial bases (see \cite[A000225]{oeis}). The $\mathfrak{S}$-equivariant Euler characteristic is given by \eqref{eq: cycle index series for Dup Pi}.
      \item  Describe the (graded) left $(\QQ\mathrm{Dup})^!$-module $\Lambda\hbottom{\bullet}({}^{\operatorname{Dup}}\Pi)$.
  \end{itemize}
\end{tcolorbox}

\subsection{Right-$\mathrm{Trias}$-decorated partitions}

The operad $\operatorname{Trias}$, encoding \emph{triassociative} algebras, was introduced by Loday and Ronco \cite{LodayRoncoTrias}, and is right-basic. It is the symmetrization of a non-symmetric operad. The set $\mathrm{Trias}(S)$ consists of a list of all the elements of $S$ together with a non-empty subset of pointed elements (a ``multipointed list''). It has cardinality $(2^n-1)n!$ if $|S|=n$. The poset 
$\Pi^{\operatorname{Trias}}(S)$ therefore consists of partitions of $S$ whose blocks are multipointed lists, where $\pi\leq \pi'$ if and only if the blocks of $\pi$ are concatenations of the blocks of $\pi'$ and all the pointed elements in $\pi$ are pointed in $\pi'$. The Hasse diagram of the poset $\Pi^{\mathrm{TriAs}}(2)$ is shown in Figure \ref{fig:Trias}. Those posets have been previously considered by Vallette \cite{vallettepartitionposets} who called them ``multipointed ordered partition posets''. 

\begin{figure}[h!]
    \centering
    \includegraphics{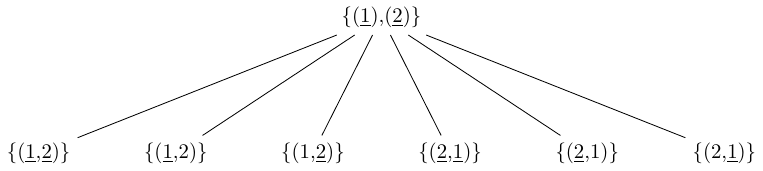}
    \caption{The Hasse diagram of the poset $\Pi^{\mathrm{Trias}}(2)$ of right-$\mathrm{Trias}$-decorated partitions on $2$ elements, viewed as partitions of $\{1,2\}$ whose blocks are multipointed lists (the multipointed elements are underlined).}
    \label{fig:Trias}
\end{figure}

As explained in \cite{LodayRoncoTrias}, the operad $\mathrm{Trias}$ is Koszul, with Koszul dual the operad $\mathrm{TriDend}$ encoding tridendriform algebras, where $\mathrm{TriDend}(S)$ has a basis consisting of planar trees with sectors indexed by $S$. Therefore, Proposition \ref{prop: right koszul} implies that the maximal intervals in $\Pi^{\mathrm{Trias}}(n)$ are Cohen--Macaulay for all $n$, and that we have an isomorphism of operads
$$\Lambda h^\bullet(\Pi^{\mathrm{Trias}})\simeq \mathrm{TriDend},$$
which already appeared in \cite{vallettepartitionposets} at the level of linear species. On the variant $\Lambda\htop{\bullet}(\Pi^{\mathrm{Trias}})$, we get a right operadic module over $\mathrm{TriDend}$. We do not know if it is concentrated in degree zero, i.e. if the poset $\Pi^{\mathrm{Trias}}_+(n) := \Pi^{\mathrm{Trias}}(n) \cup\{\hat{0}\}$ is Cohen--Macaulay for all $n$. Proposition \ref{prop: generating series right operadic module} yields the Möbius numbers \cite[A383991]{oeis}, which beg for a combinatorial interpretation. It is believable that these numbers alternate in sign, which would be consistent with the hypothesis that $\Pi^{\mathrm{Trias}}_+(n)$ is indeed Cohen--Macaulay for all $n$. Furthermore, Proposition \ref{prop: cycle index series right operadic module} yields the $\mathfrak{S}_n$-equivariant Euler characteristic:
\begin{equation}\label{eq: cycle index series for Pi Trias}
 \sum_{k\geq 0}(-1)^k \mathrm{Z}_{\htop{k}(\Pi^{\mathrm{Trias}})}  =  \left( \exp\Bigg(\sum_{n\geq 1}\frac{p_n}{n}\Bigg) -1 \right) \circ \Sigma \mathrm{Z}_{\mathrm{TriDend}} = \exp\left(\sum_{n\geq 1}\frac{1}{n}\frac{1+3p_n+\sqrt{1+6p_n+p_n^2}}{4p_n}\right)-1.
\end{equation}

\begin{tcolorbox}[colback=green!5!white,colframe=green!50!black,title=Open questions]
  \begin{itemize}
      \item Is the poset $\Pi^{\operatorname{Trias}}_+(n)$ Cohen--Macaulay for all $n$?
      \item  Find a formula and/or a combinatorial interpretation for \cite[A383991]{oeis}.
      \item Describe the cohomology groups $\htop{\bullet}(\Pi^{\mathrm{Trias}}(n))\simeq \widetilde{H}^{\bullet-1}(\Pi^{\mathrm{Trias}}_+(n))$, e.g. using combinatorial bases (see \cite[A383991]{oeis}). The $\mathfrak{S}$-equivariant Euler characteristic is given by \eqref{eq: cycle index series for Pi Trias}.
      \item Describe the (graded) right $\mathrm{TriDend}$-module $\Lambda\htop{\bullet}(\Pi^{\mathrm{Trias}})$.
  \end{itemize}
\end{tcolorbox}

\subsection{Right-$\mathrm{ComTrias}$-decorated partitions}

The $\operatorname{ComTrias}$, encoding \emph{commutative-triassociative} algebras, was introduced by \cite{vallettepartitionposets}, and is right-basic. The set $\mathrm{ComTrias}(S)$ can be described as the set of non-empty subsets of $S$, also known as ``multipointings'' of $S$. Therefore, the poset $\Pi^{\mathrm{ComTrias}}(S)$ consists of partitions of $S$ where each block is equipped with a multipointing.
It was proved by Chapoton and Vallette \cite{chapotonvallette} that every poset $\Pi^{\mathrm{ComTrias}}(n)$ is totally semi-modular and therefore Cohen--Macaulay. Proposition \ref{prop: right koszul} then applies and shows that $\mathrm{ComTrias}$ is Koszul, its Koszul dual being denoted by $\mathrm{PostLie}$, the operad encoding \emph{post-Lie} algebras. We then get an isomorphism of operads
$$\Lambda h^\bullet(\Pi^{\mathrm{ComTrias}})\simeq \mathrm{PostLie},$$
which first appeared in \cite{vallettepartitionposets} at the level of linear species.
On the variant $\Lambda\htop{\bullet}(\Pi^{\mathrm{ComTrias}})$, we get a right operadic module over $\mathrm{PostLie}$. We do not know if it is concentrated in degree zero, i.e. if the poset $\Pi^{\mathrm{ComTrias}}_+(n) := \Pi^{\mathrm{ComTrias}}(n) \cup\{\hat{0}\}$ is Cohen--Macaulay for all $n$. Proposition \ref{prop: generating series right operadic module} implies that the corresponding Möbius numbers are such that $\widehat{\mu}(\Pi^{\mathrm{ComTrias}}(n))=(-1)^{n-1}\frac{(2n)!}{n!}$ \cite[A001813]{oeis}. Their sign pattern is consistent with the hypothesis that $\Pi^{\mathrm{ComTrias}}_+(n)$ is indeed Cohen--Macaulay for all $n$. Furthermore, Proposition \ref{prop: cycle index series right operadic module} yields the $\mathfrak{S}_n$-equivariant Euler characteristic:
\begin{equation}\label{eq: cycle index series for Pi ComTrias}
 \sum_{k\geq 0}(-1)^k \mathrm{Z}_{\htop{k}(\Pi^{\mathrm{ComTrias}})}  =  \left( \exp\Bigg(\sum_{n\geq 1}\frac{p_n}{n}\Bigg) -1 \right) \circ \Sigma \mathrm{Z}_{\mathrm{PostLie}}
\end{equation}
where $\Sigma\mathrm{Z}_{\mathrm{PostLie}}$ is the plethystic inverse of $\mathrm{Z}_{\mathrm{ComTrias}} = \exp\left(2\sum_{n\geq 1}\frac{p_n}{n}\right) - \exp\left(\sum_{n\geq 1}\frac{p_n}{n}\right).$

\begin{tcolorbox}[colback=green!5!white,colframe=green!50!black,title=Open questions]
  \begin{itemize}
      \item Is the poset $\Pi^{\operatorname{ComTrias}}_+(n)$ Cohen--Macaulay for all $n$?
      \item Describe the cohomology groups $\htop{\bullet}(\Pi^{\mathrm{ComTrias}}(n))\simeq \widetilde{H}^{\bullet-1}(\Pi^{\mathrm{ComTrias}}_+(n))$, e.g. using combinatorial bases (see \cite[A001813]{oeis}). The $\mathfrak{S}$-equivariant Euler characteristic is given by \eqref{eq: cycle index series for Pi ComTrias}.
      \item Describe the (graded) right $\mathrm{PostLie}$-module $\Lambda\htop{\bullet}(\Pi^{\mathrm{ComTrias}})$.
  \end{itemize}
\end{tcolorbox}

\subsection{Non-crossing $2$-partitions}

In \S\ref{sec:NC2P} we have studied an operadic poset species consisting of the non-crossing $2$-partition posets $\Pi_2(n)$. We have proved in Proposition \ref{prop:operad-NC2P} that $h^\bullet(\Pi_2(n))$ and $\hbottom{\bullet}(\Pi_2(n))$ are both concentrated in degree $n-1$, yielding a degree zero operad $\Lambda h^\bullet(\Pi_2)$ and a degree zero left operadic module $\Lambda\hbottom{\bullet}(\Pi_2)$.

\begin{tcolorbox}[colback=green!5!white,colframe=green!50!black,title=Open questions]
  \begin{itemize}
      \item Describe the (degree zero) operad $\Lambda h^\bullet(\Pi_2(n))$ on the cohomology groups $h^{n-1}(\Pi_2(n))$. The underlying linear species is that of planar binary trees with a labeling of the leaves. However, $\Lambda h^\bullet(\Pi_2(n))$ is not the Magmatic operad on one generator since the binary generator satisfies the pre-Lie identity (Proposition \ref{prop: prelie in Pi 2}).
      \item Describe the (degree zero) left operadic module $\Lambda\hbottom{\bullet}(\Pi_2(n))$ on the cohomology groups $\hbottom{n-1}(\Pi_2(n))\simeq \widetilde{H}^{n-2}(\Pi_2(n)\cup\{\hat{1}\})$. The underlying linear species is, up to the sign representation, that of prime parking functions (Proposition \ref{prop:DOJVR}). 
  \end{itemize}
\end{tcolorbox}

\bibliographystyle{hyperamsalpha}
\bibliography{biblio}

\end{document}